\def\titlerunning#1{\gdef\titrun{#1}}
\def\author#1{\gdef\autrun{\def\and{\unskip, }#1}\gdef\@author{#1}}
\def\address#1{{\def\and{\\\hspace*{18pt}}\renewcommand{\thefootnote}{}%
\footnote {#1}}%
\markboth{\autrun}{\titrun}} \makeatother
\def\email#1{e-mail: #1}
\def\subjclass#1{{\renewcommand{\thefootnote}{}%
\footnote{\emph{Mathematics Subject Classification (2010):} #1}}}
\newcommand{\K}{{\mathbb K}}
\newcommand{\F}{{\mathbb F}}
\newcommand{\N}{{\mathbb N}}
\newcommand{\R}{{\mathbb R}}
\newcommand{\Z}{{\mathbb Z}}
\newtheorem{theorem}{Theorem}[section]
\newtheorem{corollary}[theorem]{Corollary}
\newtheorem{remark}[theorem]{Remark}
\newtheorem{lemma}[theorem]{Lemma}
\newtheorem{proposition}[theorem]{Proposition}
\newtheorem{claim}[theorem]{Claim}
\numberwithin{equation}{section}
\begin{document}
\baselineskip=17pt

\titlerunning{***}

\title{The splitting  lemmas for nonsmooth functionals
on Hilbert spaces I\footnote{IN MEMORY OF PROFESSOR SHUZHONG SHI
(1939--2008)}\;\footnote{The original version (arXiv:1102.2062 on Feb 2011)
  was split into 3 parts, this is the first one.}}


\author{Guangcun Lu}

\date{June 10, 2014}

\maketitle

\address{F1. Lu: School of Mathematical Sciences, Beijing Normal University,
Laboratory of Mathematics  and Complex Systems,  Ministry of
  Education,    Beijing 100875, The People's Republic
 of China; \email{gclu@bnu.edu.cn}}

\subjclass{Primary~58E05, 49J52, 49J45}

\begin{abstract}
The Gromoll-Meyer's generalized Morse lemma (so called splitting
lemma) near degenerate critical points on Hilbert spaces,   which is
one of key results in infinite dimensional Morse theory, is usually
stated for at least $C^2$-smooth functionals. It obstructs one using
Morse theory to study most of variational problems of form
$F(u)=\int_\Omega f(x, u,\cdots, D^mu)dx$ as in (\ref{e:1.1}). In
this paper we establish a splitting theorem and a shifting theorem
for a class of continuously directional differentiable functionals
(lower than $C^1$) on a Hilbert space $H$ which have higher
smoothness (but lower than $C^2$) on a densely and continuously
imbedded Banach space $X\subset H$ near a critical point lying in
$X$. (This splitting theorem generalize almost all previous ones to
my knowledge). Moreover, a new theorem of Poincar\'e-Hopf type and a
relation between critical groups of the functional on $H$ and $X$
are given.  Different from the usual implicit function theorem
method and dynamical system one our proof is to combine the ideas
of the Morse-Palais lemma due to Duc-Hung-Khai \cite{DHK} with some
techniques from \cite{JM, Skr, Va1}.  Our theory is applicable to
the Lagrangian systems on compact manifolds and boundary value
problems for a large class of nonlinear higher order elliptic
equations.

\end{abstract}

\tableofcontents

\section{Introduction }

\subsection{Motivation}
Morse theory is an important tool in critical point theory. Morse
inequalities, which provide the appropriate relations between global
topological notions and the critical groups of the critical points,
had been generalized to very general frameworks, see \cite{Ch, MaWi}
(for $C^1$-functionals on manifolds of infinite dimension) and
\cite{Cor} (for continuous functionals on complete metric spaces)
and the references therein. These inequalities and precise
computations of critical groups are extremely useful in
distinguishing different types of critical points and  obtaining
multiple critical points of a functional (cf. \cite{BaSzWi, Ch,
MaWi, PerAO}). However, the calculation of critical groups in
applications is a complex problem. Gromoll-Meyer's generalization of
Morse lemma to an isolated degenerate critical point in \cite{GrM},
also called the splitting theorem, provides a basic tool for the
effective computation of critical groups. Since then many authors
made their effort to improve the splitting theorem, see \cite{Ch,
Ho, MaWi, IoSch, JM, LiLiLiu, DHK, DHK1, LiLiLiuP} and related
historical and bibliographical notes in \cite[Remark 5.1]{Ch} and
\cite[page 202]{MaWi}. Probably, the  most convenient formulations
in the present applications are ones given in \cite[Th. 2.1]{Ch0}
(see also \cite[Th. 5.1]{Ch}) and
 \cite[Th.8.3]{MaWi} (see also \cite{MaWi0}). It was only assumed therein that $f$ is a
$C^2$-functional on a neighborhood $U$ of the origin $\theta$ in a
Hilbert space $H$ and that $\theta$ is an isolated critical point of
$f$ such that $0$ is either an isolated point of the spectrum
$\sigma(d^2f(\theta))$ or not in $\sigma(d^2f(\theta))$. This can be
used to deal with many elliptic boundary value problems of form
$\triangle u=f(x,u)$ on bounded smooth domains in $\R^n$ with
Dirichlet boundary condition.

However,  the action functionals in many important variational
problems are at most $C^{2-0}$ on spaces where the functionals can
satisfy the (PS) condition.  Let $\Omega\subset\R^n$ be
  a bounded domain with smooth boundary
$\partial\Omega$, $x=(x_1,\cdots,x_n)\in\R^n$, and let
$\alpha=(\alpha_1,\cdots,\alpha_n)$ be a multi-index of nonnegative
integer components $\alpha_i$, and $|\alpha|=\alpha_1+\cdots
+\alpha_n$ be its length. Denote by $M(m)$ the number of such
$\alpha$ of length $|\alpha|\le m$, and by
$\xi=\{\xi_\alpha:|\alpha|\le m\}\in\R^{M(m)}$. Consider the
variational problem
\begin{equation}\label{e:1.1}
F(u)=\int_\Omega f(x, u,\cdots, D^mu)dx,
\end{equation}
where  the function $f:\overline\Omega\times\R^{M(m)}\to\R,\; (x,
\xi)\mapsto f(x,\xi)$ is measurable in $x$ for all values of $\xi$,
and twice continuously differentiable in $\xi$ for almost all $x$;
and there are continuous, positive, nondecreasing function $g_1$ and
nonincreasing function $g_2$ such that the functions
$$
\overline\Omega\times\R^{M(m)}\to\R,\; (x, \xi)\mapsto
f_{\alpha\beta}(x,\xi)=\frac{\partial^2f(x,\xi)}{\partial
x_\alpha\partial x_\beta}
$$
satisfy:
\begin{eqnarray*}
&& |f_{\alpha\beta}(x,\xi)|\le
g_1\Biggl(\sum_{|\gamma|<m-n/2}|\xi_\gamma|\Biggr)\cdot\left(1+
\sum_{m-n/2\le |\gamma|\le
m}|\xi_\gamma|^{p_\gamma}\right)^{p_{\alpha\beta}},\\
&&
\sum_{|\alpha|=|\beta|=m}f_{\alpha\beta}(x,\xi)\eta_\alpha\eta_\beta\ge
g_2\Biggl( \sum_{|\gamma|<m-n/2}|\xi_\gamma|\Biggr)\cdot\left(
\sum_{|\alpha|= m}\eta^2_\alpha\right),
\end{eqnarray*}
for any $\eta\in\R^{M_0}$ ( $M_0=M(m)-M(m-1)$), where $p_\gamma$ is
an arbitrary positive number if $|\gamma|=m-\frac{n}{2}$, and
$p_\gamma=\frac{2n}{n-2(m-|\gamma|)}$ if $m-\frac{n}{2}<|\gamma|\le
m$, and $p_{\alpha\beta}=p_{\beta\alpha}$ are defined by
\begin{eqnarray*}
&&p_{\alpha\beta}=\left\{\begin{array}{ll}
 1-\frac{1}{p_\alpha}-\frac{1}{p_\beta}\quad
 &\hbox{if}\;|\alpha|=|\beta|=m,\\
 1-\frac{1}{p_\alpha},\quad &\hbox{if}\;m-\frac{n}{2}\le |\alpha|\le
 m,\; |\beta|<m-\frac{n}{2},\\
 1 \quad &\hbox{if}\; |\alpha|, |\beta|<m-\frac{n}{2},
 \end{array}
 \right.\\
 &&
 0<p_{\alpha\beta}<1-\frac{1}{p_\alpha}-\frac{1}{p_\beta}\quad\hbox{if}\;|\alpha|,\;|\beta|\ge
 m-\frac{n}{2},\;|\alpha|+|\beta|<2m.
\end{eqnarray*}
Generally speaking, under the assumptions above, as stated on the
pages 118-119 of \cite{Skr1} (see  \cite{Skr} for detailed
arguments) the functional $F$ in (\ref{e:1.1}) is $C^1$ and
satisfies the (PS) condition on $W^{m,2}_0(\Omega)$, and the mapping
$F'$ is
 only $G$-differentiable on $W^{m,2}_0(\Omega)$; moreover,  on Banach spaces
  on $W^{m,p}_0(\Omega)$ with $p>2$, it is $C^2$, but does not satisfy the (PS) condition.
  Furthermore, Morse inequalities were also obtained in  \cite[Chapter 5]{Skr} under the
  assumptions that the functional $F$ have only nondegenerate
  critical points. A similar question appears in some optimal control problems (see
 Vakhrameev \cite{Va1}).

Another important problem comes from the study of periodic solutions
of Lagrangian systems on compact manifolds, whose variational
functional is given by
\begin{equation}\label{e:1.2}
\mathcal{ L}_{\tau}(\gamma)=\int^{\tau}_0L(t, \gamma(t),\dot
\gamma(t))dt
\end{equation}
on the Riemannian-Hilbert manifold
 $H_{\tau}=W^{1,2}(\R/\tau\Z, M)\;(\subset C(\R/\tau\Z, M))$,
where $M$ is a $n$-dimensional compact smooth manifold without
boundary, and  $L:\R\times TM\to\R$ is a $C^2$-smooth function
satisfying the following conditions (L1)-(L3):
\begin{enumerate}
\item[(L1)]
 $L(t+1, q, v)=L(t, q, v)\quad\forall (t,q,v)$.
\end{enumerate}
In any local coordinates $(q_1,\cdots, q_n)$, there exist constants
$0<c<C$, depending on the local coordinates, such that
\begin{enumerate}
\item[(L2)] $c|{\bf u}|^2\le \sum_{ij}\frac{\partial^2 L}{\partial v_i\partial v_j}(t,q,
v)u_iu_j\le C|{\bf u}|^2\quad \forall {\bf
u}=(u_1,\cdots,u_n)\in\R^n$,
\item[(L3)] $\Bigl| \frac{\partial^2L}{\partial q_i\partial
v_j}(t,q,v)\Bigr|\le C(1+ |v|)$ \;\hbox{and}\; $\Bigl|
\frac{\partial^2L}{\partial q_i\partial q_j}(t,q,v)\Bigr|\le C(1+
|v|^2)\quad\forall (t, q, v)$.
\end{enumerate}
Under these assumptions the functional  $\mathcal{ L}_\tau$ is only
$C^{2-0}$ on the Hilbert manifold $H_\tau$ (as showed \cite{AbSc1}
recently), but satisfies the (PS) condition on $H_\tau$. The usual
regularity theory shows that all critical points of $\mathcal{
L}_\tau$ on $H_\tau$ sit in the Banach manifold
$X_\tau=C^1(\R/\tau\Z, M)$. It is very unfortunate that the (PS)
condition cannot be satisfied on $X_\tau$  though $\mathcal{
L}_\tau$ is $C^2$ on it. So far one do not find a suitable space on
which the functional $\mathcal{ L}_\tau$ is not only $C^2$ but also
satisfies the (PS) condition.

 The common points of the
two functionals above are: one hand on a Hilbert manifold they have
smoothness lower than $C^2$,  but satisfy the (PS) condition; on the
other hand their critical points are contained in a densely and
continuously imbedded Banach manifold on which the functional
possesses at least $C^2$ smoothness, but does not satisfy the (PS)
condition. To my knowledge there is no a suitable splitting lemma,
which can be used to deal with the above functionals. These motivate
us to look for a new splitting theorem.

With the regularity theory and prior estimation techniques of
differential equations our theory can also be applied to some
variational problems not satisfying our theorems (such as general
Tonelli Lagrangian systems and geodesics on Finsler manifolds, see
\cite[Remarks 5.9,6.1]{Lu1}, \cite{Lu4} and the references cited
therein) by modifying the original Euler-Lagrangian functions.

\subsection{Notion and terminology}
 Since there often exists some small differences in references we state
some necessary notions and terminologies for reader's conveniences.
Let $E_1$ and $E_2$ be two real normed linear spaces.
Denote by $L(E_1, E_2)$ the space of the continuous linear operator from $E_1$ to
$E_2$, and by $L(E_1)=L(E_1,E_1)$. A map $T$ from an open subset $U$ of $E_1$ to $E_2$
 is called {\it directional differentiable} at
$x\in U$ if for every $u\in E_1$ there exists an element of $E_2$,
denoted by $DT(x, u)$, such that $\lim_{t\to 0}\|\frac{T(x+
tu)-T(x)}{t}- DT(x,u)\|=0$; $DT(x, u)$ is called the {\it directional
derivative} of $T$ at $x$ in the direction $u$. If the map $U\times
E_1\to E_2, (x,u)\mapsto DT(x,u)$ is continuous we say $T$ to be
{\it continuously directional differentiable} on $U$. (This implies
that $T$ is G\^{a}teaux differentiable at every point of $U$ in the
following sense). If there exists a $B\in L(E_1, E_2)$ such that
$DT(x_0, u)=Bu\;\forall u\in E_1$, $T$ is called {\it G\^{a}teaux
differentiable} at $x_0\in U$, and $B$ is called the {\it
G\^{a}teaux derivative} of $T$ at $x_0$, denoted by $DT(x_0)$ (or
$T'(x_0)$). By Definition 3.2.2 of \cite{Schi}, $T$ is called {\it
strictly} G (G\^{a}teaux) {\it differentiable} at $x_0\in U$ if for
any $v\in E_1$,
$$
\|T(x+ tv)-T(x)-T'(x_0)(v)\|=o(|t|)\quad\hbox{as}\;x\to
x_0\;\hbox{and}\;t\to 0;
$$
if this convergence uniformly holds for $v$ in any compact subset we
say $T$ to be {\it strictly} H (Hadamard) {\it differentiable}
\footnote{This is called {\it strictly differentiable} in \cite[page
30]{Cl}. } at $x_0\in U$; moreover $T$ is called {\it strictly}
\footnote{It is also called strongly F-differentiable in some books,
for instance. Question 7) at the end of \cite[Chap.8, \S6]{Di}.} F
(Fr\'{e}chet) {\it differentiable} at $x_0\in U$ if
$$
\|T(x)-T(y)-T'(x_0)(x-y)\|=o(\|x-y\|)\quad\hbox{as}\;x\to
x_0\;\hbox{and}\;y\to x_0
$$
(this implies that $T$  has Fr\'{e}chet derivative  $T'(x_0)$ at
$x_0$). By \cite[Prop.2.2.1]{Cl} or \cite[Prop.3.2.4(iii)]{Schi},
$T$ is  strictly H-differentiable at $x_0\in U$ if and only if $T$
is locally Lipschitz  continuous around $x_0$ and strictly
G-differentiable at $x_0\in U$. Specially, the strict
$F$-differentiability of $T$ at $x_0$ implies  that $T$ is Lipschitz
continuous in some neighborhood of $x_0$. By
\cite[Prop.3.4.2]{Schi}, the continuous F-differentiability of $T$
at $x_0$ implies that $T$ is strictly F-differentiable at $x_0$. If
$T$ is $F$-differentiable in $U$, then $dT=T'$ is continuous at
$x_0\in U$ (i.e. $T$ is continuously differentiable at $x_0$) if and
only if $T$ is strictly $F$-differentiable at $x_0$, see Questions
3a) and 7a) at the end of \cite[Chap.8, \S6]{Di}. By
Proposition~\ref{prop:B.1}  the continuously directional
differentiability of $T$ in $U$ implies the strict
$H$-differentiability of $T$ in $U$ (and thus the locally Lipschitz
continuality of $T$ in $U$).

\subsection{Method and overview}

 The main methods to the splitting lemma in past references are
 the implicit function theorem method such as \cite{GrM} and
dynamical system one as in  \cite[Th. 5.1]{Ch} and
 \cite[Th.8.3]{MaWi}. Our method is different from theirs
 completely. Recently, Duc-Hung-Khai \cite{DHK}
gave a new proof to the Morse-Palais lemma based on elementary
differential calculus. It seems that the parameterized versions of
the new Morse lemma cannot be applied to the above two typical
functionals yet. After carefully analyzing the functionals we
combine it with some techniques from \cite{JM, Skr, Va1} to
successfully design a splitting lemma which is applicable to our
above functionals.   For completeness and reader's convenience we
state the parameterized versions of Duc-Hung-Khai's Morse-Palais
lemma in \cite{DHK}  and outline its proof in Appendix~\ref{app:A}.
 Some results on functional analysis are given in Appendix~\ref{app:B}.

In Section~\ref{sec:2} we state our main results, which include  a new
 splitting lemma, Theorem~\ref{th:2.1}, and the corresponding shifting
theorem, Corollary~\ref{cor:2.6}. We also obtain critical group
characteristics for local minimum and critical points of mountain
pass type under weaker conditions in
Corollaries~\ref{cor:2.7},~\ref{cor:2.9}, respectively.
Corollary~\ref{cor:2.5} and Theorem~\ref{th:2.10} study relations
between critical groups of a  functional and its restriction on a
densely imbedded Banach space, which are very key for our  work
\cite{Lu4}. A theorem of Poincar\'e-Hopf type,
Theorem~\ref{th:2.12}, is proved in Section~\ref{sec:5}. We also
study the functor properties of our splitting lemma in
Section~\ref{sec:6}, and estimate behavior of the functional
$\mathcal{L}$ of Theorem~\ref{th:2.1} near $\theta$ in
Section~\ref{sec:7}. As concluding remarks it is shown in
Section~\ref{sec:8} that the most results in  Theorem~\ref{th:2.1}
still hold true under weaker conditions.



These result  have been used in \cite{Lu3}  to generalize some
previous results on computations of critical groups and some
critical point theorems  to weaker versions.

 This paper consists of the sections~1,2 and the appendix of
\cite{Lu2}, which is not to be published elsewhere. The fourth
section of \cite{Lu2} has been rewritten and extended into a
separate paper. The author would like to express his deep gratitude
to the anonymous referee for many valuable revision suggestions and
for pointing out many misprints.


\section{Statements of  main results}\label{sec:2}
\setcounter{equation}{0}

 Let $H$ be a Hilbert space with inner product $(\cdot,\cdot)_H$
and the induced norm $\|\cdot\|$, and let $X$ be a Banach space with
norm $\|\cdot\|_X$, such that
\begin{enumerate}
\item[\bf (S)]  $X\subset H$ is dense in $H$ and
 the inclusion $X\hookrightarrow H$ is continuous, i.e. we
may assume $\|x\|\le \|x\|_X\;\forall x\in X$.
\end{enumerate}
For an open neighborhood $V$ of the origin $\theta\in H$, $V\cap X$
is also an open neighborhood of $\theta$ in $X$, \textsf{denoted by $V^X$ for clearness without special
statements}.
 Suppose that  a functional $\mathcal{ L}:V\to\mathbb{R}$ satisfies the following
conditions:
\begin{enumerate}
\item[\bf (F1)] $\mathcal{L}$ is  continuously directional
differentiable (and thus $C^{1-0}$) on $V$.

\item[\bf (F2)] There exists a  continuously directional
differentiable (and thus $C^{1-0}$) map $A: V^X\to X$, which is
 strictly Fr\'{e}chet
differentiable  at $\theta$, such that
$$
D\mathcal{ L}(x)(u)=(A(x), u)_H\quad\forall x\in V^X\;\hbox{and}\; u\in X.
$$
(This actually implies that $\mathcal{ L}|_{V^X}\in C^1(V^X,
\R)$.)
\item[\bf (F3)] There exists a map $B$ from
$V^X$ to the space $L_s(H)$ of bounded self-adjoint linear
operators of $H$ such that
$$
(DA(x)(u), v)_H=(B(x)u, v)_H\;\forall x\in V^X\;\hbox{and}\; u,
v\in X.
$$
(This and (F1)-(F2) imply: (a) $A$ is G\^{a}tuax differentiable and
$DA(x)=B(x)|_X$ for all $x\in V^X$, (b) $B(x)(X)\subset
X\;\forall x\in V^X$, (c) $d(\mathcal{ L}|_{V^X})$ is
strictly Frech\'et differentiable at $\theta\in V^X$, and
$d^2(\mathcal{ L}|_{V^X})(\theta)(u,v)=(B(\theta)u,v)_H$ for any
$u, v\in X$.)
\item[\bf (C1)] The origin $\theta\in X$ is a critical point of $\mathcal{ L}|_{V^X}$ (and thus $\mathcal{ L}$),
$0$ is either not in the spectrum $\sigma(B(\theta))$ or is an
isolated point of $\sigma(B(\theta))$. \footnote{The claim in the
latter sentence is actually implied in the following condition (D)
by Proposition~\ref{prop:B.2}. In order to state some results
without the condition (D) we still list it.}

\item[\bf (C2)]  If $u\in H$ such that $B(\theta)(u)=v$ for some
$v\in X$, then $u\in X$.

\item[\bf (D)] The map $B:V^X\to
L_s(H)$  has a decomposition \footnote{Actually, this and (D4)
imply the claim in the second sentence in (C1) by
Proposition~\ref{prop:B.2}.}
$$
B(x)=P(x)+ Q(x)\quad\forall x\in V^X,
$$
where $P(x):H\to H$ is a positive definitive linear operator and
$Q(x):H\to H$ is a compact linear  operator with the following
properties:
\begin{enumerate}
\item[\bf (D1)]  All eigenfunctions of the operator $B(\theta)$ that correspond
to negative eigenvalues belong to $X$;

\item[\bf (D2)] For any sequence $\{x_k\}\subset
V\cap X$ with $\|x_k\|\to 0$ it holds that
$\|P(x_k)u-P(\theta)u\|\to 0$ for any $u\in H$;

\item[\bf (D3)] The  map $Q:V\cap X\to
L(H)$ is continuous at $\theta$ with respect to the topology
induced from $H$ on $V\cap X$;

\item[\bf (D4)] For any sequence $\{x_n\}\subset V\cap X$ with $\|x_n\|\to 0$ (as $n\to\infty$), there exist
 constants $C_0>0$ and $n_0\in\N$ such that
$$
(P(x_n)u, u)_H\ge C_0\|u\|^2\quad\forall u\in H,\;\forall n\ge n_0.
$$
\end{enumerate}\end{enumerate}

Sometimes we need to replace the condition ({\rm D4}) by the
following slightly stronger
\begin{enumerate}
\item[\bf (D4*)] There exist positive constants $\eta_0>0$ and  $C'_0>0$ such that
$$
(P(x)u, u)\ge C'_0\|u\|^2\quad\forall u\in H,\;\forall x\in
B_H(\theta,\eta_0)\cap X.
$$
\end{enumerate}

Here is a way looking for the map $B$. Suppose that $\mathcal{
L}|_{V^X}$ is twice G\^{a}teaux differentiable at every point
$x\in V^X$, i.e. for any $u_1, u_2\in X$ the  limit
\begin{eqnarray*}
D\mathcal{ L}|_{V^X}(x;u_1,u_2) = \lim_{t_2\to 0}\lim_{t_1\to
0}\frac{1}{t_1t_2}\triangle^2_{t_1u_1,t_2u_2}\mathcal{ L}(x)
\end{eqnarray*}
exists and is linear continuous with respect to $u_i$, $i=1,2$,
where
$$\triangle^2_{t_1u_1,t_2u_2}\mathcal{ L}(x)=\mathcal{ L}(x+
t_1u_1+ t_2u_2)- \mathcal{L}(x+ t_1u_1)-\mathcal{ L}(x+ t_2u_2)-
\mathcal{L}(x).
$$
By (F2) the map $A: V^X\to X$ is G\^{a}teaux
differentiable and
$$D\mathcal{ L}|_{V^X}(x;u_1,u_2)=(A'(x)u_2,
u_1)_H\quad\forall x\in V^X,\; u_1, u_2\in X.
$$
If $(u_1,
u_2)\mapsto D\mathcal{ L}|_{V^X}(x;u_1,u_2)$ is symmetric then
$A'(x)\in L(X)$ is self-adjoint with respect to the inner $(\cdot,
\cdot)_H$. By Question 17) at the end of \cite[Chap.11, \S5]{Di},
$A'(x)$ can be extended into an element $\hat B(x)\in L_s(H)$ with
the following properties: (a) $\|\hat
B(x)\|_{L(H)}\le\rho_X(A'(x))\le\|A'(x)\|_{L(X)}$ and $\sigma(\hat
B(x))\subset\sigma(A'(x))$, (b) if $A'(x)$ is compact in $(X,
\|\cdot\|_X)$ so is $\hat B(x)$ in $(H, \|\cdot\|)$. In the case, if
 $B$ is a map satisfying the conditions ({\rm F3}), ({\rm C1})-({\rm
C2}) and ({\rm D}), it holds that $B(x)=\hat B(x)\;\forall
x\in V^X$.

By the assumption (D) each $B(x)$ is Fredholm. In particular,
$H^0:={\rm Ker}(B(\theta))$ is finitely dimensional. Let
$H^\pm:=(H^0)^\bot$ be the range of $B(\theta)$. There exists an
orthogonal decomposition $H=H^0\oplus H^\pm=H^0\oplus H^-\oplus
H^+$, where $H^-$ and $H^+$ are subspaces invariant under
$B(\theta)$ such that $B(\theta)|_{H^+}$ is positive definite and
$B(\theta)|_{H^+}$ is negative definite. Clearly, we have also
\begin{equation}\label{e:2.1}
\left.\begin{array}{ll}
&(B(\theta)u, v)_H=0\;\forall u\in H^+\oplus H^-,\;v\in H^0,\\
&(B(\theta)u, v)_H=0\;\forall  u\in H^-\oplus H^0,\;v\in H^+,\\
&(B(\theta)u, v)_H=0\;\forall u\in H^+\oplus H^0,\;v\in H^-.
\end{array}\right\}
\end{equation}
 By the condition (C1)
there exists a small $a_0>0$ such that $[-2a_0,
2a_0]\cap\sigma(B(\theta))$ at most contains a point $0$. Hence
\begin{equation}\label{e:2.2}
\left.\begin{array}{ll}
  (B(\theta)u, u)_H\ge
2a_0\|u\|^2\quad\forall u\in H^+,\\
  (B(\theta)u, u)_H\le
-2a_0\|u\|^2\quad\forall u\in H^-.
\end{array}\right\}
\end{equation}
The conditions (C2) and (D) imply that both $H^0$ and $H^-$ are
finitely dimensional subspaces contained in $X$ by
Proposition~\ref{prop:B.2}.  Denote by $P^\ast$ the orthogonal
projections onto $H^\ast$, $\ast=+, -, 0$, and by $X^\ast=X\cap
H^\ast=P^\ast(X),\;\ast=+, -$. Then $X^+$ is dense in $H^+$, and
$(I-P^0)|_X=(P^++P^-)|_X: (X, \|\cdot\|_X)\to (X^\pm, \|\cdot\|)$ is
also continuous because all norms are equivalent on a linear space
of finite dimension, where $X^\pm:=X\cap (I-P^0)(H)=X\cap H^\pm=X^-+
P^+(X)=X^-+ H^+\cap X$.  These give the following topological direct
sum decomposition:
$$
X=H^0\oplus X^\pm=H^0\oplus X^+\oplus X^-.
$$
Let $\nu=\dim H^0$ and $\mu=\dim H^-$. We  call them the {\it
nullity} and the {\it Morse index} of critical point $\theta$ of
$\mathcal{ L}$, respectively.  In particular, the critical point
$\theta$ is said to be {\it nondegenerate} if $\nu=0$. Since the norms
$\|\cdot\|$ and $\|\cdot\|_X$ are equivalent on the finite dimension
space $H^0$ we shall not point out the norm used without occurring
of confusions. In this paper, for a normed
vector space $(E, \|\cdot\|)$ and $\delta>0$ let $B_E(\theta,
\delta)=\{x\in E\,:\,\|x\|=\|x-\theta\|<\delta\}$ and $\bar
B_E(\theta, \delta)=\{x\in E\,:\,\|x\|\le\delta\}$. Moreover, \textsf{we always use $\theta$
to denote the origins of all linear spaces without occurring of confusions}.

\begin{theorem}\label{th:2.1}
Under the above assumptions {\rm (S)}, {\rm (F1)-(F3)} and {\rm
(C1)-(C2)}, {\rm (D)}, if $\nu>0$ there exist a positive
$\epsilon\in\R$, a (unique) Lipschitz continuous map
$h:B_{H^0}(\theta,\epsilon)=B_{H}(\theta,\epsilon)\cap H^0\to X^\pm$
satisfying $h(\theta)=\theta$ and
\begin{equation}\label{e:2.3}
 (I-P^0)A(z+ h(z))=0\quad\forall z\in B_{H^0}(\theta,\epsilon),
 \end{equation}
an open neighborhood $W$ of $\theta$ in $H$ and an origin-preserving
homeomorphism
\begin{equation}\label{e:2.4}
\Phi: B_{H^0}(\theta,\epsilon)\times
\left(B_{H^+}(\theta,\epsilon) +
B_{H^-}(\theta,\epsilon)\right)\to W
\end{equation}
of form $\Phi(z, u^++ u^-)=z+ h(z)+\phi_z(u^++ u^-)$ with
$\phi_z(u^++ u^-)\in H^\pm$  such that
\begin{equation}\label{e:2.5}
\mathcal{ L}\circ\Phi(z, u^++ u^-)=\|u^+\|^2-\|u^-\|^2+ \mathcal{
L}(z+ h(z))
\end{equation}
for all $(z, u^+ + u^-)\in B_{H^0}(\theta,\epsilon)\times
\left(B_{H^+}(\theta,\epsilon) +
B_{H^-}(\theta,\epsilon)\right)$, and that
\begin{equation}\label{e:2.6}
\Phi\left(B_{H^0}(\theta,\epsilon)\times
\bigl(B_{H^+}(\theta,\epsilon)\cap X +
B_{H^-}(\theta,\epsilon)\bigr)\right)\subset X.
\end{equation}
Moreover, the homeomorphism $\Phi$ has also properties:
\begin{enumerate}
\item[\bf (a)] For
each $z\in B_{H^0}(\theta,\epsilon)$, $\Phi(z, \theta)=z+ h(z)$,
$\phi_z(u^++ u^-)\in H^-$ if and only if $u^+=\theta$;

\item[\bf (b)] The restriction of $\Phi$ to
$B_{H^0}(\theta,\epsilon)\times B_{H^-}(\theta, \epsilon)$ is a
homeomorphism from $B_{H^0}(\theta,\break \epsilon)\times B_{H^-}(\theta,
\epsilon)\subset X\times X$ onto
$\Phi(B_{H^0}(\theta,\epsilon)\times B_{H^-}(\theta,
\epsilon))\subset X$ even if the topologies on these two sets
 are chosen as the induced one
by $X$.
\end{enumerate}
The map $h$ and the function $B_{H^0}(\theta,\epsilon)\ni z\mapsto
\mathcal{ L}^\circ(z):=\mathcal{ L}(z+ h(z))$ \footnote{If $A$ is
$C^1$ then maps $h$ and $\mathcal{ L}^\circ$ have higher smoothness
too, see Remark~\ref{lem:3.2}.} also satisfy:
\begin{enumerate}
\item[\bf (i)]  The map $h$ is strictly Fr\'echet differentiable at
$\theta\in H^0$ and
$$
h'(\theta)z=-[(I-P^0)A'(\theta)|_{X^\pm}]^{-1}(I-P^0)A'(\theta)z
\quad\forall z\in H^0;
$$
\item[\bf (ii)]  $\mathcal{ L}^\circ$ is $C^{2-0}$,
 $$
d\mathcal{ L}^\circ(z_0)(z)=(A(z_0+ h(z_0)), z)_H\quad\forall z_0\in
B_{H^0}(\theta, \epsilon),\; z\in H^0,
 $$
 and $d\mathcal{ L}^\circ$ is strictly F-differentiable at $\theta\in
H^0$ and $d^2\mathcal{ L}^\circ(\theta)=0$;

\item[\bf (iii)] If $\theta$ is an isolated critical point of $\mathcal{
L}|_{V^X}$, then $\theta$ is also an isolated critical point
of $\mathcal{L}^\circ$.
\end{enumerate}
\end{theorem}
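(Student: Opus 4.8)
The plan is to first solve $(I-P^{0})A(z+y)=\theta$ for $y=h(z)\in X^{\pm}$ as a function of $z\in H^{0}$ near $\theta$. By (C1), (C2), (D) and Proposition~\ref{prop:B.2}, $H^{0}$ and $H^{-}$ are finite dimensional and contained in $X$, $X^{\pm}=X\cap H^{\pm}$ is a closed subspace of $X$, and $(I-P^{0})B(\theta)|_{X^{\pm}}=B(\theta)|_{X^{\pm}}\colon X^{\pm}\to X^{\pm}$ is a Banach space isomorphism (here $H^{\pm}$ is $B(\theta)$-invariant). For $z\in\bar B_{H^{0}}(\theta,\epsilon)$ I would consider, on $\bar B_{X^{\pm}}(\theta,r)$, the map
\[
\Psi_{z}(y)=y-[B(\theta)|_{X^{\pm}}]^{-1}(I-P^{0})A(z+y),
\]
whose fixed points are exactly the solutions of (\ref{e:2.3}). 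Using $A(\theta)=\theta$, $A'(\theta)=B(\theta)|_{X}$ (so $A'(\theta)$ annihilates $H^{0}$), the strict Fr\'{e}chet differentiability of $A$ at $\theta$ and the local Lipschitz continuity of $A$ (Proposition~\ref{prop:B.1}), one checks that for $\epsilon,r$ small $\Psi_{z}$ is a $\tfrac12$-contraction of $\bar B_{X^{\pm}}(\theta,r)$ into itself depending Lipschitz continuously on $z$; the Banach fixed point theorem then produces the unique Lipschitz map $h$ with $h(\theta)=\theta$ and (\ref{e:2.3}). The standard implicit-function argument under strict differentiability (using only the strict Fr\'{e}chet differentiability of $A$ at $\theta$) gives the strict differentiability of $h$ at $\theta$ together with the formula in (i); since $B(\theta)|_{H^{0}}=0$, this in fact shows $h'(\theta)=0$, a fact used below.

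\textbf{Step 2 (the reduced functional).}
Put $\mathcal L^{\circ}(z)=\mathcal L(z+h(z))$ on $B_{H^{0}}(\theta,\epsilon)$. For $z_{0},z\in H^{0}$ small, set $\delta h=h(z_{0}+z)-h(z_{0})\in H^{\pm}$ and $x_{s}=z_{0}+h(z_{0})+s(z+\delta h)$. Since $\mathcal L|_{V^{X}}\in C^{1}$, the mean value theorem and (F2) give
\[
\mathcal L^{\circ}(z_{0}+z)-\mathcal L^{\circ}(z_{0})=\int_{0}^{1}\bigl(A(x_{s}),\,z+\delta h\bigr)_{H}\,ds .
\]
By (\ref{e:2.3}) one has $A(z_{0}+h(z_{0}))\in H^{0}$, so $(A(x_{0}),\delta h)_{H}=0$; combined with $\|\delta h\|_{X}\le L\|z\|$ and the local Lipschitz continuity of $A$, the remainder terms are $O(\|z\|^{2})$, whence $\mathcal L^{\circ}(z_{0}+z)-\mathcal L^{\circ}(z_{0})=(A(z_{0}+h(z_{0})),z)_{H}+O(\|z\|^{2})$. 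This gives the differentiability formula of (ii); the local Lipschitz continuity of $z_{0}\mapsto A(z_{0}+h(z_{0}))$ makes $\mathcal L^{\circ}$ of class $C^{2-0}$, and the strict Fr\'{e}chet differentiability of $d\mathcal L^{\circ}$ at $\theta$ with $d^{2}\mathcal L^{\circ}(\theta)=0$ follows from the chain rule for strict differentiability, the strict Fr\'{e}chet differentiability of $A$ at $\theta$ and $h'(\theta)=0$. For (iii): if $z$ is a critical point of $\mathcal L^{\circ}$ then $P^{0}A(z+h(z))=\theta$, which with (\ref{e:2.3}) forces $A(z+h(z))=\theta$, so $z+h(z)$ is a critical point of $\mathcal L|_{V^{X}}$; since $P^{0}(z+h(z))=z$ the map $z\mapsto z+h(z)$ is injective, hence if $\theta$ is isolated for $\mathcal L|_{V^{X}}$ then necessarily $z=\theta$.

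\textbf{Step 3 (the parameterized Morse--Palais lemma on $H^{\pm}$; the main obstacle).}
For each $z\in B_{H^{0}}(\theta,\epsilon)$ consider $g_{z}(y)=\mathcal L(z+h(z)+y)-\mathcal L^{\circ}(z)$ for $y$ near $\theta$ in $H^{\pm}$. Then $g_{z}$ is $C^{1-0}$, $g_{z}(\theta)=0$, and by (F2) its gradient at $y$ is $\mathcal A_{z}(y):=(I-P^{0})A(z+h(z)+y)$, a continuously directional differentiable map on a neighbourhood of $\theta$ in $X^{\pm}$ with $\mathcal A_{z}(\theta)=\theta$ (by (\ref{e:2.3})) and, for $z=\theta$, strictly Fr\'{e}chet differentiable at $\theta$ with derivative $B(\theta)|_{X^{\pm}}$. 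Writing $B=P+Q$ as in (D) and restricting attention to the $H^{\pm}$-directions, (C1) and (\ref{e:2.2}) give the positive definiteness of $B(\theta)|_{H^{+}}$ and the negative definiteness of $B(\theta)|_{H^{-}}$, while (D1)--(D4) (after shrinking $\epsilon$) keep the coercive part $P$ uniformly coercive and the compact part $Q$ uniformly small along $z+h(z)+y$. This is precisely the input required by the parameterized version of the Duc--Hung--Khai \cite{DHK} Morse--Palais lemma stated in Appendix~\ref{app:A}, applied with parameter $z$. It produces, after a further shrinking of $\epsilon$, a family of origin-preserving homeomorphisms
\[
\phi_{z}\colon B_{H^{+}}(\theta,\epsilon)+B_{H^{-}}(\theta,\epsilon)\longrightarrow U_{z},
\]
with $U_{z}$ an open neighbourhood of $\theta$ in $H^{\pm}$, jointly continuous in $(z,u^{+}+u^{-})$ together with their inverses, satisfying $g_{z}(\phi_{z}(u^{+}+u^{-}))=\|u^{+}\|^{2}-\|u^{-}\|^{2}$, with $\phi_{z}(u^{+}+u^{-})\in H^{-}$ if and only if $u^{+}=\theta$, and---because $\mathcal A_{z}$ maps $X^{\pm}$ into $X^{\pm}$---sending $(B_{H^{+}}(\theta,\epsilon)\cap X)+B_{H^{-}}(\theta,\epsilon)$ into $X^{\pm}$ and restricting there to a homeomorphism in the $X$-topology. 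I expect the genuine work to lie exactly here: to verify, \emph{uniformly in the parameter $z$}, all the hypotheses of the parameterized Morse--Palais lemma for the family $g_{z}$, which on $H$ is only $C^{1-0}$, and to extract the $X$-compatibility of $\phi_{z}$. This is the point at which the missing $C^{2}$-smoothness on $H$ must be compensated by the $C^{1}$-regularity of the gradient $A$ on $X$, the spectral gap (C1), and the coercive-plus-compact splitting (D1)--(D4).

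\textbf{Step 4 (assembling $\Phi$).}
Finally, set $\Phi(z,u^{+}+u^{-})=z+h(z)+\phi_{z}(u^{+}+u^{-})$ and $W=\Phi\bigl(B_{H^{0}}(\theta,\epsilon)\times(B_{H^{+}}(\theta,\epsilon)+B_{H^{-}}(\theta,\epsilon))\bigr)$. Applying $P^{0}$ shows that the $H^{0}$-component of $\Phi$ is $z$, so $\Phi$ is injective; combined with the joint continuity of $z\mapsto\phi_{z}$ and $z\mapsto\phi_{z}^{-1}$ and the continuity of $z\mapsto z+h(z)$, this makes $\Phi$ an origin-preserving homeomorphism onto the open neighbourhood $W$, of the form (\ref{e:2.4}) with $\phi_{z}(u^{+}+u^{-})\in H^{\pm}$. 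Then (\ref{e:2.5}) is immediate, since $\mathcal L(\Phi(z,u^{+}+u^{-}))=g_{z}(\phi_{z}(u^{+}+u^{-}))+\mathcal L^{\circ}(z)=\|u^{+}\|^{2}-\|u^{-}\|^{2}+\mathcal L(z+h(z))$; (\ref{e:2.6}) follows from $z\in H^{0}\subset X$, $h(z)\in X^{\pm}$ and $\phi_{z}\bigl((B_{H^{+}}(\theta,\epsilon)\cap X)+B_{H^{-}}(\theta,\epsilon)\bigr)\subset X^{\pm}$; and (a), (b) are inherited from the corresponding properties of $\phi_{z}$ established in Step 3 (noting that $H^{0},H^{-}\subset X$, so the sets appearing in (b) already lie in $X\times X$).
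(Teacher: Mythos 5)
Your outline follows the paper's own strategy exactly: Step~1 corresponds to the contraction argument in Step~1 of the proof of Lemma~\ref{lem:3.1}, Step~2 to Step~2 of that proof (together with Remark~\ref{lem:3.2}), Step~3 to the application of Theorem~\ref{th:A.1} to the function $F$ of (\ref{e:3.12}), and Step~4 to Lemma~\ref{lem:3.6}. Steps~1, 2 and 4 are essentially sound, though Step~2 is thinner than the paper's careful estimate (\ref{e:3.10}), which is what actually yields the strict Fr\'echet differentiability of $d\mathcal{L}^\circ$ at $\theta$ and $d^2\mathcal{L}^\circ(\theta)=0$.

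The genuine gap, which you flag honestly, is in Step~3, but your heuristic for closing it is also off the mark. You write that (D1)--(D4) ``keep the coercive part $P$ uniformly coercive and the compact part $Q$ uniformly small along $z+h(z)+y$.'' The condition (D) gives no smallness of $Q$ at all; $Q(x)$ is merely compact and continuous at $\theta$ in the $H$-topology by (D3). The decisive estimate --- the uniform positivity $(B(x)u,u)_H\ge a_1\|u\|^2$ for all $u\in H^+$ and $x$ near $\theta$ in $U\cap X$, i.e.\ Lemma~\ref{lem:3.4}(i) --- is not a perturbation estimate. It is proved by contradiction: take $x_n\to\theta$ in $U\cap X$, $u_n\in H^+$ with $\|u_n\|=1$, pass to a weak limit $u_0$, use (D4) to show $u_0\ne\theta$, then use (D2)--(D3), the compactness of $Q(\theta)$, and the weak convergence to derive $(B(\theta)u_0,u_0)_H\le 0$, contradicting (\ref{e:2.2}). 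Without this estimate (and the companions Lemma~\ref{lem:3.3} and Lemma~\ref{lem:3.4}(ii)--(iii)) one cannot verify conditions (ii)--(iv) of Theorem~\ref{th:A.1}, which the paper does in Lemma~\ref{lem:3.5} via mean-value arguments using $DA=B|_X$ together with a density passage from $X^+$ to $H^+$. This verification is the real content of the theorem, and a sketch that treats it as a routine perturbation of $B(\theta)$ would fail.

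A second, smaller omission: in Step~3 you justify the $X$-topology compatibility of $\phi_z$ on the $H^-$-part (needed for property (b)) by the observation that ``$\mathcal{A}_z$ maps $X^\pm$ into $X^\pm$.'' That is not what the paper uses. The inclusion $\phi_z\bigl((B_{H^+}(\theta,\epsilon)\cap X)+B_{H^-}(\theta,\epsilon)\bigr)\subset X$ comes from the explicit form of $\psi_1,\psi_2,\varphi_z$ in the proof of Theorem~\ref{th:A.1} (they are scalar multiples of elements already in $X$), and the homeomorphism in the $X$-topology on the $H^-$-part comes from the ``Moreover'' clause of Theorem~\ref{th:A.1} (continuity with respect to any norm on the finite-dimensional $H^-$); this is exactly the equivalence (\ref{e:3.28}) that the proof of Lemma~\ref{lem:3.6} invokes.
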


If the strictly Fr\'{e}chet
differentiability   at $\theta$ of the map $A: V^X\to X$ in (F2) is replaced by weaker conditions
we shall show in Section~\ref{sec:8} that the most results in Theorem~\ref{th:2.1} still hold true.

 Under the conditions (L1)-(L3) it was proved in \cite{Lu1} that
 the functional $\mathcal{L}_\tau$ in (\ref{e:1.2}) satisfies the
 assumptions of Theorem~\ref{th:2.1} near a critical point of it.
In fact,  a special version of Theorem~\ref{th:2.1} was used there.
As stated in \cite[\S 5.2]{Skr} the arguments of \cite[Chap.3]{Skr}
showed that the functional $F$ in (\ref{e:1.1}) satisfies the
 assumptions of Theorem~\ref{th:2.1} near a critical point of it
 too. Our frame conditions in Theorem~\ref{th:2.1} seem strange and
 complex. But they come from abstract and analysis for the studies
in \cite{Skr}. Of course, the theory of this paper can be used to
improve one of \cite{Skr}. This work is in progress.

\begin{remark}\label{rm:2.2}
{\rm (i) Note that our proof only use the Banach fixed point theorem
or the implicit function theorem in the case $H^0\ne\{0\}$. If
$H^0=\{0\}$, we do not require the completeness of $(X,
\|\cdot\|_X)$, that is, the condition ({\rm S}) can be replaced by
the following
\begin{enumerate}
\item[\bf (S')] $(X, \|\cdot\|_X)$ is a normed vector space, $X\subset H$ is dense in $H$ and
 the inclusion $X\hookrightarrow H$ is continuous, i.e. we
may assume $\|x\|\le \|x\|_X\;\forall x\in X$;
\end{enumerate}
And the conclusions of Theorem~\ref{th:2.1} become: There exist a
positive $\epsilon\in\R$, an open neighborhood $W$ of $\theta$ in
$H$ and an origin-preserving homeomorphism, $\phi: B_{H^+}(\theta,\epsilon) +
B_{H^-}(\theta,\epsilon)\to W$,
 such that
\begin{equation}\label{e:2.7}
\mathcal{L}\circ\phi(u^++ u^-)=\|u^+\|^2-\|u^-\|^2
\end{equation}
for all $(u^+, u^-)\in B_{H^+}(\theta,\epsilon)\times
B_{H^-}(\theta,\epsilon)$, and that
$$\phi\left((B_{H^+}(\theta,\epsilon)\cap X) +
B_{H^-}(\theta,\epsilon)\right)\subset X.
$$
Moreover, $\phi(u^++
u^-)\in H^-$ if and only if $u^+=\theta$, and  the restriction of
$\phi$ to $B_{H^-}(\theta, \epsilon)$ is a homeomorphism from
$B_{H^-}(\theta, \epsilon)\subset X$ onto $\phi(B_{H^-}(\theta,
\epsilon))\subset X$ even if the topologies on $B_{H^-}(\theta,
\epsilon)\subset X$ and $\phi(B_{H^-}(\theta, \epsilon))\subset X$
are chosen as the induced ones by $X$.\\
(ii) Suppose that $\mathcal{ L}$ is only defined on $V\cap X$ and
that the condition ({\rm F1}) can be replaced by the following
\begin{enumerate}
\item[\bf (F1')] $\mathcal{ L}$ is  continuously directional differentiable (and so $C^{1-0}$) on
$V\cap X$ with respect to the topology of $H$.
\end{enumerate}
Then the origin-preserving homeomorphism in (\ref{e:2.4}) should be
changed into
\begin{equation}\label{e:2.8}
\Phi: B_{H^0}(\theta,\epsilon)\times
\left(B_{H^+}(\theta,\epsilon)\cap X +
B_{H^-}(\theta,\epsilon)\right)\to W\cap X
\end{equation}
(with respect to the topology of $H$), which satisfies (\ref{e:2.5})
for all $(z, u^+, u^-)\in B_{H^0}(\theta,\epsilon)\times
\left(B_{H^+}(\theta,\epsilon)\cap X +
B_{H^-}(\theta,\epsilon)\right)$. }
\end{remark}

\begin{remark}\label{rm:2.3}{\rm
Since Lemmas~\ref{lem:3.3},~\ref{lem:3.4} are only used in the proof
of Lemma~\ref{lem:3.5}. Carefully checking the proof of the latter
one easily see that the condition ({\rm D}) can be replaced by the
following
\begin{enumerate}
\item[\bf (D')] There exist a  small neighborhood $U\subset V$ of $\theta$ in
$H$, a positive number $c_0$ and a function $\omega: U\cap X\to [0,
\infty)$ with property $\omega(x)\to 0$ as $x\in U\cap X$ and
$\|x\|\to 0$, to satisfy
\begin{enumerate}
\item[(${\rm D}'_1$)] The kernel $H^0$ and negative definite
subspace $H^-$ of $B(\theta)$ are finitely dimensional subspaces
contained in $X$; \footnote{It seems to be sufficient for us to
assume only that $H^0\subset X$ and is closed in $X$. }
\item[(${\rm D}'_2$)] $(B(x)v, v)_H\ge c_0\|v\|^2\;\forall v\in H^+$;
\item[(${\rm D}'_3$)] $|(B(x)u,v)_H-(B(\theta)u,v)_H|\le\omega(x)\|u\|\cdot\|v\|\quad\forall u\in H,  v\in
H^-\oplus H^0$;
\item[(${\rm D}'_4$)] $(B(x)u,u)_H\le-c_0\|u\|^2\;\forall u\in H^-$.
\end{enumerate}
\end{enumerate}}
\end{remark}

\begin{remark}\label{rm:2.4}{\rm
When $(X, \|\cdot\|_X)=(H, \|\cdot\|)$  the conditions (F1)-(F3) are
reduced to:
\begin{enumerate}
\item[\bf (F)] $\mathcal{ L}$ is $C^1$, $\nabla\mathcal{ L}$ is  continuously directional
differentiable (and so G\^{a}teaux differentiable) in $V$ and
strictly Fr\'echet differentiable at $\theta\in H$, and
$B(x):=D(\nabla\mathcal{ L})(x)\in L_s(H)$ for any $x\in V$.
\end{enumerate}
Clearly, this holds if $\mathcal{ L}\in C^2(V,\R)$. In fact, the
condition (C1) for $B(\theta)=d^2\mathcal{ L}(\theta)$ also imply
the condition (D) in the case $\dim H^0\oplus H^-<\infty$. In order
to see this we can write $B(x)=P(x)+ Q(x)$, where
$P(x)=P^+B(x)-P^-B(x)+ P^0$ and $Q(x)=2P^-B(x)+ P^0+ P^0B(x)$. The
latter is finite rank and therefore compact. The continuity of the
map $B:V\to L_s(H)$ implies that  both maps $P$ and $Q$ are
continuous, and that there exists a $\delta>0$ such that
$$
\|B(x)-B(\theta)\|_{L(H)}<\min\{a_0, 1\}/4\quad\forall x\in
B_H(\theta,\delta).
$$
Note that $(P(\theta)u,u)_H\ge \min\{a_0,
1\}\|u\|^2\;\forall u\in H$ and that
$$
|(P(x)u,u)_H-(P(\theta)u,u)_H|\le
2\|B(x)-B(\theta)\|_{L(H)}\cdot\|u\|^2\quad\forall u\in H.
$$
We get
$$
(P(x)u,u)_H\ge \frac{\min\{a_0, 1\}}{2}\|u\|^2\;\forall u\in H.
$$
These show that the condition (D) is satisfied. Hence
Theorem~\ref{th:2.1} is a generalization of \cite[Th.3]{Ho} and
\cite[Th.8.3]{MaWi}, \cite[Th.2.2]{LiLiLiuP},  and \cite[Th.5.1.
p.44]{Ch} in the case $\dim H^0\oplus H^-<\infty$ (a condition
naturally satisfied in applications). Since the strictly Fr\'echet
 differentiability of $\nabla\mathcal{ L}$  at
$\theta\in H$ implies that $\nabla\mathcal{ L}$ is $C^{1-0}$ near
$\theta$,  we cannot guarantee that Theorem~\ref{th:2.1} include
\cite[Cor.3]{IoSch}. ({\it Note}: By \cite[Th.4.5]{Con} the
assumptions in \cite[Th.1.2]{MoSo} is actually the same as that of
\cite[Cor.3]{IoSch}, but the author cannot verify the equalities
$h_2\circ h_3=id=h_3\circ h_2$ below (2.19) of \cite{MoSo}.) }
\end{remark}

 For an open
neighborhood $W$ of $\theta$ in $H$, we write $W^X=W\cap X$ as an  open neighborhood
of $\theta$ in $X$. Note that $(\mathcal{ L}|_{V^X})_0\cap(W\cap
X)=(\mathcal{ L}|_{V^X})_0\cap W=\mathcal{L}_0\cap W^X$.

\begin{corollary}\label{cor:2.5}
For any Abel group ${\bf K}$ and an open neighborhood $W$ of $\theta$ in
$H$, the inclusion
$$
I^{xw}: (\mathcal{ L}_0\cap W^X, \mathcal{ L}_0\cap W^X\setminus\{\theta\})\hookrightarrow (\mathcal{ L}_0\cap W,
\mathcal{ L}_0\cap W\setminus\{\theta\})
$$
induce surjective homomorphisms
$$
H_\ast(\mathcal{ L}_0\cap W^X, \mathcal{ L}_0\cap W^X
\setminus\{\theta\};{\bf K})\rightarrow H_\ast(\mathcal{ L}_0\cap
W, \mathcal{ L}_0\cap W\setminus\{\theta\};{\bf K}).
$$
\end{corollary}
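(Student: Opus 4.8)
The plan is to pull the pair $(\mathcal{L}_0\cap W,\,\mathcal{L}_0\cap W\setminus\{\theta\})$ back through the splitting homeomorphism $\Phi$ of Theorem~\ref{th:2.1} and to check that, in the model, all relative homology is already carried by the finite-dimensional slice $H^0\oplus H^-\subset X$. First I would record an excision fact, valid for any nested open neighbourhoods $W'\subset W$ of $\theta$ in $H$ (and likewise in $X$): the inclusion $(\mathcal{L}_0\cap W',\,\mathcal{L}_0\cap W'\setminus\{\theta\})\hookrightarrow(\mathcal{L}_0\cap W,\,\mathcal{L}_0\cap W\setminus\{\theta\})$ induces isomorphisms on $H_\ast(\cdot;{\bf K})$. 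Indeed, set $A=\mathcal{L}_0\cap W$ and $B=A\setminus\{\theta\}$ (open in $A$, since $\{\theta\}$ is closed) and excise $Z=(W\setminus W')\cap\mathcal{L}_0$; because $\theta\in W'$ one has $\overline Z^{\,A}\subset A\setminus\{\theta\}=\mathrm{int}_A B$, so excision applies with $A\setminus Z=\mathcal{L}_0\cap W'$. Consequently, for the given $W$, after applying Theorem~\ref{th:2.1} and shrinking $\epsilon$ --- which leaves (\ref{e:2.5})--(\ref{e:2.6}) and properties (a)--(b) intact --- we may assume the range $W_0$ of $\Phi$ satisfies $W_0\subset W$, and it suffices to prove the corollary for $W=W_0$. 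For a general $W$ one then notes that the composites $\mathcal{L}_0\cap W_0^X\to\mathcal{L}_0\cap W^X\to\mathcal{L}_0\cap W$ and $\mathcal{L}_0\cap W_0^X\to\mathcal{L}_0\cap W_0\to\mathcal{L}_0\cap W$ agree, the excision fact makes $H_\ast(\mathcal{L}_0\cap W_0)\to H_\ast(\mathcal{L}_0\cap W)$ an isomorphism, hence the common composite out of $H_\ast(\mathcal{L}_0\cap W_0^X)$ is onto once the $W_0$ case is known, and, factoring it through $H_\ast(\mathcal{L}_0\cap W^X)$, so is $I^{xw}_\ast$.

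Now take $W=W_0$. Put $c:=\mathcal{L}(\theta)=\mathcal{L}^\circ(\theta)$ and $D:=B_{H^0}(\theta,\epsilon)\times\bigl(B_{H^+}(\theta,\epsilon)+B_{H^-}(\theta,\epsilon)\bigr)$. By (\ref{e:2.5}) the homeomorphism $\Phi$ carries
\[
S:=\{(z,u^++u^-)\in D:\ \|u^+\|^2-\|u^-\|^2+\mathcal{L}^\circ(z)\le c\}
\]
onto $\mathcal{L}_0\cap W_0$, taking the origin to $\theta$. Let $T:=S\cap\{u^+=\theta\}$, which lies in the finite-dimensional subspace $H^0\oplus H^-\subset X$; by property (b) of Theorem~\ref{th:2.1} (or by (\ref{e:2.6})) $\Phi$ maps $T$ homeomorphically onto $\Phi(T)\subset\mathcal{L}_0\cap W_0^X$, with $\theta\in\Phi(T)$. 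I would then verify that $r_t(z,u^++u^-):=(z,(1-t)u^++u^-)$, $t\in[0,1]$, is a strong deformation retraction of $S$ onto $T$ preserving $S\setminus\{\theta\}$: it maps $S$ into $S$ and has $r_1(S)=T$ because $\|(1-t)u^+\|\le\|u^+\|$, it fixes $T$ pointwise, and if $r_t(z,u^++u^-)=\theta$ then $z=\theta$, $u^-=\theta$, $(1-t)u^+=\theta$, which forces $u^+=\theta$ for $t<1$, while for $t=1$ the constraint defining $S$ reads $\|u^+\|^2+c\le c$, again giving $u^+=\theta$. Hence $(T,T\setminus\{\theta\})\hookrightarrow(S,S\setminus\{\theta\})$ is a homotopy equivalence of pairs.

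Transporting through $\Phi$, the composite
\[
H_\ast(\Phi(T),\Phi(T)\setminus\{\theta\};{\bf K})\longrightarrow H_\ast(\mathcal{L}_0\cap W_0^X,\ \mathcal{L}_0\cap W_0^X\setminus\{\theta\};{\bf K})\longrightarrow H_\ast(\mathcal{L}_0\cap W_0,\ \mathcal{L}_0\cap W_0\setminus\{\theta\};{\bf K})
\]
is identified with the isomorphism induced by $(T,T\setminus\{\theta\})\hookrightarrow(S,S\setminus\{\theta\})$; in particular its second arrow is surjective, which is exactly the $W=W_0$ case. The degenerate case $\nu=0$ is handled identically using the homeomorphism $\phi$ and (\ref{e:2.7}) of Remark~\ref{rm:2.2}(i), with $T=B_{H^-}(\theta,\epsilon)$. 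I expect the only genuinely delicate point to be the base-point bookkeeping in the deformation retraction --- that $r_t$ sends nothing but $\theta$ to $\theta$ --- which is precisely where the positive definiteness of $\mathcal{L}\circ\Phi$ in the $H^+$-directions together with $\mathcal{L}^\circ(\theta)=c$ is used; everything else is formal once $T$ is known to sit in the finite-dimensional subspace where the $X$- and $H$-topologies agree.
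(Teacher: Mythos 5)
Your proof is correct and takes essentially the same route as the paper's: reduce by excision to the model neighbourhood $W=\mathrm{Im}\,\Phi$ of Theorem~\ref{th:2.1}, deformation-retract the sublevel set onto the finite-dimensional slice corresponding to $H^0\oplus H^-$, and use Theorem~\ref{th:2.1}(b) together with (\ref{e:2.6}) to see that this slice carries the same topology (hence the same relative homology) in both $H$ and $X$. The differences are presentational --- you phrase the surjectivity as a commutative-diagram argument while the paper traces a representative cycle through $W_{0-}$, and you spell out the excision reduction and the base-point bookkeeping of the retraction more explicitly than the paper does.
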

Hereafter $H_q(A,B;{\bf K})$ denotes the $q$th relative singular homology group of
a pair $(A,B)$ of topological spaces with coefficients in ${\bf K}$.

One of important applications of the splitting lemma is to compute
critical groups of critical points. Recall that for $q\in\N\cup\{0\}$ {\it the $q$th critical group} (with coefficients in ${\bf K}$)
of a real continuous functional $f$ on a metric space $\mathcal{M}$ at a point $x\in\mathcal{M}$
is defined by
$$
C_q(f,x;{\bf K})=H_q(f_c\cap U, f_c\cap U\setminus\{x\};{\bf K}),
$$
where $c=f(x)$ and $U$ is a neighborhood of $x$ in $\mathcal{M}$.
The definition of the critical groups are independent of the special choice of $U$ because of
the excision property of the singular homology. If $\mathcal{M}$ is a Banach space and
$f$ is $C^1$ then the $q$th critical group of an isolated critical point $x$ may equivalently be defined as
$$
C_q(f,x;{\bf K})=H_q((\mathring{f}_c\cup\{x\})\cap U,
\mathring{f}_c\cap U;{\bf K}),
$$
where $c=f(x)$, $\mathring{f}_c=\{f<c\}$ and  $U$ is as above. (See
\cite[Prop.3.7]{Cor}).

If the critical point $\theta$ of $\mathcal{ L}$ is isolated, then
it is also an isolated critical point of $\mathcal{ L}|_{V^X}$. By
Theorem~\ref{th:2.1} $\theta\in H^0$ is an isolated critical point
of $\mathcal{ L}^\circ$. Since $\mathcal{ L}^\circ$ is also
$C^{2-0}$ and $\dim H^0<\infty$ we can construct a $C^{2-0}$
function on $H^0$ that satisfies the (PS) condition and is equal to
$\mathcal{ L}^\circ$ near $\theta$.  With the same proof method as
in \cite[Th.8.4]{MaWi} or \cite[Th.5.1.17]{Ch1} we can use
Theorem~\ref{th:2.1} to derive:

\begin{corollary}[Shifting]\label{cor:2.6}
Under the assumptions of Theorem~\ref{th:2.1},  if $\theta$ is an
isolated critical point  of $\mathcal{ L}$, for any Abel group ${\bf
K}$ it holds that
$$
C_q(\mathcal{ L}, \theta;{\bf K})\cong C_{q-\mu}(\mathcal{
L}^{\circ}, \theta; {\bf K})\quad\forall q=0, 1,\cdots,
$$
where  $\mathcal{ L}^{\circ}(z)=\mathcal{ L}(h(z)+z)$. {\rm
(}Consequently, $C_q(\mathcal{ L}, \theta;{\bf K})=0$ for $q\notin
[\mu, \mu+\nu]$, and $C_q(\mathcal{ L}, \theta;{\bf K})$ is
isomorphic to a finite direct sum $r_1{\bf K}\oplus\cdots\oplus
r_s{\bf K}$ for each $q\in [\mu, \mu+\nu]$, where each
$r_j\in\{0,1\}$, see Proposition~\ref{prop:4.5}.{\rm )}
\end{corollary}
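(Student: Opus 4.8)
The plan is to transport $\mathcal{L}$ to the normal form supplied by Theorem~\ref{th:2.1} and then to compute the critical group of that normal form by stripping off the two nondegenerate blocks $H^+$ and $H^-$ one at a time, following the scheme of \cite[Th.8.4]{MaWi} (equivalently \cite[Th.5.1.17]{Ch1}). Since $\theta$ is an isolated critical point of $\mathcal{L}$ it is one of $\mathcal{L}|_{V^X}$, hence of $\mathcal{L}^\circ$ by Theorem~\ref{th:2.1}(iii). Put $c=\mathcal{L}(\theta)=\mathcal{L}^\circ(\theta)$ and $D:=B_{H^0}(\theta,\epsilon)\times\bigl(B_{H^+}(\theta,\epsilon)+B_{H^-}(\theta,\epsilon)\bigr)$, a neighbourhood of $\theta$ in $H=H^0\oplus H^+\oplus H^-$. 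By (\ref{e:2.5}) the homeomorphism $\Phi:D\to W$ of Theorem~\ref{th:2.1} satisfies $\mathcal{L}\circ\Phi=N$ on $D$, where $N(z,u^++u^-):=\|u^+\|^2-\|u^-\|^2+\mathcal{L}^\circ(z)$; being origin-preserving, $\Phi$ maps the pair $\bigl(N_c\cap D,(N_c\cap D)\setminus\{\theta\}\bigr)$ homeomorphically onto $\bigl(\mathcal{L}_c\cap W,(\mathcal{L}_c\cap W)\setminus\{\theta\}\bigr)$; hence, since critical groups do not depend on the neighbourhood used to compute them, $C_q(\mathcal{L},\theta;{\bf K})\cong C_q(N,\theta;{\bf K})$ for every $q$.

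Next I globalise so as to fit the framework of the cited references. Because $\dim H^0<\infty$ and $\mathcal{L}^\circ\in C^{2-0}$ has $\theta$ as an isolated critical point, glueing $\mathcal{L}^\circ$ on a small ball with a coercive quadratic form outside a slightly larger ball produces a $C^{2-0}$ function $g:H^0\to\R$ that equals $\mathcal{L}^\circ$ near $\theta$, has $\theta$ as its only critical point, and satisfies the (PS) condition; hence $C_q(g,\theta;{\bf K})\cong C_q(\mathcal{L}^\circ,\theta;{\bf K})$. Set $M(z,u^++u^-):=\|u^+\|^2-\|u^-\|^2+g(z)$ on $H$. Then $M$ is $C^{2-0}$ with $dM$ vanishing only at $\theta$; along any sequence on which $M$ stays bounded and $dM\to\theta$ one has $u^\pm\to\theta$, and then (PS) for $g$ yields a convergent subsequence, so $M$ satisfies (PS) on $H$. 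Since $M=N$ near $\theta$, $C_q(M,\theta;{\bf K})\cong C_q(N,\theta;{\bf K})$.

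Finally I strip off the two blocks. Write $M(z,u^++u^-)=M_1(z,u^-)+\|u^+\|^2$ with $M_1(z,u^-):=g(z)-\|u^-\|^2$ on $H^0\oplus H^-$, which is again $C^{2-0}$, satisfies (PS), and has $\theta$ as its only critical point. Inside a small ball about $\theta$ the fibrewise contraction $(z,u^+,u^-,t)\mapsto(z,tu^+,u^-)$ strongly deformation retracts the pair $(M_c,M_c\setminus\{\theta\})$ onto $\bigl((M_1)_c,(M_1)_c\setminus\{\theta\}\bigr)$ — it fixes $\theta$, never increases $\|\cdot\|$, and never leaves $M_c$ — so $C_q(M,\theta;{\bf K})\cong C_q(M_1,\theta;{\bf K})$; this disposes of the positive block $H^+$, possibly infinite dimensional. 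For the negative block, $\dim H^-=\mu<\infty$ and $C_r\bigl(-\|\cdot\|^2_{H^-},\theta;{\bf K}\bigr)={\bf K}$ for $r=\mu$ and $0$ otherwise, so the product/shift argument of \cite[Th.8.4]{MaWi} applied to the splitting $M_1(z,u^-)=g(z)-\|u^-\|^2$ gives $C_q(M_1,\theta;{\bf K})\cong C_{q-\mu}(g,\theta;{\bf K})$. Chaining all the isomorphisms and using $g=\mathcal{L}^\circ$ near $\theta$ yields $C_q(\mathcal{L},\theta;{\bf K})\cong C_{q-\mu}(\mathcal{L}^\circ,\theta;{\bf K})$; the parenthetical consequences then follow from $\dim H^0=\nu$ and Proposition~\ref{prop:4.5}.

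The genuinely delicate point — the only place where more than bookkeeping is required — is the last reduction: establishing the product/shift formulas for critical groups under addition of $+\|\cdot\|^2$ in the $H^+$-variable and of $-\|\cdot\|^2$ in the $H^-$-variable while keeping careful track of the relative term ``$\setminus\{\theta\}$'', the $H^+$-step needing a deformation argument that remains valid when $\dim H^+=\infty$. These are exactly the computations carried out in \cite[Th.8.4]{MaWi} / \cite[Th.5.1.17]{Ch1}; what one must check is only that their hypotheses ($C^{2-0}$ regularity, the (PS) condition, isolatedness of the critical point) are met by $M$, $M_1$ and $g$, which is precisely what the globalisation step arranges.
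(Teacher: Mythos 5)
Your proof is correct and takes essentially the same route the paper indicates: transport $\mathcal{L}$ to the normal form of Theorem~\ref{th:2.1}, globalize $\mathcal{L}^\circ$ on the finite-dimensional $H^0$ to a $C^{2-0}$ (PS) function, and then carry out the shift computation from \cite[Th.8.4]{MaWi} / \cite[Th.5.1.17]{Ch1}. You have simply expanded the intermediate steps (in particular the two block-stripping deformations) that the paper invokes by reference.
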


Corresponding with Proposition 3.2 of \cite{BaChWa}, but no
requirement for the (PS) condition, we have

\begin{corollary}\label{cor:2.7}
Under the assumptions of Theorem~\ref{th:2.1}, if $\theta$ is an
isolated critical point  of $\mathcal{ L}$, the following are
equivalent.
\begin{enumerate}
\item[{\bf (i)}] $\theta$ is a local minimum;
\item[{\bf (ii)}] $C_q(\mathcal{ L}, \theta;{\bf K})\cong \delta_{q0}{\bf K}\quad\forall
q\in\Z$;

\item[{\bf (iii)}] $C_0(\mathcal{ L}, \theta;{\bf K})\ne 0$.
\end{enumerate}
\end{corollary}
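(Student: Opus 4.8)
The plan is to deduce all three equivalences from the Shifting Corollary~\ref{cor:2.6}, which reduces everything to a single finite-dimensional fact about the function $\mathcal{L}^\circ$. Write $c=\mathcal{L}(\theta)=\mathcal{L}^\circ(\theta)$. After possibly shrinking $\epsilon$, Theorem~\ref{th:2.1} gives an origin-preserving homeomorphism
$$
\Phi\colon B_{H^0}(\theta,\epsilon)\times\bigl(B_{H^+}(\theta,\epsilon)+B_{H^-}(\theta,\epsilon)\bigr)\to W
$$
onto an open neighborhood $W\ni\theta$ with $\mathcal{L}\circ\Phi(z,u^++u^-)=\|u^+\|^2-\|u^-\|^2+\mathcal{L}^\circ(z)$ as in (\ref{e:2.5}); by Theorem~\ref{th:2.1}(ii)--(iii), $\mathcal{L}^\circ$ is a $C^{2-0}$ function on the finite-dimensional space $H^0$ having $\theta$ as an isolated critical point (since $\theta$ is isolated for $\mathcal{L}|_{V^X}$). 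When $\nu=0$ the same argument applies with $\mathcal{L}^\circ$ understood as the constant $c$ on $H^0=\{\theta\}$ (equivalently, use the normal form of Remark~\ref{rm:2.2}(i), from which $C_q(\mathcal{L},\theta;{\bf K})\cong\delta_{q\mu}{\bf K}$ directly and each of (i), (ii), (iii) is plainly equivalent to $\mu=0$), so I assume $\nu>0$ below.

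The implication (ii)$\Rightarrow$(iii) is immediate (take $q=0$). For (i)$\Rightarrow$(ii): assuming $W$ is contained in the neighborhood on which $\mathcal{L}$ attains its local minimum, $\mathcal{L}\circ\Phi\ge c$ on the whole domain of $\Phi$; putting $z=\theta,\ u^+=\theta$ gives $-\|u^-\|^2\ge0$ for all $u^-\in B_{H^-}(\theta,\epsilon)$, hence $H^-=\{\theta\}$ and $\mu=0$, and putting $u^+=u^-=\theta$ gives $\mathcal{L}^\circ(z)\ge c$ for $z\in B_{H^0}(\theta,\epsilon)$, so $\theta$ is a local minimum of $\mathcal{L}^\circ$. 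Then $\{\mathcal{L}^\circ<c\}$ is empty near $\theta$, and the strict-sublevel description of critical groups recalled before Corollary~\ref{cor:2.6} (valid because $\mathcal{L}^\circ$ is $C^1$ on the Banach space $H^0$) yields $C_q(\mathcal{L}^\circ,\theta;{\bf K})\cong H_q(\{\theta\},\emptyset;{\bf K})\cong\delta_{q0}{\bf K}$. Corollary~\ref{cor:2.6} and $\mu=0$ now give $C_q(\mathcal{L},\theta;{\bf K})\cong C_{q-\mu}(\mathcal{L}^\circ,\theta;{\bf K})\cong\delta_{q0}{\bf K}$, i.e.\ (ii).

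For (iii)$\Rightarrow$(i): by Corollary~\ref{cor:2.6}, $0\ne C_0(\mathcal{L},\theta;{\bf K})\cong C_{-\mu}(\mathcal{L}^\circ,\theta;{\bf K})$; since singular homology vanishes in negative degrees this forces $\mu=0$, and then $C_0(\mathcal{L}^\circ,\theta;{\bf K})\ne0$. Invoking the finite-dimensional fact that an isolated critical point of a $C^1$ function on $\R^{\nu}$ with nonzero zeroth critical group is a local minimizer (discussed below), $\mathcal{L}^\circ$ has a local minimum $c$ at $\theta$; then with $\mu=0$ the normal form reads $\mathcal{L}\circ\Phi(z,u^+)=\|u^+\|^2+\mathcal{L}^\circ(z)\ge c$ near the origin, and since $\Phi$ maps onto the neighborhood $W$ of $\theta$ we conclude $\mathcal{L}\ge c$ on $W$, i.e.\ $\theta$ is a local minimum of $\mathcal{L}$.

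The only step that is not bookkeeping with (\ref{e:2.5}) is the finite-dimensional lemma used in (iii)$\Rightarrow$(i), and this is where I expect the work to lie. It is classical (cf.\ \cite[\S I.4]{MaWi}, \cite[Ch.~I]{Ch}): arguing by contraposition, if $\theta$ is \emph{not} a local minimizer of $\mathcal{L}^\circ$ one takes a small closed ball $\bar B\subset H^0$ about $\theta$ containing no other critical point and uses a negative pseudo-gradient flow on $\bar B\setminus\{\theta\}$ (where $\nabla\mathcal{L}^\circ\ne0$) to show $\theta$ lies in the same path component of $(\{\mathcal{L}^\circ<c\}\cup\{\theta\})\cap\bar B$ as some point of $\{\mathcal{L}^\circ<c\}$; hence $H_0$ of the pair $\bigl((\{\mathcal{L}^\circ<c\}\cup\{\theta\})\cap\bar B,\ \{\mathcal{L}^\circ<c\}\cap\bar B\bigr)$, which computes $C_0(\mathcal{L}^\circ,\theta;{\bf K})$, vanishes. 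No (PS) hypothesis on $\mathcal{L}^\circ$ is needed: as noted before Corollary~\ref{cor:2.6} one may first replace $\mathcal{L}^\circ$ by a globally defined $C^{2-0}$ function on the finite-dimensional $H^0$ that satisfies (PS) and agrees with $\mathcal{L}^\circ$ near $\theta$, and on a finite-dimensional space the deformation argument requires only $C^1$ regularity.
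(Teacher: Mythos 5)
Your proposal is correct and follows essentially the same route as the paper: reduce to (iii)$\Rightarrow$(i), split according to $\nu=0$ or $\nu>0$, apply the normal form of Remark~\ref{rm:2.2}(i) or Corollary~\ref{cor:2.6} to force $\mu=0$, pass to the finite-dimensional function $\mathcal{L}^\circ$ (extended to a (PS) function on $H^0$), and invoke the classical finite-dimensional fact that $C_0\ne 0$ at an isolated critical point forces a local minimum. The only differences are cosmetic: you spell out (i)$\Rightarrow$(ii) and sketch a pseudo-gradient proof of the finite-dimensional lemma, where the paper simply cites \cite{BaChWa} for the reduction and Theorem~4.6 on p.~43 of \cite{Ch} for the lemma.
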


Actually our proof shows that (iii) implies $\theta$ to be a strict
minimum.

Since $d^2\mathcal{ L}|_{V^X}(\theta)(u,v)=(B(\theta)u,v)_H\;\forall
u, v\in X$ we arrive at $H^0=\{\theta\}=H^-$ provided that
$d^2(\mathcal{ L}|_{V^X})(\theta)(u,u)>0$ for any $u\in
X\setminus\{\theta\}$. From Theorem~\ref{th:2.1} or Step 3 in the
proof of Lemma~\ref{lem:3.5} we easily derive  a similar conclusion
of Tromba's main result Theorem 1.3 in \cite{Tro2} without
requirement for completeness of $(X, \|\cdot\|_X)$.

\begin{corollary}\label{cor:2.8}
Under the assumptions of Theorem~\ref{th:2.1}, but no requirement
for completeness of $(X, \|\cdot\|_X)$, i.e., the condition $({\rm
S}$) is replaced by $({\rm S}')$, suppose also that $d^2(\mathcal{
L}|_{V^X})(\theta)(u,u)>0$ for any $u\in X\setminus\{\theta\}$.
Then $\theta$ is a strict minimum for $\mathcal{L}$ and thus
$\mathcal{ L}|_{V^X}$.
\end{corollary}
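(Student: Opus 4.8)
The plan is to show that the positive definiteness of $d^2(\mathcal{L}|_{V^X})(\theta)$ on $X$ forces both the nullity $\nu$ and the Morse index $\mu$ to vanish, and then to read off the conclusion from Theorem~\ref{th:2.1} in the nondegenerate form of Remark~\ref{rm:2.2}(i), which is precisely the case that does not require completeness of $(X,\|\cdot\|_X)$.

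First I would reformulate the hypothesis in terms of $B(\theta)$. By (F3)(c), $d^2(\mathcal{L}|_{V^X})(\theta)(u,v)=(B(\theta)u,v)_H$ for all $u,v\in X$, so the assumption says precisely that $(B(\theta)u,u)_H>0$ for every $u\in X\setminus\{\theta\}$. Under (C2) and (D), Proposition~\ref{prop:B.2} tells us that $H^0=\mathrm{Ker}\,B(\theta)$ and the negative subspace $H^-$ are finite dimensional and contained in $X$. Since $B(\theta)$ vanishes identically on $H^0$ and is negative definite on $H^-$ (see (\ref{e:2.2})), the positivity just obtained is incompatible with $H^0\neq\{\theta\}$ or $H^-\neq\{\theta\}$; hence $\nu=\dim H^0=0$ and $\mu=\dim H^-=0$, so $\theta$ is a nondegenerate critical point and $H=H^+$.

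Next, because $\nu=0$, Remark~\ref{rm:2.2}(i) applies: condition (S) may be weakened to (S'), so no completeness of $(X,\|\cdot\|_X)$ is needed, and Theorem~\ref{th:2.1} provides $\epsilon>0$, an open neighborhood $W$ of $\theta$ in $H$, and an origin-preserving homeomorphism $\phi:B_{H^+}(\theta,\epsilon)+B_{H^-}(\theta,\epsilon)\to W$ with $\mathcal{L}(\phi(u^++u^-))-\mathcal{L}(\theta)=\|u^+\|^2-\|u^-\|^2$; cf. (\ref{e:2.7}). Since $H^-=\{\theta\}$ this reduces to $\mathcal{L}(\phi(u^+))-\mathcal{L}(\theta)=\|u^+\|^2$ on $B_{H^+}(\theta,\epsilon)=B_H(\theta,\epsilon)$. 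As $\phi$ is a homeomorphism onto $W$ with $\phi(\theta)=\theta$, every $w\in W\setminus\{\theta\}$ equals $\phi(u^+)$ for a unique $u^+\neq\theta$, so $\mathcal{L}(w)=\mathcal{L}(\theta)+\|u^+\|^2>\mathcal{L}(\theta)$; thus $\theta$ is a strict local minimum of $\mathcal{L}$. Finally $W^X=W\cap X$ is an open neighborhood of $\theta$ in $X$, and $\mathcal{L}|_{V^X}(x)=\mathcal{L}(x)>\mathcal{L}(\theta)$ for $x\in W^X\setminus\{\theta\}$, so $\theta$ is a strict local minimum of $\mathcal{L}|_{V^X}$ as well.

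The one point needing care is the inclusion $H^0,H^-\subset X$, since it is what transmits positivity on $X$ to positivity of $B(\theta)$ and yields nondegeneracy; but this is already furnished by Proposition~\ref{prop:B.2} under (C2) and (D), and the rest is bookkeeping. Alternatively, once $\nu=\mu=0$ one may bypass the full splitting theorem entirely and invoke only the positive-definite estimate of Step~3 in the proof of Lemma~\ref{lem:3.5}, which produces a neighborhood of $\theta$ on which $\mathcal{L}(x)-\mathcal{L}(\theta)$ dominates a positive multiple of $\|x\|^2$; this route makes it manifest that completeness of $X$ plays no role.
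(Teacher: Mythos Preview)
Your argument is correct and follows precisely the route sketched in the paper: use $d^2(\mathcal{L}|_{V^X})(\theta)(u,u)=(B(\theta)u,u)_H$ together with $H^0,H^-\subset X$ to force $\nu=\mu=0$, and then invoke the nondegenerate case of Theorem~\ref{th:2.1} via Remark~\ref{rm:2.2}(i), where completeness of $X$ is not required. Your closing remark about bypassing the splitting theorem through Step~3 of Lemma~\ref{lem:3.5} also matches the alternative the paper points to.
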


According to Hofer \cite{Ho} the critical point $\theta$ is called
{\it mountain pass type} if for any small neighborhood $\mathcal{
O}$ of $\theta$ in $H$ the set $\{x\in\mathcal{ O}\,|\,\mathcal{
L}(x)<0\}$ is nonempty and not path-connected.

\begin{corollary}\label{cor:2.9}
Under the assumptions of Theorem~\ref{th:2.1} (and hence without the {\rm
(PS)} condition), let $\theta$ be an isolated critical point  of
$\mathcal{ L}$ with Morse index $\mu$ and nullity $\nu$.
\begin{enumerate}
\item[{\bf (i)}] If $C_1(\mathcal{ L}, \theta;{\bf K})\ne 0$ and $\nu=\dim{\rm
Ker}(B(\theta))=1$ then
$$
C_q(\mathcal{ L}, \theta;{\bf K})\cong
\delta_{q1}{\bf K}\;\forall q\in\Z;
$$

\item[{\bf (ii)}] If  $\nu=\dim{\rm Ker}(B(\theta))=1$ in the case $\mu=\dim H^-=0$, then $\theta$ is mountain pass type
if and only if $C_q(\mathcal{ L}, \theta;{\bf K})\cong
\delta_{q1}{\bf K}\;\forall q\in\Z$;

\item[{\bf (iii)}] If $C_\mu(\mathcal{ L}, \theta;{\bf K})\ne 0$,  then $C_q(\mathcal{ L}, \theta;{\bf K})\cong \delta_{q\mu}{\bf
K}\;\forall q\in\Z$.
\end{enumerate}
\end{corollary}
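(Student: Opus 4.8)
The plan is to deduce everything from the Shifting theorem (Corollary \ref{cor:2.6}) together with Corollary \ref{cor:2.7}, reducing each statement to a computation of critical groups of the $C^{2-0}$ function $\mathcal{L}^\circ$ on the finite-dimensional space $H^0$ at the isolated critical point $\theta$. By the Shifting theorem we have $C_q(\mathcal{L},\theta;{\bf K})\cong C_{q-\mu}(\mathcal{L}^\circ,\theta;{\bf K})$ for all $q$, and since $\nu=\dim H^0=1$ in parts (i) and (ii), $\mathcal{L}^\circ$ is a $C^{2-0}$ function of one real variable near $\theta=0$ with $0$ as an isolated critical point. The first step is therefore a local classification: near an isolated critical point on $\R^1$, a continuous function either has a strict local minimum, a strict local maximum, or is (after possibly reflecting the coordinate) strictly monotone on each side with opposite monotonicity --- i.e. behaves like $\pm|t|$ composed with an order isomorphism, or like $t^3$. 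Computing relative homology of the sublevel sets $\{\mathcal{L}^\circ<c\}\cap U$ against $\{\mathcal{L}^\circ\le c\}\cap U$ in each of these three cases gives $C_q(\mathcal{L}^\circ,\theta;{\bf K})$ equal to $\delta_{q0}{\bf K}$ (strict min), $\delta_{q1}{\bf K}$ (strict max), or $0$ for all $q$ (the monotone/``$t^3$'' case).

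For part (i): the hypothesis $C_1(\mathcal{L},\theta;{\bf K})\neq 0$ forces, via the Shifting isomorphism, $C_{1-\mu}(\mathcal{L}^\circ,\theta;{\bf K})\neq 0$; from the trichotomy above the only possibility with a nonzero group in degree $1-\mu\ge 0$ is $\mu=1$ with $C_0(\mathcal{L}^\circ,\theta;{\bf K})\neq 0$ (the strict-minimum case) or $\mu=0$ with $C_1(\mathcal{L}^\circ,\theta;{\bf K})\neq 0$ (the strict-maximum case). In either case the full critical group of $\mathcal{L}^\circ$ is concentrated in the one degree and is isomorphic to ${\bf K}$, so by Shifting $C_q(\mathcal{L},\theta;{\bf K})\cong\delta_{q1}{\bf K}$ for all $q$. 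Actually I would first observe that $\mu=1$ with a strict minimum would give $C_0(\mathcal{L},\theta)\ne 0$, so by Corollary \ref{cor:2.7} $\theta$ would be a local minimum, which still yields the desired formula only if we also check $\nu$; cleaner is to note $C_1(\mathcal{L},\theta)\ne 0$ together with Corollary \ref{cor:2.7}(iii) rules out $\theta$ being a local minimum, hence $\mathcal{L}^\circ$ is not a strict minimum, hence (by the trichotomy, since its critical group is nonzero in some degree) $\mathcal{L}^\circ$ has a strict maximum and $\mu=0$; then $C_q(\mathcal{L},\theta;{\bf K})\cong C_q(\mathcal{L}^\circ,\theta;{\bf K})\cong\delta_{q1}{\bf K}$.

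For part (ii), with $\mu=0$ we have $C_q(\mathcal{L},\theta;{\bf K})\cong C_q(\mathcal{L}^\circ,\theta;{\bf K})$. The set $\{\mathcal{L}<0\}$ near $\theta$ corresponds under the homeomorphism $\Phi$ (with $\mu=0$, so no $H^-$ factor) to $\{z\in B_{H^0}(\theta,\epsilon):\mathcal{L}^\circ(z)<0\}$; in dimension one this local negative-sublevel set is nonempty and disconnected precisely when $\theta$ is a strict local maximum of $\mathcal{L}^\circ$, which by the trichotomy is equivalent to $C_q(\mathcal{L}^\circ,\theta;{\bf K})\cong\delta_{q1}{\bf K}$, i.e. to $C_q(\mathcal{L},\theta;{\bf K})\cong\delta_{q1}{\bf K}$; this gives the ``mountain pass type $\Leftrightarrow$'' equivalence. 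For part (iii), $C_\mu(\mathcal{L},\theta;{\bf K})\neq 0$ gives $C_0(\mathcal{L}^\circ,\theta;{\bf K})\neq 0$, so by Corollary \ref{cor:2.7} applied to $\mathcal{L}^\circ$ (an isolated critical point of a $C^{2-0}$ function on the finite-dimensional $H^0$, which trivially satisfies (PS) after modification as remarked before Corollary \ref{cor:2.6}) $\theta$ is a local minimum of $\mathcal{L}^\circ$ and $C_q(\mathcal{L}^\circ,\theta;{\bf K})\cong\delta_{q0}{\bf K}$ for all $q$; Shifting then gives $C_q(\mathcal{L},\theta;{\bf K})\cong\delta_{q\mu}{\bf K}$ for all $q$. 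The main obstacle is the $\nu=1$ local trichotomy on $\R^1$ for merely $C^{2-0}$ (indeed merely continuous) functions with an isolated critical point, and the careful translation of ``$\{\mathcal{L}<0\}$ near $\theta$ is disconnected'' through $\Phi$ into a statement about $\mathcal{L}^\circ$; once these are in hand the rest is bookkeeping with the Shifting isomorphism and Corollary \ref{cor:2.7}.
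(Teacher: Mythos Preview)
Your approach is essentially the same as the paper's: reduce everything via the Shifting theorem (Corollary~\ref{cor:2.6}) to the one-variable function $\mathcal{L}^\circ$ on $H^0\cong\R$ and invoke the standard trichotomy and the finite-dimensional local-minimum criterion (the paper simply cites \cite[Th.~II.1.6]{Ch}, \cite[Prop.~3.3]{BaChWa}, \cite[Prop.~2.4]{Ba1} for the respective parts). Your initial case split in (i) --- $\mu\in\{0,1\}$ with $\mathcal{L}^\circ$ a strict maximum or strict minimum accordingly --- is correct and already finishes the job.

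Two points to clean up. First, your ``cleaner'' alternative in (i) contains an error: from ``$\theta$ is not a local minimum of $\mathcal{L}$'' you cannot conclude ``$\mathcal{L}^\circ$ is not a strict minimum'' without already knowing $\mu=0$; indeed when $\mu=1$ and $\mathcal{L}^\circ$ has a strict minimum, $\theta$ is \emph{not} a local minimum of $\mathcal{L}$ (the $H^-$ direction decreases $\mathcal{L}$), and Shifting gives $C_q(\mathcal{L},\theta)\cong\delta_{q1}{\bf K}$, which is exactly what you want. Also the sentence ``$\mu=1$ with a strict minimum would give $C_0(\mathcal{L},\theta)\ne 0$'' is false --- Shifting gives $C_0(\mathcal{L},\theta)=C_{-1}(\mathcal{L}^\circ,\theta)=0$ in that case. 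Just keep your first argument and delete the alternatives. Second, in (ii) the sublevel set $\{\mathcal{L}<0\}\cap W$ corresponds under $\Phi^{-1}$ to $\{(z,u^+):\|u^+\|^2+\mathcal{L}^\circ(z)<0\}$, not merely to $\{z:\mathcal{L}^\circ(z)<0\}$; but the linear retraction $u^+\mapsto tu^+$ gives a deformation retract onto the latter, so the path-connectedness conclusion is unaffected. Finally, in (iii) you should invoke the finite-dimensional fact directly (e.g.\ \cite[Th.~4.6, p.~43]{Ch}) rather than Corollary~\ref{cor:2.7}, whose hypotheses are those of Theorem~\ref{th:2.1} for $\mathcal{L}$ on $H$, not for $\mathcal{L}^\circ$ on $H^0$.
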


The proofs of (i) and (ii) are the same as those of
\cite[Th.II.1.6]{Ch} and \cite[Prop.3.3]{BaChWa}, respectively, with
some slight replacements by Theorem~\ref{th:2.1}. (iii) corresponds
to Proposition 2.4 in \cite{Ba1} and can be proved similarly. (Note
that Theorem 4.6 in \cite[page. 43]{Ch} does not need the (PS)
condition in finite dimension space.) Since {\rm (F1)} implies that
$\mathcal{ L}:V\to\R$ is G\^{a}teaux differentiable, if $V=X$ and
$D\mathcal{ L}:X\to X^\ast$ is continuous from the norm topology of
$X$ to the weak*-topology of $X^\ast$ one may use a generalized
version of mountain pass lemma in \cite{GhPr} to yield a critical
point of mountain pass type provided that $\mathcal{ L}$ also
satisfies the condition (C) (weaker than (PS)).

If the critical point $\theta$ of $\mathcal{ L}$ is isolated,
Corollary~\ref{cor:2.5} yields  surjective homomorphisms from
critical groups $C_\ast(\mathcal{ L}|_{V^X}, \theta;{\bf K})$ to
$C_\ast(\mathcal{ L}, \theta;{\bf K})$, which are also isomorphisms
provided that ${\bf K}$ is a field and both groups are finite
dimension vector spaces over ${\bf K}$ of same dimension. When
$\mathcal{ L}\in C^2(V, \R)$ and $A\in C^1(V^X, X)$ it follows from
\cite[Cor.2.8]{JM} that $C_\ast(\mathcal{ L}|_{V^X}, \theta;{\bf
K})\cong C_\ast(\mathcal{ L}, \theta;{\bf K})$ for any Abel group
${\bf K}$. The following theorem generalizes and refines this
result.

\begin{theorem}\label{th:2.10}
Under the assumptions of Theorem~\ref{th:2.1}, let $\theta\in H$ be an isolated critical point of $\mathcal{L}$
and let $(Y, \|\cdot\|_Y)$ be another Banach space such that $X\subset Y\subset
H$ and that $(X, \|\cdot\|_X)$  is a densely embedded Banach space in $(Y, \|\cdot\|_Y)$
{\rm (}and hence $(Y, \|\cdot\|_Y)$ is a densely embedded Banach space in
 $(H, \|\cdot\|)$ due to \hbox{\rm (S)}. We may assume that
$\|y\|\le\|y\|_Y\;\forall y\in Y$ and $\|x\|_Y\le\|x\|_X\;\forall
x\in X${\rm )}. For an open neighborhood $V$ of the origin
$\theta\in H$, write $V^X=V\cap X$ (resp. $V^Y=V\cap Y$) as an open subset
of $X$ (resp. $Y$) as before. Assume also that
\begin{enumerate}
\item[{\bf (i)}] $\mathcal{L}|_{V^Y}\in C^2(V^Y, \mathbb{R})$.
\item[{\bf (ii)}] The map $A$ in $({\rm F2})$ belongs to $C^1(V^X, X)$.
\footnote{This and (i) imply $\mathcal{ L}|_{V^X}\in C^2(V^X, \mathbb{R})$.}
\item[{\bf (iii)}] The map $B$ in $({\rm F2})$ can be extended into a continuous  map $B: V^Y\to L_s(H)$ satisfying
$$
d^2(\mathcal{ L}|_{V^Y})(y)(u, v)=(B(y)u, v)_H\quad\forall y\in
V^Y\;\hbox{and}\; u, v\in Y.
$$
\end{enumerate}
 Then for any open neighborhood $W$ of $\theta$ in $V$ and a field ${\F}$
the inclusions
\begin{eqnarray*}
&&I^{xw}: \left(\mathcal{ L}_0\cap W^X, \mathcal{ L}_0\cap W^X\setminus\{\theta\}\right)\to
\left(\mathcal{ L}_0\cap W, \mathcal{
L}_0\cap W\setminus\{\theta\}\right),\\
&& I^{yw}: \left(\mathcal{ L}_0\cap W^Y, \mathcal{ L}_0\cap
W^Y\setminus\{\theta\}\right)\to \left(\mathcal{ L}_0\cap W,
\mathcal{ L}_0\cap W\setminus\{\theta\}\right)
\end{eqnarray*}
induce isomorphisms
\begin{eqnarray*}
&&\!\!\!\!I^{xw}_\ast: H_\ast\left(\mathcal{ L}_0\cap W^X,
\mathcal{ L}_0\cap W^X\setminus\{\theta\};{\F}\right)\to
H_\ast\left(\mathcal{ L}_0\cap W,
\mathcal{ L}_0\cap W\setminus\{\theta\};{\F}\right),\\
&&\!\!\!\!I^{yw}_\ast: H_\ast\left(\mathcal{ L}_0\cap W^Y,
\mathcal{ L}_0\cap W^Y\setminus\{\theta\};{\F}\right)\to
H_\ast\left(\mathcal{ L}_0\cap W, \mathcal{ L}_0\cap
W\setminus\{\theta\};{\F}\right).
\end{eqnarray*}
Consequently, $C_\ast(\mathcal{ L}|_{V^X},\theta; \F)\cong
C_\ast(\mathcal{ L}|_{V^Y},\theta; \F)\cong C_\ast(\mathcal{
L},\theta; \F)$.
\end{theorem}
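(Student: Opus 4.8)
The plan is to deduce Theorem~\ref{th:2.10} from Corollary~\ref{cor:2.5} applied to two different ``ambient pairs'' of spaces, together with a dimension count over the field $\F$. The key observation is that Corollary~\ref{cor:2.5} does not really use that $H$ is the Hilbert space: it is a statement about a densely and continuously embedded Banach space inside another Banach space, under the structural hypotheses (S), (F1)--(F3), (C1)--(C2), (D) on the ambient space. Concretely, I would first check that the hypotheses of Theorem~\ref{th:2.1} (hence of Corollary~\ref{cor:2.5}) hold for the triple $(Y,\|\cdot\|_Y)\hookrightarrow(H,\|\cdot\|)$ with $A$ replaced by $\nabla(\mathcal{L}|_{V^Y})$ and $B$ replaced by the extension given in hypothesis (iii): assumption (i) gives that $\mathcal{L}|_{V^Y}\in C^2$, so in particular $\nabla(\mathcal{L}|_{V^Y})\in C^1(V^Y,Y)$ is strictly Fr\'echet differentiable at $\theta$, and (iii) supplies the required $B$; the conditions (C1)--(C2) and (D) for $B(\theta)$ are unchanged since $B(\theta)$ is the same self-adjoint operator on $H$ as before. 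This yields a surjection
$$
H_\ast\left(\mathcal{L}_0\cap W^Y,\mathcal{L}_0\cap W^Y\setminus\{\theta\};\F\right)\longrightarrow H_\ast\left(\mathcal{L}_0\cap W,\mathcal{L}_0\cap W\setminus\{\theta\};\F\right).
$$
Next I would apply Corollary~\ref{cor:2.5} again, this time to the triple $(X,\|\cdot\|_X)\hookrightarrow(Y,\|\cdot\|_Y)$ (both Banach spaces, $X$ dense in $Y$, inclusion continuous), with $\mathcal{L}|_{V^Y}$ playing the role of the ambient functional and $A\in C^1(V^X,X)$ from hypothesis (ii) playing the role of the gradient-type map. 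For this one must verify that (F2)--(F3), (C2), (D) hold for the pair $X\subset Y$: (F2) holds because $D(\mathcal{L}|_{V^Y})(x)(u)=(A(x),u)_H$ represents the differential via the $H$-pairing (which is what (F2) requires, with the Hilbert space in the role of $H$ again being the common completion $H$); (F3) holds with the same $B$; and (C1)--(C2), (D) only involve $B(\theta)$ on $H$ and the finite-dimensionality and location in $X$ of $H^0,H^-$, all already known. Note $\mathcal{L}_0\cap W^X=(\mathcal{L}|_{V^Y})_0\cap(W^Y)^X$, so Corollary~\ref{cor:2.5} gives a surjection $H_\ast(\mathcal{L}_0\cap W^X,\ldots;\F)\to H_\ast(\mathcal{L}_0\cap W^Y,\ldots;\F)$. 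Composing with the previous surjection and observing that the composite is exactly $I^{xw}_\ast$ (the inclusion $W^X\hookrightarrow W$ factors through $W^Y$), we get that $I^{xw}_\ast$ is surjective; and $I^{yw}_\ast$ is surjective by the first step.

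To upgrade surjectivity to isomorphism, I would invoke the Shifting Corollary~\ref{cor:2.6}: applied to each of the three triples $X\subset H$, $Y\subset H$, $X\subset Y$ — for which the splitting theorem holds with the \emph{same} Morse index $\mu$, nullity $\nu$, and the same reduced function $\mathcal{L}^\circ$ on the same finite-dimensional $H^0\subset X$ — it gives
$$
C_q(\mathcal{L},\theta;\F)\cong C_{q-\mu}(\mathcal{L}^\circ,\theta;\F),\quad C_q(\mathcal{L}|_{V^X},\theta;\F)\cong C_{q-\mu}(\mathcal{L}^\circ,\theta;\F),\quad C_q(\mathcal{L}|_{V^Y},\theta;\F)\cong C_{q-\mu}(\mathcal{L}^\circ,\theta;\F).
$$
Since $\F$ is a field and $H^0$ is finite-dimensional, each $C_q(\mathcal{L}^\circ,\theta;\F)$ is a finite-dimensional vector space, so all three critical groups are finite-dimensional of equal dimension in every degree. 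A surjective linear map between finite-dimensional vector spaces of equal dimension is an isomorphism; applying this degreewise to the surjections $I^{xw}_\ast$ and $I^{yw}_\ast$ (and, after excision, to the maps on $H_\ast(\mathcal{L}_0\cap W^\bullet,\ldots)$, which compute the critical groups independently of the neighborhood $W$) shows both are isomorphisms. The final chain $C_\ast(\mathcal{L}|_{V^X},\theta;\F)\cong C_\ast(\mathcal{L}|_{V^Y},\theta;\F)\cong C_\ast(\mathcal{L},\theta;\F)$ then follows.

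The main obstacle I anticipate is bookkeeping rather than any deep difficulty: carefully confirming that the abstract hypotheses (S), (F1)--(F3), (C1)--(C2), (D) genuinely transfer to the auxiliary pair $X\subset Y$ — in particular that the role played by ``$H$'' in those hypotheses can still be taken to be the original Hilbert space $H$ even when the ``small'' and ``large'' Banach spaces are $X$ and $Y$, so that all spectral data of $B(\theta)$ on $H$ (condition (C1), the decomposition (D), the inclusions $H^0,H^-\subset X$) remain literally the same. A secondary point requiring care is checking that the composite inclusion-induced map on homology is genuinely $I^{xw}_\ast$, i.e. that the factorization $(\mathcal{L}_0\cap W^X,\ldots)\hookrightarrow(\mathcal{L}_0\cap W^Y,\ldots)\hookrightarrow(\mathcal{L}_0\cap W,\ldots)$ is compatible on the nose with the pairs as written; this is immediate from functoriality of singular homology once the set-level identities $\mathcal{L}_0\cap W^X=(\mathcal{L}|_{V^Y})_0\cap W^X$ etc.\ are noted. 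Everything else is a direct citation of Corollaries~\ref{cor:2.5} and~\ref{cor:2.6} together with the elementary linear-algebra fact about surjections between equidimensional spaces.
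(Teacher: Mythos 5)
Your proposal has a genuine gap, not just bookkeeping to be done. The core difficulty is that Corollaries~\ref{cor:2.5} and~\ref{cor:2.6} are proved via Theorem~\ref{th:2.1}, and Theorem~\ref{th:2.1} uses the Hilbert structure of the ambient space essentially: the splitting homeomorphism $\Phi$ puts $\mathcal{L}$ in the normal form $\|u^+\|^2-\|u^-\|^2+\mathcal{L}^\circ(z)$, where $\|\cdot\|$ is the Hilbert norm and $H=H^0\oplus H^+\oplus H^-$ is the spectral decomposition of $B(\theta)$ in $H$. Thus neither corollary applies as stated with $Y$ in the role of the ambient space, since $Y$ is only a Banach space. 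Your second application of Corollary~\ref{cor:2.5}, to the pair $X\subset Y$, is therefore not legitimate: the conclusion of Corollary~\ref{cor:2.5} would produce a surjection onto $H_\ast(\mathcal{L}_0\cap W,\ldots)$ for $W$ open in $H$, not onto $H_\ast(\mathcal{L}_0\cap W^Y,\ldots)$. Likewise, applying Corollary~\ref{cor:2.6} to $X\subset Y$ to argue that $C_\ast(\mathcal{L}|_{V^X},\theta;\F)$ and $C_\ast(\mathcal{L}|_{V^Y},\theta;\F)$ both shift down to $C_{\ast-\mu}(\mathcal{L}^\circ,\theta;\F)$ requires a splitting lemma valid for a pair of \emph{Banach} spaces, which is exactly what the paper gets from Jiang's result (Theorem~\ref{th:4.1}), not from its own Theorem~\ref{th:2.1}.

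Your first application, to the pair $Y\subset H$, also does not follow from the stated hypotheses. Condition $({\rm F2})$ for $Y\subset H$ would require a strictly Fr\'echet differentiable map $A_Y:V^Y\to Y$ with $D\mathcal{L}(y)(u)=(A_Y(y),u)_H$; hypotheses (i)--(iii) of Theorem~\ref{th:2.10} give $\mathcal{L}|_{V^Y}\in C^2$ and a continuous extension of $B$ to $V^Y$, but do not assert that the $H$-gradient of $\mathcal{L}$ carries $V^Y$ into $Y$. So the hypotheses of Theorem~\ref{th:2.1} for the pair $(Y,H)$ are simply not available. The paper sidesteps both problems by importing Jiang's splitting lemma for Banach-space pairs (Theorem~\ref{th:4.1}), deriving from it Corollaries~\ref{cor:4.3},~\ref{cor:4.4} (to get $C_\ast(\mathcal{L}|_{V^X},\theta;{\bf K})\cong C_\ast(\mathcal{L}|_{V^Y},\theta;{\bf K})$ via the same reduced function $\mathcal{L}^\circ$) and Proposition~\ref{prop:4.5} (to show $I^{xy}_\ast$ is an isomorphism). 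With the equality of all three critical groups established and their finite-dimensionality from Remark~\ref{rm:4.6}, Corollary~\ref{cor:2.5} applied only to the legitimate pair $X\subset H$ gives $I^{xw}_\ast$ surjective hence iso, and the factorization $I^{xw}_\ast=I^{yw}_\ast\circ I^{xy}_\ast$ then forces $I^{yw}_\ast$ to be iso. To repair your argument you would have to replace each illegitimate use of Corollaries~\ref{cor:2.5}/\ref{cor:2.6} with an invocation of Jiang's Theorem~\ref{th:4.1} and its consequences.
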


The first isomorphism in the final claims is due to Jiang \cite{JM},
see Corollary~\ref{cor:4.4}. Taking $Y=X$ we get

\begin{corollary}\label{cor:2.11}
Under the assumptions of Theorem~\ref{th:2.1}, also assume: {\rm
(i)} $\theta$ is an isolated critical point of $\mathcal{ L}$, {\rm
(ii)} $\mathcal{ L}|_{V^X}\in C^2(V^X, \mathbb{R})$, {\rm
(iii)} the map $A$ in $({\rm F2})$ belongs to $C^1(V^X, X)$,
{\rm (iv)} the map $B$ in $({\rm F3})$ is continuous, Then for any
open neighborhood $W$ of $\theta$ in $V$ and a field ${\F}$ the
inclusion
\begin{eqnarray*}
I^{xw}: \left(\mathcal{ L}_0\cap W^X, \mathcal{ L}_0\cap W^X\setminus\{\theta\}\right)\to
\left(\mathcal{ L}_0\cap W, \mathcal{
L}_0\cap W\setminus\{\theta\}\right),
\end{eqnarray*}
induces isomorphisms between their relative homology groups with
coefficients in ${\F}$. Specially, $C_\ast(\mathcal{ L}|_{V^X},\theta;
\F)\cong C_\ast(\mathcal{ L},\theta;\F)$.
\end{corollary}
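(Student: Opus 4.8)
The plan is to obtain Corollary~\ref{cor:2.11} as the special case $Y=X$ of Theorem~\ref{th:2.10}; the whole task then reduces to checking that, with this choice, every hypothesis of that theorem holds, and to reading off its conclusion. First, with $Y:=X$ the chain $X\subset Y\subset H$ becomes $X\subset X\subset H$, so the demand that $(X,\|\cdot\|_X)$ be a densely embedded Banach space in $(Y,\|\cdot\|_Y)$ is trivially met, as are the normalizations $\|y\|\le\|y\|_Y$ for $y\in Y$ (which is just \hbox{\rm (S)}) and $\|x\|_Y\le\|x\|_X$ for $x\in X$ (which is an equality); moreover $V^Y=V^X$ and $W^Y=W^X$. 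The standing hypotheses \hbox{\rm (S)}, {\rm (F1)--(F3)}, {\rm (C1)--(C2)} and {\rm (D)} of Theorem~\ref{th:2.1} are assumed in the corollary as well, and $\theta$ is an isolated critical point of $\mathcal{L}$ by hypothesis (i) of the corollary.

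It remains to verify the three numbered conditions of Theorem~\ref{th:2.10}. Condition (i) there, $\mathcal{L}|_{V^Y}\in C^2(V^Y,\R)$, is exactly hypothesis (ii) of the corollary since $V^Y=V^X$. Condition (ii), $A\in C^1(V^X,X)$, is hypothesis (iii) of the corollary. For condition (iii) I take the map $B$ of {\rm (F3)} itself as the required extension to $V^Y=V^X$: it is continuous by hypothesis (iv) of the corollary, and it realizes the Hessian at every point, as follows. By {\rm (F2)} one has $d(\mathcal{L}|_{V^X})(x)(u)=(A(x),u)_H$ for $x\in V^X$, $u\in X$; since $A\in C^1(V^X,X)$, the map $A$ is Fr\'echet differentiable with $A'(x)=DA(x)=B(x)|_X$ by {\rm (F3)(a)}; differentiating once more yields $d^2(\mathcal{L}|_{V^X})(x)(u,v)=(A'(x)u,v)_H=(B(x)u,v)_H$ for all $x\in V^X$ and $u,v\in X=Y$, which is precisely the identity required in condition (iii).

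With these verifications, Theorem~\ref{th:2.10} asserts that for every open neighborhood $W$ of $\theta$ in $V$ and every field $\F$ the inclusion $I^{xw}$ induces an isomorphism $I^{xw}_\ast$ on all relative singular homology groups with coefficients in $\F$; this is the first assertion of the corollary. For the ``Specially'' clause one fixes such a $W$: by definition of the critical groups and their independence of the chosen neighborhood, $H_\ast(\mathcal{L}_0\cap W^X,\mathcal{L}_0\cap W^X\setminus\{\theta\};\F)=C_\ast(\mathcal{L}|_{V^X},\theta;\F)$ and $H_\ast(\mathcal{L}_0\cap W,\mathcal{L}_0\cap W\setminus\{\theta\};\F)=C_\ast(\mathcal{L},\theta;\F)$, so $I^{xw}_\ast$ becomes the claimed isomorphism $C_\ast(\mathcal{L}|_{V^X},\theta;\F)\cong C_\ast(\mathcal{L},\theta;\F)$; equivalently, this is the $Y=X$ instance of the last line of Theorem~\ref{th:2.10}, in which the first isomorphism there collapses to the identity. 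As the argument is a pure specialization, there is no genuine obstacle; the only step needing a line of care is condition (iii) above, where one must upgrade the a priori pointwise-at-$\theta$ identity {\rm (F3)(c)} to the identity $d^2(\mathcal{L}|_{V^X})(x)(u,v)=(B(x)u,v)_H$ at every $x\in V^X$, which is exactly what the added regularity $A\in C^1(V^X,X)$ buys.
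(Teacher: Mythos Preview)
Your proof is correct and follows exactly the paper's approach: the paper obtains Corollary~\ref{cor:2.11} by the one-line remark ``Taking $Y=X$ we get'' immediately before the statement, and your argument simply spells out in full the verification that each hypothesis of Theorem~\ref{th:2.10} is met under this specialization. The only place where you add substance beyond the paper's laconic reduction is the check of condition~(iii) at every $x\in V^X$ (rather than just at $\theta$), and your derivation there from $A\in C^1$ and {\rm (F3)(a)} is correct.
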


If $\Omega\subset\R^n$ is a bounded open domain with smooth boundary
$\partial\Omega$, and $f\in C^1(\overline{\Omega}\times\R,\R)$
satisfies the condition:
 $|f'_t(x,t)|\le
C(1+|t|^{\alpha})$ for some constants $C>0$ and
 $\alpha\le\frac{n+2}{n-2}$ (if $n>2$), then for an isolated critical point $u_0$ of
 the functional
 $$J(u)=\int_\Omega\Bigl(
 \frac{1}{2}|\nabla u|^2-F(x,u)\Bigr)dx
 $$
 (where $F$ is the primitive of $f$ with respect to $u$) on
 $H=H_0^1(\Omega)$ it follows from Corollary~\ref{cor:2.11} that
$C_\ast(J, u_0;\K)\cong C_\ast(J|_X, u_0;\K)$ provided that $u_0\in
X=C_0^1(X)$ is also an isolated critical point of $J|_X$. This
result was obtained by Chang \cite{Ch2} under the assumption that
$J$ satisfies the $({\rm PS})_c$ condition. Br\'ezis and Nirenberg
\cite{BrNi} firstly proved it as $u_0$ is a minimizer.

Theorem~\ref{th:2.1} and Corollary~\ref{cor:2.6} cannot be applied
to the geodesic problems on Finsler geometry directly.
But as outlined
in Remark~5.9 of \cite{Lu1} we may develop an method of infinite dimensional Morse theory
 for geodesics on Finsler manifolds  based on them in \cite{Lu4}, that is, giving the
shifting theorem of critical groups of the energy functional of a
Finsler manifold at a nonconstant critical
orbit and relations of critical groups under iterations. In particular,
Corollary~\ref{cor:2.5} is a key for us to realize the second goal.

Finally we give a theorem of Poincar\'e-Hopf type. By the condition
(F1) the functional $\mathcal{ L}:V\to\R$ is G\^{a}teaux
differentiable. Its gradient $\nabla\mathcal{ L}$ is equal to $A$ on
$V\cap X$ by the condition (F2). Furthermore, under the assumptions
(F3) and (D) we can prove that for a small $\epsilon>0$ the
restriction of $\nabla\mathcal{ L}$ to $B_H(\theta, 2\epsilon)$ has
a unique zero $\theta$ and is a demicontinuous map of class $(S)_+$.
According to \cite{Bro} and \cite{Skr1} we have a degree $\deg_{\rm
BS}(\nabla\mathcal{ L}, B_H(\theta,\epsilon), \theta)$. Under the
conditions {\rm (C1)} and {\rm (C2)}, $A'(\theta):X\to X$ is a
bounded linear Fredholm operator of index zero, see the first
paragraph in Step 1 of proof of Lemma~\ref{lem:3.1}. If the map $A$
in ({\rm F2}) is $C^1$, then $A$ is a Fredholm map of index zero
near $\theta\in X$ and thus for sufficiently small $\epsilon>0$
there exists a degree $\deg_{\rm FPR}(A, B_X(\theta,
\epsilon),\theta)$ or $\deg_{\rm BF}(A, B_X(\theta,
\epsilon),\theta)$ according to \cite{FiPeRa, PeRa} or \cite{BeFu1,
BeFu2}.

\begin{theorem}\label{th:2.12}
Under the assumptions of Theorem~\ref{th:2.1}, one has:
\begin{enumerate}
\item[\bf (i)]  If the map $A$ in the condition $({\rm F2})$ is $C^1$ near $\theta\in
X$, then for small $\epsilon>0$
\begin{eqnarray*}
\deg_{\rm FPR}(A, B_X(\theta, \epsilon),\theta)&=&\deg_{\rm BF}(A,
B_X(\theta, \epsilon),\theta)\\
&=&(-1)^{\mu}\deg(\nabla\mathcal{ L}^\circ, B_X(\theta,
\epsilon)\cap H^0,
\theta)\\
&=&\sum^\infty_{q=0}(-1)^q{\rm rank}C_q(\mathcal{ L}, \theta;{\bf
K})
\end{eqnarray*}
provided a suitable orientation for $A$.
\item[\bf (ii)] If  $\theta$ is also an isolated critical point  of $\mathcal{ L}$,
and the condition {\rm (D${\rm 4}^\ast$)} holds true,  then for a
small $\epsilon>0$,
\begin{eqnarray*}
\deg_{\rm BS}(\nabla\mathcal{ L}, B_H(\theta,
\epsilon),\theta)&=&\sum^\infty_{q=0}(-1)^q{\rm rank}C_q(\mathcal{
L},
\theta;{\bf K})\\
&=&(-1)^{\mu}\sum^\infty_{q=0}(-1)^q{\rm rank}C_q(\mathcal{
L}^\circ,
\theta;{\bf K})\\
&=&(-1)^{\mu}\deg(\nabla\mathcal{ L}^\circ, B_X(\theta,
\epsilon)\cap H^0, \theta).
\end{eqnarray*}
\end{enumerate}
Here $\deg$ is the classical Brouwer degree.
\end{theorem}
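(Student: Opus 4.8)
The plan is to reduce both parts, via the splitting theorem (Theorem~\ref{th:2.1}) and the shifting theorem (Corollary~\ref{cor:2.6}), to the classical finite-dimensional Poincar\'e-Hopf identity for the reduced functional $\mathcal{ L}^\circ$ on the finite-dimensional space $H^0$. The key claim is that each of the three degrees occurring in the statement --- $\deg_{\rm FPR}(A,B_X(\theta,\epsilon),\theta)$, $\deg_{\rm BF}(A,B_X(\theta,\epsilon),\theta)$ and $\deg_{\rm BS}(\nabla\mathcal{ L},B_H(\theta,\epsilon),\theta)$ --- equals $(-1)^{\mu}\deg(\nabla\mathcal{ L}^\circ,B_X(\theta,\epsilon)\cap H^0,\theta)$, the sign $(-1)^{\mu}$ being the contribution of the negative definite block $B(\theta)|_{H^-}$ (with $\dim H^-=\mu<\infty$), the positive definite block $B(\theta)|_{H^+}$ contributing $+1$. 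Granting this, the remaining identities follow from: (a) since $\mathcal{ L}^\circ$ is $C^{2-0}$ on $H^0$ with $\dim H^0<\infty$ (Theorem~\ref{th:2.1}(ii)) and, when $\theta$ is an isolated critical point of $\mathcal{ L}$, also an isolated critical point of $\mathcal{ L}^\circ$ (Theorem~\ref{th:2.1}(iii)), the finite-dimensional Poincar\'e-Hopf theorem for gradient vector fields gives $\deg(\nabla\mathcal{ L}^\circ,B_{H^0}(\theta,\epsilon),\theta)=\sum_{q\ge 0}(-1)^q{\rm rank}\,C_q(\mathcal{ L}^\circ,\theta;{\bf K})$, a finite sum (cf. \cite{MaWi, Ch}); (b) Corollary~\ref{cor:2.6} gives $C_q(\mathcal{ L},\theta;{\bf K})\cong C_{q-\mu}(\mathcal{ L}^\circ,\theta;{\bf K})$, hence $\sum_q(-1)^q{\rm rank}\,C_q(\mathcal{ L},\theta;{\bf K})=(-1)^{\mu}\sum_q(-1)^q{\rm rank}\,C_q(\mathcal{ L}^\circ,\theta;{\bf K})$. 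Chaining (a), (b) and the key claim yields all the displayed equalities in (i) and (ii).

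For part (i): since $A$ is $C^1$ near $\theta\in X$, it is a Fredholm map of index $0$ with $A'(\theta)=B(\theta)|_X$, $\ker A'(\theta)=H^0\subset X$ and cokernel isomorphic to $H^0$ (and then $h,\,\mathcal{ L}^\circ$ are $C^2$). Both $\deg_{\rm FPR}$ and $\deg_{\rm BF}$ satisfy a Lyapunov-Schmidt reduction property: up to the chosen orientation, $\deg(A,B_X(\theta,\epsilon),\theta)$ equals the parity of $A'(\theta)$ restricted to the topological complement $X^{\pm}$ of $H^0$ in $X$, times the Brouwer degree at $\theta$ of the reduced map $z\mapsto P^0A(z+h(z))$ on $B_{H^0}(\theta,\epsilon)$, where $w=h(z)$ is the unique solution of $(I-P^0)A(z+w)=0$ provided by Theorem~\ref{th:2.1}. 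By $(\ref{e:2.3})$ this reduced map equals $z\mapsto A(z+h(z))=\nabla\mathcal{ L}^\circ(z)$ (Theorem~\ref{th:2.1}(ii)); and since $A'(\theta)|_{X^+}$ is positive definite (hence joined to ${\rm id}_{X^+}$ through invertible operators) while $A'(\theta)|_{X^-}$ is negative definite on the $\mu$-dimensional $X^-$, its parity is $(+1)\cdot(-1)^{\mu}$. This proves the key claim for $\deg_{\rm FPR}$, and equally for $\deg_{\rm BF}$, for a suitable base orientation of $A$; finally $\deg_{\rm FPR}=\deg_{\rm BF}$ by the known coincidence of these degree theories for $C^1$ Fredholm maps of index zero (cf. \cite{FiPeRa, PeRa, BeFu1, BeFu2}).

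For part (ii): by the discussion preceding the theorem, (F3) and (D) make $\nabla\mathcal{ L}$ a demicontinuous map of class $(S)_+$ on $B_H(\theta,2\epsilon)$, and (D4*) provides a uniform bound $(P(x)u,u)\ge C_0'\|u\|^2$ on this ball, so $\deg_{\rm BS}(\nabla\mathcal{ L},B_H(\theta,\epsilon),\theta)$ is defined (shrink $\epsilon$ so that, $\theta$ being isolated, there are no zeros on $\partial B_H(\theta,\epsilon)$). Put $T_1(x):=P^+x-P^-x+\nabla\mathcal{ L}^\circ(P^0x)$; this is a demicontinuous $(S)_+$ map --- the strongly monotone $(S)_+$ operator ${\rm id}_{H^+}$ on the $H^+$-block plus a compact finite-rank perturbation --- whose only zero in $B_H(\theta,\epsilon)$ is $\theta$. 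I would connect $\nabla\mathcal{ L}$ to $T_1$ by the affine homotopy $\mathcal{ H}(t,\cdot)=(1-t)\nabla\mathcal{ L}+tT_1$; using (D4*), the strict Fr\'echet differentiability of $A$ at $\theta$ and the decomposition (D) one checks that $\mathcal{ H}(t,\cdot)$ stays demicontinuous of class $(S)_+$ and that $\mathcal{ H}(t,x)\ne\theta$ for $\|x\|=\epsilon$ --- testing the $H^+$- and $H^-$-components of a hypothetical zero against $P^{\pm}x$ and using $(\ref{e:2.1})$--$(\ref{e:2.2})$ forces $\|P^{\pm}x\|=o(\epsilon)$, and the $H^0$-component then forces $\|P^0x\|=o(\epsilon)$ via the Lyapunov-Schmidt reduction, contradicting $\|x\|=\epsilon$ for $\epsilon$ small. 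Homotopy invariance of $\deg_{\rm BS}$ gives $\deg_{\rm BS}(\nabla\mathcal{ L},B_H(\theta,\epsilon),\theta)=\deg_{\rm BS}(T_1,B_H(\theta,\epsilon),\theta)$, and by excision (replacing $B_H(\theta,\epsilon)$ by a product of balls in $H^+$ and $H^0\oplus H^-$) the product formula for the $(S)_+$ degree gives $\deg_{\rm BS}(T_1,B_H(\theta,\epsilon),\theta)=1\cdot\deg(\nabla\mathcal{ L}^\circ,B_{H^0}(\theta,\epsilon),\theta)\cdot(-1)^{\mu}$ (cf. \cite{Bro, Skr1}), which is the key claim for $\deg_{\rm BS}$.

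The main obstacle is the admissibility of the homotopy $\mathcal{ H}$ in part (ii): one must keep the $(S)_+$ property for every $t\in[0,1]$ and obtain a uniform positive lower bound for $\|\mathcal{ H}(t,x)\|$ on the sphere $\|x\|=\epsilon$, which is exactly why the uniform hypothesis (D4*) is imposed in place of (D4). Everything else reduces to the reduction and product properties of the three degrees, the classical finite-dimensional Poincar\'e-Hopf identity, and Theorem~\ref{th:2.1} together with Corollary~\ref{cor:2.6}.
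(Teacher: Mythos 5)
Your overall plan is reasonable in spirit, but in both parts you defer the real technical work to properties that you assert rather than prove, and these are precisely the steps the paper has to establish carefully.

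For part (i), you invoke a ``Lyapunov--Schmidt reduction property'' for $\deg_{\rm FPR}$ and $\deg_{\rm BF}$, claiming that the degree factors as the parity of $A'(\theta)|_{X^\pm}$ times the Brouwer degree of $z\mapsto P^0A(z+h(z))$. This is not an off-the-shelf lemma for these degree theories; it is exactly what has to be proved. The paper does so by constructing an explicit three-stage homotopy $\Gamma_t$ on $[0,3]\times B_X(\theta,\rho)$ (interpolating $A$ first to a map that decouples the $H^0$-component via $h$, then to one linear in the $X^\pm$-direction, and finally to $I-K$ with $K$ compact and of finite rank), and then verifying in Claim~5.1 that $\Gamma_t$ has no zeros on $\bar B_X(\theta,\epsilon)\setminus\{\theta\}$ for any $t$. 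Only after this does the Benevieri--Furi degree reduce to a Leray--Schauder degree of $I-K$, and only then does the sign $(-1)^{\mu}$ appear, via $\deg_{\rm LS}(I-K,\cdot)=(-1)^{\dim H^-}\deg_{\rm LS}(\nabla\mathcal{L}^\circ,\cdot)$. Without this homotopy and the nonvanishing estimate (which uses the uniqueness of $h$, the construction of $S$ in (\ref{e:3.3}), and the inverse-function estimate (\ref{e:5.2})), your reduction claim is an assertion, not an argument.

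For part (ii), you take a genuinely different route: a direct affine homotopy from $\nabla\mathcal{L}$ to a model map $T_1(x)=P^+x-P^-x+\nabla\mathcal{L}^\circ(P^0x)$, followed by a product formula for $\deg_{\rm BS}$. This is much harder than what the paper does, and your execution is incomplete: you would need to show that $\mathcal{H}(t,\cdot)=(1-t)\nabla\mathcal{L}+tT_1$ is demicontinuous of class $(S)_+$ for \emph{every} $t\in[0,1]$ (which is not automatic for an affine combination of an $(S)_+$ map and a compact perturbation of the identity), that $\mathcal{H}(t,\cdot)\ne\theta$ on $\partial B_H(\theta,\epsilon)$ uniformly in $t$, and that a product/excision formula for the Browder--Skrypnik degree applies to your block decomposition; none of these is checked. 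The paper instead verifies only one thing --- that $\nabla\mathcal{L}$ itself is demicontinuous of class $(S)_+$ near $\theta$, using (D4$^\ast$), the estimate (\ref{e:5.7})--(\ref{e:5.8}) and density of $X$ in $H$ --- and then \emph{cites} Cingolani--Degiovanni \cite[Th.1.2]{CiDe}, which already gives $\deg_{\rm BS}(\nabla\mathcal{L},B_H(\theta,\epsilon),\theta)=\sum_q(-1)^q{\rm rank}\,C_q(\mathcal{L},\theta;{\bf K})$ directly. The remaining equalities then follow, exactly as you say, from Corollary~\ref{cor:2.6} and the finite-dimensional Poincar\'e--Hopf identity \cite[Th.8.5]{MaWi}. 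So in (ii) your chain of reasoning runs in the opposite direction from the paper's, placing the heavy lifting at the first equality; that lifting is not carried out.
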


The first equality in (ii) of Theorem~\ref{th:2.12} is a direct
consequence of \cite[Th.1.2]{CiDe} once we prove that the map
$\nabla\mathcal{ L}$ is a demicontinuous map of class $(S)_+$ near
$\theta\in H$.

Using Theorem~\ref{th:2.1} we also gave a handle body theorem under
the our weaker framework  in Theorem 2.8 of \cite{Lu3}.

\section{Proof of Theorem~\ref{th:2.1}}\label{sec:3}

We shall complete the proof of Theorem~\ref{th:2.1} by a series of
lemmas.

\begin{lemma}\label{lem:3.1}
Under the above assumption $({\rm S})$, for an open neighborhood $V$
of $\theta\in H$ let $\mathcal{ L}|_{V\cap X}:V\cap X\to\R$ be
continuous   and continuously directional differentiable
\footnote{The former can be derived from the latter with mean value
theorem \cite[Prop.3.3.3]{Schi}.} (with respect to the induced
topology on $V\cap H$ from $H$). Let $B(\theta)\in L_s(H)$ satisfy
the conditions {\rm (C1)} and {\rm (C2)}. Suppose that a map $A:
V^X\to X$ is strictly F-differentiable at $\theta$ and satisfies
$A'(\theta)=B(\theta)|_X$ and
$$
D\mathcal{ L}(x)(u)=(A(x), u)_H\quad\forall x\in V\cap
X\;\hbox{and}\; u\in X.
$$
 Then there exist a positive $r_0\in\R$, a
unique map $h:B_{H^0}(\theta, r_0)\to X^\pm$ such that
\begin{enumerate}
\item[\bf (i)] $h(\theta)=\theta$ and  $(I-P^0)A(z+ h(z))=\theta$
for all $z\in B_{H^0}(\theta, r_0)$;
\item[\bf (ii)] $h$
is also Lipschitz continuous,  strictly F-differentiable at
$\theta\in H^0$ and $h'(\theta)z=\theta$ for any $z\in H^0$.
\end{enumerate}
 Moreover,  the
function $\mathcal{ L}^\circ(z)=\mathcal{ L}(z+ h(z))$
 is  $C^{2-0}$,
$$
d\mathcal{ L}^\circ(z_0)(z)=(A(z_0+ h(z_0)), z)_H\quad\forall z_0\in
B_{H^0}(\theta, r_0),\; z\in H^0,
 $$
and $d\mathcal{ L}^\circ$ is strictly F-differentiable at
$\theta\in H^0$ and $d^2\mathcal{ L}^\circ(\theta)=0$. {\rm
(}Clearly, if $\theta$ is an isolated critical point of $\mathcal{
L}|_{V^X}$ (thus an isolated zero of $A$) then $\theta$ is
also an isolated critical point of $\mathcal{ L}^\circ$.{\rm )}
\end{lemma}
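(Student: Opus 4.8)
The plan is to run a Lyapunov--Schmidt reduction "by hand": since $A$ is only strictly Fr\'echet differentiable at $\theta$ (not $C^1$ near $\theta$), the classical implicit function theorem is unavailable and will be replaced by the Banach fixed point theorem, and the regularity of $\mathcal{L}^\circ$ will be read off directly from the gradient representation $D\mathcal{L}(x)(u)=(A(x),u)_H$ rather than from a chain rule.

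\textit{Step 1 (algebraic setup).} First I would record three consequences of the hypotheses. Since $\theta$ is a critical point of $\mathcal{L}|_{V^X}$ and $X$ is dense in $H$, one gets $A(\theta)=\theta$. Since $B(\theta)z=\theta$ for $z\in H^0=\mathrm{Ker}\,B(\theta)$, condition (C2) forces $H^0\subset X$, so $X=H^0\oplus X^\pm$ topologically and $z+y\in V^X$ for small $z\in H^0$, $y\in X^\pm$. Finally, by (C1) the self-adjoint $B(\theta)$ is bounded below on $H^\pm=(H^0)^\perp$, hence $B(\theta)|_{H^\pm}$ is an isomorphism of $H^\pm$; combining this with (C2) (to see it maps $X^\pm$ onto $X^\pm$) and the open mapping theorem shows that $L:=(I-P^0)A'(\theta)|_{X^\pm}=B(\theta)|_{X^\pm}\colon X^\pm\to X^\pm$ is a Banach space isomorphism, while $(I-P^0)A'(\theta)z=B(\theta)z=\theta$ for $z\in H^0$.

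\textit{Step 2 (construction of $h$).} Write $A(w)=A'(\theta)w+R(w)$; then $R(\theta)=\theta$ and strict Fr\'echet differentiability of $A$ at $\theta$ gives $\|R(w)-R(w')\|_X\le\varepsilon(r)\|w-w'\|_X$ for $w,w'\in\bar B_X(\theta,r)$ with $\varepsilon(r)\to0$ as $r\to0^+$. Using $(I-P^0)A'(\theta)(z+y)=Ly$, the reduced equation $(I-P^0)A(z+y)=\theta$ becomes the fixed point equation $y=T_z(y):=-L^{-1}(I-P^0)R(z+y)$. For $r_0,\rho$ small, $T_z$ is a $\tfrac12$-contraction of $\bar B_{X^\pm}(\theta,\rho)$ into itself, uniformly in $z\in B_{H^0}(\theta,r_0)$, so the Banach fixed point theorem produces the desired unique $h(z)\in\bar B_{X^\pm}(\theta,\rho)$, with $h(\theta)=\theta$. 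The standard two-point estimate $\|h(z)-h(z')\|_X\le\tfrac12\|h(z)-h(z')\|_X+C\varepsilon(r_0)\|z-z'\|$ yields Lipschitz continuity of $h$, and letting $r_0\to0$ it yields strict Fr\'echet differentiability of $h$ at $\theta$; the derivative is $h'(\theta)=-L^{-1}(I-P^0)A'(\theta)|_{H^0}=\theta$ by Step 1.

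\textit{Step 3 (the function $\mathcal{L}^\circ$, the crux).} Fix $z_0,z\in H^0$, put $w=z_0+h(z_0)$ and $\eta(t)=tz+\bigl(h(z_0+tz)-h(z_0)\bigr)\in X$, so $P^0\eta(t)=tz$ and $\eta_\pm(t):=(I-P^0)\eta(t)=h(z_0+tz)-h(z_0)\in H^\pm$ with $\|\eta_\pm(t)\|\le C|t|$. Since $\mathcal{L}|_{V^X}$ is $C^{1-0}$, the mean value equality gives
$$
\mathcal{L}^\circ(z_0+tz)-\mathcal{L}^\circ(z_0)=\int_0^1\bigl(A(w+s\eta(t)),\eta(t)\bigr)_H\,ds.
$$
Now split $\bigl(A(w+s\eta),\eta\bigr)_H=t\,(A(w+s\eta),z)_H+(A(w),\eta_\pm)_H+(A(w+s\eta)-A(w),\eta_\pm)_H$. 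The reduced equation $(I-P^0)A(w)=\theta$ says $A(w)\in H^0$, so $(A(w),\eta_\pm)_H=0$ by $H^0\perp H^\pm$; continuity of $A$ makes $\sup_s\|A(w+s\eta(t))-A(w)\|\to0$, so the last term is $o(|t|)$; and $t^{-1}\!\int_0^1 t(A(w+s\eta(t)),z)_H\,ds\to(A(w),z)_H$. Hence $d\mathcal{L}^\circ(z_0)(z)=(A(z_0+h(z_0)),z)_H$, i.e. $\nabla\mathcal{L}^\circ(z_0)=P^0A(z_0+h(z_0))=A(z_0+h(z_0))$. Since $z_0\mapsto z_0+h(z_0)$ is Lipschitz into $X$ and $A$ is locally Lipschitz near $\theta$ (from its strict Fr\'echet differentiability there), $\nabla\mathcal{L}^\circ$ is locally Lipschitz, so $\mathcal{L}^\circ\in C^{2-0}$ for $r_0$ small. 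For the strict differentiability of $d\mathcal{L}^\circ$ at $\theta$, expand $A(z+h(z))-A(z'+h(z'))=A'(\theta)\bigl((z-z')+(h(z)-h(z'))\bigr)+o\bigl(\|(z+h(z))-(z'+h(z'))\|_X\bigr)$; here $A'(\theta)(z-z')=B(\theta)(z-z')=\theta$, $\|A'(\theta)(h(z)-h(z'))\|_X=o(\|z-z'\|)$ by Step 2, and $\|(z+h(z))-(z'+h(z'))\|_X=\|z-z'\|+o(\|z-z'\|)$, so the whole difference is $o(\|z-z'\|)$, giving $d^2\mathcal{L}^\circ(\theta)=0$. Finally, if $\theta$ is an isolated zero of $A$ in $V^X$ but $\nabla\mathcal{L}^\circ(z)=A(z+h(z))=\theta$ for some $z\ne\theta$ arbitrarily close to $\theta$, then $z+h(z)$ (which is $\ne\theta$ because $z\in H^0$, $h(z)\in H^\pm$) is a zero of $A$ distinct from $\theta$ and converging to $\theta$, a contradiction; so $\theta$ is an isolated critical point of $\mathcal{L}^\circ$.

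The main obstacle is Step 3: because $h$ is merely Lipschitz (differentiable only at $\theta$), one cannot differentiate $z\mapsto\mathcal{L}(z+h(z))$ by the chain rule. The device is to use the integral mean value form of $\mathcal{L}(w+\eta)-\mathcal{L}(w)$ and to exploit the orthogonality $A(w)\in H^0\perp H^\pm$ — which is exactly the content of the reduced equation $(I-P^0)A(z+h(z))=\theta$ — together with continuity of $A$, to show that the contribution of the non-differentiable part $\eta_\pm(t)$ of the increment is $o(t)$; everything else is routine.
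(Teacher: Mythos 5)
Your proposal is correct and follows essentially the same route as the paper's proof: a Banach fixed-point/Lyapunov--Schmidt construction of $h$ driven by the strict Fr\'echet differentiability estimate at $\theta$, followed by a direct mean-value computation of $d\mathcal{L}^\circ$ that exploits the orthogonality $A(z_0+h(z_0))\in H^0\perp H^\pm$. In fact your contraction $T_z(y)=-L^{-1}(I-P^0)R(z+y)$ is algebraically the same map as the paper's $S(z,x)=-(B(\theta)|_{X^\pm})^{-1}(I-P^0)A(z+x)+x$. The only cosmetic differences are that you use the integral form of the fundamental theorem of calculus where the paper invokes the mean value theorem at a single intermediate point, and that for $d^2\mathcal{L}^\circ(\theta)=0$ you invoke $h'(\theta)=0$ to kill the $A'(\theta)(h(z)-h(z'))$ term, whereas the paper sidesteps this by noting $P^0B(\theta)(z_0+h(z_0))=0$ directly (since $B(\theta)z_0=0$ and $B(\theta)h(z_0)\in H^\pm$); both are valid.
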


\begin{proof}
 The proof method seems to be standard. For
completeness and the reader's conveniences we give its detailed
proof in two steps.

\noindent{\bf Step 1}.  Since $B(\theta)\in L_s(H)$ and
$A'(\theta)=B(\theta)|_X$ (so $B(\theta)(X)\subset X$), using
(C1)-(C2) it was proved in \cite{JM} that $B(\theta)(X^\pm)\subset
X^\pm$ and $B(\theta)|_{X^\pm}: X^\pm\to X^\pm$ is an isomorphism.
({\it Note}: It is where the assumption (C1) is used to prove that
the range $R(B(\theta))$ of $B(\theta)$ is closed in $H$ by
Proposition~\ref{prop:B.3}.)

Since $A$ is strictly F-differentiable at $\theta\in X$. It follows
that
\begin{equation}\label{e:3.1}
\|A(x_1)- B(\theta)x_1-A(x_2)+ B(\theta)x_2\|_X\le K_r\|x_1-x_2\|_X
\end{equation}
for all $x_1, x_2\in B_X(\theta, r)$ with constant $K_r\to 0$ as
$r\to 0$. (See  the proof of \cite[Cor.3]{IoSch}). In particular,
this implies that $A$ is continuous in $B_X(\theta, r)$. Let
\begin{equation}\label{e:3.2}
C_1=\|(B(\theta)|_{X^\pm})^{-1}\|_{L(X^\pm,
X^\pm)}\quad\hbox{and}\quad C_2=\|I-P^0\|_{L(X, X^\pm)}.
\end{equation}
Fix a small $r_1>0$ so that $C_1C_2K_{2r_1}<1/2$. Consider the map
\begin{equation}\label{e:3.3}
S: B_{H^0}(\theta, r_1)\times (B_X(\theta, r_1)\cap X^\pm)\to X^\pm
\end{equation}
given by $S(z,x)=-(B(\theta)|_{X^\pm})^{-1}(I-P^0)A(z+x)+ x$. Let
$z_1, z_2\in B_{H^0}(\theta, r_1)$ and $x_1, x_2\in B_X(\theta,
r_1)\cap X^\pm$. Noting  $B(\theta)x_i\in X^\pm$ and
$B(\theta)z_i=0$, $i=1,2$, we get
\begin{eqnarray}\label{e:3.4}
&&\|S(z_1, x_1)-S(z_2, x_2)\|_{X^\pm}\nonumber\\
&&\le C_1\cdot\|(I-P^0)A(z_1+x_1)- B(\theta)x_1
-(I-P^0)A(z_2+x_2)+ B(\theta)x_2\|_{X^\pm}\nonumber\\
&&= C_1\cdot\|(I-P^0)A(z_1+x_1)- (I-P^0)B(\theta)(z_1+ x_1)\nonumber\\
&&\hspace{20mm}-(I-P^0)A(z_2+x_2)
 +(I-P^0)B(\theta)(z_2+ x_2)\|_{X^\pm}\nonumber\\
&&\le C_1C_2\cdot \|A(z_1+x_1)- B(\theta)(z_1+ x_1)
-A(z_2+x_2)+ B(\theta)(z_2+ x_2)\|_{X}\nonumber\\
&&\le C_1C_2K_{2r_1}\cdot \|z_1+ x_1-z_2-
x_2\|_{X}\nonumber\\
&&<\frac{1}{2}\|z_1+ x_1-z_2-x_2\|_X\quad\hbox{if}\quad (z_1,x_1)\ne
(z_2,x_2).
\end{eqnarray}
Here the first two inequalities come from (\ref{e:3.2}), and the
third one is due to (\ref{e:3.1}). In particular, for any $z\in
B_{H^0}(\theta, r_1)$ and $x_1, x_2\in B_X(\theta, r_1)\cap X^\pm$,
it holds that
$$
\|S(z, x_1)-S(z,
x_2)\|_{X^\pm}<\frac{1}{2}\|x_1-x_2\|_X\quad\hbox{if}\;x_1\ne x_2.
$$
Moreover, since $A(x)\to \theta$ as $x\to\theta$ we can choose
$r_0\in (0, r_1)$ such that
\begin{eqnarray*}
\|S(z,\theta)\|_{X^\pm}&=&\|(B(\theta)|_{X^\pm})^{-1}(I-P^0)A(z)\|_{X^\pm}\\
&\le& C_1C_2\|A(z)\|_X<r_1(1-1/2)=\frac{r_1}{2}
\end{eqnarray*}
for any $z\in B_{H^0}(\theta, r_0)$. By Theorem 10.1.1 in
\cite[\S10.1]{Di} there exists a unique  map $h:B_{H^0}(\theta,
r_0)\to B_X(\theta, r_1)\cap X^\pm$ such that $S(z, h(z))=h(z)$ or
equivalently
\begin{equation}\label{e:3.5}
 (I-P^0)A(z+ h(z))=\theta\quad
\forall z\in B_{H^0}(\theta, r_0).
\end{equation}
Clearly, $h(\theta)=\theta$. From this and (\ref{e:3.4}) it follows
that
\begin{equation}\label{e:3.6}
\|h(z_1)-h(z_2)\|_{X}\le 2\|z_1-z_2\|_X\quad\forall z_1, z_2\in
B_{H^0}(\theta, r_0).
\end{equation}
That is, $h$ is Lipschitz  continuous.

For small $z_i\in B_{H^0}(\theta, r_0)$ set $x_i=h(z_i)$ in
(\ref{e:3.4}), $i=1,2$. We get
\begin{eqnarray}\label{e:3.7}
&&\quad\|h(z_1)-h(z_2)\|_{X^\pm}\nonumber\\
&&=\|S(z_1, h(z_1))-S(z_2, h(z_2))\|_{X^\pm}\nonumber\\
&&\le C_1C_2\cdot \|A(z_1 + h(z_1))- B(\theta)(z_1+ h(z_1))\nonumber\\
&&\hspace{20mm}-A(z_2 + h(z_2))+ B(\theta)(z_2+ h(z_2))\|_{X}.
\end{eqnarray}
By (\ref{e:3.1}), for any $\varepsilon>0$ there exists a number
$\delta>0$ such that
\begin{eqnarray}\label{e:3.8}
\|A(y_2)-A'(\theta)(y_2)- A(y_1) + A'(\theta)(y_1) \|_{X}
\le\varepsilon\|y_2-y_1\|_X
\end{eqnarray}
for $y_1, y_2\in B_X(\theta, \delta)$. Let us choose $\delta_0\in
(0, \delta)$ such that $z+ h(z)\in B_X(\theta, \delta)$ for any
$z\in B_{H^0}(\theta, \delta_0)$. From (\ref{e:3.7})- (\ref{e:3.8})
and (\ref{e:3.6}) it follows that
$$
\|h(z_2)- h(z_1)\|_{X^\pm}\le 3C_1C_2\varepsilon \|z_2-z_1\|_X
\quad\forall z_1, z_2\in B_{H^0}(\theta, \delta_0).
$$
Hence $h$ is strictly F-differentiable at $\theta\in H^0$ and
$h'(\theta)=0$.

 \noindent{\bf Step 2}. Let us prove the remainder ``Moreover'' part. Since $\mathcal{
L}|_{V\cap X}$ is continuous and continuously directional
differentiable (with respect to the induced topology on $V\cap H$ from $H$),
 for $z_0\in B_{H^0}(\theta, r_0)$, $z\in H^0$ and
$t\in\R\setminus\{0\}$ with $z_0+ tz\in B_{H^0}(\theta, r_0)$, by
the mean value theorem we have $s\in (0, 1)$ such that
\begin{eqnarray}\label{e:3.9}
&&\mathcal{ L}^\circ(z_0+ tz)-\mathcal{ L}^\circ(z_0)\nonumber\\
&=&D\mathcal{ L}(z_{s,t})(tz + h(z_0+ tz)-h(z_0))\nonumber\\
&=&(A(z_{s,t}), tz + h(z_0+ tz)-h(z_0))_H\nonumber\\
&=&(A(z_{s,t}), tz)_H + ((I-P^0)A(z_{s,t}), h(z_0+ tz)-h(z_0))_H
\end{eqnarray}
because $h(z_0+ tz)-h(z_0)\in X^\pm\subset H^\pm$, where
$z_{s,t}=z_0+ h(z_0)+ s[tz+ h(z_0+ tz)-h(z_0)]$. Note that
(\ref{e:3.6}) implies
$$
\|h(z_0+ tz)-h(z_0)\|_H\le\|h(z_0+ tz)-h(z_0)\|_X\le 2|t|r_0.
$$
Let $t\to 0$, we have
\begin{eqnarray*}
&&\left|\frac{((I-P^0)A(z_{s,t}), h(z_0+
tz)-h(z_0))_H}{t}\right|\\
&\le&\frac{\|(I-P^0)A(z_{s,t})\|_H\cdot\|h(z_0+
tz)-h(z_0)\|_H}{|t|}\\
&\le& 2r_0\|(I-P^0)A(z_{s,t})\|_{X^\pm}\\
&\to& 2r_0\|(I-P^0)A(z_0+ h(z_0))\|_{X^\pm}=0
\end{eqnarray*}
because of (\ref{e:3.5}) and the continuity of $A$ in
$B_X(\theta,r)$. From this and (\ref{e:3.9}) it follows that
$$
D\mathcal{ L}^\circ(z_0)(z)=\lim_{t\to 0}\frac{\mathcal{
L}^\circ(z_0+ tz)-\mathcal{ L}^\circ(z_0) }{t}= (A(z_0+ h(z_0)),
z)_H.
$$
Namely, $\mathcal{ L}^\circ$ is G\^{a}teaux differentiable at $z_0$.
Clearly, $z\mapsto D\mathcal{ L}^\circ(z_0)(z)$ is linear and
continuous, i.e. $\mathcal{ L}^\circ$ has a linear bounded
G\^{a}teaux derivative at $z_0$, $D\mathcal{ L}^\circ(z_0)$, given
by $D\mathcal{ L}^\circ(z_0)z=(A(z_0+ h(z_0)), z)_H=(P^0A(z_0+
h(z_0)), z)_H\;\forall z\in H^0$.

Note that  $B(\theta)|_{H^0}=0$, $B(\theta)(H^\pm)\subset H^\pm$ and
$h(z_0), h(z_0')\in X^\pm\subset H^\pm$ for any $z_0, z_0'\in
B_{H^0}(\theta, r_0)$. We have
\begin{eqnarray*}
(P^0B(\theta)(z_0+ h(z_0)), z)_H=(P^0B(\theta)(z'_0+ h(z'_0)),
z)_H=0\quad\forall z\in H^0.
\end{eqnarray*}
From this  it easily follows that
\begin{eqnarray}\label{e:3.10}
&&|D\mathcal{ L}^\circ(z_0)z- D\mathcal{ L}^\circ(z'_0)z|\nonumber\\
&=&\left|\bigl(P^0A(z_0+ h(z_0))-P^0A(z'_0+ h(z'_0)), z\bigr)_H\right|\nonumber\\
&=&\bigl|\bigl(P^0A(z_0+ h(z_0))- P^0B(\theta)(z_0+ h(z_0)), z\bigr)_H\nonumber\\
&&-\bigl(P^0A(z'_0+ h(z'_0))- P^0B(\theta)(z'_0+ h(z'_0)), z\bigr)_H\bigr|\nonumber\\
&\le &\|P^0A(z_0+ h(z_0))- P^0B(\theta)(z_0+ h(z_0))\nonumber\\
&&-P^0A(z'_0+h(z'_0))+ P^0B(\theta)(z'_0+ h(z'_0))\|_H\cdot\|z\|_H\nonumber\\
&\le &\|A(z_0+ h(z_0))- B(\theta)(z_0+ h(z_0))\nonumber\\
&&- A(z'_0+h(z'_0))+ B(\theta)(z'_0+ h(z'_0))\|_H\cdot\|z\|_H\nonumber\\
&\le &\|A(z_0+ h(z_0))- B(\theta)(z_0+ h(z_0))\nonumber\\
&&- A(z'_0+h(z'_0))+ B(\theta)(z'_0+ h(z'_0))\|_X\cdot\|z\|_H\\
&\le &K_{r_0+ r_1}\|z_0+ h(z_0)- z'_0- h(z'_0)\|_X\cdot\|z\|_H\nonumber\\
&\le &3K_{r_0+ r_1}\|z_0- z'_0\|_X\cdot\|z\|_H\nonumber
\end{eqnarray}
because of (\ref{e:3.1}) and (\ref{e:3.6}). Hence $z_0\mapsto
D\mathcal{ L}^\circ(z_0)$ is continuous and
\begin{eqnarray*}
\|D\mathcal{ L}^\circ(z_0)- D\mathcal{
L}^\circ(z'_0)\|_{L(H^0,\R)}\le3K_{r_0+ r_1}\|z_0- z'_0\|_X
\end{eqnarray*}
for every $z_0, z_0'\in B_X(\theta, r_0)\cap H^0$.
({\it Note}: Since $H$ and $X$ induce equivalent norms on $H^0$ and thus on
$L(H^0, \R)$, the alternative cannot lead to any troubles for the arguments.)
 By \cite[Th.2.1.13]{Ber}, this implies that $\mathcal{ L}^\circ$ is Fr\'echet differentiable at
$z_0$ and its Fr\'echet differential $d\mathcal{
L}^\circ(z_0)=D\mathcal{ L}^\circ(z_0)$ is Lipschitz continuous in
$z_0\in B_{H^0}(\theta, r_0)$.

Now for any $\varepsilon>0$ let $\delta>0$ such that (\ref{e:3.8})
holds. For $\delta_0\in (0, \delta)$ below (\ref{e:3.8}), by
(\ref{e:3.10}) and (\ref{e:3.6}) we obtain
\begin{eqnarray*}
&&|d\mathcal{ L}^\circ(z_0)z- d\mathcal{ L}^\circ(z'_0)z|\nonumber\\
&\le &\|A(z_0+ h(z_0))- B(\theta)(z_0+ h(z_0))\nonumber\\
&&- A(z'_0+h(z'_0))+ B(\theta)(z'_0+ h(z'_0))\|_X\cdot\|z\|_H\\
&\le &3\varepsilon\|z_0-z'_0\|_X\cdot\|z\|_H
\end{eqnarray*}
and hence $\|d\mathcal{ L}^\circ(z_0)- d\mathcal{
L}^\circ(z'_0)\|_{L(H^0,\R)}\le 3\varepsilon\|z_0-z'_0\|_X$ for any
$z_0, z'_0\in B_{H^0}(\theta, \delta_0)$. This shows that
$d\mathcal{ L}^\circ$ is strictly F-differentiable at $\theta\in
H^0$ and $d^2\mathcal{ L}^\circ(\theta)=0$.  Lemma~\ref{lem:3.1} is
proved.
 \end{proof}

 Since $\|\cdot\|$ and $\|\cdot\|_X$ are
equivalent norms on $H^0$ we may choose $\delta>0$ so small that
$\bar B_H(\theta, \delta)\cap H^0\subset B_X(\theta, r_0)\cap H^0$
and that
\begin{equation}\label{e:3.11}
z+ h(z)+ u\in V\quad\forall (z,u)\in (\bar B_H(\theta, \delta)\cap
H^0)\times (\bar B_H(\theta, \delta)\cap H^\pm).
\end{equation}

\begin{remark}\label{lem:3.2}
{\rm If $A\in C^1(V^X, X)$, we can directly apply the implicit
function theorem \cite[Th.3.7.2]{Schi} to $C^1$-map
$$
T:(H^0\cap V)\times (X^\pm\cap V)\to X^\pm,\; (z,x)\mapsto
(I-P^0)A(z+ x),
$$
and get that  the maps $h$ and $\mathcal{ L}^\circ$ are $C^1$ and
$C^2$, respectively. Precisely,
$$
h'(z)=-\bigl[(I-P^0)A'(z+ h(z))|_{X^\pm}\bigr]^{-1}(I-P^0)A'(z+
h(z))|_{H^0}.
$$
\hfill$\Box$\vspace{2mm}}
\end{remark}

 Define a continuous map $F:\bar B_{H^0}(\theta, \delta)\times B_{H^\pm}(\theta, \delta)\to\R$ as
\begin{equation}\label{e:3.12}
 F(z, u)=\mathcal{ L}(z+ h(z)+ u)-\mathcal{ L}(z+ h(z)).
\end{equation}
 Then for each
$z\in \bar B_{H^0}(\theta, \delta)$ the map $F(z,\cdot)$ is
continuously directional differentiable in $B_{H^\pm}(\theta,
\delta)$, and the directional derivative of it at $u\in
B_{H^\pm}(\theta, \delta)$ in any direction $v\in H^\pm$ is given by
\begin{eqnarray}\label{e:3.13}
D_2F(z,u)(v)&=&(\nabla{\mathcal L}(z+ h(z)+ u), v)_H\nonumber\\
&=&((I-P^0)\nabla{\mathcal L}(z+ h(z)+ u), v)_H.
\end{eqnarray}
 It follows from this and (\ref{e:3.5}) that
\begin{eqnarray}\label{e:3.14}
F(z, \theta)=0\quad\hbox{and}\quad D_2F(z,
\theta)(v)=0\;\forall v\in H^\pm.
\end{eqnarray}

Now we wish to apply Theorem~\ref{th:A.1} to the function $F$. In
order to check that $F$ satisfies the conditions in
Theorem~\ref{th:A.1} we need two lemmas.

\begin{lemma}\label{lem:3.3}
 There exists a function $\omega:V\cap X\to [0, \infty)$  such that $\omega(x)\to 0$ as $x\in V\cap X$ and $\|x\|\to
0$, and that
$$
|(B(x)u, v)_H- (B(\theta)u, v)_H |\le \omega(x) \|u\|\cdot\|v\|
$$
for any $x\in V\cap X$,  $u\in H^0\oplus H^-$ and $v\in H$.
\end{lemma}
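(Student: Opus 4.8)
The plan is to exploit the splitting $B(x)=P(x)+Q(x)$ furnished by hypothesis (D) together with the fact that $N:=H^0\oplus H^-$ is a \emph{finite-dimensional} subspace of $X$ (recorded in the discussion preceding Theorem~\ref{th:2.1}). The key point is that the desired inequality would fail if $u$ were allowed to range over all of $H$: hypothesis (D2) only gives strong (pointwise) convergence $P(x)u\to P(\theta)u$, not convergence in operator norm. Restricting $u$ to the fixed finite-dimensional space $N$ is precisely what lets one promote this pointwise statement to a uniform estimate; $Q$, on the other hand, is already norm-continuous at $\theta$ by (D3).

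Concretely, first I would fix an orthonormal basis $e_1,\dots,e_m$ of $N$ with respect to $(\cdot,\cdot)_H$ and set
$$\omega(x):=\sum_{i=1}^m\|P(x)e_i-P(\theta)e_i\|+\|Q(x)-Q(\theta)\|_{L(H)}\qquad(x\in V\cap X).$$
Then $\omega\ge 0$, and I would check that $\omega(x)\to 0$ as $x\in V\cap X$ with $\|x\|\to 0$: each of the finitely many terms $\|P(x)e_i-P(\theta)e_i\|$ tends to $0$ by (D2), and $\|Q(x)-Q(\theta)\|_{L(H)}\to 0$ by (D3), so a finite sum of null quantities is null. (Here I would note that (D2)–(D3) are phrased along sequences $\|x_k\|\to 0$, but $V\cap X$ with the norm of $H$ is a metric space, on which sequential convergence coincides with the ``$x\to\theta$ in $H$'' limit we need.)

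Finally, for $x\in V\cap X$, $u\in N$ and $v\in H$, write $u=\sum_{i=1}^m c_ie_i$ with $c_i=(u,e_i)_H$, so that $\sum_i c_i^2=\|u\|^2$ and hence $|c_i|\le\|u\|$ for each $i$. Using the triangle inequality and $B=P+Q$ together with $\|(Q(x)-Q(\theta))u\|\le\|Q(x)-Q(\theta)\|_{L(H)}\|u\|$,
$$\|(B(x)-B(\theta))u\|\le\sum_{i=1}^m|c_i|\,\|P(x)e_i-P(\theta)e_i\|+\|Q(x)-Q(\theta)\|_{L(H)}\|u\|\le\omega(x)\|u\|,$$
and Cauchy--Schwarz then gives $|(B(x)u,v)_H-(B(\theta)u,v)_H|\le\|(B(x)-B(\theta))u\|\,\|v\|\le\omega(x)\|u\|\,\|v\|$, which is the assertion. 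Note $B(x)$ is used only as an element of $L_s(H)$, so it is irrelevant whether the $e_i$ are viewed in $X$ or in $H$.

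The main obstacle is conceptual rather than computational: recognizing that the lemma hinges entirely on the finite-dimensionality of $H^0\oplus H^-$, which reduces the verification to finitely many applications of (D2) plus one application of (D3). Once this is seen, the remaining steps are routine triangle-inequality and Cauchy--Schwarz estimates with no delicate points.
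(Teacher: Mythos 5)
Your proof is correct and takes essentially the same route as the paper's: split $B=P+Q$, exploit the finite-dimensionality of $H^0\oplus H^-$ to reduce (D2) to finitely many basis vectors, invoke (D3) for the compact part, and close with Cauchy--Schwarz. The only difference is cosmetic — you use an $H$-orthonormal basis of $N$ so that $|c_i|\le\|u\|$ directly, whereas the paper uses a normalized (not necessarily orthogonal) basis and compensates with an equivalence constant $C_4$ and a Cauchy--Schwarz estimate on the coefficient sums; both yield the same conclusion with a slightly different explicit $\omega$.
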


\begin{proof}
 Note that  the condition (D2) can be equivalently expressed as:
{\it For any $u\in H$ it holds that $\|P(x)u-P(\theta)u\|\to 0$ as
$x\in V\cap X$ and $\|x\|\to 0$}. Let $e_1,\cdots, e_m$ be a basis
of $H^0\oplus H^-$ with $\|e_i\|=1$, $i=1,\cdots, m$. Then
\begin{eqnarray*}
&&|(B(x)e_i, v)_H- (B(\theta)e_i, v)_H | \\
&\le &|(P(x)e_i-P(\theta)e_i, v)_H|+ |([Q(x)-Q(\theta)]e_i, v)_H|\\
&\le &\|P(x)e_i- P(\theta)e_i\|\cdot\|v\|+
\|Q(x)-Q(\theta)\|\cdot\|v\|.
\end{eqnarray*}
Since $H^0\oplus H^-$ is of finite dimension, there exists a
constant $C_4>0$ such that
$$
\Bigl(\sum^m_{i=1}|t_i|^2\Bigr)^{1/2}\le C_4\|u\|\quad\forall
u=\sum^m_{i=1}t_ie_i\in H^0\oplus H^-.
$$
Hence for any $u=\sum^m_{i=1}t_ie_i\in H^0\oplus H^-$ we have
\begin{eqnarray*}
&&|(B(x)u, v)_H- (B(\theta)u, v)_H | \\
&\le &\sum^m_{i=1}|t_i|\|P(x)e_i- P(\theta)e_i\|\cdot\|v\|+
\sum^m_{i=1}|t_i|\|Q(x)-Q(\theta)\|\cdot\|v\|\\
&\le &\left(\sum^m_{i=1}\|P(x)e_i-
P(\theta)e_i\|^2\right)^{1/2}\left(\sum^m_{i=1}|t_i|^2\right)^{1/2}\|v\|\\
&&\quad +
\sqrt{m}\left(\sum^m_{i=1}|t_i|^2\right)^{1/2}\|Q(x)-Q(\theta)\|\cdot\|v\|\\
&\le &\hspace{-3mm}
\left[C_4
\left(\sum^m_{i=1}\|P(x)e_i-P(\theta)e_i\|^2\right)^{1/2}
+ C_4\sqrt{m}\|Q(x)-Q(\theta)\| \right]\|u\|\|v\|\\
&=&\omega(x)\|u\|\|v\|,
\end{eqnarray*}
 where
$$
\omega(x)=\hspace{-2mm}\left[C_4 \left(\sum^m_{i=1}\|P(x)e_i-
P(\theta)e_i\|^2\right)^{1/2} + C_4\sqrt{m}\|Q(x)-Q(\theta)\|
\right]\to 0
$$
as $x\in V\cap X$ and $\|x\|\to 0$ (because of the conditions (D2)
and (D3)). \end{proof}

When $H^0=\{\theta\}$ under the stronger assumptions the following
lemma was proved in \cite{Skr, Va1}. We also give proof of it for
clearness.

\begin{lemma}\label{lem:3.4}
There exists a  small neighborhood $U\subset V$ of $\theta$ in $H$
and a number $a_1\in (0, 2a_0]$ such that for any $x\in U\cap X$,
\begin{enumerate}
\item[{\rm (i)}] $(B(x)u, u)_H\ge a_1\|u\|^2\;\forall u\in H^+$;
\item[{\rm (ii)}] $|(B(x)u,v)_H|\le\omega(x)\|u\|\cdot\|v\|\;\forall u\in H^+, \forall v\in
H^-\oplus H^0$;
\item[{\rm (iii)}] $(B(x)u,u)_H\le-a_0\|u\|^2\;\forall u\in H^-$.
\end{enumerate}
\end{lemma}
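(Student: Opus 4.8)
The plan is to dispatch parts (ii) and (iii) directly from Lemma~\ref{lem:3.3} together with (\ref{e:2.1})--(\ref{e:2.2}) and the self-adjointness of the operators $B(x)$, and to devote the real effort to part (i), which I would prove by a weak-compactness argument by contradiction exploiting the decomposition in condition (D). For (iii): if $u\in H^-$, write $(B(x)u,u)_H=(B(\theta)u,u)_H+([B(x)-B(\theta)]u,u)_H$; the first term is $\le-2a_0\|u\|^2$ by (\ref{e:2.2}), and since $u\in H^0\oplus H^-$, Lemma~\ref{lem:3.3} (with $v=u$) bounds the second term by $\omega(x)\|u\|^2$ in absolute value; choosing a neighborhood $U_3\subset V$ of $\theta$ with $\omega\le a_0$ on $U_3\cap X$ gives $(B(x)u,u)_H\le-a_0\|u\|^2$ there. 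For (ii): if $u\in H^+$ and $v\in H^0\oplus H^-$, self-adjointness gives $(B(x)u,v)_H=(B(x)v,u)_H$, and since $(B(\theta)v,u)_H=0$ by (\ref{e:2.1}), Lemma~\ref{lem:3.3} yields $|(B(x)u,v)_H|=|(B(x)v,u)_H-(B(\theta)v,u)_H|\le\omega(x)\|u\|\,\|v\|$ for every $x\in V\cap X$ (no shrinking needed).

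For (i), the difficulty is that the perturbation $B(x)-B(\theta)$ is controlled only strongly (via (D2) for $P$), not in operator norm, while $Q(\theta)$ may have large norm, so the naive estimate fails. I would argue: suppose there is no pair $(U_1,a_1')$ with $(B(x)u,u)_H\ge a_1'\|u\|^2$ for all $x\in U_1\cap X$ and $u\in H^+$; then, testing with $U_1=B_H(\theta,1/n)\cap V$ and $a_1'=1/n$, one gets $x_n\in V\cap X$ with $\|x_n\|\to 0$ and $u_n\in H^+$, $\|u_n\|=1$, with $(B(x_n)u_n,u_n)_H<1/n$. Pass to a subsequence with $u_n\rightharpoonup u_*$ weakly; since $H^+$ is a closed subspace, $u_*\in H^+$. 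Split $(B(x_n)u_n,u_n)_H=(P(x_n)u_n,u_n)_H+(Q(x_n)u_n,u_n)_H$. Because $Q(\theta)$ is compact, $Q(\theta)u_n\to Q(\theta)u_*$ in norm, and together with (D3) this gives $(Q(x_n)u_n,u_n)_H\to(Q(\theta)u_*,u_*)_H$. For the $P$-term I would combine two lower bounds: from (D4) applied to the sequence $\{x_n\}$, $(P(x_n)u_n,u_n)_H\ge C_0$ for $n\ge n_0$; and from positive-definiteness of $P(x_n)$ (expanding $(P(x_n)(u_n-u_*),u_n-u_*)_H\ge 0$ and using self-adjointness) together with (D2) ($P(x_n)u_*\to P(\theta)u_*$ in norm, paired against $u_n\rightharpoonup u_*$ and against $u_*$), one obtains $\liminf_n(P(x_n)u_n,u_n)_H\ge(P(\theta)u_*,u_*)_H$. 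Hence $\liminf_n(B(x_n)u_n,u_n)_H\ge\max\{C_0,(P(\theta)u_*,u_*)_H\}+(Q(\theta)u_*,u_*)_H$, which is $\ge(B(\theta)u_*,u_*)_H\ge 2a_0\|u_*\|^2>0$ when $u_*\ne\theta$, and is $\ge C_0>0$ when $u_*=\theta$; in either case this contradicts $(B(x_n)u_n,u_n)_H<1/n\to 0$. This produces $(U_1,a_1')$ for (i); replacing $a_1'$ by $a_1:=\min\{a_1',2a_0\}$ and setting $U:=U_1\cap U_3\cap V$ gives the single $U$ and $a_1\in(0,2a_0]$ claimed, with (ii)--(iii) also holding on $U\cap X$.

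The step I expect to be the crux is the lower semicontinuity estimate $\liminf_n(P(x_n)u_n,u_n)_H\ge(P(\theta)u_*,u_*)_H$: strong operator convergence $P(x_n)\to P(\theta)$ does not in general survive the pairing against the \emph{moving} weakly convergent sequence $u_n$, so the positivity of $P(x_n)$ must be used to convert the estimate into a convexity inequality before letting $n\to\infty$. The remaining delicate point, handled by the uniform coercivity (D4), is the case $u_n\rightharpoonup\theta$, where the weak-limit bound degenerates and only the genuine positive lower bound $C_0$ on $(P(x_n)u_n,u_n)_H$ (while $(Q(x_n)u_n,u_n)_H\to 0$) closes the argument.
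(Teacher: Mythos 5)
Your proof is correct and follows essentially the same approach as the paper: (ii) and (iii) from Lemma~\ref{lem:3.3} with (\ref{e:2.1})--(\ref{e:2.2}), and (i) by contradiction via weak compactness, using (D4) for coercivity of $P(x_n)$, compactness of $Q(\theta)$ with (D3) for the $Q$-term, and weak lower semicontinuity of $u\mapsto (P(x_n)u,u)_H$ via positivity and (D2). The only organizational difference is that you handle $u_*=\theta$ and $u_*\ne\theta$ in one dichotomy via the $\max$-bound, whereas the paper first proves $u_0\ne\theta$ using (D4) and then derives the contradiction from $(B(\theta)u_0,u_0)_H\le\beta\le 0$; these are equivalent rearrangements of the same argument.
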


\begin{proof}
 {\bf (i)} By (\ref{e:2.2}), we have
\begin{equation}\label{e:3.15}
(B(\theta)u, u)_H\ge 2a_0\|u\|^2\;\forall u\in H^+.
\end{equation}
Assume by  contradiction  that (i) does not hold. Then there exist
sequences $\{x_n\}\subset V\cap X$ with $\|x_n\|\to 0$, and
$\{u_n\}\in H^+$ with $\|u_n\|=1\;\forall n$, such that
$$(B(x_n)u_n,
u_n)_H<1/n\;\forall n=1,2,\cdots.
$$
Passing a subsequence, we may
assume that
\begin{equation}\label{e:3.16}
(B(x_n)u_n, u_n)_H\to\beta\le 0\;\hbox{as}\;n\to\infty,
\end{equation}
and that $u_n\rightharpoonup u_0$ in $H$. We claim: $u_0\ne
\theta$. In fact, by the condition (D4) we have constants $C_0>0$
and $n_0\in\N$ such that $(P(x_n)u, u)\ge C_0\|u\|^2$ for any $u\in
H$ and $n\ge n_0$. Hence
\begin{eqnarray}\label{e:3.17}
(B(x_n)u_n, u_n)_H&=&(P(x_n)u_n, u_n)_H + (Q(x_n)u_n, u_n)_H\nonumber\\
&\ge & C_0+ (Q(x_n)u_n, u_n)_H\quad\forall n>n_0.
\end{eqnarray}
 Moreover, a direct computation gives
\begin{eqnarray}\label{e:3.18}
&&\!\!\!\!\!\quad |(Q(x_n)u_n, u_n)_H-(Q(\theta)u_0, u_0)_H|\\
&&\!\!\!\!\!=|((Q(x_n)-Q(\theta))u_n, u_n)_H+ (Q(\theta)u_n, u_n)_H-(Q(\theta)u_0, u_n)_H\nonumber\\
&&\hspace{70mm}+ (Q(\theta)u_0, u_n-u_0)_H|\nonumber\\
&&\!\!\!\!\!\le \|Q(x_n)-Q(\theta)\|\cdot\|u_n\|^2+
\|Q(\theta)u_n-Q(\theta)u_0\|\cdot\|u_n\|\nonumber\\
&&\hspace{40mm}+
|(Q(\theta)u_0, u_n-u_0)_H|\nonumber\\
&&\!\!\!\!\!\le \|Q(x_n)-Q(\theta)\|+ \|Q(\theta)u_n-Q(\theta)u_0\|+
|(Q(\theta)u_0, u_n-u_0)_H|.\nonumber
\end{eqnarray}
Since $u_n\rightharpoonup u_0$ in $H$,
$\lim_{n\to\infty}|(Q(\theta)u_0, u_n-u_0)_H|=0$. We have also
\begin{equation}\label{e:3.19}
\lim_{n\to\infty}\|Q(\theta)u_n-Q(\theta)u_0\|=0
\end{equation}
by the  compactness  of $Q(\theta)$, and
\begin{equation}\label{e:3.20}
\lim_{n\to\infty}\|Q(x_n)-Q(\theta)\|=0
\end{equation}
 by the condition (D3).
Hence  (\ref{e:3.18})-(\ref{e:3.20}) give
\begin{equation}\label{e:3.21}
\lim_{n\to\infty}(Q(x_n)u_n, u_n)_H=(Q(\theta)u_0, u_0)_H.
\end{equation}
Then this and (\ref{e:3.16})-(\ref{e:3.17}) yield
$$
0\ge \beta=\lim_{n\to\infty}(B(x_n)u_n, u_n)_H\ge C_0 +
(Q(\theta)u_0,u_0)_H.
$$
This implies $u_0\ne\theta$. Note that $u_0$ also sits in $H^+$.

As above, using (\ref{e:3.20}) we derive
\begin{eqnarray}\label{e:3.22}
&&|(Q(x_n)u_0, u_n)_H- (Q(\theta)u_0, u_0)_H|\\
&\le& |(Q(x_n)u_0, u_n)_H- (Q(\theta)u_0, u_n)_H|+ |(Q(\theta)u_0,
u_n)_H- (Q(\theta)u_0,
u_0)_H|\nonumber\\
&\le&  \|Q(x_n)-Q(\theta)\|\cdot\|u_0\|+ |(Q(\theta)u_0, u_n-u_0)_H
|\to 0.\nonumber
\end{eqnarray}
 Note that
\begin{eqnarray*}
&&(B(x_n)(u_n-u_0), u_n-u_0)_H\\
&=&(P(x_n)(u_n-u_0), u_n-u_0)_H + (Q(x_n)(u_n-u_0), u_n-u_0)_H\\
&\ge& C_0\|u_n-u_0\|^2+ (Q(x_n)(u_n-u_0), u_n-u_0)_H\\
&\ge& (Q(x_n)u_n, u_n)_H-2(Q(x_n)u_0, u_n)_H+ (Q(\theta)u_0, u_0)_H.
\end{eqnarray*}
It follows from this and (\ref{e:3.21})-(\ref{e:3.22}) that
\begin{eqnarray}\label{e:3.23}
&&\liminf_{n\to\infty}(B(x_n)(u_n-u_0),
u_n-u_0)_H\nonumber\\
&&\ge\lim_{n\to\infty}(Q(x_n)(u_n-u_0), u_n-u_0)_H = 0.
\end{eqnarray}
Note that $u_n\rightharpoonup u_0$ implies that $(P(\theta)u_0,
u_n-u_0)_H\to 0$. We get
\begin{eqnarray*}
&&|(B(x_n)u_0, u_n)_H-(B(\theta)u_0, u_0)_H|\\
&=&|(P(x_n)u_0, u_n)_H+ (Q(x_n)u_0, u_n)_H- (P(\theta)u_0, u_0)_H-
(Q(\theta)u_0, u_0)_H|\\
&\le & |(P(x_n)u_0, u_n)_H-(P(\theta)u_0, u_0)_H|+ |(Q(x_n)u_0,
u_n)_H-(Q(\theta)u_0, u_0)_H| \\
&\le & |(P(x_n)u_0, u_n)_H-(P(\theta)u_0, u_n)_H|+|(P(\theta)u_0,
u_n)_H-(P(\theta)u_0, u_0)_H| \\
&&\quad + |(Q(x_n)u_0, u_n)_H-(Q(\theta)u_0, u_0)_H|
\\
&\le & \|P(x_n)u_0- P(\theta)u_0\| + |(P(\theta)u_0, u_n-u_0)_H| \\
&&\quad + |(Q(x_n)u_0, u_n)_H-(Q(\theta)u_0, u_0)_H| \to 0
\end{eqnarray*}
because of the condition (D2) and (\ref{e:3.22}). Similarly, we have
$$
\lim_{n\to\infty}(B(x_n)u_0, u_0)_H=(B(\theta)u_0, u_0)_H.
$$
From these, (\ref{e:3.16}) and (\ref{e:3.23}) it follows that
\begin{eqnarray*}
0&\le& \liminf_{n\to\infty}(B(x_n)(u_n-u_0), u_n-u_0)_H\\
&=& \liminf_{n\to\infty}[(B(x_n)u_n,u_n)_H-2(B(x_n)u_0, u_n)_H+
(B(x_n)u_0,
u_0)_H]\\
&=&\lim_{n\to\infty}(B(x_n)u_n,u_n)_H- (B(\theta)u_0, u_0)_H\\
&=&\beta- (B(\theta)u_0, u_0)_H.
\end{eqnarray*}
Namely, $(B(\theta)u_0,u_0)_H\le\beta\le 0$. It contradicts to
(\ref{e:3.15}) because $u_0\in H^+\setminus\{0\}$.

\noindent{\bf (ii)} By (\ref{e:2.1}), $(B(\theta)u,v)_H=0$ for $u\in
H^+$ and $v\in H^0\oplus H^-$. The conclusion follows from
Lemma~\ref{lem:3.3} immediately.

\noindent{\bf (iii)} By the choice of $a_0$ we have $(B(\theta)v,
v)_H\le -2a_0\|v\|^2\;\forall v\in H^-$. By Lemma~\ref{lem:3.3}, for
any $x\in U\cap X$ and $v\in H^-$ we have
\begin{eqnarray*}
(B(x)v, v)_H&=&(B(\theta)v, v)_H+ (B(x)v, v)_H-(B(\theta)v,v)_H\\
&\le& (B(\theta)v,v)_H+ \omega(x)\|v\|^2\\
&\le& -2a_0\|v\|^2+\omega(x)\|v\|^2.
\end{eqnarray*}
By shrinking $U$ (if necessary) we can require that $\omega(x)<a_0$
for any $x\in U\cap X$. Then  the desired conclusion is proved.
\end{proof}

Since $h(\theta)=\theta$, for the neighborhood $U$ in
Lemma~\ref{lem:3.4} we may take $\varepsilon\in (0, \delta)$ so
small that
 \begin{equation}\label{e:3.24}
 z+ h(z)+
u^++ u^-\in U
\end{equation}
for all $z\in\bar B_{H^0}(\theta,\varepsilon)$, $u^+\in \bar
B_{H^+}(\theta,\varepsilon)$ and $u^-\in\bar
B_{H^-}(\theta,\varepsilon)$.

\begin{lemma}\label{lem:3.5}
For the above $\varepsilon>0$ the restriction of the function $F$ in
(\ref{e:3.12}) to $\bar B_{H^0}(\theta,\varepsilon)\times \bigl(\bar
B_{H^+}(\theta,\varepsilon)\oplus \bar
B_{H^-}(\theta,\varepsilon)\bigr)$ satisfies the conditions in
Theorem~\ref{th:A.1}.
\end{lemma}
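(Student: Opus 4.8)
The plan is to verify, condition by condition, the hypotheses of Theorem~\ref{th:A.1} for the function $F$ of (\ref{e:3.12}), taking the parameter to range over the (compact, since $\dim H^0<\infty$) ball $\bar B_{H^0}(\theta,\varepsilon)$ and the ``fibre'' variable $u=u^++u^-$ over $\bar B_{H^+}(\theta,\varepsilon)\oplus\bar B_{H^-}(\theta,\varepsilon)$, the ambient Hilbert space being $H^\pm=H^+\oplus H^-$ with its dense Banach subspace $X^\pm=(X\cap H^+)\oplus H^-$. The elementary data come first. By (\ref{e:3.24}) (and (\ref{e:3.11})) every admissible point $z+h(z)+u^++u^-$ lies in the neighbourhood $U\subset V$ of Lemma~\ref{lem:3.4}, so $F$ is well defined there, and, since $\mathcal{L}$ is continuous on $V$ and $h$ is Lipschitz continuous, $F$ is continuous on the closed product. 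For each fixed $z$, (F1) makes $u\mapsto F(z,u)$ continuously directional differentiable with derivative given by (\ref{e:3.13}), and (\ref{e:3.14}) yields $F(z,\theta)=0$ and $D_2F(z,\theta)=0$; this is precisely the regularity and ``critical point at $\theta$'' input of Theorem~\ref{th:A.1}, holding uniformly in $z$.

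Next I would produce the quadratic-form structure. Using $(I-P^0)A(z+h(z))=\theta$ from (\ref{e:3.5}): for $u\in X^\pm$ the segment $t\mapsto z+h(z)+tu$ stays in $V\cap X$, so combining $\nabla\mathcal{L}=A$ on $V\cap X$ with $DA(x)=B(x)|_X$ (from (F3)) and the mean value theorem for the $C^{1-0}$ map $A$ gives
\[
D_2F(z,u)(v)=\Big(\Big[\int_0^1(I-P^0)B\big(z+h(z)+tu\big)(I-P^0)\,dt\Big]u,\;v\Big)_H\qquad\forall\,v\in H^\pm .
\]
Thus the ``gradient'' of $F(z,\cdot)$ is represented, on the dense set $X^\pm$, by the family $\mathbf{B}(z,u):=\int_0^1(I-P^0)B(z+h(z)+tu)(I-P^0)\,dt$ of operators that are self-adjoint on $H^\pm$; its continuity at the base point $(\theta,\theta)$ (with value $B(\theta)|_{H^\pm}$) and the meaningfulness of the integral follow from (D2) and (D3). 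Together with (\ref{e:3.13}) this delivers the operator-valued datum required by Theorem~\ref{th:A.1}.

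Finally I would check the uniform definiteness. Shrinking $\varepsilon$ if necessary so that $z+h(z)+tu\in U$ for all admissible $(z,u)$ and $t\in[0,1]$, and so that $\omega$ is below any prescribed bound on the region thus swept, Lemma~\ref{lem:3.4}(i)--(iii) integrated over $t$ give, for $u\in X^\pm$, that $(\mathbf{B}(z,u)v^+,v^+)_H\ge a_1\|v^+\|^2$ for all $v^+\in H^+$, that $(\mathbf{B}(z,u)v^-,v^-)_H\le-a_0\|v^-\|^2$ for all $v^-\in H^-$, and that $|(\mathbf{B}(z,u)v^+,v^-)_H|\le(\sup\omega)\,\|v^+\|\,\|v^-\|$ with $\sup\omega$ as small as we like; Lemma~\ref{lem:3.3} takes care of the residual coupling and error terms. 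By density of $X^\pm$ in $H^\pm$ and continuity of $F(z,\cdot)$, these estimates persist for all $u$ in the $H^\pm$-ball. This is exactly the uniform positivity on $H^+$, uniform negativity on $H^-$, and vanishing-at-$\theta$ off-diagonal coupling demanded by Theorem~\ref{th:A.1}, completing the verification.

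The \emph{main obstacle} is the infinite-dimensional positive direction $H^+$: the operators $B(x)$, $P(x)$, $Q(x)$ and the modulus $\omega(x)$ are a priori available only at points $x\in V\cap X$, whereas a general $u^+\in H^+$ drives $z+h(z)+u^++u^-$ out of $X$. Hence the integral formula for $\mathbf{B}(z,u)$ and the Lemma~\ref{lem:3.4} estimates are first obtained only for $u\in X^\pm$, and one must extend $\mathbf{B}$ to all of $H^\pm$ and argue that both the gradient representation (using that $F(z,\cdot)$ is $C^{1-0}$ and $X^\pm$ is dense) and the coercivity bounds survive this extension; this is precisely where (D2)--(D4) — equivalently Lemmas~\ref{lem:3.3} and \ref{lem:3.4} — are genuinely used. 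The finite-dimensional summand $H^-\subset X$ needs no such passage, since all norms on it are equivalent.
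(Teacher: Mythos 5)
Your strategy of estimating the "second derivative" by a single operator $\mathbf{B}(z,u)=\int_0^1(I-P^0)B(z+h(z)+tu)(I-P^0)\,dt$ carries you through conditions (iii) and (iv) of Theorem~\ref{th:A.1}, but it does \emph{not} verify condition (ii), and that is a genuine gap. Condition (ii) is the scalar inequality
$[D_2F(z,u^++u^-_2)-D_2F(z,u^++u^-_1)](u^-_2-u^-_1)<0$ for $u^-_1\ne u^-_2$; writing this with your $\mathbf{B}$, it becomes
$(\mathbf{B}(z,u_2)w,w)_H+\bigl([\mathbf{B}(z,u_2)-\mathbf{B}(z,u_1)]u_1,w\bigr)_H$ with $u_i=u^++u^-_i$, $w=u^-_2-u^-_1$. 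The first term is controlled by your negativity bound on $H^-$, but the second, involving the \emph{difference} of the two integrated operators applied to the full vector $u_1$, is not controlled by any of your estimates (and $u_1$ is not in $H^-$, so the cross-coupling bound is of no help). Moreover you conclude by saying the three operator bounds are ``exactly... demanded by Theorem~\ref{th:A.1}''; they are not — the theorem asks for the scalar inequalities (ii)--(iv), and you must actually derive them, which for (ii) your framework does not do.

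The fix — and what the paper does — is not to integrate from $\theta$ to $u$ but to apply the mean value theorem along the \emph{segment from $u^-_1$ to $u^-_2$}: for $u^+\in X^+\cap\bar B_H(\theta,\varepsilon)$, the map $u\mapsto (A(z+h(z)+u^++u),u^-_2-u^-_1)_H$ is continuously directionally differentiable on a neighbourhood of $[u^-_1,u^-_2]\subset H^-\subset X$, so there is a single $t\in(0,1)$ with the difference equal to
$\bigl(B(z+h(z)+u^++u^-_1+t(u^-_2-u^-_1))(u^-_2-u^-_1),u^-_2-u^-_1\bigr)_H$,
a genuine quadratic form in $u^-_2-u^-_1\in H^-$, whence Lemma~\ref{lem:3.4}(iii) gives the bound $\le -a_0\|u^-_2-u^-_1\|^2$; the density of $X^+$ in $H^+$ then removes the restriction on $u^+$. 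Conditions (iii) and (iv) are similarly obtained in the paper with pointwise mean value theorem applications from $\theta$ and then Lemma~\ref{lem:3.4}(i)--(iii), which your integral version also reaches (modulo the density passage you correctly identify as the obstacle). So your identification of the key tools and of the $X^+\subset H^+$ density issue is sound, but the argument must be rerouted to produce the three scalar inequalities of Theorem~\ref{th:A.1} directly, with the mean value theorem taken along the segment adapted to each condition rather than always from the origin.
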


\begin{proof}
 By (\ref{e:3.14}) we only need to prove that
$F$ satisfies conditions (ii)-(iv) in Theorem~\ref{th:A.1}.

\noindent{\bf Step 1}. For $z\in\bar B_{H^0}(\theta,\varepsilon)$, $u^+\in
\bar B_H(\theta,\varepsilon)\cap X^+$ and $u^-_1, u^-_2\in\bar
B_{H^-}(\theta,\varepsilon)$, by the condition (F2) we have
\begin{eqnarray*}
&&[D_2F(z, u^+ + u^-_2)-D_2F(z, u^++ u^-_1)](u^-_2-u^-_1)\\
&=&(A(z+ h(z)+ u^++u^-_2), u^-_2-u^-_1)_H\\
& &- (A(z+ h(z)+ u^++u^-_1), u^-_2-u^-_1)_H.
\end{eqnarray*}
Moreover, $A$ is continuously directional differentiable so is the
function
$$
u\mapsto (A(z+ h(z)+ u^++u), u^-_2-u^-_1)_H.
$$
By the mean value theorem we have $t\in (0, 1)$ such that
\begin{eqnarray*}
&&(A(z+ h(z)+ u^++u^-_2), u^-_2-u^-_1)_H \\
&&- (A(z+ h(z)+ u^++u^-_1), u^-_2-u^-_1)_H\\
&=&\left(DA(z+ h(z)+ u^++ u^-_1+ t(u^-_2-u^-_1))(u^-_2-u^-_1),
u^-_2-u^-_1\right)_H\\
&\stackrel{(F3)}{=}&\left(B(z+ h(z)+ u^++ u^-_1+
t(u^-_2-u^-_1))(u^-_2-u^-_1),
u^-_2-u^-_1\right)_H\\
&\le& -a_0\|u^-_2-u^-_1\|^2
\end{eqnarray*}
by Lemma~\ref{lem:3.4}(iii). Hence
\begin{eqnarray*}
[D_2F(z, u^+ + u^-_2)-D_2F(z, u^++ u^-_1)](u^-_2-u^-_1)\le
-a_0\|u^-_2-u^-_1\|^2.
\end{eqnarray*}
 Since $\bar
B_H(\theta,\varepsilon)\cap X^+$ is dense in $\bar
B_H(\theta,\varepsilon)\cap H^+$ we get
\begin{eqnarray}\label{e:3.25}
 [D_2F(z, u^+ + u^-_2)-D_2F(z, u^++ u^-_1)](u^-_2-u^-_1)\le
-a_0\|u^-_2-u^-_1\|^2.
\end{eqnarray}
for all $z\in\bar B_{H^0}(\theta,\varepsilon)$, $u^+\in \bar
B_H(\theta,\varepsilon)\cap H^+$ and $u^-_i\in\bar
B_{H}(\theta,\varepsilon)\cap H^-$, $i=1, 2$. This implies the
condition (ii).

\noindent{\bf Step 2}.  Let $z\in\bar B_{H^0}(\theta,\varepsilon)$, $u^+\in
\bar B_H(\theta,\varepsilon)\cap X^+$ and $u^-\in\bar
B_{H^-}(\theta,\varepsilon)$. Then by (\ref{e:3.14}), the mean value
theorem and  (F2)-(F3), for some $t\in (0, 1)$ we have
\begin{eqnarray*}
&&D_2F(z, u^++u^-)(u^+-u^-)\\
&=&D_2F(z, u^++u^-)(u^+-u^-)- D_2F(z, \theta)(u^+-u^-)\\
&=&(A(z+ h(z)+ u^++u^-), u^+-u^-)_H-(A(z+ h(z)+ \theta), u^+-u^-)_H\\
&=&\left(B(z+ h(z)+ t(u^++u^-))(u^++u^-), u^+-u^-\right)_H\\
&=&\left(B(z+ h(z)+ t(u^++u^-))u^+, u^+\right)_H\\
&-&\left(B(z+ h(z)+
t(u^++u^-))u^-, u^-\right)_H\\
&\ge & a_1\|u^+\|^2+ a_0\|u^-\|^2
\end{eqnarray*}
by Lemma~\ref{lem:3.4}(i) and (iii).  As above this inequality also
holds for all $u^+\in \bar B_{H^+}(\theta,\varepsilon)$ because
$\bar B_H(\theta,\varepsilon)\cap X^+$ is dense in $\bar
B_H(\theta,\varepsilon)\cap H^+$. Hence $D_2F(z,
u^++u^-)(u^+-u^-)>0$ for $(u^+, u^-)\ne (\theta, \theta)$. The
condition (iii) is proved.

\noindent{\bf Step 3}. For $z\in\bar B_{H^0}(\theta,\varepsilon)$ and $u^+\in
\bar B_H(\theta,\varepsilon)\cap X^+$, as above we have $t\in (0,
1)$ such that
\begin{eqnarray*}
D_2F(z, u^+)u^+
&=&D_2F(z, u^+)u^+- D_2F(z, \theta)u^+\\
&=&(A(z+ h(z)+ u^+), u^+)_H-(A(z+ h(z)+ \theta), u^+)_H\\
&=&\left(B(z+ h(z)+ tu^+)u^+, u^+\right)_H\\
&\ge& a_1\|u^+\|^2
\end{eqnarray*}
because of Lemma~\ref{lem:3.4}(i). It follows that
$$
D_2F(z, u^+)u^+ \ge a_1\|u^+\|^2> p(\|u^+\|)\quad\forall u^+\in \bar
B_H(\theta,\varepsilon)\cap H^+\setminus\{\theta\},
$$
where $p:(0, \varepsilon]\to (0, \infty)$ is a non-decreasing
function given by $p(t)=\frac{a_1}{2}t^2$. This proves the condition
(iv).
 \end{proof}

 By Lemma~\ref{lem:3.5} we can apply Theorem~\ref{th:A.1} to $F$  to
 get a
positive number $\epsilon$, an open neighborhood
 $\mathcal{ W}$ of $\bar B_{H^0}(\theta,\varepsilon)\times\{\theta\}$ in $\bar B_{H^0}(\theta,\varepsilon)\times H^\pm$,
  and  an origin-preserving homeomorphism
\begin{eqnarray}\label{e:3.26}
\phi: \bar B_{H^0}(\theta,\varepsilon)\times \left(B_{H^+}(\theta,
\epsilon)+ B_{H^-}(\theta, \epsilon)\right)\to \mathcal{ W}
\end{eqnarray}
of form
\begin{eqnarray*}
\phi(z, u^+ + u^-)=(z, \phi_z(u^+ + u^-))\in \bar
B_{H^0}(\theta,\varepsilon)\times H^\pm
\end{eqnarray*}
such that $\phi_z(\theta)=\theta$ and
\begin{eqnarray}\label{e:3.27}
 &&\mathcal{ L}(z+ h(z)+ \phi_z(u^+, u^-))-\mathcal{ L}(z+ h(z))\nonumber\\
 &=&F(\phi(z, u^+, u^-))=\|u^+\|^2-\|u^-\|^2
\end{eqnarray}
for all $(z, u^+, u^-)\in \bar B_{H^0}(\theta,\varepsilon)\times
B_{H^+}(\theta, \epsilon)\times B_{H^-}(\theta, \epsilon)$.
Moreover, $\phi_z(u^++u^-)\in H^-$ if and only if $u^+=\theta$, and
$\phi$ is also a homeomorphism from $\bar
B_{H^0}(\theta,\varepsilon)\times  B_{H^-}(\theta, \epsilon)$ onto
$\mathcal{ W}\cap (\bar B_{H^0}(\theta,\varepsilon)\times H^-)$ even
if the last two sets are equipped with the induced topology from
$X$, or, equivalently, for $(z_0, u^-_0)\in \bar
B_{H^0}(\theta,\varepsilon)\times B_{H^-}(\theta, \epsilon)$ and
$\{(z_k, u^-_k)\}\subset\bar B_{H^0}(\theta,\varepsilon)\times
B_{H^-}(\theta, \epsilon)$ it holds that
\begin{equation}\label{e:3.28}
\|z_k+ u^-_k-z_0-u^-_0\|_X\to 0\Longleftrightarrow
\left\{\begin{array}{ll}
 \|z_k-z_0\|_X\to 0\quad\hbox{and}\\
\|\phi_{z_k}(u^-_k)-\phi_{z_0}(u^-_0)\|_X\to 0.
\end{array}\right.
\end{equation}
Consider the continuous map
\begin{eqnarray}\label{e:3.29}
&&\Phi:B_{H^0}(\theta,\varepsilon)\times \left(B_{H^+}(\theta,
\epsilon)+ B_{H^-}(\theta, \epsilon)\right)\to H,\\
&&\hspace{10mm} (z, u^+ + u^-)\mapsto z+ h(z)+ \phi_z(u^+ +
u^-).\nonumber
\end{eqnarray}
Then (\ref{e:3.27}) gives (\ref{e:2.5}), i.e. $\mathcal{ L}(\Phi(z,
u^+, u^-))=\|u^+\|^2-\|u^-\|^2+ \mathcal{ L}(z+ h(z))$. Since  $H^0$
and $H^-$ are finitely dimensional subspaces contained in $X$, from
Steps 1,4 in the proof of Theorem~\ref{th:A.1} it is easily seen
that
$$
\phi_z \left(B_{H^+}(\theta, \epsilon)\cap X + B_{H^-}(\theta,
\epsilon)\right)\subset X\quad\forall z\in
B_{H^0}(\theta,\varepsilon).
$$
Then (\ref{e:2.6}) follows from this and the fact that ${\rm
Im}(h)\subset X^\pm\subset X$. In particular, it holds that
$\Phi(B_{H^0}(\theta,\varepsilon)\times B_{H^-}(\theta,
\epsilon))\subset X$.  Now we can complete the proof of
Theorem~\ref{th:2.1} by the following lemma.

\begin{lemma}\label{lem:3.6}
Let $W={\rm Im}(\Phi)$. Then it is an open neighborhood of $\theta$
in $H$ and $\Phi$  is an origin-preserving homeomorphism onto $W$.
Moreover, if the topologies on $B_{H^0}(\theta,\varepsilon) \times
B_{H^-}(\theta, \epsilon)\subset X$ and
$\Phi(B_{H^0}(\theta,\varepsilon)\times  B_{H^-}(\theta,
\epsilon))\subset X$ are chosen as ones induced by $X$,  the
restriction of $\Phi$ to $B_{H^0}(\theta,\varepsilon)\times
B_{H^-}(\theta, \epsilon)$ is a homeomorphism from
$B_{H^0}(\theta,\varepsilon)\times B_{H^-}(\theta, \epsilon)\subset
X$ onto $\Phi(B_{H^0}(\theta,\varepsilon)\times B_{H^-}(\theta,
\epsilon))\subset X$.
\end{lemma}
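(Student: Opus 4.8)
The plan is to factor $\Phi$ through the homeomorphism $\phi$ supplied by Theorem~\ref{th:A.1}, followed by the ``shear'' attached to the orthogonal splitting $H=H^0\oplus H^\pm$, twisted by the graph map $h$. Precisely, put
\[
\Psi:B_{H^0}(\theta,\varepsilon)\times H^\pm\longrightarrow H,\qquad (z,w)\longmapsto z+h(z)+w,
\]
so that $\Phi=\Psi\circ\phi$ on $B_{H^0}(\theta,\varepsilon)\times\bigl(B_{H^+}(\theta,\epsilon)+B_{H^-}(\theta,\epsilon)\bigr)$, where $\phi$ is regarded as a homeomorphism onto the set $\mathcal{W}'=\mathcal{W}\cap\bigl(B_{H^0}(\theta,\varepsilon)\times H^\pm\bigr)$; the latter is open in $H^0\times H^\pm$ because $\phi$ acts as the identity on the $H^0$-coordinate. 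It then suffices to prove that $\Psi$ is a homeomorphism onto an open subset of $H$ and to compose with $\phi$.

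First I would verify that $\Psi$ is a homeomorphism onto $(P^0)^{-1}\bigl(B_{H^0}(\theta,\varepsilon)\bigr)$. Since $P^0$ and $I-P^0$ are (orthogonal, hence) continuous projections and $h$ is Lipschitz continuous by Lemma~\ref{lem:3.1}, $\Psi$ is continuous; a one-line computation using $h(z)\in H^\pm$ shows that $v\mapsto\bigl(P^0v,\,(I-P^0)v-h(P^0v)\bigr)$ is a two-sided inverse of $\Psi$, defined exactly on $\{v\in H:P^0v\in B_{H^0}(\theta,\varepsilon)\}$, and it too is continuous. In particular $\Psi$ is an open map, so $W={\rm Im}(\Phi)=\Psi(\mathcal{W}')$ is open in $H$, $\Phi$ is a homeomorphism onto $W$ as a composite of two homeomorphisms, and $\Phi(\theta,\theta)=\theta+h(\theta)+\phi_\theta(\theta)=\theta$.

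For the final assertion I would rerun exactly the same composition argument, but with the $X$-norm topology on $B_{H^0}(\theta,\varepsilon)\times B_{H^-}(\theta,\epsilon)$ and on its image. Three facts make the shear $\Psi$ bicontinuous in this topology: the norms $\|\cdot\|$ and $\|\cdot\|_X$ are equivalent on the finite-dimensional spaces $H^0$ and $H^-$; $h$ is Lipschitz for $\|\cdot\|_X$ by Lemma~\ref{lem:3.1}; and both $P^0|_X$ and $(I-P^0)|_X$ are continuous on $(X,\|\cdot\|_X)$ (as recorded in the paragraph preceding Theorem~\ref{th:2.1}). The remaining ingredient is \eqref{e:3.28}, which is precisely the statement that $\phi$ restricted to $\bar B_{H^0}(\theta,\varepsilon)\times B_{H^-}(\theta,\epsilon)$ is a homeomorphism onto $\mathcal{W}\cap\bigl(\bar B_{H^0}(\theta,\varepsilon)\times H^-\bigr)$ for the $X$-induced topologies. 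Composing, the restriction of $\Phi$ to $B_{H^0}(\theta,\varepsilon)\times B_{H^-}(\theta,\epsilon)$ is a homeomorphism onto its image inside $X$, as asserted.

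I do not expect a genuine obstacle here: all the analytic difficulty has been absorbed into the earlier lemmas (the construction of $h$ in Lemma~\ref{lem:3.1}, the coercivity estimates in Lemma~\ref{lem:3.4}, and the delicate two-topology statement \eqref{e:3.28}). The only points needing attention are bookkeeping ones — identifying ${\rm Im}(\Psi)$ with the open set $(P^0)^{-1}(B_{H^0}(\theta,\varepsilon))$ so that openness of $W$ in $H$ comes for free, and checking that each factor $P^0,\ I-P^0,\ h$ appearing in $\Psi^{-1}$ is continuous specifically for the $\|\cdot\|_X$-norm when dealing with the last claim.
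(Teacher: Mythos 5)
Your proposal is correct and, for the second claim, follows the same underlying route as the paper: the crux there is identity \eqref{e:3.28}, which you invoke in exactly the role the paper does, and the surrounding bookkeeping (finite-dimensionality of $H^0, H^-$, $X$-Lipschitz continuity of $h$, continuity of $P^0|_X$ and $(I-P^0)|_X$) matches the paper's ingredients.

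Where you genuinely diverge — and arguably improve on the paper — is in the first claim. The paper checks directly that $\Phi$ is a bijection and that $\Phi^{-1}$ is sequentially continuous, and then declares ``the first claim is proved,'' but a continuous bijection with continuous inverse onto a subset of $H$ does not by itself make that subset open in $H$ (invariance of domain is unavailable here). Your factorization $\Phi=\Psi\circ\phi$, with $\Psi(z,w)=z+h(z)+w$ an explicit homeomorphism of $B_{H^0}(\theta,\varepsilon)\times H^\pm$ onto the open slab $(P^0)^{-1}(B_{H^0}(\theta,\varepsilon))$ and $\phi$ preserving the $H^0$-coordinate so that it carries the open-ball domain homeomorphically onto the open set $\mathcal{W}'=\mathcal{W}\cap(B_{H^0}(\theta,\varepsilon)\times H^\pm)$, gives $W=\Psi(\mathcal{W}')$ and so its openness for free. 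It also delivers the homeomorphism statement as a composition of two known homeomorphisms instead of a fresh sequential argument. The price is only the one-line verification that $v\mapsto(P^0v,\,(I-P^0)v-h(P^0v))$ inverts $\Psi$, which is cheap; so your version fills a small gap in the paper's write-up while shortening the direct computations.
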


\begin{proof}
 Assume that $\Phi(z_1, u^+_1+ u^-_1)=\Phi(z_2, u^+_2+ u^-_2)$ for
$(z_1, u^+_1+ u^-_1)$ and $(z_2, u^+_2+ u^-_2)$ in
$B_{H^0}(\theta,\varepsilon)\times (B_{H^+}(\theta, \epsilon)+
B_{H^-}(\theta, \epsilon))$. Then
$$
z_1=z_2\quad\hbox{and}\quad h(z_1)+ \phi_{z_1}(u^+_1+ u^-_1)=h(z_2)+
\phi_{z_2}(u^+_2+ u^-_2).
$$
It follows that $h(z_1)=h(z_2)$ and $\phi_{z_1}(u^+_1+ u^-_1)=
\phi_{z_2}(u^+_2+ u^-_2)$. This shows that $\phi(z_1, u^+_1+ u^-_1)=
\phi(z_2, u^+_2+ u^-_2)$ and thus $(u^+_1, u^-_1)=(u^+_2, u^-_2)$.
So $\Phi$ is a bijection.

Let $(z, u^++ u^-)$ and a sequence $\{(z_k, u^+_k+ u^-_k)\}$ sit in
$B_{H^0}(\theta,\varepsilon)\times (B_{H^+}(\theta, \epsilon)+
B_{H^-}(\theta, \epsilon))$. Suppose that $\Phi(z_k, u^+_k+
u^-_k)\to\Phi(z, u^++ u^-)$. Then
\begin{eqnarray*}
&&P^0\Phi(z_k, u^+_k+ u^-_k)\to P^0\Phi(z, u^++
u^-)\quad\hbox{and}\\
&& (P^++P^-)\Phi(z_k, u^+_k+ u^-_k)\to(P^++P^-)\Phi(z, u^++ u^-).
\end{eqnarray*}
It follows that $z_k\to z$, and thus $h(z_k)\to h(z)$ and
$\phi_{z_k}(u^+_k+ u^-_k)\to \phi_z(u^++ u^-)$. This shows that
$\phi(z_k, u^+_k+ u^-_k)\to\phi(z, u^++ u^-)$ and hence $(z_k,
u^+_k+ u^-_k)\to (z, u^++ u^-)$ since $\phi$ is a homeomorphism.
That is, $\Phi^{-1}$ is continuous. The first claim is proved.

To prove the second claim, it suffices to prove that for $(z_0,
u^-_0)\in \bar B_{H^0}(\theta,\varepsilon)\times B_{H^-}(\theta,
\epsilon)$  and $\{(z_k, u^-_k)\}\subset \bar
B_{H^0}(\theta,\varepsilon)\times B_{H^-}(\theta, \epsilon)$
\begin{equation}\label{e:3.30}
\left.\begin{array}{ll}
 \|z_k+ u^-_k-z_0-u^-_0\|_X\to 0\quad\hbox{if and only if}\\
 \|z_k+ h(z_k)+\phi_{z_k}(u^-_k)-z_0-h(z_0)-\phi_{z_0}(u^-_0)\|_X\to
0.
\end{array}\right\}
\end{equation}
Note that $h\in C(B_{H^0}(\theta, \delta), X^\pm)$  and that $X$ and
$H$ induce equivalent topologies on $H^0+H^-$. Since $\|z_k+
u^-_k-z_0-u^-_0\|_X\to 0$ if and only if $\|z_k-z_0\|_X\to 0$ and
$\|u^-_k-u^-_0\|_X\to 0$, it follows from (\ref{e:3.28}) that in
(\ref{e:3.30}) the left side implies the right side. Conversely, if
the right of (\ref{e:3.30}) holds, then
\begin{eqnarray*}
\|z_k-z_0\|&=&\|P^0(z_k+
h(z_k)+\phi_{z_k}(u^-_k))-P^0(z_0+h(z_0)+\phi_{z_0}(u^-_0))\|\\
&\le &\|z_k+
h(z_k)+\phi_{z_k}(u^-_k)-z_0-h(z_0)-\phi_{z_0}(u^-_0)\|\\
&\le&\|z_k+ h(z_k)+\phi_{z_k}(u^-_k)-
z_0-h(z_0)-\phi_{z_0}(u^-_0)\|_X \to 0,
\end{eqnarray*}
and hence $\|z_k-z_0\|_X\to 0$. It follows that
$\|h(z_k)-h(z_0)\|_X\to 0$ and therefore
\begin{eqnarray*}
&&\|\phi_{z_k}(u^-_k)-\phi_{z_0}(u^-_0)\|_X\\
&\le& \|z_k-z_0\|_X+ \|h(z_k)-h(z_0)\|_X\\
&& + \;\|z_k+
h(z_k)+\phi_{z_k}(u^-_k)-z_0-h(z_0)-\phi_{z_0}(u^-_0)\|_X\to 0.
\end{eqnarray*}
From these and (\ref{e:3.28}) we derive that
$\|z_k+u^-_k-z_0-u^-_0\|_X\to 0$.  (\ref{e:3.30})   is proved.
\end{proof}

In summary we have completed the proof of Theorem~\ref{th:2.1}.

\section{Proofs of Corollaries~\ref{cor:2.5},
~\ref{cor:2.7} and Theorem~\ref{th:2.10}}\label{sec:4}

\subsection{Proof of Corollaries~\ref{cor:2.5} and ~\ref{cor:2.7}}

\begin{proof}[Proof of Corollary~\ref{cor:2.5}]
 By the excision property of relative homology
groups we only need to prove the corollary for some open
neighborhood $W$ of $\theta$ in $H$. Let $W$ be as in
Theorem~\ref{th:2.1}, that is,
$$
W=\Phi\left(B_{H^0}(\theta,\epsilon)\times
 (B_{H^+}(\theta,\epsilon)+ B_{H^-}(\theta,\epsilon))\right).
 $$
 Set
$W_{0-}:=\Phi\left(B_{H^0}(\theta,\epsilon)\times
 B_{H^-}(\theta,\epsilon)\right)$. It is contained in $X$ by (\ref{e:2.6}).
We write $W_{0-}$ as $W^X_{0-}$ when it is considered a topological
subspace of $X$. Clearly, $\mathcal{ L}_0\cap W_{0-}=(\mathcal{
L}|_{V\cap X})_0\cap W^X_{0-}= \mathcal{ L}_0\cap W^X_{0-}$ as sets.
Define a deformation
 $\eta: W\times [0, 1]\to W$ as
 $$
\eta(\Phi(z, u^++ u^-), t)=\Phi(z, tu^++ u^-).
 $$
It gives a deformation retract from $\mathcal{ L}_0\cap W$ onto
$\mathcal{ L}_0\cap W_{0-}$. Hence  the inclusion
$$
I:\left(\mathcal{ L}_0\cap W_{0-}, \mathcal{ L}_0\cap
W_{0-}\setminus\{\theta\}\right)\hookrightarrow\left(\mathcal{
L}_0\cap W, \mathcal{ L}_0\cap W\setminus\{\theta\}\right)
$$
induces isomorphisms between their relative singular homology groups with
inverse $(\eta_1)_\ast$, where $\eta_1(\cdot)=\eta(1, \cdot)$. That
means that  each
$$\alpha\in H_q\left(\mathcal{L}_0\cap W,
\mathcal{ L}_0\cap W\setminus\{\theta\}; {\bf K}\right)
$$
has
 a relative singular cycle representative,
$c=\sum_jg_j\sigma_j$, such that
$$
|c|:=\cup_j\sigma_j(\triangle^q)\subset \mathcal{ L}_0\cap
W_{0-}\quad\hbox{and}\quad |\partial c|\subset \mathcal{ L}_0\cap
W_{0-}\setminus\{\theta\}.
$$
 By the conclusion (b) in Theorem~\ref{th:2.1}  the identity map
 $$
 \imath^{0-}:\left(\mathcal{ L}_0\cap W^X_{0-}, \mathcal{ L}_0\cap W^X_{0-}\setminus\{\theta\}\right)
 \to \left(\mathcal{ L}_0\cap
W_{0-}, \mathcal{ L}_0\cap W_{0-}\setminus\{\theta\}\right)
$$
is a homeomorphism. So $c$ is also a relative singular cycle in
$$\left(\mathcal{ L}_0\cap W^X_{0-}, \mathcal{ L}_0\cap
W^X_{0-}\setminus\{\theta\}\right),$$
  denoted by $c^x$. Then
$\imath^{0-}\circ c^x=c$. Write $W^X=W\cap X$ as a topological
subspace of $X$. Denote by   the inclusion
$$
\jmath:\left(\mathcal{ L}_0\cap W^X, \mathcal{ L}_0\cap
W^X\setminus\{\theta\}\right)\hookrightarrow \left(\mathcal{
L}_0\cap W, \mathcal{ L}_0\cap W\setminus\{\theta\}\right),
$$
and by the inclusion
$$
I^X:\left(\mathcal{ L}_0\cap W^X_{0-}, \mathcal{ L}_0\cap
W^X_{0-}\setminus\{\theta\}\right)\hookrightarrow\left(\mathcal{
L}_0\cap W^X, \mathcal{ L}_0\cap W^X\setminus\{\theta\}\right).
$$
Since $I_\ast([c])=\alpha$, $(\imath^{0-})_\ast([c^x])=[c]$ and $I\circ\imath^{0-}=\jmath\circ I^X$
we obtain
$$
\alpha=I_\ast\circ(\imath^{0-})_\ast[c^x]=\jmath_\ast\circ
(I^X)_\ast[c^x]=\jmath_\ast\bigl((I^X)_\ast[c^x]\bigr).
$$
This completes the proof of Corollary~\ref{cor:2.5}.
 \end{proof}

\begin{proof}[Proof of Corollary~\ref{cor:2.7}]
 As in
the proof of \cite[Prop.3.2]{BaChWa} we only need to prove the
implication (iii)$\Longrightarrow$(i). If $\nu=\dim H^0=0$, by (i)
of Remark~\ref{rm:2.2} and (\ref{e:2.7}) we have $C_q(\mathcal{
L},\theta;\K)=\delta_{q\mu}\;\forall q\in\Z$, where $\mu=\dim H^-$.
Hence $\mu=0$. Then (\ref{e:2.7}) shows that $\theta$ is a strict
minimum. If $\nu>0$, by Corollary~\ref{cor:2.6} it must hold that
$\mu=\dim H^-=0$ and $C_0(\mathcal{ L}^\circ,\theta;\K)\ne 0$. Since
$\mathcal{ L}^\circ$ is $C^{2-0}$ and $\dim H^0<\infty$ we can
construct a $C^{2-0}$ function $g$ on $H^0$ satisfying (PS) such
that it coincides with $\mathcal{ L}^\circ$ near $\theta\in H^0$. By
Theorem 4.6 on the page 43 of \cite{Ch}, $\theta$ is a minimum of
$\mathcal{L}^\circ$. It follows from (\ref{e:2.5}) that $\theta$ is
a strict minimum of $\mathcal{ L}$.
\end{proof}

\subsection{Proofs of Theorem~\ref{th:2.10}}\label{sec:2.4}

Recall that $H^0={\rm
Ker}(B(\theta))$ and
 $X^\pm=X\cap H^\pm=(I_H-P^0)(X)$. Set $Y^\pm=Y\cap
 H^\pm=(I_H-P^0)(Y)$. We need the following theorem by Ming Jiang.

\begin{theorem}[\hbox{\cite[Th.2.5]{JM}}]\label{th:4.1}
Under the assumptions of Theorem~\ref{th:2.10}, (but it suffices to
assume the density of $X$ in $Y$), there exists a ball $B_Y(\theta,
\kappa)$, an origin-preserving local homeomorphism $\Psi$ defined on
$B_Y(\theta, \kappa)$ and a $C^1$ map $\rho:B_{Y}(\theta,
\kappa)\cap H^0\to X^\pm$ such that
$$
\mathcal{ L}\circ\Psi(y)=\frac{1}{2}(B(\theta)y^\pm, y^\pm)_H +
\mathcal{ L}(z+\rho(z))\quad\forall y\in B_{Y}(\theta, \kappa),
$$
where $z=P^0(y)$ and $y^\pm=(I-P^0)(y)$. Moreover,
$\Psi(B_{Y}(\theta, \kappa)\cap X)\subset X$ and $\Psi:B_Y(\theta,
\kappa)\cap X\to \Psi(B_Y(\theta, \kappa)\cap X)$ is also an
origin-preserving local homeomorphism even if both $B_Y(\theta,
\kappa)\cap X$ and $\Psi(B_Y(\theta, \kappa)\cap X)$ are equipped
with the induced topology by $X$.
\end{theorem}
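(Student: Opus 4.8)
The plan is to prove this as a Gromoll--Meyer type splitting lemma for the $C^2$ functional $\mathcal{L}|_{V^Y}$ on the Banach space $Y$, carrying out the two classical steps --- a Lyapunov--Schmidt reduction along the finite-dimensional kernel $H^0$, then a parametrized Morse--Palais lemma on the complement $Y^\pm:=Y\cap H^\pm$ --- while tracking the chain $X\subset Y\subset H$ so that the resulting change of variables restricts compatibly to $X$.

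\emph{Step 1: Lyapunov--Schmidt reduction.} By hypothesis (ii) the map $A$ lies in $C^1(V^X,X)$, and by Step~1 of the proof of Lemma~\ref{lem:3.1} (which uses only (C1)--(C2)) the operator $A'(\theta)|_{X^\pm}=B(\theta)|_{X^\pm}\colon X^\pm\to X^\pm$ is an isomorphism of the Banach space $X^\pm$. Applying the implicit function theorem to the $C^1$ map $(z,w)\mapsto (I-P^0)A(z+w)$ on $(V\cap H^0)\times X^\pm$ therefore yields, on a small ball, the unique $C^1$ map $\rho:=h\colon B_{H^0}(\theta,\kappa)\to X^\pm$ with $\rho(\theta)=\theta$ and $(I-P^0)A(z+\rho(z))=0$ for all $z$; this is exactly the map $h$ of Lemma~\ref{lem:3.1}, now $C^1$ as noted in Remark~\ref{lem:3.2}. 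Since $H^0\subset X\subset Y$ with all norms on $H^0$ equivalent, $B_Y(\theta,\kappa)\cap H^0$ is such a ball, so $\rho$ is defined there and takes values in $X^\pm$ as required.

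\emph{Step 2: parametrized Morse lemma on $Y^\pm$.} For $z\in B_{H^0}(\theta,\kappa)$ set $\widehat{\mathcal L}(z,w):=\mathcal{L}(z+\rho(z)+w)-\mathcal{L}(z+\rho(z))$ for $w\in Y^\pm$ near $\theta$. For fixed $z$, the function $w\mapsto\widehat{\mathcal L}(z,w)$ is $C^2$ (the $C^2$ functional $\mathcal{L}|_{V^Y}$ composed with an affine map), its first $w$-derivative at $w=\theta$ vanishes because $(I-P^0)A(z+\rho(z))=0$ and $Y^\pm\perp H^0$ in $H$, and its second $w$-derivative there equals $(B(z+\rho(z))\,\cdot\,,\cdot)_H$ restricted to $Y^\pm$ by hypothesis (iii); all of this varies continuously with $z$ since $\rho$ and $B|_{V^Y}$ are continuous. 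By operator-norm continuity of $B$ on $V^Y$ together with the spectral gap of $B(\theta)$ at $0$ coming from (C1), these fibrewise Hessians are, for $z$ near $\theta$, uniformly positive definite on $H^+$ and uniformly negative definite on $H^-$, hence uniformly non-degenerate of index $\mu=\dim H^-$ on $Y^\pm$. Feeding this into the parametrized Morse--Palais lemma (Theorem~\ref{th:A.1}), followed by the standard straightening of the resulting family of uniformly non-degenerate fibrewise Hessians to the constant form $\tfrac12(B(\theta)\,\cdot\,,\cdot)_H$, produces --- after shrinking $\kappa$ --- origin-preserving local homeomorphisms $\psi_z$ of $Y^\pm$, jointly continuous in $(z,w)$, with $\widehat{\mathcal L}(z,\psi_z(w))=\tfrac12(B(\theta)w,w)_H$. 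Setting $\Psi(y):=z+\rho(z)+\psi_z(y^\pm)$ with $z=P^0(y)$ and $y^\pm=(I-P^0)(y)$ gives an origin-preserving local homeomorphism of $B_Y(\theta,\kappa)$ with $\mathcal{L}\circ\Psi(y)=\tfrac12(B(\theta)y^\pm,y^\pm)_H+\mathcal{L}(z+\rho(z))$; injectivity on small sets and continuity of the local inverse follow as in Lemma~\ref{lem:3.6}.

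\emph{Step 3: compatibility with $X$, and the main obstacle.} It remains to check that $\Psi$ maps $B_Y(\theta,\kappa)\cap X$ into $X$ and restricts there to an origin-preserving local homeomorphism for the $\|\cdot\|_X$-topology. As $\rho$ takes values in $X^\pm\subset X$, this reduces to the analogous statement for the $\psi_z$ on $Y^\pm\cap X=X^\pm$. I expect \emph{this} to be the crux: the Morse--Palais change of variables of Theorem~\ref{th:A.1} is obtained by integrating a vector field built from the derivative of $\widehat{\mathcal L}$, hence ultimately from $A$, which maps $X$ into $X$ and is $C^1$ there; tracking this through Steps~1 and~4 of the proof of Theorem~\ref{th:A.1} --- exactly as conclusion (b) and (\ref{e:2.6}) were obtained in Theorem~\ref{th:2.1} --- should show that the flow preserves $X^\pm$ and is continuous in $\|\cdot\|_X$, and likewise that the linear straightening in Step~2 preserves $X^\pm$. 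The genuine difficulty is this triple bookkeeping: reconciling the $C^2$ regularity that is available only on $Y$, the $H$-inner product that defines the quadratic normal form, and the demand that every construction restrict homeomorphically to $X$ --- kept manageable only by the finite-dimensionality of $H^0\oplus H^-$ and the spectral gap at $0$ in (C1).
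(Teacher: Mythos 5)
First note that the paper does not prove Theorem~\ref{th:4.1} at all: it is stated with the attribution \cite[Th.2.5]{JM} and used as a black box, and Jiang's own proof is a flow/deformation argument, not the elementary Duc--Hung--Khai scheme. So there is nothing in the paper for your proposal to be ``the same as''; you are proposing a new proof.

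Your Step~2 has a genuine gap. To run Theorem~\ref{th:A.1} on the fibres $Y^\pm$ you must verify hypothesis (iv), namely $D_2J(\lambda,x)x> p(\|x\|)$ for $x\in H^+$, where $\|\cdot\|$ is the norm of the ambient normed space. If you take the ambient space to be $(Y^\pm,\|\cdot\|_Y)$, so that the resulting homeomorphism is a $Y$-homeomorphism as the theorem requires, then the only available estimate (Lemma~\ref{lem:3.4}(i), valid thanks to (D)) is $D_2J(\lambda,x)x=(B(\cdot)x,x)_H\ge a_1\|x\|^2$, a lower bound in the \emph{$H$-norm}. Since $\|x\|\le\|x\|_Y$ with no reverse inequality on the infinite-dimensional $Y^+$, there is no function $p$ with $a_1\|x\|^2>p(\|x\|_Y)>0$, so (iv) simply fails. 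Alternatively, if you equip $Y^\pm$ with the $H$-norm so (iv) holds, the conclusion of Theorem~\ref{th:A.1} produces a map that is continuous only for the (strictly coarser) $H$-topology on $Y^\pm$, and whose normal form is $\|u^+\|^2-\|u^-\|^2$ in the $H$-norm, not the claimed $\tfrac12(B(\theta)y^\pm,y^\pm)_H$; you would then still need to upgrade continuity from $\|\cdot\|_H$ to $\|\cdot\|_Y$ in \emph{both} the $H^+$ and $H^-$ directions, which is precisely the hard part. (Note that even in Theorem~\ref{th:2.1} the stronger-topology statement (b) is obtained only for the finite-dimensional $H^0\times H^-$ slice, not for the $H^+$ direction.)

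This is also why the phrase ``standard straightening of the fibrewise Hessians'' hides the real content. What Jiang actually does is build, for each $z$, a continuous family of operators $T_t(z)\in L(Y^\pm)$ interpolating between $B(z+\rho(z)+\cdot)$ and $B(\theta)$ and then solves a Moser-type ODE/implicit-function problem in the $Y$-norm, exploiting that $B$ is continuous from $V^Y$ into $L_s(H)$ (condition (iii) of Theorem~\ref{th:2.10}) and that $B(\theta)|_{H^\pm}$ has a bounded $H$-inverse, to produce a flow that is a \emph{$Y$-homeomorphism} and preserves $X^\pm$. None of that comes for free from Theorem~\ref{th:A.1}; if you want an elementary proof in the spirit of \cite{DHK} you must first supply a substitute for (iv) adapted to the mismatched pair of norms $(\|\cdot\|_Y,\|\cdot\|_H)$, or else reproduce Jiang's flow argument, and the proposal as written does neither.
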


\begin{remark}\label{rm:4.2}
{\rm (i) From the arguments of Lemma~\ref{lem:3.1} and the proof of
\cite{JM} it is easily seen that near $\theta\in N$ the map $\rho$
is equal to $h$ in Lemma~\ref{lem:3.1}.\\
(ii) It was proved in \cite[Prop.2.1]{JM} that the condition (iii)
in Theorem~\ref{th:2.10} can be derived from others of this proposition and the
following two conditions:
\begin{enumerate}
\item[\bf (FN3a)] $\forall x\in V\cap X$, $\exists\; C(x)>0$ such that
$$
|d^2(\mathcal{ L}|_{V^X})(x)(\xi, \eta)|\le
C(x)\|\xi\|\cdot\|\eta\|\;\forall \xi, \eta\in X.
$$
\item[\bf (FN3b)] $\forall \varepsilon>0$, $\exists\; \delta >0$ such that
for any $x_1, x_2\in V\cap X$ with $\|x_1- x_2\|_Y<\delta$,
$$
|d^2(\mathcal{ L}|_{V^X})(x_1)(\xi, \eta)- d^2(\mathcal{
L}|_{V^X})(x_2)(\xi, \eta) |\le \varepsilon
\|\xi\|\cdot\|\eta\|\;\forall \xi, \eta\in X.
$$
\end{enumerate}}
\end{remark}

If $H^-\subset Y$, then $P^+Y\subset Y$ because $H^0\subset X\subset
Y$. In this case, for $y\in Y$  we can write
$y^\bot=(I-P^0)y=y^++y^-=P^+y+ P^-y$ and hence
$$
(B(\theta)y^\bot, y^\bot)_H=(P^+B(\theta)P^+y^+, y^+)_H+
(P^-B(\theta)P^-y^-, y^-)_H
$$
Define a functional
$$
\mathcal{ L}^{\diamond}:B_{H^0}(\theta, \kappa)\cap
H^0\to\R,\;z\mapsto \mathcal{ L}^{\diamond}(z)=\mathcal{ L}(z+
\rho(z)).
 $$
  Then $\theta\in H^0$ is its critical
point, and also isolated if  $\theta$ is an isolated critical point
of $\mathcal{ L}|_{V^X}$.  By Remark~\ref{lem:3.2}, $\rho$ is $C^1$,
and Lemma~\ref{lem:3.1} and Remark~\ref{rm:4.2}(i) show that near
$\theta\in H^0$,
\begin{eqnarray*}
d\mathcal{ L}^\diamond(z)(\xi)=(A(z+ \rho(z)), \xi)_H=(A(z+ h(z)),
\xi)_H\quad\forall \xi\in H^0.
\end{eqnarray*}

 If $\theta$ is an isolated critical point of $\mathcal{
L}|_{V^X}$ (and hence $\mathcal{L}|_{V^Y}$), then by
Theorem~\ref{th:4.1} we can use the same proof method as
 in \cite[Th.8.4]{MaWi} or \cite[Th.5.1.17]{Ch1} to
derive:

\begin{corollary}[Shifting]\label{cor:4.3}
Under the assumptions of Theorem~\ref{th:4.1}, if $\theta$ is an
isolated critical point of $\mathcal{ L}|_{V^Y}$, $H^-\subset Y$ and
$\dim H^0\oplus H^-<\infty$, then
$$
C_q(\mathcal{ L}|_{V^Y}, \theta;{\bf K})\cong
C_{q-\mu}(\mathcal{ L}^{\diamond}, \theta;{\bf K})\quad\forall
q\in\N\cup\{0\}
$$
for any Abel group ${\bf K}$,  where  $\mu:=\dim H^-$.
\end{corollary}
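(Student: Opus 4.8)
The plan is to run the proof of the classical shifting theorem (\cite[Th.8.4]{MaWi}, \cite[Th.5.1.17]{Ch1}) with Theorem~\ref{th:4.1} in the role of the usual splitting lemma. Put $c=\mathcal{L}(\theta)$; we may assume $c=0$. First I would record that $\theta$ is in fact an isolated critical point of $\mathcal{L}|_{V^X}$ as well: every critical point of $\mathcal{L}|_{V^X}$ near $\theta$ is a zero of $A$ (as $D(\mathcal{L}|_{V^X})(x)(u)=(A(x),u)_H$ and $X$ is dense in $H$), hence, since $\nabla\mathcal{L}=A$ on $V\cap X$, a critical point of $\mathcal{L}$ on $H$, hence of $\mathcal{L}|_{V^Y}$, so by hypothesis it equals $\theta$. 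Consequently Lemma~\ref{lem:3.1} and Remark~\ref{rm:4.2}(i) show that $\theta$ is an isolated critical point of $\mathcal{L}^\diamond$ and that $\mathcal{L}^\diamond$ is $C^{2-0}$ on the finite dimensional space $H^0$. Since $\Psi$ from Theorem~\ref{th:4.1} is an origin-preserving local homeomorphism and singular homology is a homeomorphism invariant, $C_q(\mathcal{L}|_{V^Y},\theta;{\bf K})\cong C_q(\mathcal{L}\circ\Psi,\theta;{\bf K})$, so it suffices to compute the critical group at $\theta$ of
\[
F(y):=\mathcal{L}\circ\Psi(y)=\tfrac12(B(\theta)y^\pm,y^\pm)_H+\mathcal{L}^\diamond(P^0y),\qquad y\in B_Y(\theta,\kappa),
\]
where $P^0$ is the $H$-orthogonal projection onto $H^0$ and $y^\pm=(I-P^0)y$.

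The second step removes the "positive" variable by a deformation, exactly as in \cite[Th.8.4]{MaWi}. Since $H^0$ and $H^-$ are finite dimensional and contained in $Y$, the projections $P^0,P^-$ restrict to bounded projections of $Y$, so $Y=H^0\oplus Y^+\oplus H^-$ ($Y^+=Y\cap H^+$) is a topological direct sum; choose a product box $N=B_{H^0}(\theta,r)\oplus B_{Y^+}(\theta,r)\oplus B_{H^-}(\theta,r)\subset B_Y(\theta,\kappa)$. Writing $y=z+w^++w^-$ and setting $Q_+(w^+):=\tfrac12(B(\theta)w^+,w^+)_H\ge a_0\|w^+\|^2\ge0$, $Q_-(w^-):=\tfrac12(B(\theta)w^-,w^-)_H\le -a_0\|w^-\|^2$ (the cross term vanishes by (\ref{e:2.1})), the homotopy $\eta_t(z+w^++w^-)=z+tw^++w^-$, $t\in[0,1]$, is continuous on $N\times[0,1]$ for the $Y$-topology, leaves $N$ invariant, fixes $\{w^+=\theta\}$ pointwise, satisfies $F(\eta_t(y))=t^2Q_+(w^+)+Q_-(w^-)+\mathcal{L}^\diamond(z)\le F(y)$, and maps $N\setminus\{\theta\}$ into itself (if $\eta_t(y)=\theta$ with $F(y)\le 0$ then $z=\theta$ and $w^-=\theta$, whence $Q_+(w^+)\le 0$, so $w^+=\theta$ and $y=\theta$). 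Hence $\eta$ is a strong deformation retraction of the pair $(\{F\le 0\}\cap N,\{F\le 0\}\cap N\setminus\{\theta\})$ onto $(\{G\le 0\}\cap N',\{G\le 0\}\cap N'\setminus\{\theta\})$, where $G(z,w^-):=\mathcal{L}^\diamond(z)+Q_-(w^-)$ on $N'=B_{H^0}(\theta,r)\oplus B_{H^-}(\theta,r)$; therefore $C_q(\mathcal{L}|_{V^Y},\theta;{\bf K})\cong C_q(G,(\theta,\theta);{\bf K})$ for all $q$.

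The third step is the genuinely finite dimensional shifting for $G=\mathcal{L}^\diamond\oplus Q_-$ on $H^0\times H^-$. I would first extend $\mathcal{L}^\diamond$ to a $C^{2-0}$ function on $H^0$ satisfying (PS) and agreeing with $\mathcal{L}^\diamond$ near $\theta$ — possible since $\dim H^0<\infty$, just as in the proof of Corollary~\ref{cor:2.7} — which leaves $C_\ast(\mathcal{L}^\diamond,\theta;{\bf K})$ unchanged. Then $(\theta,\theta)$ is an isolated critical point of $G$, with $Q_-$ negative definite on the $\mu$-dimensional $H^-$ and $C_j(Q_-,\theta;{\bf K})\cong H_j(\bar B_{H^-}(\theta,r),\bar B_{H^-}(\theta,r)\setminus\{\theta\};{\bf K})\cong\delta_{j\mu}{\bf K}$; the product (Künneth) argument of \cite[Th.8.4]{MaWi} or \cite[Th.5.1.17]{Ch1} — which carries no Tor contribution because $C_\ast(Q_-,\theta;{\bf K})$ is ${\bf K}$ placed in the single degree $\mu$ — then gives $C_q(G,(\theta,\theta);{\bf K})\cong C_{q-\mu}(\mathcal{L}^\diamond,\theta;{\bf K})$, and combining with the second step proves the corollary. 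I expect the real difficulty to lie not in any single step but in the bookkeeping that makes them compatible: choosing $r$ small enough that $N\subset B_Y(\theta,\kappa)$ and that $\Psi$, the splitting of $F$, and the retraction $\eta$ are all simultaneously available on $N$, and verifying throughout that the homeomorphisms and deformations involved are continuous for the $Y$-norm rather than the weaker $H$-norm.
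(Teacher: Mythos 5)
Your proposal is correct and follows essentially the route the paper itself indicates: the paper states Corollary~\ref{cor:4.3} "with the same proof method as in \cite[Th.8.4]{MaWi} or \cite[Th.5.1.17]{Ch1}", and you have spelled out precisely that method — transport by the homeomorphism $\Psi$ of Theorem~\ref{th:4.1}, a fiberwise deformation retraction in the $Y$-topology that kills the $w^+$-variable (which is in fact the same $\Re$-type retraction the paper uses in the proof of Proposition~\ref{prop:4.5}), and the finite-dimensional K\"unneth step for $\mathcal{L}^\diamond\oplus Q_-$, with the correct observation that $C_\ast(Q_-,\theta;{\bf K})$ being ${\bf K}$ in degree $\mu$ eliminates any Tor contribution. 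Your bookkeeping of the $Y$-norm continuity of the retraction, the verification that $\eta_t$ preserves $N\setminus\{\theta\}$ inside the sublevel set, and the reduction of $\mathcal{L}^\diamond$ to a (PS) function on $H^0$ are all exactly the points that need checking and are handled correctly.
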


\begin{corollary}[\hbox{\cite[Cor.2.8]{JM}}]\label{cor:4.4}
Under the assumptions of Theorem~\ref{th:4.1}, if $\theta$ is  an
isolated critical point of $\mathcal{ L}|_{V^Y}$, and $H^-\subset
X$, then for any Abel group ${\bf K}$,
$$
C_q(\mathcal{ L}|_{V^X}, \theta;{\bf K})\cong C_{q}(\mathcal{
L}|_{V^Y}, \theta;{\bf K})\quad\forall q=0, 1,\cdots.
$$
\end{corollary}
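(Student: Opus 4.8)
The plan is to reduce both critical groups to the critical group at $\theta$ of \emph{one and the same} finite-dimensional function, namely $\mathcal{ L}^\circ(z)=\mathcal{ L}(z+h(z))$ on $H^0$. Recall that the standing assumptions (those of Theorem~\ref{th:2.1}) force $H^0\subset X$, and that the present hypothesis adds $H^-\subset X$; hence $H^0\oplus H^-\subset X$ with $\dim(H^0\oplus H^-)=\nu+\mu<\infty$, and also $H^-\subset X\subset Y$. Therefore the shifting theorem, Corollary~\ref{cor:4.3}, applies to $\mathcal{ L}|_{V^Y}$ and gives
$$
C_q(\mathcal{ L}|_{V^Y},\theta;{\bf K})\cong C_{q-\mu}(\mathcal{ L}^{\diamond},\theta;{\bf K}),\qquad q=0,1,\dots,
$$
where $\mathcal{ L}^{\diamond}(z)=\mathcal{ L}(z+\rho(z))$ and $\rho$ is the $C^1$ map furnished by Theorem~\ref{th:4.1} for the pair $X\subset Y$.

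For the $X$-side I would argue as follows. First, although only $\theta$ isolated for $\mathcal{ L}|_{V^Y}$ is assumed, it follows that $\theta$ is isolated for $\mathcal{ L}|_{V^X}$: any critical point $x\in V^X$ of $\mathcal{ L}|_{V^X}$ satisfies $A(x)=0$, hence $d(\mathcal{ L}|_{V^Y})(x)$ vanishes on the dense subspace $X\subset Y$ and thus on all of $Y$ by continuity (here $\mathcal{ L}|_{V^Y}\in C^2$), so $x$ is a critical point of $\mathcal{ L}|_{V^Y}$. Next, the hypotheses of Theorem~\ref{th:4.1} are met with $Y$ replaced by $X$: by (ii)--(iii) of Theorem~\ref{th:2.10} we have $\mathcal{ L}|_{V^X}\in C^2$ and $A\in C^1(V^X,X)$, while the map $B$ is continuous on $V^X$ in the $L(H)$-norm because $B(x)-B(x')$ is the $H$-extension of the $(\cdot,\cdot)_H$-symmetric operator $A'(x)-A'(x')$, whence $\|B(x)-B(x')\|_{L(H)}\le\|A'(x)-A'(x')\|_{L(X)}\to 0$. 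So Corollary~\ref{cor:4.3}, now with $Y$ replaced by $X$, yields $C_q(\mathcal{ L}|_{V^X},\theta;{\bf K})\cong C_{q-\mu}(\mathcal{ L}^{\circ},\theta;{\bf K})$, with $\mathcal{ L}^{\circ}$ as in Lemma~\ref{lem:3.1}. (Alternatively, one can avoid this verification and instead run the deformation contracting the $X^+$-component to $\theta$ directly inside the $X$-homeomorphic image produced by the second half of Theorem~\ref{th:4.1}.) Finally, by Remark~\ref{rm:4.2}(i) and the uniqueness in Lemma~\ref{lem:3.1} the maps $\rho$ and $h$ coincide near $\theta\in H^0$, so $\mathcal{ L}^{\diamond}=\mathcal{ L}^{\circ}$ near $\theta$ and hence $C_{q-\mu}(\mathcal{ L}^{\diamond},\theta;{\bf K})=C_{q-\mu}(\mathcal{ L}^{\circ},\theta;{\bf K})$ (critical groups depend only on the local behaviour, by excision). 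Composing the two isomorphisms gives the claim.

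The genuinely delicate point is (a): ensuring that the finite-dimensional model is \emph{literally the same} in the two invocations of the shifting theorem. This is exactly where the uniqueness of $h$ (equivalently, $\rho=h$ near $\theta$, Remark~\ref{rm:4.2}(i)) is indispensable — otherwise one would merely obtain $C_\ast$ of two a priori different functions, with no reason for them to agree. A secondary nuisance is (b): keeping track of the \emph{$X$-topology} on the sublevel sets throughout the shifting reduction, which is precisely what the second half of Theorem~\ref{th:4.1} (and, in the self-referential case $Y=X$, the $X$-homeomorphism statement there) is built to supply. The remaining check that Theorem~\ref{th:4.1} is applicable with $Y=X$ is routine given (i)--(iii) of Theorem~\ref{th:2.10}, and can be dispensed with by the alternative deformation argument indicated above.
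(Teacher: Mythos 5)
Your argument is correct, but it proceeds along a genuinely different route than the paper's. The paper attributes the result to Jiang and elaborates on his proof technique in the stronger Proposition~\ref{prop:4.5}: one conjugates by the local homeomorphism $\Psi$ of Theorem~\ref{th:4.1}, then applies the explicit deformation $\Re(t,y)=(P^0+P^-)y+(1-t)P^+y$ to retract the conjugated sublevel set onto its $(H^0\oplus H^-)$-part, and finally uses that this part lies in $X$, where the $X$- and $Y$-norms are equivalent because $\dim(H^0\oplus H^-)<\infty$; the inclusion then induces an isomorphism on relative homology. Your proof instead applies the shifting theorem (Corollary~\ref{cor:4.3}) twice — once for the pair $(X,Y)$ and once for the degenerate pair $(X,X)$ — and closes the loop via the uniqueness of the reduction map (Remark~\ref{rm:4.2}(i), $\rho=h$ near $\theta$), so both shifts land on the same finite-dimensional germ $\mathcal{L}^\circ$. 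This is more modular, but it shifts the burden to verifying that Theorem~\ref{th:4.1}'s hypotheses hold with $Y$ replaced by $X$; your reduction of condition (iii) (continuity of $B$ on $V^X$) to the extension bound $\|\hat B(x)\|_{L(H)}\le\|A'(x)\|_{L(X)}$ stated in Section~\ref{sec:2} is the right observation and makes the self-referential application legitimate. Your separate points — that $X$-isolation of $\theta$ follows from $Y$-isolation (since a critical point of $\mathcal{L}|_{V^X}$ has $A(x)=0$, hence is critical for $\mathcal{L}|_{V^Y}$ by density, and $X$-balls are contained in $Y$-balls), and that the uniqueness of $h$ is what makes the two shift targets coincide — are exactly the places where a sloppy version of this argument would break, so it is good you flagged them. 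One small cosmetic remark: Corollary~\ref{cor:2.6} is not usable for the $X$-side since it concerns $C_q(\mathcal{L},\theta)$ on $H$, not $C_q(\mathcal{L}|_{V^X},\theta)$; you implicitly avoid this pitfall by invoking Corollary~\ref{cor:4.3} with $Y=X$ rather than Corollary~\ref{cor:2.6}, which is the correct choice.
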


Actually, from the proof of \cite[Cor.2.8]{JM} one can  get the
following stronger conclusion:

\begin{proposition}\label{prop:4.5}
For any open neighborhood $U^Y$
of $\theta$ in $V^Y$ and the corresponding one of $\theta$ in
$V^X$, $U^X=U^Y\cap X$, the inclusion
$$
\iota: \left(\mathcal{ L}_0\cap U^X, \mathcal{ L}_0\cap
U^X\setminus\{\theta\}\right)\to \left(\mathcal{ L}_0\cap U^Y,
\mathcal{ L}_0\cap U^Y\setminus\{\theta\}\right)
$$
induces  isomorphisms
$$
\iota_\ast: H_\ast\left(\mathcal{ L}_0\cap U^X, \mathcal{ L}_0\cap
U^X\setminus\{\theta\};{\bf K}\right)\to H_\ast\left(\mathcal{
L}_0\cap U^Y, \mathcal{ L}_0\cap U^Y\setminus\{\theta\};{\bf
K}\right)
$$
for any Abel group ${\bf K}$, where $\mathcal{ L}_0=\{x\in V\,|\,
\mathcal{ L}(x)\le 0\}$.
\end{proposition}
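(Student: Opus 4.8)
The plan is to reduce everything, by excision, to a normalised situation near $\theta$ in which Theorem~\ref{th:4.1} puts $\mathcal{L}$ into product form, and then to strong deformation retract both the $X$-sublevel set and the $Y$-sublevel set onto one and the same subset of $H^0\oplus H^-\subset X$. I work under the hypotheses in force for Corollary~\ref{cor:4.4} (those of Theorem~\ref{th:4.1}, together with $H^-\subset X$ and $\theta$ an isolated critical point of $\mathcal{L}|_{V^Y}$), and I may assume $\mathcal{L}(\theta)=0$: otherwise, on a small neighbourhood of $\theta$ either $\mathcal{L}>0$ everywhere or $\theta$ is an interior point of $\mathcal{L}_0$, and in both cases the two relative homology groups vanish, so $\iota_\ast$ is trivially an isomorphism. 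Throughout, a pair $(S,\cdot\setminus\{\theta\})$ abbreviates $(S,S\setminus\{\theta\})$, and $g(y):=\tfrac12(B(\theta)P^+y,P^+y)_H+\tfrac12(B(\theta)P^-y,P^-y)_H+\mathcal{L}^\diamond(P^0y)$ denotes the product model of $\mathcal{L}\circ\Psi$ furnished by Theorem~\ref{th:4.1}, where $\mathcal{L}^\diamond(z)=\mathcal{L}(z+\rho(z))$ and $\rho=h$ near $\theta$ by Remark~\ref{rm:4.2}(i).

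First I would fix $\kappa>0$ so small that Theorem~\ref{th:4.1} applies on $B_Y(\theta,\kappa)$ and, shrinking $\kappa$ if necessary, $\Psi$ is there a homeomorphism onto an open neighbourhood of $\theta$ in $Y$. Since $P^0$ and $P^-$ are finite rank and $P^+=I-P^0-P^-$, the projections $P^0,P^-,P^+$ are continuous on $Y$ and on $X$, so the ``box'' $\mathcal{N}_\delta:=\{y\in Y:\|P^0y\|<\delta,\ \|P^-y\|<\delta,\ \|P^+y\|_Y<\delta\}$ is an open neighbourhood of $\theta$ in $Y$; I choose $\delta$ so small that $\mathcal{N}_\delta\subset B_Y(\theta,\kappa)$ and $\Psi(\mathcal{N}_\delta)\subset U^Y$. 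By the excision property of singular homology, applied once in $Y$ and once in $X$, the inclusion of the pair on $\mathcal{L}_0\cap\Psi(\mathcal{N}_\delta)$ into the pair on $\mathcal{L}_0\cap U^Y$, and the inclusion of the pair on $\mathcal{L}_0\cap\Psi(\mathcal{N}_\delta\cap X)$ (with the topology of $X$) into the pair on $\mathcal{L}_0\cap U^X$, both induce isomorphisms; an extra excision inside $X$ is what lets one ignore the difference between $\Psi(\mathcal{N}_\delta\cap X)$ and $\Psi(\mathcal{N}_\delta)\cap X$. Fitting these into the obvious commutative square with $\iota$ and with the inclusion of the $X$-pair into the $Y$-pair, the proof reduces to showing that $(\mathcal{L}_0\cap\Psi(\mathcal{N}_\delta\cap X),\,\cdot\setminus\{\theta\})\hookrightarrow(\mathcal{L}_0\cap\Psi(\mathcal{N}_\delta),\,\cdot\setminus\{\theta\})$ induces an isomorphism; by the two homeomorphism assertions of Theorem~\ref{th:4.1}, $\Psi$ transports this to the inclusion $(g_0\cap\mathcal{N}_\delta\cap X,\,\cdot\setminus\{\theta\})\hookrightarrow(g_0\cap\mathcal{N}_\delta,\,\cdot\setminus\{\theta\})$, with $g_0=\{g\le 0\}$.

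The core step is then a deformation argument in the model. Writing $y=P^0y+P^+y+P^-y$, I set $\eta_t(y)=P^0y+tP^+y+P^-y$ for $t\in[0,1]$. The box $\mathcal{N}_\delta$ is invariant under every $\eta_t$, and since $(B(\theta)u,u)_H\ge 0$ on $H^+$ one has $g(\eta_t(y))=t^2\tfrac12(B(\theta)P^+y,P^+y)_H+\tfrac12(B(\theta)P^-y,P^-y)_H+\mathcal{L}^\diamond(P^0y)\le g(y)$, so $\eta$ carries $(g_0\cap\mathcal{N}_\delta)\times[0,1]$ into $g_0\cap\mathcal{N}_\delta$; it is continuous (the projections are), equals the identity at $t=1$, and at $t=0$ retracts onto $R:=(g_0\cap\mathcal{N}_\delta)\cap\{P^+y=0\}\subset H^0\oplus H^-\subset X$, fixing $R$ throughout. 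Moreover $\eta_t$ never produces $\theta$: if $y\in g_0\cap\mathcal{N}_\delta$ has $P^0y=0=P^-y$ then $g(y)=\tfrac12(B(\theta)P^+y,P^+y)_H\ge 0$ forces $P^+y=0$, i.e.\ $y=\theta$; hence for $y\ne\theta$ the pair $(P^0y,P^-y)$, and so $(P^0\eta_t(y),P^-\eta_t(y))$, is nonzero, whence $\eta_t(y)\ne\theta$. Thus $\eta$ is a strong deformation retraction of $(g_0\cap\mathcal{N}_\delta,\,\cdot\setminus\{\theta\})$ onto $(R,R\setminus\{\theta\})$; the identical formula, using the $X$-continuous projections and $P^+y\in X^+\Rightarrow tP^+y\in X^+$, strong deformation retracts $(g_0\cap\mathcal{N}_\delta\cap X,\,\cdot\setminus\{\theta\})$ onto the \emph{same} pair $(R,R\setminus\{\theta\})$, the same as a space since $R$ lies in the finite-dimensional $H^0\oplus H^-$ where $X$, $Y$, $H$ all induce one topology. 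Hence both inclusions of $(R,R\setminus\{\theta\})$ into the two $g_0$-pairs are homotopy equivalences of pairs, and since they compose with the inclusion under study, that inclusion induces an isomorphism on singular homology with coefficients in any Abelian group ${\bf K}$. Reversing the reductions finishes the proof.

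I expect the main difficulty to be organisational rather than deep: keeping the excisions straight so that the $X$- and $Y$-pairs are compared through genuinely commuting inclusions (the mismatch between $\Psi(\,\cdot\cap X)$ and $\Psi(\,\cdot)\cap X$), and replacing the $Y$-ball $B_Y(\theta,\kappa)$ by the box $\mathcal{N}_\delta$, which — unlike a ball in a general Banach space — is invariant under the coordinatewise rescaling $\eta_t$. The only place the geometry of $g$ genuinely enters is the verification that $\eta_t$ avoids $\theta$, which is exactly the product structure supplied by Theorem~\ref{th:4.1} combined with $\mathcal{L}^\diamond(\theta)=\mathcal{L}(\theta)=0$.
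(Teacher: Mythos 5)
Your proof is correct and follows essentially the same route as the paper's: transport by the Jiang normal form $\Psi$, replace the $Y$-ball with a neighbourhood invariant under the coordinatewise scaling of the $H^+$-component (your box $\mathcal{N}_\delta$ plays the role of the $\|\cdot\|_D$-ball $B^Y_{\kappa_0}$ in the paper), deformation-retract both the $X$- and $Y$-sublevel pairs onto the common finite-dimensional slice in $H^0\oplus H^-\subset X$, and close with the observation that $X$, $Y$, $H$ induce one topology there. The only organisational difference is that you re-derive the two properties of the retraction (avoiding $\theta$ and mapping into $X$) directly from the product form of $g$, while the paper cites them as facts (I)--(II) established in \cite{JM}.
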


\begin{proof}
 By the excision property of the singular
homology theory we only need to prove it for some open neighborhood
$U^Y$ of $\theta$ in $V^Y$. By \cite[Claim 1]{JM})
$$
\|y\|_D=\|(P^0+P^-)y\|_Y+ \|P^+y\|_Y
$$
gives a norm on $Y$ equivalent to $\|\cdot\|_Y$. Let $\kappa_0\in
(0, \kappa)$ be so small that
\begin{equation}\label{e:4.1}
B^Y_{\kappa_0}:=\{y\in Y\,|\, \|y\|_D<\kappa_0\}\subset B_Y(\theta,
\delta)
\end{equation}
and that $U^Y=\Psi(B^Y_{\kappa_0})$ (resp. $\Psi(B^Y_{\kappa_0}\cap
X)$) is a neighborhood of $\theta$ in $Y$ (resp. $X$) which only
contains $\theta$ as a unique critical point of $\mathcal{
L}|_{V^Y}$ (resp. $\mathcal{ L}|_{V^X}$). (This can be assured by
the second claim in Theorem~\ref{th:4.1}). For conveniences let
$$
\mathcal{ Y}=\mathcal{ L}_0\cap U^Y\quad\hbox{and}\quad \mathcal{
X}=\mathcal{ Y}\cap X=\mathcal{ L}_0\cap U^X=\{y\in U^Y\cap X\,|\,
\mathcal{ L}(y)\le 0\},
$$
and let $\iota:(\mathcal{ X}, \mathcal{
X}\setminus\{\theta\})\hookrightarrow (\mathcal{ Y}, \mathcal{
Y}\setminus\{\theta\})$ be the inclusion.
 By Theorem~\ref{th:4.1} we have
$$
\Psi^{-1}(\mathcal{ Y})=\left\{y\in B^Y_{\kappa_0}\,\Bigm|\,
\frac{1}{2}(B(\theta)y^\bot, y^\bot)+ \mathcal{ L}(z+ \rho(z))\le
0\right\}
$$
and isomorphisms
\begin{eqnarray*}
&&(\Psi^{-1}|_\mathcal{ Y})_\ast: H_\ast(\mathcal{ Y}, \mathcal{
Y}\setminus\{\theta\};{\bf K})\cong H_\ast(\Psi^{-1}(\mathcal{ Y}),
\Psi^{-1}(\mathcal{
Y})\setminus\{\theta\};{\bf K}),\\
&&(\Psi^{-1}|_\mathcal{ X})_\ast: H_\ast(\mathcal{ X}, \mathcal{
X}\setminus\{\theta\};{\bf K})\cong H_\ast(\Psi^{-1}(\mathcal{
Y})\cap X, \Psi^{-1}(\mathcal{ Y})\cap X\setminus\{\theta\};{\bf
K}).
\end{eqnarray*}
Define $\Psi^{-1}(\mathcal{ Y})_{0-}=\Psi^{-1}(\mathcal{ Y})\cap
(H^0+ H^-)$. Then $\Psi^{-1}(\mathcal{ Y})_{0-}\subset X$ and thus
\begin{equation}\label{e:4.2}
\Psi^{-1}(\mathcal{ Y})_{0-}=\Psi^{-1}(\mathcal{ Y})_{0-}\cap X.
\end{equation}
For $B^Y_{\kappa_0}$ in (\ref{e:4.1}) let $\Re:[0, 1]\times
B^Y_{\kappa_0}\to Y$ be the continuous map defined by
$$
\Re(t,y)=(P^0+ P^-)y+ (1-t)P^+y.
$$
Clearly, $\Re(0,\cdot)=id$, $\Re(t,\cdot)|_{\Psi^{-1}(\mathcal{
Y})_{0-} }=id$ and $\Re(1, \Psi^{-1}(\mathcal{ Y}))\subset
\Psi^{-1}(\mathcal{ Y})_{0-}$. It was  proved in \cite{JM}  that
$\Re$ is also a continuous map from $[0,1]\times(B^Y_{\kappa_0}\cap
X)$ to $X$ (with respect to the induced topology from $X$) and that
\begin{enumerate}
\item[(I)] $\Re(1,\Psi^{-1}(\mathcal{
Y})\setminus\{\theta\})\subset \Psi^{-1}(\mathcal{
Y})_{0-}\setminus\{\theta\}$,
\item[(II)] $\Re(t, \Psi^{-1}(\mathcal{
Y})\setminus\{\theta\})\subset\Psi^{-1}(\mathcal{
Y})\setminus\{\theta\}$ for $t\in [0, 1]$.
\end{enumerate}
 These show that $\Re$ gives not only a deformation retract from $ (\Psi^{-1}(\mathcal{
Y}), \Psi^{-1}(\mathcal{ Y})\setminus\{\theta\})$ to
$(\Psi^{-1}(\mathcal{ Y})_{0-}, \Psi^{-1}(\mathcal{
Y})_{0-}\setminus\{\theta\})$, but also one from
$(\Psi^{-1}(\mathcal{ Y})\cap X, \Psi^{-1}(\mathcal{ Y})\cap
X\setminus\{\theta\})$ to
$$
(\Psi^{-1}(\mathcal{ Y})_{0-}\cap X, \Psi^{-1}(\mathcal{
Y})_{0-}\cap X\setminus\{\theta\})=(\Psi^{-1}(\mathcal{ Y})_{0-},
\Psi^{-1}(\mathcal{ Y})_{0-}\setminus\{\theta\})
$$
(with respect to the induced topology from $X$). Hence inclusions
{
\begin{eqnarray*}
&&\hspace{-5mm}i^y:(\Psi^{-1}(\mathcal{ Y})_{0-},
\Psi^{-1}(\mathcal{ Y})_{0-}\setminus\{\theta\})\hookrightarrow
(\Psi^{-1}(\mathcal{ Y}),
\Psi^{-1}(\mathcal{ Y})\setminus\{\theta\})\quad\hbox{and}\\
&&\hspace{-5mm}i^x:(\Psi^{-1}(\mathcal{ Y})_{0-}\cap X,
\Psi^{-1}(\mathcal{ Y})_{0-}\cap
X\setminus\{\theta\})\hookrightarrow (\Psi^{-1}(\mathcal{ Y})\cap X,
\Psi^{-1}(\mathcal{ Y})\cap X\setminus\{\theta\})
\end{eqnarray*}}
induce isomorphisms
 {\footnotesize
\begin{eqnarray*}
&&\hspace{-7mm}H_\ast(\Psi^{-1}(\mathcal{ Y})_{0-},
\Psi^{-1}(\mathcal{ Y})_{0-}\setminus\{\theta\};{\bf
K})\xrightarrow{i^y_\ast} H_\ast(\Psi^{-1}(\mathcal{ Y}),
\Psi^{-1}(\mathcal{ Y})\setminus\{\theta\};{\bf K})\quad\hbox{\normalsize and}\\
&&\hspace{-7mm}H_\ast(\Psi^{-1}(\mathcal{ Y})_{0-}\cap X,
\Psi^{-1}(\mathcal{ Y})_{0-}\cap X\setminus\{\theta\};{\bf
K})\xrightarrow{i^x_\ast} H_\ast(\Psi^{-1}(\mathcal{ Y})\cap X,
\Psi^{-1}(\mathcal{ Y})\cap X\setminus\{\theta\};{\bf K}).
\end{eqnarray*}}
Consider the inclusions
{
\begin{eqnarray*}
&&i^{xy}:(\Psi^{-1}(\mathcal{ Y})\cap X, \Psi^{-1}(\mathcal{ Y})\cap
X\setminus\{\theta\})\hookrightarrow (\Psi^{-1}(\mathcal{ Y}),
\Psi^{-1}(\mathcal{ Y})\setminus\{\theta\})\quad\hbox{and}\\
&&i^{xy}_0:(\Psi^{-1}(\mathcal{ Y})_{0-}\cap X, \Psi^{-1}(\mathcal{
Y})_{0-}\cap X\setminus\{\theta\})\hookrightarrow
(\Psi^{-1}(\mathcal{ Y})_{0-}, \Psi^{-1}(\mathcal{
Y})_{0-}\setminus\{\theta\}).
\end{eqnarray*}}
It is obvious that $i^{xy}\circ i^x=i^y\circ i^{xy}_0$. Since $H^0+
H^-\subset X$, both $(H^0+ H^-, \|\cdot\|_X)$ and $(H^0+ H^-,
\|\cdot\|_Y)$ are complete. Hence the norms $\|\cdot\|_X$ and
$\|\cdot\|_Y$ are equivalent on $H^0+ H^-$. It follows from this and
(\ref{e:4.2}) that $i^{xy}_0$ is a homeomorphism. This shows that
$(i^{xy}_0)_\ast$ and hence $i^{xy}_\ast$ is an isomorphism. Note
that $(\Psi^{-1}|_\mathcal{
Y})\circ\iota=i^{xy}\circ(\Psi^{-1}|_\mathcal{ X})$.
Proposition~\ref{prop:4.5}
 follows immediately.
 \end{proof}

Before proving Theorem~\ref{th:2.10} we also need the following
observation,  which is contained in the proof of \cite[Th.3.2, page
100]{Ch} and seems to be obvious. But the author cannot find
where it is explicitly pointed out.

\begin{remark}\label{rm:4.6}
{\rm Let $H$ be a real Hilbert space, and let $f\in C^2(H,\R)$
satisfy the (PS) condition. Assume that $df(x)=x-Tx$, where $T$ is a
compact mapping, and that $p_0$ is an isolated critical point of
$f$. Then for any field $\F$ and each $q\in\N\cup\{0\}$, $C_q(f, p_0;\F)$ is
a finite dimension vector space over $\F$. In particular, if $f\in
C^2(\R^n, \R)$ has an isolated critical point $p_0\in\R^n$ then
$C_q(f, p_0;\F)$, $q=0,1,\cdots$, are vector spaces over $\F$ of
finite dimensions. In fact, by \cite[(3.2), page 101]{Ch} we have
$$
C_\ast(f, p_0;\F)=H_\ast(W, W_-;\F)=H_\ast\left(\tilde
f_{\frac{2}{3}\gamma}\cap W, \tilde f_{-\frac{2}{3}\gamma}\cap
W;\F\right),
$$
where $(W, W_-)$ is a Gromoll-Meyer pair of $f$ at $p_0$, and
$\tilde f$ has only nondegenerate critical points $\{p_j\}^m_1$ in
$W$, finite in number, contained in $B_H(p_0, \delta)\subset {\rm
Int}(W)\cap f^{-1}[-\gamma/3, \gamma/3]$. Hence $C_\ast(f,
p_0;\F)=\oplus^m_{j=1}C_\ast(\tilde f, p_j;\F)$. The claim follows
because each $C_q(\tilde f, p_j;\F)$ is either $\F$ or $0$.}
\end{remark}

\begin{proof}[Proof of Theorem~\ref{th:2.10}]
 By assumptions
$(X, H, \mathcal{ L})$ and  $(X, Y, H, \mathcal{ L})$ satisfy the
conditions in Corollaries~\ref{cor:2.6},~\ref{cor:4.3} and
\ref{cor:4.4} respectively. By Remark~\ref{rm:4.2}  near $\theta\in
H^0$ the maps $h$ and $\rho$ are same. Then
Corollaries~\ref{cor:2.6},~\ref{cor:4.3} and \ref{cor:4.4} lead to
\begin{equation}\label{e:4.3}
C_\ast(\mathcal{ L},\theta;{\bf K})\cong C_\ast(\mathcal{ L}|_{V^Y},\theta;{\bf K})\cong
C_\ast(\mathcal{ L}|_{V^X}, \theta;{\bf
K})
\end{equation}
for any Abel group ${\bf K}$.

 Note that we may assume that  $W$ is given by
Theorem~\ref{th:2.1} because of the excision property of the
singular homology groups. By Proposition~\ref{prop:4.5} the
inclusion
$$
I^{xy}:\left(\mathcal{ L}_0\cap W^X,
 \mathcal{ L}_0\cap W^X\setminus\{\theta\}\right)\hookrightarrow \left(\mathcal{ L}_0\cap
W^Y, \mathcal{ L}_0\cap W^Y\setminus\{\theta\}\right)
$$
induces an isomorphism
$$
I^{xy}_\ast: H_\ast\left(\mathcal{ L}_0\cap W^X, \mathcal{ L}_0\cap
W^X\setminus\{\theta\}; {\bf K}\right)\to H_\ast\left(\mathcal{
L}_0\cap W^Y, \mathcal{ L}_0\cap W^Y\setminus\{\theta\};{\bf
K}\right).
$$
By (\ref{e:4.3}) and Remark~\ref{rm:4.6},  for a field $\F$ and each
$q\in\N\cup\{0\}$,
\begin{eqnarray*}
&&C_q(\mathcal{ L}|_{V^X}, \theta;{\F})\cong H_q\left(\mathcal{
L}_0\cap
W^X, \mathcal{ L}_0\cap W^X\setminus\{\theta\};{\F}\right),\\
&&C_q(\mathcal{ L}|_{V^Y},\theta;{\F})\cong H_q\left(\mathcal{
L}_0\cap
W^Y, \mathcal{ L}_0\cap W^Y\setminus\{\theta\};{\F}\right),\\
&& C_q(\mathcal{ L},\theta;{\F})\cong H_q\left(\mathcal{ L}_0\cap W,
\mathcal{ L}_0\cap W\setminus\{\theta\};{\F}\right)
\end{eqnarray*}
are isomorphic vector spaces over ${\F}$ of finite dimension. Then
any surjective (or injective) homomorphism among them must be an
isomorphism. By Corollary~\ref{cor:2.5}
$I^{xw}_\ast$ is a surjection and hence an isomorphism. Since
$I^{xw}_\ast=I^{yw}_\ast\circ
I^{xy}_\ast$, $I^{yw}_\ast$ is also an isomorphism.
 \end{proof}

\section{Proof of Theorem~\ref{th:2.12}}\label{sec:5}

We use the ideas of \cite{Ho} to prove (i) in Step 1, and then
derive (ii) in Step 2 from \cite[Th.1.2]{CiDe} by checking that
$\nabla\mathcal{ L}$ is a demicontinuous  map of class $(S)_+$.

\noindent{\bf Step 1}. By the first paragraph in Step 1 of the proof
of Lemma~\ref{lem:3.1},
 $(I-P^0)B(\theta)|_{X^\pm}: X^\pm\to X^\pm$ is a Banach isomorphism.
 Consider the $C^1$ map
$\Theta: [2,3]\times (V\cap X^\pm)\to X^\pm$ given by
\begin{equation}\label{e:5.1}
(t,u)\mapsto (3-t)(I-P^0)A(u)+(t-2)(I-P^0)B(\theta)u.
\end{equation}
Then $D_2\Theta(t,\theta)=(I-P^0)B(\theta)|_{X^\pm}$ for all $t\in
[2, 3]$. By the inverse function theorem there exist positive
constants $\rho\in (0, r_0]$ and $C_7>0, C_8>0$ such that
\begin{equation}\label{e:5.2}
C_7\|u\|_X\le \|\Theta(t,u)\|_X\le C_8\|u\|_X\quad\forall u\in
B_X(\theta, \rho)\cap X^\pm,\;t\in [2, 3].
\end{equation}
 Following the
notations in Lemma~\ref{lem:3.1}, we can shrink $\rho>0$ (if
necessary) such that the following (i)-(iii) are satisfied:
\begin{enumerate}
\item[\bf (i)] $\theta$ is a unique zero of $A$ in $B_X(\theta,
2\rho)$,

\item[\bf (ii)] $z+ h(z)\in B_X(\theta, r_0/2)$ for any $z\in
B_X(\theta,2\rho)$,

\item[\bf (iii)] $\|z\|_X<r_0$ and $\|u\|_X<r_0$ for any $z+u\in
B_X(\theta, 2\rho)$ with $z\in H^0$ and $u\in X^\pm$. (This can be
realized because $H^0$ is a space of finite dimension.)
\end{enumerate}

Now we define a map $ \Gamma:[0,3]\times B_X(\theta, \rho)\to
X,\;(t, z+u)\mapsto\Gamma_t(z+u)$, where
$$
\Gamma_t(z+ u)=\left\{\begin{array}{ll}
 (I-P^0)A(z+u)+ P^0A\bigl(th(z)+ (1-t)u+ z)\quad\hbox{if}\;t\in
 [0,1],\\
 (I-P^0)A\bigl(u+ (2-t)z\bigr)+ P^0A(z+ h(z))\quad\hbox{if}\;t\in
 [1,2],\\
 (3-t)(I-P^0)A(u)+ (t-2)(I-P^0)A'(\theta)u+ P^0A(z+ h(z))\\
 \hspace{40mm}\hbox{if}\;t\in
 [2,3].
 \end{array}\right.
 $$
Clearly, $\Gamma$ is $C^0$, and every $\Gamma_t$ is $C^1$ and
satisfies $\Gamma_t(\theta)=\theta$. Let us prove:

\noindent{\bf Claim 5.1}. $\exists\; \epsilon\in (0, \rho)$ such that
$\Gamma_t(x)\ne\theta\;\forall (t,x)\in [0,3]\times(\bar B_X(\theta,
\epsilon)\setminus\{\theta\})$.

In fact, assume that $\Gamma_t(z+u)=\theta$ for some $t\in [0, 1]$
and $z+u\in \bar B_X(\theta, \rho)$. Then $(I-P^0)A(z+u)+
P^0A\bigl(th(z)+ (1-t)u+ z)=\theta$ and hence
$$
(I-P^0)A(z+u)=\theta\quad\hbox{and}\quad P^0A\bigl(th(z)+
(1-t)u+ z)=\theta.
$$
By the first equality, (\ref{e:3.5}) and the uniqueness we have
$u=h(z)$. So the second equality becomes
$$
\theta=P^0A\bigl(th(z)+ (1-t)u+ z)=P^0A\bigl(th(z)+ (1-t)h(z)+
z)=P^0A\bigl(z+ h(z)).
$$
This and (\ref{e:3.5}) give $A(z+h(z))=\theta$. By (i) we get
$z+h(z)=\theta$. That is, $z=\theta$ and $z+u=\theta$.

Similarly, let $\Gamma_t(z+u)=\theta$ for some $t\in [1, 2]$ and
$z+u\in \bar B_X(\theta, \rho)$. Then
$$
(I-P^0)A\bigl(u+ (2-t)z\bigr)=\theta\quad\hbox{and}\quad P^0A(z+
h(z))=\theta.
$$
(\ref{e:3.5}) and the second equality yield $A(z+h(z))=\theta$, and
hence $z=\theta$ as above. Since $\|u\|_X<r_0<r_1$, it follows from
the first equality and the construction of $h$ above (\ref{e:3.5})
that $u=h((2-t)z)=\theta$.

Finally, assume that $\Gamma_t(z+u)=\theta$ for some $t\in [2, 3]$
and $z+u\in B_X(\theta, \epsilon)$, where $\epsilon\in (0, \rho)$ is
such that $\|u\|_X<\rho$ for any $z+u\in B_X(\theta, \epsilon)$
(with $z\in H^0$ and $u\in X^\pm$).
 Then $P^0A(z+ h(z))=\theta$ and
$$
\Theta(t,u)=(3-t)(I-P^0)A(u)+ (t-2)(I-P^0)B(\theta)u=\theta.
$$
 The former implies $z=\theta$ as above, and  (\ref{e:5.2}) leads to $u=\theta$.
 \underline{Claim~5.1 is proved.}

By Lemma~\ref{lem:3.1}(i), $h'(\theta)=\theta$. Using this it is
easily proved that $d\Gamma_t(\theta)=A'(\theta)$ for any $t\in [0,
3]$. Since the $C^1$ Fredholm map is locally proper, we can shrink
$\epsilon>0$ such that the restriction of each $\Gamma_t$ to $\bar
B_X(\theta,\epsilon)$ is Fredholm and that the restriction of
$\Gamma$ to $[0, 3]\times\bar B_X(\theta,\epsilon)$ is proper. Hence
 $\Gamma:[0,3]\times B_X(\theta,\epsilon)\to X$ satisfies
the homotopy definition in the Benevieri-Furi degree theory
\cite{BeFu1, BeFu2}, and we arrive at
\begin{equation}\label{e:5.3}
\deg_{\rm BF}(A, B_X(\theta, \epsilon),\theta)=\deg_{\rm
BF}(\Gamma_0, B_X(\theta, \epsilon),\theta)=\deg_{\rm BF}(\Gamma_3,
B_X(\theta, \epsilon),\theta).
\end{equation}
Recall that $D\Gamma_3(\theta)=A'(\theta)=B(\theta)|_X$ and
$$
\Gamma_3(z+u)=(I-P^0)A'(\theta)u+ P^0A(z+ h(z))=I- [P^0B(\theta)u-
P^0A(z+ h(z))].
$$
Moreover $\dim H^0<\infty$ implies that the map
$$
\bar B_X(\theta,\epsilon)\to X,\;z+u\mapsto K(z+u):=P^0B(\theta)u-
P^0A(z+ h(z))
$$
 is compact. Hence the Leray-Schauder degree
$\deg_{\rm LS}(I-K, B_X(\theta,\epsilon),\theta)$ exists, and
\begin{eqnarray}\label{e:5.4}
\deg_{\rm FPR}(I-K, B_X(\theta,\epsilon),\theta)&=&\deg_{\rm
BF}(I-K, B_X(\theta,\epsilon),\theta)\nonumber\\
&=&\deg_{\rm LS}(I-K, B_X(\theta,\epsilon),\theta)
\end{eqnarray}
for a suitable orientation of the map $I-K$. By Remark~\ref{lem:3.2}
and Lemma~\ref{lem:3.1}  $\mathcal{ L}^\circ$ is $C^{2}$ and
$$
d\mathcal{ L}^\circ(z_0)(z)=(A(z_0+ h(z_0)), z)_H\quad\forall z_0\in
B_{H^0}(\theta, r_0),\; z\in H^0.
 $$
Hence the gradient of $\mathcal{ L}^\circ$ with respect to the
induced inner on $H^0$ (from $H$), denoted by  $\nabla\mathcal{
L}^\circ$,  is given by $\nabla\mathcal{
L}^\circ(z)=P^0A(z+h(z))\;\forall z\in B_{H^0}(\theta, r_0)$. By the
definition and properties of the Leray-Schauder degree it is easily
proved that
\begin{equation}\label{e:5.5}
\deg_{\rm LS}(I-K, B_X(\theta,\epsilon),\theta)=(-1)^{\dim
H^-}\deg_{\rm LS}(\nabla\mathcal{ L}^\circ,
B_{H^0}(\theta,\epsilon),\theta)
\end{equation}
Moreover, $B_X(\theta,\epsilon)$ is open, connected and simply
connected. After a suitable orientation is chosen it follows from
(\ref{e:5.3})-(\ref{e:5.5}) that
\begin{eqnarray*}
\deg_{\rm FPR}(A, B_X(\theta, \epsilon),\theta)&=&\deg_{\rm BF}(A,
B_X(\theta, \epsilon),\theta)\nonumber\\
&=&(-1)^{\dim H^-}\deg_{\rm LS}(\nabla\mathcal{ L}^\circ,
B_X(\theta,\epsilon)\cap H^0,\theta)\\
&=&(-1)^{\dim H^-}\sum^\infty_{q=0}(-1)^q{\rm rank}C_q(\mathcal{
L}^\circ, \theta;{\bf K}),
\end{eqnarray*}
where the final equality comes from \cite[Th.8.5]{MaWi}. Combing
this with Corollary~\ref{cor:2.6} the expected first conclusion is
obtained.

\noindent{\bf Step 2}.  Recall that a map $T$ from a reflexive real
Banach space to its dual $X^\ast$ is said to be {\it demicontinuous}
if $T$ maps strongly convergent sequences in $X$ to weakly
convergent sequences in $X^\ast$. Now since the Hilbert space $H$ is
self-adjoint and $D\mathcal{ L}(x)(u)=(\nabla\mathcal{ L}(x),u)_H$,
by the continuously directional differentiability of $\mathcal{ L}$,
if $\{x_n\}\subset V$ converges to $x\in V$ in $H$ then
$\{\nabla\mathcal{ L}(x_n)\}$ weakly converges to $\nabla\mathcal{
L}(x)$, i.e., $(\nabla\mathcal{ L}(x_n),u)_H\to (\nabla\mathcal{
L}(x),u)_H$ for every $u\in H$. This shows that the map
$\nabla\mathcal{ L}:V\to H=H^\ast$ is demicontinuous in the sense of
\cite[Th.4]{Bro}.

Next we show that the restriction of $\nabla\mathcal{ L}$ to a small
neighborhood of $\theta\in H$ is of class $(S)_+$ in the sense of
\cite[Def.2(b)]{Bro}.
 By (D3), for the constants $\eta_0$ and
$C_0'$ in ({\rm D4*}) and $\rho>0$ in (i)-(iii) above we can choose
$\rho_0\in (0, \rho)$ such that $2\rho_0<\eta_0$ and the following
(iv)-(v) are satisfied:
\begin{enumerate}
\item[\bf (iv)] $B_{H^0}(\theta, 2\rho_0)\subset B_X(\theta, \rho)$
and
\begin{equation}\label{e:5.6}
\|Q(x)-Q(\theta)\|<\frac{C_0'}{2}\quad\forall x\in B_H(\theta,
2\rho_0)\cap X;
\end{equation}
\item[\bf (v)] $\theta$ is a unique zero of  $\nabla\mathcal{ L}$ in $B_H(\theta,
 2\rho_0)\subset V$.
\end{enumerate}
Then (\ref{e:5.6}) and ({\rm D4*}) yield
\begin{eqnarray}\label{e:5.7}
\bigl(B(x)u,u\bigr)_H&=&\bigl(P(x)u,u\bigr)_H+
\bigl([Q(x)-Q(\theta)]u,u\bigr)_H+ \bigl(Q(\theta)u, u\bigr)_H\nonumber\\
&\ge& \frac{C_0'}{2}\|u\|^2+ \bigl(Q(\theta)u, u\bigr)_H
\end{eqnarray}
for all $x\in B_H(\theta, 2\rho_0)\cap X$ and $u\in H$. Take
$\rho_1\in (0, \rho_0)$ so small that
$$
z+ h(z)\in B_H(\theta, \rho_0)\quad\forall z\in B_{H^0}(\theta,
2\rho_1).
$$
(This assures that the functional $\mathcal{ L}^\circ$ in
Corollary~\ref{cor:2.6} is defined on $B_{H^0}(\theta, 2\rho_1)$).
Then for $x, x'\in B_H(\theta, 2\rho_1)\cap X$, by (F2)-(F3) and the
mean value theorem we have $\tau\in (0, 1)$ such that
\begin{eqnarray*}
&&(\nabla\mathcal{ L}(x), x-x')_H\\
&=&(\nabla\mathcal{ L}(x)-\nabla\mathcal{ L}(x'),
x-x')_H-(\nabla\mathcal{
L}(x'), x-x')_H\\
&=&(A(x)-A(x'), x-x')_H-(\nabla\mathcal{ L}(x'), x-x')_H\\
&=&\bigl(DA([\tau x+ (1-\tau)x'])(x-x'), x-x'\bigr)_H-(\nabla\mathcal{ L}(x'), x-x')_H\\
&=&\bigl(B([\tau x+ (1-\tau)x'])(x-x'), x-x'\bigr)_H-(\nabla\mathcal{ L}(x'), x-x')_H\\
&\ge& \frac{C_0'}{2}\|x-x'\|^2-(\nabla\mathcal{ L}(x'), x-x')_H+
(Q(\theta)(x-x'), x-x')_H,
\end{eqnarray*}
where the final inequality is because of (\ref{e:5.7}). Since
$\mathcal{ L}$ is continuously directional differentiable and
$B_H(\theta, 2\rho_1)\cap X$ is dense in $B_H(\theta, 2\rho_1)$ we
obtain
\begin{eqnarray}\label{e:5.8}
(\nabla\mathcal{ L}(x), x-x')_H &\ge&
\frac{C_0'}{2}\|x-x'\|^2-(\nabla\mathcal{
L}(x'), x-x')_H\nonumber\\
&+& (Q(\theta)(x-x'), x-x')_H
\end{eqnarray}
for any $x, x'\in B_H(\theta, 2\rho_1)$.

 Let $\{x_n\}\subset
B_H(\theta, 2\rho_1)$ weakly converge to $x\in B_H(\theta, 2\rho_1)$
and
$$
\varlimsup_{n\to\infty}(\nabla\mathcal{ L}(x_n), x_n-x)_H\le 0.
$$
Then $(\nabla\mathcal{ L}(x), x_n-x)_H\to 0$, and
$(Q(\theta)(x_n-x), x_n-x)_H\to 0$ by the compactness of
$Q(\theta)$. It follows from these and (\ref{e:5.8}) that
$$
\frac{C_0'}{2}\lim_{n\to\infty}\|x_n-x\|\le\frac{C_0'}{2}\varlimsup_{n\to\infty}\|x_n-x\|^2
\le\varlimsup_{n\to\infty}(\nabla\mathcal{ L}(x_n), x_n-x)_H\le 0,
$$
 This is, $\lim_{n\to\infty}\|x_n-x\|=0$. Hence the map $\nabla\mathcal{
L}:B_H(\theta, 2\rho_1)\to H$ is of class $(S)_+$.

Then three equalities in the formula of Theorem~\ref{th:2.12}(ii)
follow from  \cite[Th.1.2]{CiDe}, Corollary~\ref{cor:2.6} and
\cite[Th.8.5]{MaWi}, respectively. \hfill$\Box$\vspace{2mm}

\section{The functor properties of the
splitting lemma}\label{sec:6}

The splitting lemma for $C^2$ functionals on Hilbert spaces has some
natural functor properties. This section studies some corresponding
properties in our setting.

Consider a tuple $(H, X, \mathcal{ L}, A, B=P+ Q)$, where $H$ (resp.
$X$) is a Hilbert (resp. Banach) space satisfying the condition
({\rm S}) as in Section~\ref{sec:2}, the functional $\mathcal{
L}:H\to\R$ and maps $A:X\to X$ and $B:X\to L_s(H)$ satisfy, at least
near the origin $\theta\in H$, the conditions ({\rm F1})-({\rm F3}),
({\rm C1})-({\rm C2}) and ({\rm D}) in Section~\ref{sec:2}. (We can
assume that these conditions are satisfied on $H$ without loss of
generality.)

Let $(\widehat H, \widehat X, \widehat{\mathcal{ L}}, \widehat A,
\widehat B=\widehat P+ \widehat Q)$ be another such a tuple. Suppose
that $J:H\to\widehat H$ is a linear injection satisfying:
\begin{eqnarray}
&&(Ju, Jv)_{\widehat H}=(u, v)_H\quad\forall u, v\in H,\label{e:6.1}\\
&&J(X)\subset\widehat{X}\quad\hbox{and}\quad J|_X\in L(X,
\widehat{X}). \label{e:6.2}
\end{eqnarray}
 Furthermore, we assume
\begin{equation}\label{e:6.3}
\widehat{\mathcal{ L}}\circ J=\mathcal{ L}
\end{equation}
and
\begin{eqnarray}
&& \widehat A(J(x))=J\circ
A(x)\quad\forall x\in X,\label{e:6.4}\\
&&\widehat B(J(x))\circ J=J\circ B(x)\;\forall x\in X.\label{e:6.5}
\end{eqnarray}
Let $H=H^0\oplus H^+\oplus H^-$, $X=H^0\oplus X^+\oplus X^-$ and
$\widehat H=\widehat H^0\oplus \widehat H^+\oplus \widehat H^-$ and
$\widehat X=\widehat H^0\oplus \widehat X^+\oplus \widehat X^-$ be
the corresponding decompositions. Namely, $\widehat H^0={\rm
Ker}(\widehat B(\theta))$, and $\widehat H^+$ (resp. $\widehat H^-$)
is the positive (resp. negative) definite subspace of $\widehat
B(\theta)$. Denote by $P^\ast$ (resp. $\widehat P^\ast$) the
orthogonal projections from $H$ (resp. $\widehat H$) to $H^\ast$
(resp. $\widehat H^\ast$) for $\ast=+, -, 0$.
 Since $\widehat B(\theta)\circ J=J\circ B(\theta)$ by
(\ref{e:6.5}), we have
\begin{equation}\label{e:6.6}
  JH^\star\subset\widehat H^\star, \quad  \widehat P^\star\circ J=J\circ P^\star,\;\star=-,0,+.
\end{equation}

\noindent{\bf Claim~6.1}. $(\widehat B(\theta)|_{\widehat
X^\pm})^{-1}\circ (J|_{X^\pm})=J|_{X^\pm}\circ(
B(\theta)|_{X^\pm})^{-1}$.

In fact, for $v\in X^\pm$ let $y=(\widehat B(\theta)|_{\widehat
X^\pm})^{-1}\circ (J|_{X^\pm})v$. Then $y\in\widehat{X}^\pm$ because
$J(X^\pm)\subset\widehat{X}^\pm$ by (\ref{e:6.2}) and (\ref{e:6.6}),
and $Jv=\widehat{B}(\theta)y$. Note that we may write
$v=B(\theta)|_{X^\pm}u$ for a unique $u\in X^\pm$. It follows that
$J|_{X^\pm}\circ
B(\theta)|_{X^\pm}u=\widehat{B}(\theta)|_{\widehat{X}^\pm}y$ and
hence $\widehat{B}(\theta)(Ju)=\widehat{B}(\theta)y$ by
(\ref{e:6.5}). The latter implies $Ju=y$ since both $Ju$ and $y$ sit
in $\widehat{X}^\pm$. From this  and (\ref{e:6.5}) we deduce that
$Jv=\widehat{B}(\theta)y=\widehat{B}(\theta)(Ju)=J\circ B(\theta)u$
and hence $v=B(\theta)u$. Then $(\widehat B(\theta)|_{\widehat
X^\pm})^{-1}\circ (J|_{X^\pm})v=y=Ju=J\circ
(B(\theta)|_{X^\pm})^{-1}v$. Claim~6.1 is proved.

Assume that the  nullity of $\mathcal{ L}$ at $\theta\in H$
\begin{equation}\label{e:6.7}
 \nu(\mathcal{ L},
\theta):=\dim H^0>0\quad\hbox{and hence}\quad\nu(\widehat{\mathcal{
L}}, \theta)>0
\end{equation}
by (\ref{e:6.6}). Here $\nu(\widehat{\mathcal{ L}},
\theta):=\dim\widehat{H}^0$ is
 nullity of  $\widehat{\mathcal{ L}}$ at $\theta\in\widehat H$.
Corresponding to the map $S$ in (\ref{e:3.3}) let us consider the
map
\begin{eqnarray*}
&&\widehat S: B_{\widehat H^0}(\theta, r_1)\times (B_{\widehat
X}(\theta, r_1)\cap \widehat X^\pm)\to
\widehat X^\pm,\\
&&\qquad \widehat S(\hat{z},\hat{x})=-(\widehat B(\theta)|_{\widehat
X^\pm})^{-1}(I_{\widehat X}-\widehat P^0)\widehat A(\hat{z}+ \hat{x})+ \hat{x}
\end{eqnarray*}
for  $\hat{z}_1, \hat{z}_2\in B_{\widehat H^0}(\theta, r_1)$ and
$\hat{x}_1, \hat{x}_2\in B_{\widehat X}(\theta, r_1)\cap{\widehat
X}^\pm$. (Here $\widehat X^\pm=\widehat X^+\oplus\widehat X^-$, and
we may shrink $r_1>0$ if necessary). Then for all $z\in
B_{H^0}(\theta, r_1)$ and $x\in B_X(\theta, r_1)\cap X^\pm$ we
derive from  (\ref{e:6.4}) and Claim~6.1 that
\begin{eqnarray*}
\widehat S(Jz, Jx)&=&-(\widehat B(\theta)|_{\widehat
X^\pm})^{-1}(I_{\widehat X}-\widehat P^0)\widehat A(Jz+ Jx)+ Jx\\
&=&-(\widehat B(\theta)|_{\widehat
X^\pm})^{-1}(I_{\widehat X}-\widehat P^0)\circ J\circ A(z+ x)+ Jx\\
&=&-(\widehat B(\theta)|_{\widehat
X^\pm})^{-1}\circ J\circ(I_{X}-P^0) A(z+ x)+ Jx\\
&=&-J\circ(B(\theta)|_{X^\pm})^{-1}\circ (I_{X}-P^0) A(z+ x)+ Jx.
\end{eqnarray*}
That is, for all $z\in B_{H^0}(\theta, r_1)$ and $x\in B_X(\theta,
r_1)\cap X^\pm$ it holds that
\begin{equation}\label{e:6.8}
\widehat S(Jz, Jx)=J\circ S(z, x).
\end{equation}
  By the proof of Lemma~\ref{lem:3.1} there exist $r_0\in
(0, r_1)$ and a unique map $\hat h:B_{\widehat H^0}(\theta, r_1)\to
B_{\widehat X}(\theta, r_1)\cap \widehat X^\pm$ such that $\hat
h(\theta)=\theta$ and
$$
\widehat S(\hat{z}, \hat h(\hat{z}))=\hat h(\hat{z})\quad\hbox{(or equivaliently
$(I_{\widehat X}-\widehat P^0)\widehat A(\hat{z}+ \hat h(\hat{z}))=0$)}.
$$
Moreover, $\hat h$ satisfies the corresponding conclusions in
Lemma~\ref{lem:3.1}. For $z\in B_{H^0}(\theta, r_0)$ we have also
$(I_{X}- P^0) A({z}+  h({z}))=0$, i.e., $S(z,h(z))=h(z)$.   Hence by
the uniqueness and (\ref{e:6.8}) we arrive at
\begin{equation}\label{e:6.9}
\hat h(Jz)=J\circ h(z)\quad\forall z\in B_{H^0}(\theta, r_0).
\end{equation}

As in (\ref{e:3.12}), we have a  map $\widehat F:\bar B_{\widehat
H^0}(\theta, \delta)\times B_{\widehat H^\pm}(\theta, \delta)\to\R$
given by
\begin{equation}\label{e:6.10}
 \widehat F(\hat{z}, \hat{u})=\widehat{\mathcal{ L}}(\hat{z}+ \hat h(\hat{z})+ \hat{u})-
 \widehat{\mathcal{ L}}(\hat{z}+ \hat h(\hat{z})).
\end{equation}
Clearly, (\ref{e:6.3}), (\ref{e:6.9}) and (\ref{e:6.10}) lead to
\begin{equation}\label{e:6.11}
 \widehat F(Jz, Ju)=F(z, u)\quad\forall (z,u)\in\bar
B_{H^0}(\theta, \delta)\times B_{H^\pm}(\theta, \delta).
\end{equation}
By shrinking $\varepsilon>0$ in Lemma~\ref{lem:3.5} (if necessary)
we may assume that the restriction of $\widehat F$  to $\bar
B_{\widehat H^0}(\theta,\varepsilon)\times \bigl(\bar B_{\widehat
H^+}(\theta,\varepsilon)\oplus \bar B_{\widehat
H^-}(\theta,\varepsilon)\bigr)$ satisfies the conditions in
Theorem~\ref{th:A.1}. Then we have a homeomorphism  as in
(\ref{e:3.29}) (shrinking $\epsilon>0$ if necessary),
\begin{eqnarray}\label{e:6.12}
&&\widehat\Phi:B_{\widehat H^0}(\theta,\varepsilon)\times
\left(B_{\widehat H^+}(\theta,
\epsilon)+ B_{\widehat H^-}(\theta, \epsilon)\right)\to \widehat H,\\
&&\hspace{10mm} (\hat{z}, \hat{u}^+ + \hat{u}^-)\mapsto \hat{z}+
\hat h(\hat{z})+ \widehat\phi_{\hat{z}}(\hat{u}^+ +
\hat{u}^-),\nonumber
\end{eqnarray}
such that $\widehat\phi_{\hat{z}}(\theta )=\theta$ and
$$
 \widehat{\mathcal{L}}(\widehat\Phi(\hat{z}, \hat{u}^+, \hat{u}^-))
 =\widehat{\mathcal{L}}(\hat{z}+ \hat h(\hat{z}))+  (\hat{u}^+, \hat{u}^+)_{\widehat H}-
 (\hat{u}^-, \hat{u}^-)_{\widehat H}
$$
for all $(\hat{z}, \hat{u}^+, \hat{u}^-)\in \bar B_{\widehat
H^0}(\theta,\varepsilon)\times B_{\widehat H^+}(\theta,
\epsilon)\times B_{\widehat H^-}(\theta, \epsilon)$.

\noindent{\bf Claim 6.2.} {\it Under the assumptions above, if
\begin{eqnarray}\label{e:6.13}
\mu(\mathcal{ L}, \theta)=\mu(\widehat{\mathcal{ L}}, \theta),
\end{eqnarray}
then $\widehat\Phi(Jz, Ju^+ + Ju^-)= J\circ\Phi(z, u^+ + u^-)$
 for $(z, u^+, u^-)\in \bar B_{H^0}(\theta,\varepsilon)\times B_{H^+}(\theta,
\epsilon)\times B_{H^-}(\theta, \epsilon)$. Here $\mu(\mathcal{ L},
\theta):=\dim H^-$ and $\mu(\widehat{\mathcal{ L}},
\theta):=\dim\widehat{H}^-$. }

In fact, suppose $\mu(\mathcal{ L}, \theta)=\mu(\widehat{\mathcal{ L}}, \theta)=0$.
By $1^\circ$) in the proof of Theorem~\ref{th:A.1}
$$
\widehat\psi(\hat{z}, \hat{x})=\left\{\begin{array}{ll}
 \frac{\sqrt{
\widehat{\mathcal{ L}}(\hat{z}+ \hat h(\hat{z})+ \hat{x})-
 \widehat{\mathcal{ L}}(\hat{z}+ \hat h(\hat{z}))}}
 {\|\hat{x}\|_{\widehat H}}
 \hat{x} &\;\hbox{if}\;\hat{x}\ne \theta,\\
 \theta&\;\hbox{if}\;\hat{x}=\theta
 \end{array}\right.
 $$
for all $(\hat{z}, \hat{x})\in \bar B_{\widehat
H^0}(\theta,\varepsilon)\times B_{\widehat H^\pm}(\theta,
\epsilon_1)$, and
$$
\psi({z}, {x})=\left\{\begin{array}{ll}
 \frac{\sqrt{
{\mathcal{ L}}({z}+  h({z})+ {x})-
 {\mathcal{ L}}({z}+ h({z}))}}
 {\|{x}\|_{ H}}
 {x} &\;\hbox{if}\;{x}\ne \theta,\\
 \theta&\;\hbox{if}\;{x}=\theta
 \end{array}\right.
 $$
for all $({z}, {x})\in \bar B_{ H^0}(\theta,\varepsilon)\times B_{
H^\pm}(\theta, \epsilon_1)$. It follows from (\ref{e:6.3}) and
(\ref{e:6.9}) that
$$
\widehat\psi(Jz, Ju)=J\circ\psi(z, u) \quad\hbox{and thus}\quad
\widehat\phi_{Jz}(Ju)=J\circ\phi_{z}(u)
$$
 for $(z, u)\in \bar B_{H^0}(\theta,\varepsilon)\times B_{H^\pm}(\theta,
\epsilon)$. The desired results follow from (\ref{e:3.29}) and
(\ref{e:6.12}).

Next suppose $\mu(\mathcal{ L}, \theta)=\mu(\widehat{\mathcal{ L}}, \theta)>0$.
Recall the constructions of $\phi_z$ and $\widehat\phi_{\hat{z}}$.
 By (\ref{e:A.9}),
$$\widehat\phi_{\hat{z}}(\hat{u}^+ + \hat{u}^-)=\hat{x}^+ + \hat{x}^-$$
 for any $(\hat{z}, \hat{u}^+, \hat{u}^-)\in \bar B_{\widehat
H^0}(\theta,\varepsilon)\times B_{\widehat H^+}(\theta,
\epsilon)\times B_{\widehat H^-}(\theta, \epsilon)$,
 where $(\hat{x}^+, \hat{x}^-)$  is a unique point in
$B_{\widehat H^+}(\theta, 2\epsilon)\times B_{\widehat
H^-}(\theta, \delta)$ satisfying $\widehat\psi(\hat{z}, \hat{x}^+ +\hat{x}^-)=\hat{u}^+
+\hat{u}^-$. By Step 4 in the proof of Theorem~\ref{th:A.1} we know
$$
\widehat\psi(\hat{z}, \hat{x}^+ + \hat{x}^-)=\widehat\psi_1(\hat{z}, \hat{x}^+ + \hat{x}^-)+
\widehat\psi_2(\hat{z}, \hat{x}^+ + \hat{x}^-)
$$
for all $(\hat{z}, \hat{x}^+, \hat{x}^-)\in \bar B_{\widehat
H^0}(\theta,\varepsilon)\times B_{\widehat H^+}(\theta,
\epsilon_1)\times B_{\widehat H^-}(\theta,\delta)$, where
$$
\widehat\psi_1(\hat{z},
\hat{x}^++\hat{x}^-)=\left\{\begin{array}{ll}
 \frac{\sqrt{\widehat F(\hat{z}, \hat{x}^+ + \widehat\varphi_{\hat{z}}(\hat{x}^+))}}{\|\hat{x}^+\|_{\widehat H}}
 \hat{x}^+ &\;\hbox{if}\;\hat{x}^+\ne \theta,\\
 \theta&\;\hbox{if}\;\hat{x}^+=\theta
 \end{array}\right.
 $$
 and
 {
$$\widehat\psi_2(\hat{z}, \hat{x}^++ \hat{x}^-)=\left\{\begin{array}{ll}
 \frac{\sqrt{\widehat F(\hat{z}, \hat{x}^+ + \widehat\varphi_{\hat{z}}(\hat{x}^+))-
 \widehat F(\hat{z}, \hat{x}^+ +\hat{x}^-)}}{\|\hat{x}^--\widehat\varphi_{\hat{z}}(\hat{x}^+)\|_{\widehat H}}(\hat{x}^--\widehat\varphi_{\hat{z}}(\hat{x}^+))
  &\;\hbox{if}\;\hat{x}^-\ne\widehat\varphi_{\hat{z}}(\hat{x}^+),\\
 \theta&\;\hbox{if}\;\hat{x}^-=\widehat\varphi_{\hat{z}}(\hat{x}^+).
 \end{array}\right.
$$}
Here  for each $(\hat{z}, \hat{x}^+)\in \bar B_{\widehat
H^0}(\theta,\varepsilon)\times B_{\widehat H^+}(\theta,
\epsilon_1)$, as showed in Step 1 of the proof of
Theorem~\ref{th:A.1}, $\widehat\varphi_{\hat{z}}(\hat{x}^+)$
 is a unique point in
$B_{\widehat H^-}(\theta,\delta)$ such that
$$
\widehat F(\hat{z}, \hat{x}^++\widehat\varphi_{\hat{z}}(\hat{x}^+))=\max\bigl\{\widehat F(\hat{z},
\hat{x}^++ \hat{x}^-)\,|\, \hat{x}^-\in B_{\widehat H^-}(\theta,\delta)\bigr\}.
$$
For $(z, x^+)\in \bar B_{H^0}(\theta,\varepsilon)\times
B_{H^+}(\theta, \epsilon_1)$ we have $(Jz, Jx^+)\in \bar B_{\widehat
H^0}(\theta,\varepsilon)\times B_{\widehat H^+}(\theta, \epsilon_1)$
by  (\ref{e:6.6}), and $J(B_{ H^-}(\theta,\delta))=B_{\widehat
H^-}(\theta,\delta)$ by (\ref{e:6.1}), (\ref{e:6.6}) and
(\ref{e:6.13}). These and  (\ref{e:6.11}) lead to
\begin{eqnarray*}
\widehat F(Jz, J{x}^++\widehat\varphi_{J{z}}(J{x}^+))&=&\max\bigl\{\widehat F(J{z},
J{x}^++ \hat{x}^-)\,|\, \hat{x}^-\in B_{\widehat H^-}(\theta,\delta)\bigr\}\\
&=&\max\bigl\{\widehat F(J{z},
J{x}^++ \hat{x}^-)\,|\, \hat{x}^-\in J(B_{ H^-}(\theta,\delta))\bigr\}\\
&=&\max\bigl\{ F({z},
{x}^++ {x}^-)\,|\, {x}^-\in B_{ H^-}(\theta,\delta)\bigr\}\\
&=&F(z, {x}^++\varphi_{{z}}({x}^+))\\
&=&\widehat F(Jz, J{x}^++ J\varphi_{{z}}({x}^+)).
\end{eqnarray*}
By the uniqueness we arrive at
$$
\widehat\varphi_{Jz}(Jx^+)=J\varphi_z(x^+)\quad\forall (z, x^+)\in
\bar B_{H^0}(\theta,\varepsilon)\times B_{H^+}(\theta,
\epsilon_1),
$$
which  implies
$$
\widehat\psi(Jz, Jx^+ + Jx^-)=J\circ\psi(z, x^+ + x^-)
$$
for all $(z, x^+, x^-)\in \bar B_{H^0}(\theta,\varepsilon)\times
B_{H^+}(\theta, \epsilon_1)\times B_{H^-}(\theta,\delta)$. From
(\ref{e:6.6}) and the definition of
$\widehat\phi_{\hat{z}}(\hat{u}^+ +\hat{u}^-)$ we deduce that
$$
\widehat\phi_{Jz}(Ju^+ + Ju^-)=J\circ\phi_{z}(u^+ + u^-)
$$
 for $(z, u^+, u^-)\in \bar B_{H^0}(\theta,\varepsilon)\times B_{H^+}(\theta,
\epsilon)\times B_{H^-}(\theta, \epsilon)$. This, (\ref{e:3.29}) and
(\ref{e:6.12}) lead to the conclusion of Claim 6.2.

 Summarizing the above
arguments we have proved the following theorem under the assumptions
(\ref{e:6.7}) and (\ref{e:6.13}).

\begin{theorem}\label{th:6.1}
Let  $(H, X, \mathcal{L}, A, B=P+ Q)$ and $(\widehat H, \widehat X,
\widehat{\mathcal{L}}, \widehat A, \widehat B=\widehat P+ \widehat
Q)$ be two tuples satisfying  the conditions $({\rm S})$, $({\rm
F1})-({\rm F3})$, $({\rm C1})-({\rm C2})$ and $({\rm D})$ in
Section~\ref{sec:2}. Suppose that $J:H\to\widehat H$ is a linear
injection satisfying (\ref{e:6.1})-(\ref{e:6.5}).
 If  $\mu(\mathcal{ L}, \theta)=\mu(\widehat{\mathcal{ L}}, \theta)$  then for the continuous maps
 $h:B_{H^0}(\theta,\epsilon)\to
X^\pm$ and $\hat h:B_{\widehat H^0}(\theta,\epsilon)\to \widehat
X^\pm$, and the origin-preserving homeomorphisms constructed in
Theorem~\ref{th:2.1},
\begin{eqnarray*}
&&\Phi: B_{H^0}(\theta,\epsilon)\times
\left(B_{H^+}(\theta,\epsilon) +
B_{H^-}(\theta,\epsilon)\right)\to W,\\
&&\widehat\Phi: B_{\widehat H^0}(\theta,\epsilon)\times \bigl(
B_{\widehat H^+}(\theta,\epsilon) + B_{\widehat
H^-}(\theta,\epsilon)\bigr)\to \widehat W,
\end{eqnarray*}
it holds that
$$
\hat h(Jz)=J\circ h(z)\quad\hbox{and}\quad \widehat\Phi(Jz, Ju^+ +
Ju^-)= J\circ\Phi(z, u^+ + u^-)
$$
 for all $(z, u^+, u^-)\in B_{H^0}(\theta,\epsilon)\times
B_{H^+}(\theta,\epsilon)\times B_{H^-}(\theta,\epsilon)$.
Consequently,
\begin{eqnarray*}
&&\widehat{\mathcal{L}}\circ\widehat\Phi(Jz, Ju^++ Ju^-)=\mathcal{
L}\circ\Phi(z, u^++ u^-),\\
&&\widehat{\mathcal{L}}(Jz+ \hat h(Jz))=\mathcal{L}(z+ h(z))
\end{eqnarray*}
for all $(z, u^+, u^-)\in B_{H^0}(\theta,\epsilon)\times
B_{H^+}(\theta,\epsilon) \times B_{H^-}(\theta,\epsilon)$.
\end{theorem}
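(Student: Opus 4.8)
The plan is to run the construction of Theorem~\ref{th:2.1} for the two tuples simultaneously and to verify, stage by stage, that the intertwining relation with $J$ is preserved; since by (\ref{e:6.1}) the map $J$ is a linear isometry onto its image and in particular injective, every uniqueness statement invoked in the proof of Theorem~\ref{th:2.1} transfers along $J$. First I would record the spectral compatibility already noted in (\ref{e:6.6}): because $\widehat B(\theta)\circ J=J\circ B(\theta)$ is a consequence of (\ref{e:6.5}), $J$ carries $H^0,H^+,H^-$ into $\widehat H^0,\widehat H^+,\widehat H^-$ respectively and commutes with the orthogonal projections, $\widehat P^\star\circ J=J\circ P^\star$ for $\star\in\{-,0,+\}$. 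Consequently $\nu(\mathcal L,\theta)>0$ forces $\nu(\widehat{\mathcal L},\theta)>0$, so Theorem~\ref{th:2.1} applies to both tuples and produces $h,\hat h$ and $\Phi,\widehat\Phi$. I would then check Claim~6.1, that $J$ intertwines $(B(\theta)|_{X^\pm})^{-1}$ with $(\widehat B(\theta)|_{\widehat X^\pm})^{-1}$; this is precisely where (\ref{e:6.2}) (so that $J(X^\pm)\subset\widehat X^\pm$) and the injectivity of $J$ on $\widehat X^\pm$ enter.

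With these in hand, the fixed-point map $S$ of (\ref{e:3.3}) satisfies $\widehat S(Jz,Jx)=J\circ S(z,x)$ by (\ref{e:6.4}) and Claim~6.1, which is (\ref{e:6.8}). Since $h(z)$ (resp.\ $\hat h(\hat z)$) is the \emph{unique} fixed point of $S(z,\cdot)$ (resp.\ $\widehat S(\hat z,\cdot)$) supplied by Lemma~\ref{lem:3.1}, uniqueness yields $\hat h(Jz)=J\circ h(z)$ on $B_{H^0}(\theta,r_0)$, i.e.\ (\ref{e:6.9}). Substituting this and (\ref{e:6.3}) into the definitions (\ref{e:3.12}) and (\ref{e:6.10}) of the reduced functions gives $\widehat F(Jz,Ju)=F(z,u)$ on the appropriate box, which is (\ref{e:6.11}). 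Applying Theorem~\ref{th:A.1} to $F$ and to $\widehat F$ produces the two homeomorphisms in question, so it remains only to compare the fibre maps $\phi_z$ and $\widehat\phi_{\hat z}$ built inside the proof of that theorem.

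The hard part, and the unique place where the hypothesis $\mu(\mathcal L,\theta)=\mu(\widehat{\mathcal L},\theta)$ is used, is Claim~6.2: that $\widehat\phi_{Jz}(Ju^++Ju^-)=J\circ\phi_z(u^++u^-)$. When $\mu=0$ the explicit formula for $\phi_z$ in the proof of Theorem~\ref{th:A.1} involves only $\mathcal L(z+h(z)+x)-\mathcal L(z+h(z))$ and the Hilbert norm $\|x\|$, both $J$-invariant by (\ref{e:6.1}) and (\ref{e:6.11}), so the claim is immediate. When $\mu>0$ one must trace through the auxiliary maps $\varphi_z$ and $\psi_1,\psi_2$ of Steps~1 and~4 of that proof. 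The map $\varphi_z(x^+)$ is the unique maximizer of $x^-\mapsto F(z,x^++x^-)$ over $B_{H^-}(\theta,\delta)$; since (\ref{e:6.1}), (\ref{e:6.6}) \emph{and} the equality of Morse indices give $J\bigl(B_{H^-}(\theta,\delta)\bigr)=B_{\widehat H^-}(\theta,\delta)$ — an equality that would fail if $\dim H^-\neq\dim\widehat H^-$ — maximization over $B_{H^-}$ corresponds under $J$ exactly to maximization over $B_{\widehat H^-}$, whence $\widehat\varphi_{Jz}(Jx^+)=J\varphi_z(x^+)$ by uniqueness of the maximizer. The same $J$-invariance then propagates through $\psi_1,\psi_2$, hence through $\psi=\psi_1+\psi_2$ and its inverse fibre, yielding $\widehat\phi_{Jz}(Ju^++Ju^-)=J\circ\phi_z(u^++u^-)$. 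Combining this with $\hat h(Jz)=J\circ h(z)$ and the defining formula (\ref{e:3.29})/(\ref{e:6.12}) for $\Phi,\widehat\Phi$ gives $\widehat\Phi(Jz,Ju^++Ju^-)=J\circ\Phi(z,u^++u^-)$; applying $\widehat{\mathcal L}$ to this and using (\ref{e:6.3}) together with (\ref{e:2.5}) produces the two final identities. I expect the bookkeeping of Step~4 (keeping track of which arguments lie in which ball and that every denominator $\|x^--\varphi_z(x^+)\|$ is $J$-invariant) to be the fiddliest point, but it is routine once the index equality has been used to identify $J(B_{H^-})$ with $B_{\widehat H^-}$.
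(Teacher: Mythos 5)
Your proposal reproduces the paper's own argument stage by stage: the spectral compatibility (\ref{e:6.6}), Claim~6.1, the intertwining of the fixed-point map $\widehat S(Jz,Jx)=J\circ S(z,x)$, uniqueness giving $\hat h\circ J=J\circ h$, the identity $\widehat F(Jz,Ju)=F(z,u)$, and the case split in Claim~6.2 on whether $\mu=0$ or $\mu>0$, with the index equality used exactly where the paper uses it, namely to get $J\bigl(B_{H^-}(\theta,\delta)\bigr)=B_{\widehat H^-}(\theta,\delta)$ and hence $\widehat\varphi_{Jz}\circ J=J\circ\varphi_z$ by uniqueness of the maximizer. This is essentially the same proof as in the paper; the only thing you leave implicit, which the paper disposes of in one sentence, is the pair of degenerate cases $\nu(\mathcal L,\theta)=\nu(\widehat{\mathcal L},\theta)=0$ and $0=\nu(\mathcal L,\theta)<\nu(\widehat{\mathcal L},\theta)$, handled by dropping the $z$-variable from the argument.
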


Here we understand $B_{H^0}(\theta,\epsilon)\times
B_{H^+}(\theta,\epsilon) \times B_{H^-}(\theta,\epsilon)$ as
$B_{H^0}(\theta,\epsilon)\times B_{H^+}(\theta,\epsilon)$ if $\dim
H^-=0$, and $B_{H^0}(\theta,\epsilon)\times B_{H^+}(\theta,\epsilon)
\times B_{H^-}(\theta,\epsilon)$ as $B_{H^-}(\theta,\epsilon)\times
B_{H^+}(\theta,\epsilon)$ if $\dim H^0=0$.

Let us prove the remainder cases. Firstly,  consider the case $\nu(\mathcal{ L},
\theta)=\nu(\widehat{\mathcal{ L}}, \theta)=0$. We only need to
remove $z$ and $\hat{z}$ in the arguments below Claim~6.2 and then
replace $\mathcal{F}$ and $\widehat{\mathcal{F}}$ by $\mathcal{L}$
and $\widehat{\mathcal{L}}$, respectively.

Finally,  the case $0=\nu(\mathcal{ L},
\theta)<\nu(\widehat{\mathcal{ L}}, \theta)$ can also be obtained by
combing the above three cases. Theorem~\ref{th:6.1} is proved.

By (\ref{e:6.3}) and (\ref{e:6.9}), for any $z\in B_{H^0}(\theta,
r_0)$ it holds that
\begin{equation}\label{e:6.14}
\widehat{\mathcal{L}}^\circ(Jz)=\widehat{\mathcal{L}}(Jz+\hat
h(Jz))=\mathcal{L}(z+ h(z))=\mathcal{L}^\circ(z).
\end{equation}

\begin{corollary}\label{cor:6.2}
Let  $(H, X, \mathcal{L}, A, B=P+ Q)$ and $(\widehat H, \widehat X,
\widehat{\mathcal{L}}, \widehat A, \widehat B=\widehat P+ \widehat
Q)$ be two tuples satisfying  the conditions $({\rm S})$, $({\rm
F1})-({\rm F3})$, $({\rm C1})-({\rm C2})$ and $({\rm D})$ in
Section~\ref{sec:2}. Suppose that $J:H\to\widehat H$ is a linear
injection satisfying (\ref{e:6.1})-(\ref{e:6.5}).
 If  $\nu(\mathcal{ L}, \theta)=\nu(\widehat{\mathcal{ L}}, \theta)>0$  then
$$
C_q(\widehat{\mathcal{L}}^\circ,\theta;{\bf
K})=C_q(\mathcal{L}^\circ,\theta;{\bf K})\quad\forall
q\in\N\cup\{0\}.
$$
\end{corollary}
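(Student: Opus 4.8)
The plan is to deduce the corollary from formula (\ref{e:6.14}) together with the topological invariance of critical groups, the one new input being that the hypothesis $\nu(\mathcal{L},\theta)=\nu(\widehat{\mathcal{L}},\theta)>0$ upgrades the injection $J$ to a linear isometric isomorphism between the finite dimensional spaces $H^0$ and $\widehat H^0$.

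First I would check that $J|_{H^0}\colon H^0\to\widehat H^0$ is a linear isomorphism. By (\ref{e:6.6}) one has $JH^0\subset\widehat H^0$, and $J|_{H^0}$ is injective because $J$ is; since $\dim H^0=\nu(\mathcal{L},\theta)=\nu(\widehat{\mathcal{L}},\theta)=\dim\widehat H^0<\infty$, it is therefore a bijection onto $\widehat H^0$. By (\ref{e:6.1}) it moreover preserves the Hilbert norm, so $J|_{H^0}$ is a linear isometric isomorphism $(H^0,\|\cdot\|)\to(\widehat H^0,\|\cdot\|)$; in particular $J\bigl(B_{H^0}(\theta,r)\bigr)=B_{\widehat H^0}(\theta,r)$ for every $r>0$, and $J\theta=\theta$.

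Next I would use that $h$ and $\hat h$ were produced (see the construction of (\ref{e:6.9})) on balls $B_{H^0}(\theta,r_0)$ and $B_{\widehat H^0}(\theta,r_0)$ with the \emph{same} $r_0$, and that (\ref{e:6.14}) reads $\widehat{\mathcal{L}}^\circ\circ(J|_{H^0})=\mathcal{L}^\circ$ on $B_{H^0}(\theta,r_0)$; taking $z=\theta$ shows that the value $c:=\mathcal{L}^\circ(\theta)=\widehat{\mathcal{L}}^\circ(\theta)$ is common to both functionals. Hence for any $\epsilon\in(0,r_0]$ the homeomorphism $J|_{H^0}\colon B_{H^0}(\theta,\epsilon)\to B_{\widehat H^0}(\theta,\epsilon)$ restricts to a homeomorphism of pairs
\[
\bigl((\mathcal{ L}^\circ)_c\cap B_{H^0}(\theta,\epsilon),\,(\mathcal{ L}^\circ)_c\cap B_{H^0}(\theta,\epsilon)\setminus\{\theta\}\bigr)\longrightarrow\bigl((\widehat{\mathcal{ L}}^\circ)_c\cap B_{\widehat H^0}(\theta,\epsilon),\,(\widehat{\mathcal{ L}}^\circ)_c\cap B_{\widehat H^0}(\theta,\epsilon)\setminus\{\theta\}\bigr),
\]
carrying $\theta$ to $\theta$, where $(g)_c=\{g\le c\}$. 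Passing to relative singular homology and using that the critical groups do not depend on the choice of the neighbourhood (the excision property recalled in Section~\ref{sec:2}), one gets $C_q(\mathcal{ L}^\circ,\theta;{\bf K})\cong C_q(\widehat{\mathcal{ L}}^\circ,\theta;{\bf K})$ for every $q\in\N\cup\{0\}$, which is the assertion.

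I do not foresee a genuine analytic obstacle: once (\ref{e:6.14}) is available, the statement is a formal consequence of the homeomorphism-invariance of critical groups, and the conceptual crux is the elementary first step. Without the equality of nullities, $J|_{H^0}$ would only be a proper inclusion $H^0\hookrightarrow\widehat H^0$, the reduced functionals $\mathcal{ L}^\circ$ and $\widehat{\mathcal{ L}}^\circ$ would be defined on domains of different dimensions, and no such identification of critical groups could be expected; so the hypothesis $\nu(\mathcal{ L},\theta)=\nu(\widehat{\mathcal{ L}},\theta)$ is exactly what is needed.
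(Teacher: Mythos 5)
Your proposal is correct and follows exactly the route the paper intends: formula (\ref{e:6.14}) combined with the observation, forced by $\nu(\mathcal{L},\theta)=\nu(\widehat{\mathcal{L}},\theta)<\infty$ together with (\ref{e:6.6}) and (\ref{e:6.1}), that $J|_{H^0}:H^0\to\widehat H^0$ is an isometric linear isomorphism intertwining $\mathcal{L}^\circ$ and $\widehat{\mathcal{L}}^\circ$ near $\theta$, so the critical groups agree by topological invariance. The paper states the corollary right after deriving (\ref{e:6.14}) without spelling out these routine steps; your write-up supplies precisely the missing details.
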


\begin{theorem}\label{th:6.3}
Under the assumptions of Theorem~\ref{th:2.1}, let
$(\widehat{H},\widehat{X})$ be another pair of Hilbert-Banach spaces
satisfying {\bf (S)}, and let $J:H\to\widehat{H}$ be a Hilbert space
isomorphism which can induce a Banach space isomorphism $J_X:X\to
\widehat{X}$ (this means that $J(X)\subset \widehat{X}$ and
$J|_X:X\to \widehat{X}$ is a Banach space isomorphism). Set
$\widehat{V}=J(V)$ (and hence
$\widehat{V}^{\widehat{X}}:=\widehat{V}\cap\widehat{X}=J(V^X)$) and
$\widehat{\mathcal{L}}:\widehat{V}\to\R$ by
$\widehat{\mathcal{L}}=\mathcal{L}\circ J^{-1}$. Then
$(\widehat{H},\widehat{X}, \widehat{V}, \widehat{\mathcal{L}})$
satisfies the assumptions of Theorem~\ref{th:2.1} too.
\end{theorem}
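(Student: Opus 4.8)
The plan is to transport every ingredient of the tuple $(H,X,V,\mathcal{L})$ to $(\widehat H,\widehat X,\widehat V,\widehat{\mathcal L})$ by conjugation with $J$ and then verify the conditions (S), (F1)--(F3), (C1)--(C2) and (D) one at a time; no new idea is involved, so the proof is essentially bookkeeping with the isomorphism $J$, which we use in the isometric sense, $(Ju,Jv)_{\widehat H}=(u,v)_H$ (so $J^\ast=J^{-1}$, $\|Jx\|_{\widehat H}=\|x\|$ and $J^{-1}(\theta)=\theta$), together with $J(X)=\widehat X$, $J|_X$ a Banach isomorphism. First I would set, for $\hat x\in\widehat V^{\widehat X}=J(V^X)$,
$$
\widehat A(\hat x)=J\circ A(J^{-1}\hat x),\qquad \widehat B(\hat x)=J\circ B(J^{-1}\hat x)\circ J^{-1},\qquad \widehat P(\hat x)=J\circ P(J^{-1}\hat x)\circ J^{-1},\qquad \widehat Q(\hat x)=J\circ Q(J^{-1}\hat x)\circ J^{-1}.
$$
Condition (S) for $(\widehat H,\widehat X)$ is immediate because $J$ is a homeomorphism carrying the dense continuous inclusion $X\hookrightarrow H$ onto $\widehat X\hookrightarrow\widehat H$, and $\widehat V=J(V)$ is an open neighbourhood of $\theta$ in $\widehat H$.

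For (F1), since $\widehat{\mathcal L}=\mathcal L\circ J^{-1}$ with $J^{-1}$ bounded linear, the chain rule gives $D\widehat{\mathcal L}(\hat x)(\hat u)=D\mathcal L(J^{-1}\hat x)(J^{-1}\hat u)$, whose joint continuity in $(\hat x,\hat u)$ follows from that of $(x,u)\mapsto D\mathcal L(x)(u)$; hence $\widehat{\mathcal L}$ is continuously directional differentiable. For (F2) I would compute, using $J^\ast=J^{-1}$,
$$
D\widehat{\mathcal L}(\hat x)(\hat u)=\bigl(A(J^{-1}\hat x),J^{-1}\hat u\bigr)_H=\bigl(JA(J^{-1}\hat x),\hat u\bigr)_{\widehat H}=\bigl(\widehat A(\hat x),\hat u\bigr)_{\widehat H};
$$
here $\widehat A$ maps $\widehat V^{\widehat X}$ into $\widehat X$ because $A(V^X)\subset X$ and $J(X)=\widehat X$, it is continuously directional differentiable as a composite of such a map with bounded linear isomorphisms, and it is strictly Fr\'echet differentiable at $\theta$ with $\widehat A'(\theta)=J\circ A'(\theta)\circ(J^{-1}|_{\widehat X})$, strict $F$-differentiability at a point being preserved under pre- and post-composition with continuous linear isomorphisms. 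For (F3), from $\widehat A=J\circ A\circ J^{-1}$ one gets $D\widehat A(\hat x)(\hat u)=J\,DA(J^{-1}\hat x)(J^{-1}\hat u)$, hence
$$
\bigl(D\widehat A(\hat x)(\hat u),\hat v\bigr)_{\widehat H}=\bigl(B(J^{-1}\hat x)J^{-1}\hat u,J^{-1}\hat v\bigr)_H=\bigl(\widehat B(\hat x)\hat u,\hat v\bigr)_{\widehat H},
$$
and $\widehat B(\hat x)\in L_s(\widehat H)$ since conjugation by a unitary preserves self-adjointness.

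For (C1)--(C2): $\widehat B(\theta)=JB(\theta)J^{-1}$ has $\sigma(\widehat B(\theta))=\sigma(B(\theta))$, so $0$ is absent from, or isolated in, it; $\theta$ is a critical point of $\widehat{\mathcal L}|_{\widehat V^{\widehat X}}$ because $D\widehat{\mathcal L}(\theta)=D\mathcal L(\theta)\circ J^{-1}=0$; and if $\widehat B(\theta)\hat u=\hat v\in\widehat X$ then $B(\theta)(J^{-1}\hat u)=J^{-1}\hat v\in X$, so $J^{-1}\hat u\in X$ by (C2), i.e.\ $\hat u\in\widehat X$. Finally, for (D) I would check that $\widehat B=\widehat P+\widehat Q$ is the required decomposition: $\widehat P(\hat x)$ is positive definite since $(\widehat P(\hat x)\hat u,\hat u)_{\widehat H}=(P(J^{-1}\hat x)J^{-1}\hat u,J^{-1}\hat u)_H$, and $\widehat Q(\hat x)$ is compact as a conjugate of a compact operator; (D1) holds because an eigenfunction $\hat e$ of $\widehat B(\theta)$ for a negative eigenvalue yields the eigenfunction $J^{-1}\hat e\in X$ of $B(\theta)$, so $\hat e\in\widehat X$; and (D2)--(D4), as well as (D4${}^\ast$), follow sequence-by-sequence by putting $x_k=J^{-1}\hat x_k$ (whence $\|x_k\|=\|\hat x_k\|_{\widehat H}\to0$) and using $\|\widehat P(\hat x_k)\hat u-\widehat P(\theta)\hat u\|_{\widehat H}=\|P(x_k)(J^{-1}\hat u)-P(\theta)(J^{-1}\hat u)\|_H$, $\|\widehat Q(\hat x_k)-\widehat Q(\theta)\|_{L(\widehat H)}=\|Q(x_k)-Q(\theta)\|_{L(H)}$, and $(\widehat P(\hat x_n)\hat u,\hat u)_{\widehat H}=(P(x_n)(J^{-1}\hat u),J^{-1}\hat u)_H\ge C_0\|\hat u\|_{\widehat H}^2$, the constants $C_0$, $n_0$, $\eta_0$, $C_0'$ being the same as for $\mathcal L$. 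The only point that is not purely formal — the one I would call the main subtlety rather than a real obstacle — is this last normalization, namely that ``Hilbert space isomorphism'' is meant isometrically so that the constants in (D4)/(D4${}^\ast$) carry over unchanged; were $J$ only a topological isomorphism one would conjugate $A$ by $(J^{-1})^\ast$ instead and absorb $\|J^{\pm1}\|$ into the constants, which still works but clutters the bookkeeping. Collecting these verifications gives that $(\widehat H,\widehat X,\widehat V,\widehat{\mathcal L})$ satisfies all the hypotheses of Theorem~\ref{th:2.1}.
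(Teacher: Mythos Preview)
Your proposal is correct and takes exactly the same approach as the paper: define $\widehat A=J_X\circ A\circ J_X^{-1}$, $\widehat B(\hat x)=J\circ B(J_X^{-1}\hat x)\circ J^{-1}$, $\widehat P$, $\widehat Q$ analogously, and verify the hypotheses by conjugation. In fact the paper's own proof is just two sentences (``It is not hard to check''), so your write-up is a fully expanded version of precisely that argument; your remark that ``Hilbert space isomorphism'' must be read isometrically for $\widehat B(\hat x)\in L_s(\widehat H)$ and for the constants in (D4)/(D4$^\ast$) to transfer unchanged is a useful clarification the paper leaves implicit.
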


\begin{proof}
 Define $\widehat{A}:\widehat{V}^{\widehat
X}\to \widehat{X}$ by $\widehat{A}=J_X\circ A\circ J_X^{-1}$, and
$\widehat{B}:\widehat{V}^{\widehat X}\to \mathcal{L}_s(\widehat{H})$
by $\widehat{B}(\hat x)=J\circ B(J_X^{-1}\hat x)\circ J^{-1}$.
Similarly, we also define $\widehat{P}(\hat x)=J\circ P(J_X^{-1}\hat
x)\circ J^{-1}$ and $\widehat{Q}(\hat x)=J\circ Q(J_X^{-1}\hat
x)\circ J^{-1}$. It is not hard to check that
$(\widehat{H},\widehat{X}, \widehat{V}, \widehat{\mathcal{L}},
\widehat{A},\widehat{B}=\widehat{P}+\widehat{Q})$ satisfies the
assumptions of Theorem~\ref{th:2.1}.
\end{proof}

\begin{theorem}\label{th:6.4}
Under the assumptions of Theorem~\ref{th:2.1}, suppose that
$\check{H}\subset H$ is a Hilbert subspace whose orthogonal
complementary in $H$ is  finite dimensional and is contained in $X$.
Then $(\mathcal{L}|_{\check{H}}, \check{H}, \check{X})$ with
$\check{X}:=X\cap\check{H}$ also satisfies the assumptions of
Theorem~\ref{th:2.1} around the critical point $\theta\in\check{H}$.
\end{theorem}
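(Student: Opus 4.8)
The plan is to build the restricted data by ``compressing'' the original objects with the orthogonal projection $P_{\check H}\colon H\to\check H$, and then to check the conditions of Theorem~\ref{th:2.1} one by one. Write $N:=\check H^\perp$, a finite dimensional subspace of $X$, and $P_N=I-P_{\check H}$. Picking an orthonormal basis $e_1,\dots,e_k$ of $N$ (which lies in $X$), the formula $P_Nu=\sum_i(u,e_i)_He_i$ shows that $P_N$, hence $P_{\check H}=I-P_N$, maps $X$ continuously into $X$; this is the structural fact that makes everything work, and the same remark applies to the finite–rank orthogonal projections $P^0,P^-$. I would then set $\check{\mathcal L}:=\mathcal L|_{V\cap\check H}$, $\check A:=P_{\check H}\circ A$, $\check B(x):=P_{\check H}B(x)P_{\check H}|_{\check H}$, and $\check P(x):=P_{\check H}P(x)P_{\check H}|_{\check H}$, $\check Q(x):=P_{\check H}Q(x)P_{\check H}|_{\check H}$, with $\check X:=X\cap\check H$.

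With these choices the bulk of the verification is routine and ``transported by $P_{\check H}$''. Condition (S) is clear ($\|\cdot\|\le\|\cdot\|_X$, and density because $P_{\check H}$ of an approximating sequence in $X$ lies in $\check X$); (F1) is inherited since directional derivatives of $\mathcal L$ restrict; (F2) holds because $D\check{\mathcal L}(x)(u)=(A(x),u)_H=(P_{\check H}A(x),u)_H=(\check A(x),u)_H$ for $u\in\check X$, with $\check A(x)=A(x)-P_NA(x)\in\check X$, and $\check A=P_{\check H}\circ A$ inherits continuous directional differentiability and strict Fréchet differentiability at $\theta$ from $A$; (F3) holds with $\check B$ as above since $D\check A(x)(u)=P_{\check H}B(x)u$ gives $(D\check A(x)(u),v)_H=(B(x)u,v)_H=(\check B(x)u,v)_H$ for $u,v\in\check X$. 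Condition (C2) for $\check B(\theta)$ follows from that for $B(\theta)$: if $\check B(\theta)u=v\in\check X$ with $u\in\check H$, then $B(\theta)u=v+P_NB(\theta)u\in X$, so $u\in\check X$. Finally (D2)--(D4) for $\check P,\check Q$ reduce to the same items for $P,Q$ using $(\check P(x)u,u)_H=(P(x)u,u)_H$, $\|\check P(x_k)u-\check P(\theta)u\|\le\|P(x_k)u-P(\theta)u\|$, $\|\check Q(x)-\check Q(\theta)\|_{L(\check H)}\le\|Q(x)-Q(\theta)\|_{L(H)}$, compactness of $\check Q(x)$, and positivity of $\check P(x)$; in particular $\check B(\theta)=\check P(\theta)+\check Q(\theta)$ is positive-bounded-below plus compact, so by Proposition~\ref{prop:B.2} the spectral statement of (C1) and the finite dimensionality of $\check H^0:=\ker\check B(\theta)$ and of the negative subspace $\check H^-$ hold once we know $\check H^0,\check H^-\subseteq\check X$.

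So the only point requiring genuine care — and the step I expect to be the main obstacle — is (D1), i.e. $\check H^-\subseteq\check X$ (together with $\check H^0\subseteq\check X$). The kernel case is easy: $\check B(\theta)\psi=0$, $\psi\in\check H$, forces $B(\theta)\psi=P_NB(\theta)\psi\in N\subseteq X$, so (C2) gives $\psi\in\check X$. For a negative eigenfunction $\check B(\theta)\psi=\lambda\psi$, $\lambda<0$, $\psi\in\check H$, one has $(B(\theta)-\lambda)\psi=P_NB(\theta)\psi\in X$; splitting $\psi=\psi^0+\psi^++\psi^-$ along $H=H^0\oplus H^+\oplus H^-$ and using $H^0,H^-\subseteq X$, one gets $\psi^0=-\lambda^{-1}P^0(P_NB(\theta)\psi)\in H^0$, $\psi^-\in H^-$, and is reduced to $(B(\theta)|_{H^+}-\lambda)\psi^+=P^+(P_NB(\theta)\psi)\in X^+$; thus the claim amounts to showing that for $\lambda<0$ the resolvent $(B(\theta)|_{H^+}-\lambda)^{-1}$ maps $X^+=X\cap H^+$ into itself. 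This is the exact analogue for $B(\theta)-\lambda$ of the $X$-regularity behind Jiang's isomorphism $B(\theta)|_{X^\pm}\colon X^\pm\to X^\pm$ (Step~1 of the proof of Lemma~\ref{lem:3.1}); it holds in the elliptic and Lagrangian situations of the paper, where (C2) is an elliptic regularity statement insensitive to lower–order shifts, but it is not a purely abstract consequence of (C1)--(C2). To stay within the stated framework I would invoke Remark~\ref{rm:2.3}: the conclusions of Theorem~\ref{th:2.1} persist with (D) replaced by (D$'$), whose condition $(D_1')$ — by the footnote there — only needs $\check H^0\subseteq\check X$ closed in $\check X$, which we have established, while $(D_2')$--$(D_4')$ for $\check B$ are precisely Lemmas~\ref{lem:3.3}--\ref{lem:3.4} transported by $P_{\check H}$. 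With (D1) (or $(D_1')$) in place, all hypotheses of Theorem~\ref{th:2.1} (resp. of its (D$'$)-version) are verified for $(\check{\mathcal L},\check H,\check X)$ around $\theta\in\check H$, which finishes the proof.
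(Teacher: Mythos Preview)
Your approach --- compress $A,B,P,Q$ by $P_{\check H}$ and verify the hypotheses of Theorem~\ref{th:2.1} item by item --- is exactly the paper's, and your checks of (S), (F1)--(F3), (C2) and (D2)--(D4) match its proof essentially verbatim. You are also right that (D1) for $\check B(\theta)$ is the only non-routine point, and your reduction of it to the resolvent regularity $(B(\theta)|_{H^+}-\lambda)^{-1}(X^+)\subset X^+$ for $\lambda<0$ is the correct diagnosis.

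However, your proposed fallback through the footnote in Remark~\ref{rm:2.3} does not close the gap: that footnote is explicitly tentative (``It seems to be sufficient\ldots''), and in fact the inclusion $H^-\subset X$ is used substantively in the proof of Lemma~\ref{lem:3.5} (the mean-value computations with $A$ along directions $u^-\in H^-$ require the argument of $A$ to lie in $X$). The paper itself dispatches (D1) with the one-line claim ``the eigenvectors of $\check B(0)$ are those of $B(0)$ too'', which is not justified in general: from $\check B(\theta)\psi=\lambda\psi$ with $\psi\in\check H$ one only obtains $B(\theta)\psi=\lambda\psi+P_{\check H^\perp}B(\theta)\psi$, so $\psi$ is an eigenvector of $B(\theta)$ only when $\check H$ happens to be $B(\theta)$-invariant. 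Thus your argument and the paper's share the same unresolved step at (D1); you have simply been more careful about isolating it. The resolvent condition you single out does hold in the paper's motivating applications (it is the same elliptic-type regularity underlying (C2), unaffected by a lower-order shift by $\lambda$), and the theorem is evidently meant to be read in such settings, or under the extra hypothesis that $\check H$ be $B(\theta)$-invariant.
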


\begin{proof}
Let $P_{\check{H}}$ be the orthogonal
projection onto $\check{H}$. Then $x-P_{\check{H}}x\in X\;\forall
x\in X$ by the assumption $\check{H}^\bot\subset X$. It follows that
 $\check{A}(x):=P_{\check{H}}A(x)\in \check{X}$ for $x\in V^{\check{X}}:=V^X\cap\check{X}$.
Since $\check{H}^\bot\subset X$ and $\dim\check{H}^\bot<\infty$,
$P_{\check{H}}$ restricts to a bounded linear operator from
$\check{X}$ to $\check{X}$. This implies that
$\check{A}:V^{\check{X}}\to\check{X}$ has the same differentiability
as $A$. It is easily checked that $D\mathcal{
L}|_{\check{H}}(x)(u)=(\check{A}(x), u)_H\;\forall u\in \check{X}$,
and that
$$
(D\check{A}(x)(u),
v)_H=(P_{\check{H}}DA(x)(u),v)_H=(P_{\check{H}}B(x)(u),v)_H=(\check{B}(x)u,v)_H
$$
for any $x\in V^{\check{X}}$, $u,v\in \check{X}$, where
$\check{B}(x):=P_{\check{H}}B(x)|_{\check{H}}\in\mathcal{L}_s(\check{H})$.
Obverse that
\begin{eqnarray*}
\|\check{B}(x_1)-\check{B}(x_2)\|_{{\mathcal{
L}}_s(\check{H})}&=&\sup\{\|\check{B}(x_1)u-\check{B}(x_2)u\|_{\check{H}}:
u\in\check{H}, \|u\|=1\}\\
&\le&\|{B}(x_1)-{B}(x_2)\|_{\mathcal{L}_s({H})}
\end{eqnarray*}
for any $x_1,x_2\in V^{\check{X}}$. So some kind of continuality of
$B$ implies the same continuous property of $\check{B}$. Suppose
that $\check{B}(0)u=v$ for some $u\in\check{H}$ and $v\in
\check{X}$. Then $P_{\check{H}}B(0)u=v$ and therefore $B(0)u=v+
P_{\check{H}^\bot}B(0)u\in X$ because
$P_{\check{H}^\bot}(\check{H})=\check{H}^\bot\subset X$ by the
assumptions. It follows that $u\in X$ and hence $u\in
X\cap\check{H}=\check{X}$. That is, {\bf (C2)} is satisfied. Since
the eigenvectors of $\check{B}(0)$ are those of $B(0)$ too the
condition {\bf (D1)} holds naturally.  For $x\in V\cap \check{X}$
take $\check{P}(x)=P_{\check{H}}\circ P(x)|_{\check{H}}$ and
$\check{Q}(x)=P_{\check{H}}\circ Q(x)|_{\check{H}}$. It is also
clear that $\check{B}(x)=\check{P}(x)+ \check{Q}(x)$ satisfies the
other conditions in {\bf (D)}.
 \end{proof}

\section{An estimation for behavior of $\mathcal{L}$}\label{sec:7}

In this section we shall estimate  behavior of $\mathcal{L}$ near
$\theta$. Such a result will be used in the proof of Theorem~5.1 of
\cite{Lu3}.

 We shall replace the condition ({\bf D4}) in Section
2 by the following stronger
\begin{enumerate}
\item[\bf (D4**)] There
exist positive constants $\eta_0'$ and $C'_2>C'_1$ such that
$$
C'_2\|u\|^2\ge (P(x)u, u)\ge C'_1\|u\|^2\quad\forall u\in
H,\;\forall x\in B_H(\theta,\eta_0')\cap X.
$$
\end{enumerate}

Note that $B(\theta)|_{H^\pm}:H^\pm\to H^\pm=H^-\oplus H^+$ is
invertible. Set
$$
\left.\begin{array}{ll}
&B_\rho^{(\ast)}=\{h\in H^\ast\,|\,\|h\|\le \rho\},\;\ast=+, 0, -,\\
&B^\pm_{(r,s)}= B_r^{(-)}\oplus B_s^{(+)}.
\end{array}\right\}
$$
 For the
neighborhood $U$ in Lemma~\ref{lem:3.4} we fix a small $\rho\in (0,
\eta_0')$ so that
$$
B_\rho^{(0)}\oplus B_\rho^{(-)}\oplus B_\rho^{(+)} \subset U.
$$
We may assume that $a_1$ is no more than $a_0$ in
Lemma~\ref{lem:3.4}. Set
\begin{equation}\label{e:7.1}
a'_1:= \frac{(2C'_2+ \|Q(\theta)\|+1)}{2}+ \frac{1}{3a_1}.
\end{equation}
Since $h(\theta)=\theta$ we can choose $\rho_0\in (0, \rho]$ so
small that  $\omega$ in Lemma~\ref{lem:3.3} and $Q$ in ({\bf D3})
satisfy
\begin{eqnarray}
&&\|Q(z+ h(z)+ u)-Q(\theta)\|\le \frac{C'_1}{2},\label{e:7.2}\\
&&\omega(z+ h(z)+ u)<\sqrt{\frac{a_1}{2}},\label{e:7.3}\\
&&\omega(z+ h(z)+ u)\le\frac{k}{8a_1'}\label{e:7.4}
\end{eqnarray}
for all $z\in B_{\rho_0}^{(0)}$ and $u\in
B^\pm_{(\rho_0,\rho_0)}\cap X$. As before we write
$B_{H^\pm}(\theta,\delta)\cap X$ as $B_{H^\pm}(\theta,\delta)^X$
when it is considered as an open subset of $X^\pm$, and $F^X$ as the
restriction of the functional $F$ in (\ref{e:3.12}) to
$\bar{B}_{H^0}(\theta,\delta)\times B_{H^\pm}(\theta,\delta)^X$.

\begin{proposition}\label{prop:7.1}
Under the assumptions of Theorem~\ref{th:2.1} with ({\bf D4})
replaced by ({\bf D4**}), suppose that the map
$A:V^X\to X$ in the condition ({\bf F2}) is Fr\'echet
differentiable. (This implies that the functional $B_{H^\pm}(\theta,\delta)^X\ni u\to F^X(z,u)$
 is twice Fr\'echet differentiable  for each
fixed $z$). Let $s, r\in (0, \rho_0]$ satisfy
\begin{equation}\label{e:7.5}
B^\pm_{(r,s)}\subset B^\pm_{(\rho_0, \rho_0)}\quad\hbox{for}\quad
r=s\sqrt{\frac{8a_1'}{a_1}}.
\end{equation}
Then for positive constants
\begin{equation}\label{e:7.6}
\varepsilon=a_1's^2\quad\hbox{and}\quad\hbar=\frac{a_1}{8}s^2
\end{equation}
the following conclusions hold.
\begin{enumerate}
\item[{\rm (i)}] $(\nabla_2F(z,u), P^+u)\ge\hbar\quad\forall (z, u)\in
B^{(0)}_{\rho_0}\times B^\pm_{(r,s)}\;\hbox{with}\;\|P^+u\|=s$;
\item[{\rm (ii)}] $(\nabla_2F(z,u), P^-u)\le-\hbar\quad\forall\;(z, u)\in
B^{(0)}_{\rho_0}\times B^\pm_{(r,s)}\;\hbox{with}\;
F(z,u)=-\varepsilon$;
\item[{\rm (iii)}] $F(z,u)\le -\varepsilon\quad\forall (z, u)\in
B^{(0)}_{\rho_0}\times B^\pm_{(r,s)}\;\hbox{with}\;\|P^-u\|=r$.
\end{enumerate}
In particular, taking $z=0$ we get
\begin{eqnarray*}
&\bullet& (\nabla\mathcal{L}(u), P^+u)\ge\hbar\quad\forall  u\in
B^\pm_{(r,s)}\;\hbox{with}\;\|P^+u\|=s,\\
&\bullet& (\nabla\mathcal{L}(u), P^-u)\le-\hbar\quad\forall\;u\in
 B^\pm_{(r,s)}\;\hbox{with}\;
\mathcal{L}(u)=-\varepsilon;\\
&\bullet& \mathcal{L}(u)\le -\varepsilon\quad\forall u\in
 B^\pm_{(r,s)}\;\hbox{with}\;\|P^-u\|=r.
\end{eqnarray*}
\end{proposition}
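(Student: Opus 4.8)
The plan is to reduce the three inequalities to quadratic–form estimates for the operator $B$ near $\theta$, exactly as in the proof of Lemma~\ref{lem:3.5}, and then to substitute the explicit constants of (\ref{e:7.1})--(\ref{e:7.6}). Fix $z\in B^{(0)}_{\rho_0}$ and work first with $u=u^++u^-\in B^\pm_{(r,s)}\cap X$, where $u^-\in H^-\subset X$ automatically and $u^+\in X^+$. Since $A$ is continuously directional differentiable, the mean value theorem applied to $\tau\mapsto(A(z+h(z)+\tau u),v)_H$ on $[0,1]$, together with (F3) and (\ref{e:3.14}), produces for each $v\in X^\pm$ some $t=t(u,v)\in(0,1)$ with
$$
D_2F(z,u)(v)=\bigl(B(z+h(z)+tu)\,u,\,v\bigr)_H ;
$$
in particular $(\nabla_2F(z,u),u^+)_H$ and $(\nabla_2F(z,u),u^-)_H$ are of this form (for possibly different intermediate points). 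For $F$ itself, the hypothesis that $A$ is Fr\'echet differentiable makes $u\mapsto F^X(z,u)$ twice Fr\'echet differentiable with second differential $(v_1,v_2)\mapsto(B(z+h(z)+u)v_1,v_2)_H$; the second–order Taylor formula with Lagrange remainder and (\ref{e:3.14}) then give $\tilde t=\tilde t(u)\in(0,1)$ with $F(z,u)=\tfrac12(B(z+h(z)+\tilde t u)u,u)_H$. By (\ref{e:7.5}) and the choice of $\rho_0$ all the intermediate points occurring here lie in the neighbourhood $U$ of Lemma~\ref{lem:3.4}.

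Next I would expand the forms along $H^\pm=H^+\oplus H^-$ and apply Lemma~\ref{lem:3.4}: (i) gives $(B(\cdot)u^+,u^+)_H\ge a_1\|u^+\|^2$, (iii) gives $(B(\cdot)u^-,u^-)_H\le-a_1\|u^-\|^2$ (we may assume $a_1\le a_0$), and (ii) gives $|(B(\cdot)u^+,u^-)_H|\le\omega(\cdot)\|u^+\|\,\|u^-\|$; the upper bound $(B(\cdot)u^+,u^+)_H\le2a_1'\|u^+\|^2$ and its counterpart $-(B(\cdot)u^-,u^-)_H\le2a_1'\|u^-\|^2$ follow from (D4**) together with (\ref{e:7.2}) applied to $(Q(\cdot)u^\pm,u^\pm)_H$, which is precisely the role of the choice (\ref{e:7.1}) of $a_1'$. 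Feeding these into the two representations yields, for all $u\in B^\pm_{(r,s)}\cap X$,
$$
(\nabla_2F(z,u),u^+)_H\ge a_1\|u^+\|^2-\omega\|u^+\|\,\|u^-\|,\qquad
(\nabla_2F(z,u),u^-)_H\le-a_1\|u^-\|^2+\omega\|u^+\|\,\|u^-\|,
$$
together with the two–sided estimate $\tfrac{a_1}{2}\|u^+\|^2-\omega\|u^+\|\|u^-\|-a_1'\|u^-\|^2\le F(z,u)\le a_1'\|u^+\|^2+\omega\|u^+\|\|u^-\|-\tfrac{a_1}{2}\|u^-\|^2$, with $\omega$ evaluated at the relevant intermediate point of $U$.

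The rest is arithmetic with (\ref{e:7.6}), using $\|u^+\|\le s$, $\|u^-\|\le r$, the relation $r^2=\tfrac{8a_1'}{a_1}s^2$, Young's inequality, and the smallness of $\omega$ from (\ref{e:7.3})--(\ref{e:7.4}). Statement (i) comes from putting $\|u^+\|=s$ in the first inequality; statement (iii) from putting $\|u^-\|=r$ in the upper bound for $F$, where $\tfrac{a_1}{2}r^2=4a_1's^2$ dominates the positive terms. Statement (ii) is a two–step argument: first, $F(z,u)=-\varepsilon$ combined with the lower bound for $F$ forces $a_1'\|u^-\|^2+\omega s\|u^-\|\ge\varepsilon=a_1's^2$, and since (\ref{e:7.4}) makes $\omega s r$ much smaller than $a_1's^2$ this yields $\|u^-\|\ge cs$ for a definite $c\in(0,1)$; feeding this back into the second inequality and again using the smallness of $\omega s r$ gives $(\nabla_2F(z,u),u^-)_H\le-a_1c^2s^2+\omega sr\le-\hbar$. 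Finally, since $H^-\subset X$, $X^+$ is dense in $H^+$, and both $u\mapsto F(z,u)$ and $u\mapsto\nabla_2F(z,u)$ are $H$–continuous, a limiting argument removes the restriction $u\in X$; for (ii) one first proves the statement with $F(z,u)=-\varepsilon$ relaxed to $F(z,u)\le-\tfrac12\varepsilon$, so that the $X$–approximants of a general $u$ still satisfy the hypothesis. The special case $z=\theta$ is immediate from $h(\theta)=\theta$.

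The step I expect to be the real obstacle is the accounting in (ii): one must extract a lower bound on $\|u^-\|$ from the single scalar constraint $F(z,u)=-\varepsilon$ and then show that the cross term $\omega\|u^+\|\,\|u^-\|$ cannot destroy the sign of $(\nabla_2F(z,u),u^-)_H$, which is exactly why (\ref{e:7.4}) demands $\omega$ small relative to $a_1$ rather than merely relative to $\sqrt{a_1}$ as in (\ref{e:7.3}); combined with the slack needed for the $X$–to–$H$ limiting argument on the closed level set, this is the only place where the delicate tuning of (\ref{e:7.1})--(\ref{e:7.6}) is genuinely used, everything else being a direct quantitative rerun of Lemma~\ref{lem:3.5}.
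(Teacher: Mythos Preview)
Your outline is essentially the paper's proof: the mean value theorem and the second-order Taylor formula reduce everything to the forms $(B(z+h(z)+\cdot)\,u,v)_H$, Lemma~\ref{lem:3.4} together with (D4**) and (\ref{e:7.2}) supply the sign and size estimates, Young's inequality absorbs the cross terms, and density in $H$ removes the restriction $u\in X$ at the end.

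The one place your sketch diverges is the accounting in (ii). You plan to extract only a lower bound $\|u^-\|\ge cs$ with $c\in(0,1)$ from the raw lower bound of $F$ and then dominate the linear cross term $\omega sr$; but under (\ref{e:7.4}) alone the ratio $\omega sr/(a_1's^2)$ is of order $\sqrt{a_1/a_1'}$ and need not be small, so $c$ can fall well below $1$ and the final substitution may not reach $\hbar=\tfrac{a_1}{8}s^2$. The paper's device is to apply Young \emph{first} to get the clean two-sided bound (\ref{e:7.14}), and then from $F\le-\varepsilon$ read off the \emph{upper} bound $\|P^+u\|^2\le\frac{4}{a_1}\bigl(a_1'\|P^-u\|^2-\varepsilon\bigr)$; substituting this (not merely a lower bound on $\|P^-u\|$) into the Young-absorbed gradient inequality (\ref{e:7.16}) makes the surviving $-\varepsilon$ term deliver the constant exactly. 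If you want to keep your route, apply Young to the $F$-lower bound first so that $c=1$, after which your linear-cross-term estimate does close.
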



\begin{proof}
For $u\in B^\pm_{(\rho,\rho)}\cap
X^\pm\setminus\{0\}$, since $H^-\oplus H^0\subset X$,
$P^+u=u-P^-u\in X^\pm$. Hence
\begin{eqnarray*}
& &(\nabla_2F(z,u), P^+u)\\
&=&d_uF(z, u)(P^+u)\\
&=&d\mathcal{L}(z+ h(z)+ u)(P^+u)\\
 &=&d({\mathcal{L}}|_X)(z+ h(z)+ u)(P^+u)\\
&=&d(\mathcal{L}|_X)(z+ h(z)+ u)(P^+u)-d({\mathcal{
L}}|_X)(z+ h(z))(P^+u)\\
&=&d^2(\mathcal{L}|_X)(z+ h(z)+ tu)(u, P^+u)\\
&=&(B(z+ h(z)+ tu)u, P^+u)\\
&=&(B(z+ h(z)+ tu)P^+u, P^+u)+ (B(z+ h(z)+ tu)P^-u, P^+u)
\end{eqnarray*}
for some $t\in (0,1)$. Here the fourth equality is because
\begin{eqnarray*}
d(\mathcal{L}|_X)(z+ h(z))(P^+u)&=&(A(z+h(z)),
P^+u)_H\\
&=&((I-P^0)A(z+h(z)), P^+u)_H=0,
\end{eqnarray*}
and the fifth equality comes from the mean value theorem. It follows
from (i)-(ii) in Lemma~\ref{lem:3.4} that
$$
(\nabla_2F(z,u), P^+u)\ge a_1\|P^+u\|^2-\omega(z+ h(z)+
tu)\|P^-u\|\cdot\|P^+u\|.
$$
Since $2pq\le p^2+q^2$ for any $p, q\in\R$, we deduce that
\begin{eqnarray*}
& &\omega(z+ h(z)+ tu)\|P^-u\|\cdot\|P^+u\|\\
&=&2\omega(z+ h(z)+ tu)\|P^-u\|
\frac{1}{2\sqrt{\eta}} \sqrt{\eta}\|P^+u\|\\
 &\le&
\frac{1}{4\eta}(\omega(z+ h(z)+ tu)\|P^-u\|)^2+ \eta\|P^+u\|^2
\end{eqnarray*}
for any $\eta>0$. Taking $\eta=3a_1/4$, we arrive at
\begin{equation}\label{e:7.7}
(\nabla_2F(z,u), P^+u)\ge
\frac{a_1}{4}\|P^+u\|^2-\frac{1}{3a_1}(\omega(z+ h(z)+
tu)\|P^-u\|)^2
\end{equation}
for all $u\in B^\pm_{(\rho,\rho)}\cap X^\pm\setminus\{0\}$, where
$t=t(u)\in (0, 1)$.

Similarly, for any $u\in B^\pm_{(\rho,\rho)}\cap
X^\pm\setminus\{0\}$ and some $t'=t'(u)\in (0,1)$, we have
\begin{eqnarray*}
&&(\nabla_2F(z,u), P^-u)\\
&=&d_uF(z,u)(P^-u)\\
&=&d\mathcal{L}(z+ h(z)+ u)(P^-u)\\
&=&d({\mathcal{
L}}|_X)(z+ h(z)+ u)(P^-u)\\
&=&d(\mathcal{L}|_X)(z+ h(z)+ u)(P^-u)-d({\mathcal{
L}}|_X)(z+ h(z))(P^-u)\\
&=&d^2(\mathcal{L}|_X)(z+ h(z)+ t'u)(u, P^-u)\\
&=&(B(z+ h(z)+ t'u)u, P^-u)\\
&=&(B(z+ h(z)+ t'u)P^-u, P^-u)+ (B(z+ h(z)+ t'u)P^+u, P^-u).
\end{eqnarray*}
Since for any $\eta>0$,
\begin{eqnarray*}
&&\omega(z+ h(z)+ t'u)\|P^+u\|\cdot\|P^-u\|\\
&=&2\omega(z+ h(z)+ t'u)\|P^+u\|
\frac{1}{2\sqrt{\eta}} \sqrt{\eta}\|P^-u\|\\
 &\le&
\frac{1}{4\eta}(\omega(z+ h(z)+ t'u)\|P^+u\|)^2+ \eta\|P^-u\|^2,
\end{eqnarray*}
taking $\eta=3a_1/4$, as above we derive from (ii)-(iii) of
Lemma~\ref{lem:3.4} that
\begin{eqnarray}\label{e:7.8}
&&(\nabla_2F(z,u), P^-u)\nonumber\\
&\le& -a_1\|P^-u\|^2
+\omega(z+ h(z)+ t'u)\|P^+u\|\cdot\|P^-u\|\nonumber\\
&\le& -\frac{a_1}{4}\|P^-u\|^2+ \frac{1}{3a_1}(\omega(z+ h(z)+
t'u)\|P^+u\|)^2.
\end{eqnarray}

Since the functional $B_{H^\pm}(\theta,\delta)^X\ni u\to F^X(z,u)$
 is twice Fr\'echet differentiable  for each
fixed $z$,
by the Taylor formula, for $u\in B^\pm_{(\rho_0,\rho_0)}\cap
X\setminus\{\theta\}$,
\begin{eqnarray}\label{e:7.9}
F(z,u)&=&F(z,\theta)+ \frac{1}{2}d^2_uF^X(z, t''u)(u,u)\nonumber\\
&=&\frac{1}{2}d^2(\mathcal{L}|_X)(z+ h(z)+ t''u)(u,u)\nonumber\\
&=&\frac{1}{2}(B(z+ h(z)+ t''u)u, u)\nonumber\\
&=&\frac{1}{2}(B(z+ h(z)+ t''u)P^-u, P^-u)\nonumber\\
& & + (B(z+ h(z)+ t''u)P^-u, P^+u)\nonumber\\
& &+ \frac{1}{2}(B(z+ h(z)+ t''u)P^+u, P^+u)
\end{eqnarray}
for some $t''=t''(u)\in (0, 1)$. As in the proof of (\ref{e:7.8}) we
have
\begin{eqnarray}\label{e:7.10}
&&\frac{1}{2}(B(z+ h(z)+ t''u)P^-u, P^-u)+ (B(z+ h(z)+ t''u)P^-u, P^+u)\nonumber\\
&&\le -\frac{a_1}{2}\|P^-u\|^2
+\omega(z+ h(z)+ t''u)\|P^+u\|\cdot\|P^-u\|\nonumber\\
&&\le -\frac{a_1}{4}\|P^-u\|^2+
\frac{1}{a_1}(\omega(z+ h(z)+ t''u)\|P^+u\|)^2\nonumber\\
&&\le -\frac{a_1}{4}\|P^-u\|^2+ \frac{1}{2}\|P^+u\|^2
\end{eqnarray}
by (\ref{e:7.3}). In addition, Since $C'_1<C'_2$, by the condition
({\bf D4**}) and (\ref{e:7.2})-(\ref{e:7.3}),
\begin{eqnarray*}
&&(B(z+ h(z)+ t''u)P^+u, P^+u)\\
&=&(P(z+ h(z)+ t''u)P^+u,P^+u)+ (Q(z+ h(z)+ t''u)P^+u,
P^+u)\\
&\le& C'_2\|P^+u\|^2+ (C'_2+ \|Q(\theta)\|)\|P^+u\|^2.
\end{eqnarray*}
  From this and (\ref{e:7.9})-(\ref{e:7.10}) it follows that for
  any $(z,u)\in B^{(0)}_{\rho_0}\times (B^\pm_{(\rho_0,
\rho_0)}\cap X)$,
\begin{equation}\label{e:7.11}
F(z, u)\le -\frac{a_1}{4}\|P^-u\|^2+  \frac{(2C'_2+
\|Q(\theta)\|+1)}{2}\|P^+u\|^2.
\end{equation}

As in the proof of (\ref{e:7.7})  we have
\begin{eqnarray}\label{e:7.12}
&&\frac{1}{2}(B(z+ h(z)+ t''u)P^+u, P^+u)+ (B(z+ h(z)+ t''u)P^-u, P^+u)\nonumber\\
&&\ge \frac{a_1}{2}\|P^+u\|^2
-\omega(z+ h(z)+ t''u)\|P^-u\|\cdot\|P^+u\|\nonumber\\
&&\ge \frac{a_1-\eta}{2}\|P^+u\|^2- \frac{1}{2\eta}(\omega(z+ h(z)+
t''u)\|P^-u\|)^2
\end{eqnarray}
for any $0<\eta<a_1$ because
$$\omega(z+ h(z)+
t''u)\|P^-u\|\cdot\|P^+u\|\le\frac{\eta}{2}\|P^+u\|^2+
\frac{1}{2\eta}(\omega(z+ h(z)+ t''u)\|P^-u\|)^2.
$$
Note that the condition ({\bf D4**}) and (\ref{e:7.2}) imply
\begin{eqnarray*}
&&(B(z+ h(z)+ t''u)P^-u, P^-u)\\
&=&(P(z+ h(z)+ t''u)P^-u,P^-u)+ (Q(z+ h(z)+ t''u)P^-u,
P^-u)\\
&\ge& C'_1\|P^-u\|^2+ (Q(z+ h(z)+ t''u)P^-u, P^-u)\\
&\ge& C'_1\|P^-u\|^2+ (-\frac{C'_1}{2}- \|Q(\theta)\|)\|P^-u\|^2\\
&=& \left(\frac{C'_1}{2}- \|Q(\theta)\|\right)\|P^-u\|^2.
\end{eqnarray*}
 From this, (\ref{e:7.9}), (\ref{e:7.12}) and (\ref{e:7.3}) we derive
\begin{equation}\label{e:7.13}
F(z,u)\ge \frac{a_1-\eta}{2}\|P^+u\|^2 -
\left[\frac{a_1}{4\eta}-\frac{C'_1}{4} +
\frac{\|Q(\theta)\|}{2}\right]\|P^-u\|^2
\end{equation}
for all $(z,u)\in B^{(0)}_{\rho_0}\times (B^\pm_{(\rho_0,
\rho_0)}\cap X)$.

Let us take $\eta$ such that
$$
\frac{a_1}{4\eta}=\frac{C'_1}{4}+ C'_2+ \frac{1}{2}
$$
Then $0<\eta<a_1/8$, and by (\ref{e:7.1})
$$
a_1'= \frac{(2C'_2+ \|Q(\theta)\|+1)}{2}+
\frac{1}{3a_1}=\left[\frac{a_1}{4\eta}-\frac{C'_1}{4} +
\frac{\|Q(\theta)\|}{2}\right]+ \frac{1}{3a_1}.
$$
It follows from (\ref{e:7.11}) and (\ref{e:7.13}) that
$$
\frac{a_1}{4}\|P^+u\|^2 -  a_1'\|P^-u\|^2\le F(z,u)\le
-\frac{a_1}{4}\|P^-u\|^2+  a_1'\|P^+u\|^2
$$
for any  $(z,u)\in B^{(0)}_{\rho_0}\times (B^\pm_{(\rho_0,
\rho_0)}\cap X)$. This implies
\begin{equation}\label{e:7.14}
\frac{a_1}{4}\|P^+u\|^2 -  a_1'\|P^-u\|^2\le F(z,u)\le
-\frac{a_1}{4}\|P^-u\|^2+  a_1'\|P^+u\|^2
\end{equation}
for all $(z,u)\in B^{(0)}_{\rho_0}\times B^\pm_{(\rho_0, \rho_0)}$
because
 $B^{(0)}_{\rho_0}\times (B^\pm_{(\rho_0,
\rho_0)}\cap X)$ is dense in $B^{(0)}_{\rho_0}\times B^\pm_{(\rho_0,
\rho_0)}$.

Moreover, since $a_1'>\frac{1}{3a_1}$,
 by (\ref{e:7.7}) and (\ref{e:7.8}),
 for any $(z,u)\in B^{(0)}_{\rho_0}\times (B^\pm_{(\rho_0,
\rho_0)}\cap X)$ with $u\ne 0$ there exist $t=t(u)\in (0,1)$ and
$t'=t'(u)\in (0, 1)$ such that
\begin{eqnarray}
\hspace{-2mm}(\nabla_2F(z,u), P^+u)\ge
\frac{a_1}{4}\|P^+u\|^2-a_1'(\omega(z+ h(z)+
tu))^2\|P^-u\|^2,\label{e:7.15}
\end{eqnarray}
\begin{eqnarray}
\hspace{-2mm} (\nabla_2F(z,u), P^-u)\le -\frac{a_1}{4}\|P^-u\|^2+
a_1'(\omega(z+ h(z)+ t'u))^2\|P^+u\|^2.\label{e:7.16}
\end{eqnarray}

Now we may prove that the positive constants $r$, $s$, $\varepsilon$
and $\hbar$ in (\ref{e:7.5})-(\ref{e:7.6}) satisfy (i)-(iii).

Firstly, for any $(z, u)\in B^{(0)}_{\rho_0}\times B^\pm_{(r,s)}$
with $\|P^-u\|=r$ it follows from (\ref{e:7.14}) that
$$
 F(z,u)\le
-\frac{a_1}{4}\|P^-u\|^2+  a_1'\|P^+u\|^2\le -\frac{a_1}{4}r^2+
a_1's^2=-a_1's^2=-\varepsilon.
$$

Next, by (\ref{e:7.15}) and (\ref{e:7.4}) we have
$$
(\nabla_2F(z,u), P^+u)\ge
\frac{a_1}{4}\|P^+u\|^2-\frac{a_1^2}{64a_1'}\|P^-u\|^2
$$
for any $(z,u)\in B^{(0)}_{\rho_0}\times (B^\pm_{(\rho_0,
\rho_0)}\cap X)$. The density of $B^{(0)}_{\rho_0}\times
(B^\pm_{(\rho_0, \rho_0)}\cap X)$  in $B^{(0)}_{\rho_0}\times
B^\pm_{(\rho_0, \rho_0)}$ implies that this inequality also holds
for any $(z, u)\in B^{(0)}_{\rho_0}\times B^\pm_{(\rho_0, \rho_0)}$.
So for any $(z, u)\in B^{(0)}_{\rho_0}\times B^\pm_{(r, s)}$ with
$\|P^+u\|=s$ we have
\begin{eqnarray*}
 (\nabla_2F(z,u), P^+u)&\ge&
\frac{a_1}{4}\|P^+u\|^2-\frac{a_1^2}{64a_1'}\|P^-u\|^2\nonumber\\
&\ge&
\frac{a_1}{4}s^2-\frac{a_1^2}{64a_1'}r^2=\frac{a_1}{8}s^2=\hbar.
\end{eqnarray*}

Finally, for any $(z, u)\in (B^{(0)}_{\rho_0}\times B^\pm_{(r,
s)})\cap\{F(z,u)\le -\varepsilon\}$, by (\ref{e:7.14}) we get
\begin{equation}\label{e:7.17}
\frac{a_1}{4}\|P^+u\|^2- a_1'\|P^-u\|^2\le -\varepsilon.
\end{equation}
This implies $a_1'\|P^-u\|^2\ge \varepsilon$, and thus $u\ne 0$. If
this $u$  also belongs to $X$, then it follows from this,
(\ref{e:7.16}) and (\ref{e:7.4}) that
\begin{eqnarray*}
(\nabla_2F(z,u), P^-u)&\le& -\frac{a_1}{4}\|P^-u\|^2+
a_1'(\omega(z+ h(z)+ t'u))^2\|P^+u\|^2\\
&\le&-\frac{a_1}{4}\|P^-u\|^2+
\frac{a_1^2}{64a_1'}\|P^+u\|^2\hspace{25mm}\hbox{by}\;(\ref{e:7.4})\\
&\le&-\frac{a_1}{4}\|P^-u\|^2+
\frac{a_1^2}{64a_1'}\frac{4}{a_1}\Bigl[a_1'\|P^-u\|^2-\varepsilon\Bigr]\hspace{6mm}\hbox{by}\;{\rm (\ref{e:7.17})}\\
&\le&-\frac{a_1}{4}\|P^-u\|^2+
\frac{a_1}{16}\|P^-u\|^2-\frac{a_1\varepsilon}{16a_1'}\\
&=&-\frac{3a_1}{16}\|P^-u\|^2-\frac{a_1\varepsilon}{16a_1'}\\
&\le
&-\frac{3a_1}{16}\frac{\varepsilon}{a_1'}-\frac{a_1\varepsilon}{16a_1'}
=-\frac{a_1\varepsilon}{4a_1'}.
\end{eqnarray*}
Since $\bigl((B^{(0)}_{\rho_0}\times (B^\pm_{(r, s)})\cap
X\bigr)\cap\{F(z,u)\le -\varepsilon\}$ is dense in $
\bigl(B^{(0)}_{\rho_0}\times (B^\pm_{(r, s)})\bigr)\cap\{F(z,u)\le
-\varepsilon\}$ we deduce that
$$
(\nabla_2F(z,u), P^-u)\le -\frac{a_1\varepsilon}{4a_1'}<-\hbar
$$
for all $(z,u)\in\bigl(B^{(0)}_{\rho_0}\times (B^\pm_{(r,
s)})\bigr)\cap\{F(z,u)\le -\varepsilon\}$.
\end{proof}

\section{Concluding remarks}\label{sec:8}

In this section we shall show that
some conclusions of Theorem~\ref{th:2.1} can still be obtained if
the strictly Fr\'{e}chet differentiability  at $\theta$ of the map $A: V^X\to X$
is replaced by a weaker condition
similar to $({\rm E_\infty})$ or $({\rm E'_\infty})$ in Theorems~4.1
and 4.3 of \cite{Lu2}. That is, the condition (F2) can be replaced by the following weaker
(${\bf F2^\prime}$) or (${\bf F2^{\prime\prime}}$).
\begin{enumerate}
\item[(${\bf F2^\prime}$)] There exists a  continuously directional
differentiable (and thus $C^{1-0}$) map $A: V^X\to X$ such that
$D\mathcal{ L}(x)(u)=(A(x), u)_H$ for all $x\in V^X$ and $u\in X$
(which actually implies that $\mathcal{ L}|_{V^X}\in C^1(V^X, \R)$),
and that
\begin{eqnarray}\label{e:N1}
&&\|(I-P^0)A(z_1+x_1)-B(\theta)x_1-(I-P^0)A(z_2+x_2)+
B(\theta)x_2\|_{X^\pm}\nonumber\\
&&\le\frac{1}{\kappa C_1}\|z_1+x_1-z_2-x_2\|_{X}
\end{eqnarray}
for some positive numbers $\kappa>1$, $r_1>0$ and all $z_i\in
B_{H^0}(\theta,r_1)$, $x_i\in B_X(\theta, r_1)\cap X^\pm$, $i=1,2$.
Here $C_1$ is given by (\ref{e:3.2}).

\item[(${\bf F2^{\prime\prime}}$)] The inequality (\ref{e:N1}) in (${\bf
F2^\prime}$) is replaced by
\begin{eqnarray}\label{e:N2}
&&\|(I-P^0)A(z+x_1)-B(\theta)x_1-(I-P^0)A(z+x_2)+
B(\theta)x_2\|_{X^\pm}\nonumber\\
&&\le\frac{1}{\kappa C_1}\|x_1-x_2\|_{X}
\end{eqnarray}
for some positive numbers $\kappa>1$, $r_1>0$ and all $z\in
B_{H^0}(\theta,r_1)$, $x_i\in B_X(\theta, r_1)\cap X^\pm$, $i=1,2$.
Here $C_1$ is given by (\ref{e:3.2}).
\end{enumerate}

Clearly,  (\ref{e:N1}) and (\ref{e:N2}) are, respectively, implied
in the following  inequalities
\begin{eqnarray}\label{e:N3}
&&\|A(z_1+x_1)-B(\theta)x_1-A(z_2+x_2)+
B(\theta)x_2\|_{X}\nonumber\\
&&\le\frac{1}{\kappa C_1C_2}\|z_1+x_1-z_2-x_2\|_{X}
\end{eqnarray}
for  all $z_i\in B_{H^0}(\theta,r_1)$, $x_i\in B_X(\theta, r_1)\cap
X^\pm$, $i=1,2$, and
\begin{eqnarray}\label{e:N4}
&&\|A(z+x_1)-B(\theta)x_1-A(z+x_2)+
B(\theta)x_2\|_{X}\nonumber\\
&&\le\frac{1}{\kappa C_1C_2}\|x_1-x_2\|_{X}
\end{eqnarray}
for  all $z\in B_{H^0}(\theta,r_1)$, $x_i\in B_X(\theta, r_1)\cap
X^\pm$, $i=1,2$. Here $C_1$ and $C_2$ are given by (\ref{e:3.2}).

\underline{We first consider the case (${\bf F2^{\prime\prime}}$)
holding}. Checking the proof of (\ref{e:3.4}) we have
\begin{eqnarray*}
&&\|S(z, x_1)-S(z, x_2)\|_{X^\pm}\nonumber\\
&&\le C_1\cdot\|(I-P^0)A(z+x_1)- B(\theta)x_1 -(I-P^0)A(z+x_2)+ B(\theta)x_2\|_{X^\pm}\nonumber\\
&&\le\frac{1}{\kappa}\|x_1-x_2\|_X
\end{eqnarray*}
for all $z\in B_{H^0}(\theta,r_1)$ and $x_i\in B_{X^\pm}(\theta,
r_1)$, $i=1,2$. Since $A(x)\to \theta$ as $x\to\theta$ we can choose
$r_0\in (0, r_1)$ such that $\|S(z,0)\|<r_1(1-1/\kappa)$ for any
$z\in B_{H^0}(\theta, r_0)$. By Theorem~10.1.1 in \cite[Chap.10]{Di}
we have a unique  map $h: B_{H^0}(\theta, r_0)\to
\bar{B}_{X^\pm}(\theta,r_0)$ with $h(\theta)=\theta$, which is also
continuous, such that $S(z, h(z))=h(z)$ or equivalently $(I-P^0)A(z+
h(z))=\theta\;\forall z\in B_{H^0}(\theta, r_0)$ as in
(\ref{e:3.5}).

\underline{Next we consider the case (${\bf F2^\prime}$) holding}.
By the proof of (\ref{e:3.4}) we easily see
\begin{eqnarray}\label{e:N5}
&&\|S(z_1, x_1)-S(z_2, x_2)\|_{X^\pm}\nonumber\\
&&\le C_1\cdot\|(I-P^0)A(z_1+x_1)- B(\theta)x_1 -(I-P^0)A(z_2+x_2)+ B(\theta)x_2\|_{X^\pm}\nonumber\\
&&\le\frac{1}{\kappa}\|z_1+ x_1-z_2-x_2\|_X
\end{eqnarray}
and thus $\|S(z, x_1)-S(z,
x_2)\|_{X^\pm}\le\frac{1}{\kappa}\|x_1-x_2\|_X$ if $z_1=z_2=z$.
Since $A(x)\to \theta$ as $x\to\theta$ we can choose $r_0\in (0,
r_1)$ such that
\begin{eqnarray*}
\|S(z, x)\|_{X^\pm}&=&\|S(z,x)- S(z,\theta)\|_{X^\pm}+ \|S(z,\theta)\|\\
&\le&\frac{1}{\kappa}\|x\|_X+ \frac{\kappa-1}{\kappa}r_0
\end{eqnarray*}
for any $z\in \bar{B}_{H^0}(\theta, r_0)$. Hence for each $z\in
\bar{B}_{H^0}(\theta, r_0)$ we may apply the Banach fixed point
theorem to the map
$$
\bar{B}_{X^\pm}(\theta,r_0)\ni x\mapsto S(z,x)\in
\bar{B}_{X^\pm}(\theta,r_0)
$$
to get a unique map $h:\bar{B}_{H^0}(\theta, r_0)\to
\bar{B}_{X^\pm}(\theta,r_0)$ such that $S(z, h(z))=h(z)$. From the
latter and (\ref{e:N5}) it easily follows that
\begin{eqnarray}\label{e:N6}
\|h(z_1)-h(z_2)\|_{X^\pm}\le\frac{1}{\kappa-1}\|z_1-z_2\|_X
\end{eqnarray}
for any $z_i\in \bar{B}_{X^\pm}(\theta,r_0)$, $i=1,2$. That is, $h$
is Lipschitz continuous. Using this we may prove as in Step 2 of the
proof of Lemma~\ref{lem:3.1} that  $\mathcal{ L}^\circ$ has a linear
bounded G\^{a}teaux derivative at each $z_0\in \bar{B}_{H^0}(\theta,
r_0)$ and
$$
D\mathcal{ L}^\circ(z_0)z=(A(z_0+ h(z_0)), z)_H=(P^0A(z_0+
h(z_0)), z)_H\;\forall z\in H^0.
$$
 Moreover, checking the proof of
(\ref{e:3.10}) we have still (\ref{e:3.10}), i.e.,
\begin{eqnarray*}
|D\mathcal{ L}^\circ(z_0)z- D\mathcal{ L}^\circ(z'_0)z|
&\le &\|A(z_0+ h(z_0))- B(\theta)(z_0+ h(z_0))\nonumber\\
&&- A(z'_0+h(z'_0))+ B(\theta)(z'_0+ h(z'_0))\|_X\cdot\|z\|_X
\end{eqnarray*}
for all $z_0\in\bar{B}_{H^0}(\theta, r_0)$ and $z\in H^0$.
Note that $A$ is continuously directional differentiable and hence $C^{1-0}$. It follows from
(\ref{e:N6}) that the map  $\bar{B}_{H^0}(\theta, r_0)\ni z_0\mapsto
D\mathcal{ L}^\circ(z_0)\in L(H^0,\R)$ is $C^{1-0}$.
As before we derive from \cite[Th.2.1.13]{Ber} that $\mathcal{ L}^\circ$ is Fr\'echet differentiable at
$z_0$ and its Fr\'echet differential $d\mathcal{
L}^\circ(z_0)=D\mathcal{ L}^\circ(z_0)$ is $C^{1-0}$ in
$z_0\in B_{H^0}(\theta, r_0)$.

Summarizing the above arguments we obtain

\begin{theorem}\label{th:8.1}
Under the above assumptions {\rm (S)}, {\rm (F1),(${\bf F2^{\prime\prime}}$), (F3)} and {\rm
(C1)-(C2)}, {\rm (D)}, if $\nu>0$ there exist a positive
$\epsilon\in\R$, a (unique)  continuous map
$h:B_{H^0}(\theta,\epsilon)=B_{H}(\theta,\epsilon)\cap H^0\to X^\pm$
satisfying $h(\theta)=\theta$ and (\ref{e:2.3}),
an open neighborhood $W$ of $\theta$ in $H$ and an origin-preserving
homeomorphism
$$
\Phi: B_{H^0}(\theta,\epsilon)\times
\left(B_{H^+}(\theta,\epsilon) +
B_{H^-}(\theta,\epsilon)\right)\to W
$$
of form $\Phi(z, u^++ u^-)=z+ h(z)+\phi_z(u^++ u^-)$ with
$\phi_z(u^++ u^-)\in H^\pm$  such that (\ref{e:2.5}) and (\ref{e:2.6}) are satisfied.
Moreover, the homeomorphism $\Phi$ has also the properties (a) and (b) in Theorem~\ref{th:2.1}.
Furthermore, if (${\bf F2^{\prime\prime}}$) is replaced by the slightly strong (${\bf F2^{\prime}}$)
then the map $h$ is Lipschitz continuous
and  the function $B_{H^0}(\theta,\epsilon)\ni z\mapsto
\mathcal{ L}^\circ(z):=\mathcal{ L}(z+ h(z))$ is $C^{2-0}$ and
 $$
d\mathcal{ L}^\circ(z_0)(z)=(A(z_0+ h(z_0)), z)_H\quad\forall z_0\in
B_{H^0}(\theta, \epsilon),\; z\in H^0.
 $$
Consequently, $\theta$ is  an isolated critical point
of $\mathcal{L}^\circ$ provided that $\theta$ is an isolated critical point of $\mathcal{
L}|_{V^X}$.
\end{theorem}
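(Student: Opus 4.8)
The plan is to re-run the proof of Theorem~\ref{th:2.1} almost verbatim, keeping track of the single point at which the strict Fr\'echet differentiability of $A$ at $\theta$ was actually used and replacing it by the new hypothesis. Inspecting Section~\ref{sec:3}, one sees that this hypothesis entered only through the estimate (\ref{e:3.1}), and that (\ref{e:3.1}) was invoked in exactly two roles: in the chain (\ref{e:3.4}) to make the map $S$ of (\ref{e:3.3}) a contraction (hence to produce $h$), and in Step~2 of the proof of Lemma~\ref{lem:3.1} to upgrade the regularity of $h$ and of $\mathcal{L}^\circ$. The construction of $h$ and, under the stronger $({\bf F2^\prime})$, the Lipschitz bound (\ref{e:N6}) for $h$ together with the $C^{2-0}$ regularity of $\mathcal{L}^\circ$ and the formula for $d\mathcal{L}^\circ$, have been carried out in the discussion preceding the statement; so the remaining task is to produce the open neighbourhood $W$ and the origin-preserving homeomorphism $\Phi$ with the properties (\ref{e:2.5}), (\ref{e:2.6}), (a) and (b).

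For that I would simply observe that Lemmas~\ref{lem:3.3}, \ref{lem:3.4}, \ref{lem:3.5} and \ref{lem:3.6}, and the application of Theorem~\ref{th:A.1}, use only: the conditions (F1), (F3), (D); the continuity of $A$ on $B_X(\theta,r)$, which still holds since $A$ is $C^{1-0}$; and the bare properties of $h$, namely $h(\theta)=\theta$, continuity of $h$, $\mathrm{Im}(h)\subset X^\pm\subset X$ and $(I-P^0)A(z+h(z))=\theta$. None of these involves (\ref{e:3.1}). Hence Lemma~\ref{lem:3.3} and Lemma~\ref{lem:3.4} hold as stated; Lemma~\ref{lem:3.5} then shows that the function $F$ of (\ref{e:3.12}) satisfies the hypotheses of Theorem~\ref{th:A.1}; applying Theorem~\ref{th:A.1} to $F$ yields the homeomorphism $\phi$ of (\ref{e:3.26}) and, via (\ref{e:3.29}), the map $\Phi(z,u^++u^-)=z+h(z)+\phi_z(u^++u^-)$ with $\phi_z(u^++u^-)\in H^\pm$ satisfying (\ref{e:2.5}) and (\ref{e:2.6}); and Lemma~\ref{lem:3.6} shows that $W:=\mathrm{Im}(\Phi)$ is an open neighbourhood of $\theta$ in $H$ and that $\Phi$ is an origin-preserving homeomorphism onto $W$ with properties (a) and (b). This establishes the first, ``homeomorphism'', part of Theorem~\ref{th:8.1} under the weaker $({\bf F2^{\prime\prime}})$.

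For the final, ``Furthermore'', part I would invoke the computations already made above the statement: under $({\bf F2^\prime})$ the fixed-point identity $S(z,h(z))=h(z)$ and (\ref{e:N5}) give the Lipschitz estimate (\ref{e:N6}); then, copying Step~2 of the proof of Lemma~\ref{lem:3.1}, the mean-value computation (\ref{e:3.9}) together with $(I-P^0)A(z+h(z))=\theta$ and the continuity of $A$ gives $D\mathcal{L}^\circ(z_0)z=(P^0A(z_0+h(z_0)),z)_H$; the estimate (\ref{e:3.10}) (which needs only (\ref{e:N3}) and (\ref{e:N6})) together with the $C^{1-0}$-ness of $A$ and the Lipschitz continuity of $h$ gives that $z_0\mapsto D\mathcal{L}^\circ(z_0)$ is $C^{1-0}$ on $B_{H^0}(\theta,r_0)$; and \cite[Th.2.1.13]{Ber} upgrades $\mathcal{L}^\circ$ to Fr\'echet differentiable with $d\mathcal{L}^\circ=D\mathcal{L}^\circ$ being $C^{1-0}$, i.e. $\mathcal{L}^\circ\in C^{2-0}$ with the stated formula for $d\mathcal{L}^\circ$. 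The isolated-critical-point assertion is immediate, since then $\theta$ is an isolated zero of $A$, hence $z\mapsto P^0A(z+h(z))$ has $\theta$ as an isolated zero.

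The only real work is the bookkeeping, and the genuinely delicate point I expect is being honest about what is lost. The estimate (\ref{e:3.1}) carried the strength ``$K_r\to 0$ as $r\to 0$'', whereas $({\bf F2^\prime})$ and $({\bf F2^{\prime\prime}})$ provide only a uniform Lipschitz-type bound with the fixed constant $1/(\kappa C_1)$. Consequently one cannot recover $h'(\theta)=\theta$, $d^2\mathcal{L}^\circ(\theta)=0$, nor the strict Fr\'echet differentiability of $h$ and of $d\mathcal{L}^\circ$ at $\theta$ --- which is exactly why conclusions (i) and (ii) of Theorem~\ref{th:2.1} do not appear in Theorem~\ref{th:8.1}. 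Moreover, under $({\bf F2^{\prime\prime}})$ the contraction argument delivers only a continuous (not Lipschitz) $h$, so the argument for $\mathcal{L}^\circ\in C^{2-0}$ breaks down; this is precisely the reason the $C^{2-0}$ conclusion is asserted only under the slightly stronger $({\bf F2^\prime})$.
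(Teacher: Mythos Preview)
Your proposal is correct and follows essentially the same approach as the paper: the paper's proof of Theorem~\ref{th:8.1} is precisely the discussion in Section~\ref{sec:8} preceding the statement together with the observation that Lemmas~\ref{lem:3.3}--\ref{lem:3.6} go through unchanged, which is exactly what you outline. One small slip: in the ``Furthermore'' part you invoke (\ref{e:N3}) to control the right-hand side of (\ref{e:3.10}), but $({\bf F2^\prime})$ only gives you (\ref{e:N1}), not (\ref{e:N3}); the paper instead (and you also, in the same sentence) uses the $C^{1-0}$-ness of $A$ together with (\ref{e:N6}) to get local Lipschitz continuity of $z_0\mapsto A(z_0+h(z_0))$ directly, which is all that is needed.
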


Carefully checking the arguments in Section~\ref{sec:2} and the proofs in Section~\ref{sec:4}
it is not hard to derive:

\begin{corollary}\label{cor:8.2}
If the above assumptions {\rm (S)}, {\rm (F1),(${\bf F2^{\prime\prime}}$), (F3)} and {\rm
(C1)-(C2)}, {\rm (D)} are satisfied then Corollary~\ref{cor:2.5} also holds. Moreover,
Corollaries~\ref{cor:2.6},~\ref{cor:2.7},~\ref{cor:2.8} and ~\ref{cor:2.9} are true under
 the  assumptions {\rm (S)}, {\rm (F1),(${\bf F2^{\prime}}$), (F3)} and {\rm
(C1)-(C2)}, {\rm (D)}.
\end{corollary}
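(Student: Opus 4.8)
The plan is to deduce Corollary~\ref{cor:8.2} from Theorem~\ref{th:8.1} by exactly the bookkeeping that derives Corollaries~\ref{cor:2.5}--\ref{cor:2.9} from Theorem~\ref{th:2.1}, after checking that every feature of the splitting actually used in those derivations is already produced under the weaker hypotheses $({\bf F2}'')$ or $({\bf F2}')$. Recall that, under $({\bf F2}'')$, Theorem~\ref{th:8.1} yields the origin-preserving homeomorphism $\Phi(z,u^++u^-)=z+h(z)+\phi_z(u^++u^-)$ with $\phi_z(u^++u^-)\in H^\pm$, satisfying the splitting identity (\ref{e:2.5}) and the inclusion (\ref{e:2.6}), together with properties (a) and (b) of Theorem~\ref{th:2.1}; and under the slightly stronger $({\bf F2}')$ it moreover gives that $h$ is Lipschitz continuous, that $\mathcal{L}^\circ(z)=\mathcal{L}(z+h(z))$ is $C^{2-0}$ with $d\mathcal{L}^\circ(z_0)(z)=(A(z_0+h(z_0)),z)_H$, and that $\theta$ is an isolated critical point of $\mathcal{L}^\circ$ whenever it is one of $\mathcal{L}|_{V^X}$.

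First I would handle the first assertion, that Corollary~\ref{cor:2.5} holds under $({\bf F2}'')$. Inspecting the proof of Corollary~\ref{cor:2.5}, its only inputs are the deformation $\eta(\Phi(z,u^++u^-),t)=\Phi(z,tu^++u^-)$, which retracts $\mathcal{L}_0\cap W$ onto $W_{0-}=\Phi(B_{H^0}(\theta,\epsilon)\times B_{H^-}(\theta,\epsilon))\subset X$ and is well defined purely from the form of $\Phi$ and (\ref{e:2.5}), and property (b) of Theorem~\ref{th:2.1}, used to identify the homeomorphism $\imath^{0-}$ onto $\mathcal{L}_0\cap W_{0-}$ in the $X$-topology. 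Both are supplied by Theorem~\ref{th:8.1}, so the singular-cycle pushing argument applies verbatim.

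Next I would handle the remaining corollaries under $({\bf F2}')$. Corollary~\ref{cor:2.6} (Shifting) is obtained from Theorem~\ref{th:2.1} by extending $\mathcal{L}^\circ$ to a $C^{2-0}$ function on the finite-dimensional space $H^0$ satisfying (PS) and coinciding with $\mathcal{L}^\circ$ near $\theta$, then invoking the argument of \cite[Th.8.4]{MaWi} (or \cite[Th.5.1.17]{Ch1}); the needed ingredients --- (\ref{e:2.5}), $C^{2-0}$ regularity of $\mathcal{L}^\circ$, and isolatedness of $\theta$ for $\mathcal{L}^\circ$ --- are all given by Theorem~\ref{th:8.1}. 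Corollary~\ref{cor:2.7} reduces, as in \cite[Prop.3.2]{BaChWa}, to the implication (iii)$\Rightarrow$(i), which uses only the $\nu=0$ case (the analogue of Remark~\ref{rm:2.2}(i), which follows by deleting the $H^0$-variable in the construction preceding Theorem~\ref{th:8.1}), Corollary~\ref{cor:2.6}, and \cite[Th.4.6, p.43]{Ch}. Corollary~\ref{cor:2.8} follows either from the $\nu=0=\mu$ conclusion (\ref{e:2.7}) or from Step~3 of the proof of Lemma~\ref{lem:3.5}, whose positivity estimate rests only on the coercivity bounds of Lemma~\ref{lem:3.4} and is thus untouched by the change of hypothesis on $A$. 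Finally Corollary~\ref{cor:2.9}(i)--(iii) are proved precisely as \cite[Th.II.1.6]{Ch}, \cite[Prop.3.3]{BaChWa}, \cite[Prop.2.4]{Ba1}, with the splitting and shifting of Theorem~\ref{th:8.1} and Corollary~\ref{cor:2.6} substituted for their $C^2$ counterparts.

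The step I expect to require the most care is verifying that none of the quoted arguments secretly needs more regularity than Theorem~\ref{th:8.1} provides: that \cite[Th.8.4]{MaWi} uses only $C^{2-0}$ smoothness of $\mathcal{L}^\circ$ on a finite-dimensional space rather than genuine $C^2$, that the finiteness-of-critical-groups device of Remark~\ref{rm:4.6} is invoked nowhere here (it was needed only for Theorem~\ref{th:2.10}, which is deliberately absent from Corollary~\ref{cor:8.2}), and that the proof of Corollary~\ref{cor:2.5} indeed uses neither the Lipschitz property of $h$ nor $\mathcal{L}^\circ\in C^{2-0}$ --- which is exactly why its conclusion persists already under $({\bf F2}'')$, whereas the remaining corollaries genuinely need $({\bf F2}')$.
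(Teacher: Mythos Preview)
Your proposal is correct and takes essentially the same approach as the paper, which merely asserts that ``carefully checking the arguments in Section~\ref{sec:2} and the proofs in Section~\ref{sec:4}'' yields the corollary without spelling out any details. You have in fact carried out that checking explicitly, correctly isolating which ingredients of each derivation (the deformation $\eta$ and property (b) for Corollary~\ref{cor:2.5}; the $C^{2-0}$ smoothness of $\mathcal{L}^\circ$ for the Shifting Corollary and its consequences) are actually used and confirming that Theorem~\ref{th:8.1} supplies them.
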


By Claim~6.1, $\widehat{C}_1:=\|(\widehat B(\theta)|_{\widehat
X^\pm})^{-1}\|_{L(\widehat{X}^\pm)}\ge C_1:=\|(
B(\theta)|_{X^\pm})^{-1}\|_{L(X^\pm)}$ if
$\|Jx\|_{\widehat{X}}=\|x\|_X\;\forall x\in X$. In order to assure
that Theorem~\ref{th:6.1} also holds when Theorem~\ref{th:2.1} with
(F2) is replaced by Theorem~\ref{th:8.1} with (${\bf
F2^{\prime\prime}}$) we should require not only that
$J|_X:X\to\widehat{X}$ is a Banach isometry but also that $C_1$ in
(\ref{e:N2}) for $(A,B)$ is replaced by $\widehat{C}_1$. For
Theorem~\ref{th:6.3} being true after Theorem~\ref{th:2.1} is
replaced by Theorem~\ref{th:8.1} it is suffice to assume that
$J|_X:X\to\widehat{X}$ is a Banach isometry. Theorem~\ref{th:6.4}
also holds if we replace ``Theorem~\ref{th:2.1}''  by
``Theorem~\ref{th:8.1}'' there.

Finally, we have also a corresponding result with
Proposition~\ref{prop:7.1} provided that the sentence ``Under the
assumptions of Theorem~\ref{th:2.1} with ({\bf D4}) replaced by
({\bf D4**}),  suppose that the map $A:V^X\to X$ in the condition
({\bf F2}) is Fr\'echet differentiable.'' in
Proposition~\ref{prop:7.1} is replaced by ``Under the assumptions of
Theorem~\ref{th:8.1} with ({\bf D4}) replaced by ({\bf D4**}),
suppose that the map $A:V^X\to X$ in the condition (${\bf
F2^{\prime\prime}}$) is Fr\'echet differentiable.''

\appendix
\section{ Parameterized version of Morse-Palais
 lemma due to Duc-Hung-Khai}\label{app:A}\setcounter{equation}{0}

 Almost repeating the proof of Theorem 1.1 in
\cite{DHK} one easily gets the following parameterized version of
it (\cite{Lu2}). Actually we give more conclusions, which are key for proofs of
some results in this paper.

\begin{theorem}\label{th:A.1}
Let $(H, \|\cdot\|)$ be a normed vector space and let $\Lambda$ be a
 topological space. Let $J:\Lambda\times B_H(\theta,
2\delta)\to\R$ be continuous, and let the function $J(\lambda,
\cdot): B_H(\theta, 2\delta)\to\R$ be continuously directional
differentiable for every $\lambda\in\Lambda$.
 Assume that there exist a closed
vector subspace $H^+$ and a finite-dimensional vector subspace $H^-$
of $H$ such that $H^+\oplus H^-$ is a direct sum decomposition of
$H$ and
\begin{enumerate}
\item[\bf (i)] $J(\lambda, \theta)=0$ and $D_2J(\lambda, \theta)=0$,
\item[\bf (ii)] $[D_2J(\lambda, x+ y_2)-D_2J(\lambda, x+ y_1)](y_2-y_1)<0$ for any $(\lambda, x)\in\Lambda\times\bar
B_{H^+}(\theta,\delta)$, $y_1, y_2\in\bar
B_{H^-}(\theta,\delta)$ and $y_1\ne y_2$,

\item[\bf (iii)] $D_2J(\lambda, x+y)(x-y)>0$ for any $(\lambda, x, y)\in\Lambda\times\bar B_{H^+}(\theta,
\delta)\times\bar B_{H^-}(\theta,\delta)$ and $(x,y)\ne (\theta,
\theta)$,

\item[\bf (iv)] $D_2J(\lambda, x)x>p(\|x\|)$ for any $(\lambda, x)\in\Lambda\times\bar
B_{H^+}(\theta,\delta)\setminus\{\theta\}$, where $p:(0,
\delta]\to (0, \infty)$ is a non-decreasing function. (One may require that $p(t)\le 4t^2\;\forall
t\in (0,\delta]$.)
\end{enumerate}
Then we have:\\
$1^\circ.$ If $H^-=\{\theta\}$ {\rm (}so
  the condition (ii) is empty and (iv) implies (iii) {\rm )} then
there exists an open neighborhood $U$ of $\Lambda\times\{\theta\}$ in
$\Lambda\times H$ and a homeomorphism
$\phi:\Lambda\times B_H(\theta, \sqrt{p(\delta/2)/2})\to
U$  satisfies
$$
J(\phi(\lambda,x))=\|x\|^2\quad\forall (\lambda,
x)\in\Lambda\times B_H(\theta, \sqrt{p(\delta/2)/2}).
$$
If $H^+=\{\theta\}$ {\rm (}so the conditions (iii) and (iv)
are empty, and (ii) becomes:\\
{\bf (ii')} $[D_2J(\lambda,  y_2)-D_2J(\lambda, y_1)](y_2-y_1)<0$ for any $\lambda\in\Lambda$, $y_1, y_2\in\bar
B_{H}(\theta,\delta)$ and $y_1\ne y_2${\rm )},\\
then there exist two open neighborhoods of $\Lambda\times\{\theta\}$ in
$\Lambda\times H$, $W$ and $V$ with $V\subset\Lambda\times
B_H(\theta,\delta)$, and  a homeomorphism
 $\phi:W\to V$ with $\phi(\lambda,x)=(\lambda,\phi_\lambda(x))$, such that
 $$J(\phi(\lambda,x))=-\|x\|^2\;\forall (\lambda,
x)\in W,
$$
 moreover $W$ can be taken as $\Lambda\times B_H(\theta, \sqrt{p(\delta/2)/2})$
 provided that  {\bf (ii')} is replaced by\\
{\bf (iv')} $D_2J(\lambda, x)x<-p(\|x\|)$ for any $(\lambda, x)\in\Lambda\times\bar
B_{H}(\theta,\delta)\setminus\{\theta\}$, where $p:(0,
\delta]\to (0, \infty)$ is as in (iv).

\noindent{$2^\circ.$} If $\Lambda$ is compact, and $H^+\ne\{\theta\}$ and $H^-\ne\{\theta\}$,
then there exist a positive $\epsilon\in\R$, an open neighborhood
$U$ of $\Lambda\times\{\theta\}$ in $\Lambda\times H$ and a
homeomorphism
$$
\phi: \Lambda\times \bigl(B_{H^+}(\theta, \sqrt{p(\epsilon)/2})+
B_{H^-}(\theta, \sqrt{p(\epsilon)/2})\bigr)\to U
$$
with $\phi(\lambda,x)=(\lambda,\phi_\lambda(x))$, such that
$$
J(\phi(\lambda, x+ y))=\|x\|^2-\|y\|^2\quad\hbox{and}\quad
\phi(\lambda, x+ y)=(\lambda, \phi_\lambda(x+y))\in\Lambda\times H
$$
for all $(\lambda, x,y)\in \Lambda\times B_{H^+}(\theta,
\sqrt{p(\epsilon)/2})\times B_{H^-}(\theta,
\sqrt{p(\epsilon)/2})$. Moreover, for each $\lambda\in\Lambda$,
$\phi_\lambda(0)=0$, $\phi_\lambda(x+y)\in H^-$ if and only if
$x=0$,  and $\phi$ is a homoeomorphism from $\Lambda\times
B_{H^-}(\theta, \sqrt{p(\epsilon)/2})$ onto $U\cap (\Lambda\times
H^-)$ according to the topologies on both induced by any norms on
$H^-$.
\end{theorem}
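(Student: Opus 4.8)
The plan is to mimic the proof of Theorem~1.1 in \cite{DHK} essentially verbatim, carrying the parameter $\lambda$ through the construction as a spectator and pausing only at the few steps that require a limiting argument, where the topology of $\Lambda$ --- and, in part $2^\circ$, its compactness --- is used to make the estimates and neighborhoods uniform in $\lambda$. Throughout we use that, each $J(\lambda,\cdot)$ being continuously directional differentiable, $u\mapsto D_2J(\lambda,x)(u)$ is bounded and linear, so one-variable calculus along rays is available: for a unit vector $v$, $\frac{d}{dt}J(\lambda,tv)=\tfrac1t D_2J(\lambda,tv)(tv)$ and $J(\lambda,x)=\int_0^1\tfrac1t D_2J(\lambda,tx)(tx)\,dt$.

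\textbf{Part $1^\circ$.} If $H^-=\{\theta\}$, then (i) and (iv) give $\frac{d}{dt}J(\lambda,tv)>\tfrac1t p(t)>0$ on $(0,\delta]$, so $t\mapsto J(\lambda,tv)$ is strictly increasing, $J(\lambda,x)>0$ for $x\ne\theta$, and $J(\lambda,x)$ is bounded below by the usual bookkeeping quantity from \cite{DHK}. Put $\psi(\lambda,x)=\|x\|^{-1}\sqrt{J(\lambda,x)}\,x$ for $x\ne\theta$ and $\psi(\lambda,\theta)=\theta$; it is continuous (at $x=\theta$ because $\|\psi(\lambda,x)\|^2=J(\lambda,x)\to0$ by continuity of $J$), maps each ray from $\theta$ homeomorphically onto an initial segment of the same ray (strict monotonicity of $t\mapsto\sqrt{J(\lambda,tv)}$), hence is an origin-preserving local homeomorphism with $J(\lambda,x)=\|\psi(\lambda,x)\|^2$; the fibrewise inverse $\phi$ is the required map and the lower bound on $J$ pins down the radius $\sqrt{p(\delta/2)/2}$. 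If instead $H^+=\{\theta\}$, then (ii$'$) makes $J(\lambda,\cdot)$ strictly concave on $\bar B_H(\theta,\delta)$ with $\theta$ its strict maximum (using $D_2J(\lambda,\theta)=0$), so $J(\lambda,y)<0$ for $y\ne\theta$ and $t\mapsto-J(\lambda,tv)$ is strictly increasing; then $\psi(\lambda,y)=\|y\|^{-1}\sqrt{-J(\lambda,y)}\,y$ works exactly as above, and replacing (ii$'$) by the quantitative (iv$'$) upgrades the neighborhood $W$ to the ball $\Lambda\times B_H(\theta,\sqrt{p(\delta/2)/2})$. No compactness of $\Lambda$ is needed in part $1^\circ$.

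\textbf{Part $2^\circ$.} \emph{Step 1 (reducing $H^-$).} For $(\lambda,x)\in\Lambda\times\bar B_{H^+}(\theta,\delta)$ the map $y\mapsto J(\lambda,x+y)$ is strictly concave on the compact ball $\bar B_{H^-}(\theta,\delta)$ by (ii), so it has a unique maximizer $\varphi_\lambda(x)$; uniqueness plus compactness of $\bar B_{H^-}(\theta,\delta)$ gives, by a standard subsequence argument, joint continuity of $(\lambda,x)\mapsto\varphi_\lambda(x)$ and of $g(\lambda,x):=J(\lambda,x+\varphi_\lambda(x))$. Taking $x=\theta$, $y_1=\theta$ in (ii) and using $D_2J(\lambda,\theta)=0$ shows $J(\lambda,y)<0$ on $\bar B_{H^-}(\theta,\delta)\setminus\{\theta\}$, hence $\varphi_\lambda(\theta)=\theta$; continuity of $\varphi$ together with \emph{compactness of $\Lambda$} then yields $\epsilon_1\in(0,\delta)$ with $\|\varphi_\lambda(x)\|<\delta$ for all $\lambda$ and all $\|x\|\le\epsilon_1$, so the maximizer is interior and $D_2J(\lambda,x+\varphi_\lambda(x))$ vanishes on $H^-$. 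Feeding this into (iii) gives $D_2J(\lambda,x+\varphi_\lambda(x))(x)>0$ for $x\ne\theta$, whence $t\mapsto g(\lambda,tv)$ is strictly increasing, $g(\lambda,\theta)=0$, $g(\lambda,x)>0$ for $x\ne\theta$, with the quantitative lower bound supplied by (iv) as in part $1^\circ$. \emph{Steps 2--4 (reducing $H^+$).} For $\|x\|\le\epsilon_1$ set $\psi_1(\lambda,x+y)=\|x\|^{-1}\sqrt{g(\lambda,x)}\,x\in H^+$ (and $\theta$ when the $H^+$-part is $\theta$), and $\psi_2(\lambda,x+y)=\|y-\varphi_\lambda(x)\|^{-1}\sqrt{g(\lambda,x)-J(\lambda,x+y)}\,(y-\varphi_\lambda(x))\in H^-$ (and $\theta$ when $y=\varphi_\lambda(x)$); the radicands are $\ge0$ because $\varphi_\lambda(x)$ maximizes. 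With $\psi(\lambda,z)=\psi_1(\lambda,z)+\psi_2(\lambda,z)$ one has $\|\psi_1(\lambda,z)\|^2-\|\psi_2(\lambda,z)\|^2=g(\lambda,x)-\bigl(g(\lambda,x)-J(\lambda,x+y)\bigr)=J(\lambda,x+y)$, i.e.\ the squared norm of the $H^+$-part of $\psi$ minus that of the $H^-$-part reproduces $J$. As in \cite{DHK}: $\psi(\lambda,\cdot)$ is continuous, including at the two exceptional sets, since $\|\psi_1\|=\sqrt{g(\lambda,x)}\to0$ and $\|\psi_2\|=\sqrt{g(\lambda,x)-J(\lambda,x+y)}\to0$ there (by continuity of $g$, $J$ and $\varphi$); $\psi(\lambda,\cdot)$ is injective, by comparing the $H^+$- and $H^-$-components and using ray-monotonicity of $g$ (from (iii)) and strict concavity of $J(\lambda,x+\cdot)$ (from (ii)); and the ray structure yields a continuous fibrewise inverse $\phi$ on a neighborhood of $\Lambda\times\{\theta\}$, so $\phi$ is an origin-preserving local homeomorphism with $J(\phi(\lambda,x+y))=\|x\|^2-\|y\|^2$. \emph{Compactness of $\Lambda$} is used once more to choose a single $\epsilon>0$ for which $\phi$ is defined on $\Lambda\times\bigl(B_{H^+}(\theta,\sqrt{p(\epsilon)/2})+B_{H^-}(\theta,\sqrt{p(\epsilon)/2})\bigr)$ with open image $U$. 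The remaining assertions are read off the formulas: $\phi_\lambda(\theta)=\theta$; the $H^+$-component of $\psi(\lambda,x+y)$ is $\psi_1(\lambda,x+y)$, which vanishes iff $g(\lambda,x)=0$ iff $x=\theta$, so $\phi_\lambda(x+y)\in H^-$ iff $x=\theta$; and on $H^-$ (i.e.\ $x=\theta$) $\psi$ reduces to $y\mapsto\|y\|^{-1}\sqrt{-J(\lambda,y)}\,y\in H^-$, a self-map of the finite-dimensional $H^-$, so $\phi$ restricts to a homeomorphism $\Lambda\times B_{H^-}(\theta,\sqrt{p(\epsilon)/2})\to U\cap(\Lambda\times H^-)$ independently of the norm chosen on $H^-$ (all being equivalent). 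The same explicit formulas for $\varphi_\lambda,\psi_1,\psi_2$ make transparent the subspace-invariance properties of $\phi$ invoked elsewhere in the paper under ``Steps 1, 4 in the proof of Theorem~\ref{th:A.1}''.

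I expect the real work to be the \emph{joint} continuity --- and the $\lambda$-uniform choice of radii and neighborhoods --- of the implicitly defined objects $\varphi_\lambda(x)$, $\psi(\lambda,\cdot)$ and $\phi=\psi^{-1}$, since every estimate in \cite{DHK} is carried out pointwise in the base point and one must verify that each survives, uniformly on $\Lambda$ (compact in part $2^\circ$), with the parameter present, so that $U$, $W$, $V$ and the radii $\sqrt{p(\cdot)/2}$ do not depend on $\lambda$. The other point needing care is the interiority of the maximizer $\varphi_\lambda(x)$: rather than a boundary computation I would deduce it from continuity of $\varphi$, the identity $\varphi_\lambda(\theta)=\theta$, and compactness of $\Lambda$, which force $\|\varphi_\lambda(x)\|<\delta$ for all small $x$ uniformly in $\lambda$.
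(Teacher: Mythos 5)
Your proposal is correct and follows essentially the same route as the paper's sketch in Appendix~\ref{app:A}: defining the map $\psi$ (and its pieces $\psi_1,\psi_2$ via the fibrewise maximizer $\varphi_\lambda$), invoking (iv) via the mean value theorem for the radius bound, and using compactness of $\bar B_{H^-}(\theta,\delta)$ together with compactness of $\Lambda$ in part $2^\circ$ to make the neighborhoods uniform. The only (harmless) deviation is the order of operations around $\varphi_\lambda$ — you establish existence/uniqueness in the closed ball and then deduce interiority from continuity plus $\varphi_\lambda(\theta)=\theta$ and compactness of $\Lambda$, whereas the paper proves interiority directly by a contradiction/sequence argument (Claim~\ref{cl:A.3}); both are equivalent compactness arguments.
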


The claim in ``Moreover'' part was not stated in \cite{DHK}, and can
be seen from the proof therein. It precisely means: for any two
norms $\|\cdot\|_1$ and $\|\cdot\|_2$  on $H^-$, if $\Lambda\times
B_{H^-}(\theta, \sqrt{p(\epsilon)/2})$ (resp.  $U\cap
(\Lambda\times H^-)$) is equipped with the topology induced by
$\Lambda\times(H^-,\|\cdot\|_1)$ (resp.
$\Lambda\times(H^-,\|\cdot\|_2)$)
 then $\phi$ is also a homoeomorphism from
$\Lambda\times B_{H^-}(\theta, \sqrt{p(\epsilon)/2})$ onto $U\cap
(\Lambda\times H^-)$. This leads to the proof of Theorem~\ref{th:2.1}(b), which
is a key for the proofs of Corollary~\ref{cor:2.5} and Theorem~\ref{th:2.10}.
 So it is helpful for readers to outline the proof of
Theorem~\ref{th:A.1}. \

\begin{proof}[Sketches of proof of Theorem~\ref{th:A.1}]
$1^\circ)$ {\bf Case} $H^-=\{\theta\}$ or  $H^+=\{\theta\}$. This is actually contained
in the proof of \cite{DHK}.

\underline{We first consider the case $H^-=\{\theta\}$}.  Define
$$
\psi(\lambda, x)=\left\{\begin{array}{ll}
 \frac{\sqrt{J(\lambda, x)}}{\|x\|}x &\;\hbox{if}\;x\in \bar B_H(\theta, \delta)\setminus\{\theta\},\\
 \theta&\;\hbox{if}\;x=\theta.
 \end{array}\right.
$$
Then it is continuous and $J(\lambda, x)=\|\psi(\lambda,x)\|^2$. It
easily follows from the condition (iv) that for each
$\lambda\in\Lambda$ the map $\psi(\lambda,\cdot)$ is one-to-one on
$\bar B_H(\theta, \delta)$. Moreover, for any $x\in\partial
B_H(\theta,\delta)$, as in \cite[(2.9)]{DHK} we have $s_x\in (1/2,
1)$ such that
\begin{eqnarray*}
&&J(\lambda, x)>J(\lambda, x)- J(\lambda, x/2)=D_2J(\lambda,
s_xx)(x/2)\\
&&=\frac{1}{2s_x}D_2J(\lambda,
s_xx)(s_xx)>\frac{1}{2}p(\|s_xx\|)\ge\frac{1}{2}p(\|x/2\|)=\frac{1}{2}p(\delta/2)
\end{eqnarray*}
by the condition (iv). Hence
$\|\psi(\lambda,x)\|>\sqrt{p(\delta/2)/2}$. For any
$0<\|y\|<\sqrt{p(\delta/2)/2}$, without loss of generality we assume
$\delta>\sqrt{p(\delta/2)/2}$. (This can be assured by replacing the
function $p(t)$ in Theorem~\ref{th:A.1}(iv) with $\min\{p(t),
4t^2\}$). Then we have a unique positive number $r>1$ such that
$x:=ry\in\partial B_H(\theta,\delta)$. Since the function
$$
[0, 1]\to\R,\; t\mapsto\sqrt{J(\lambda, tx)}
$$
is continuous there exists  a $t_0\in (0, 1)$ such that $\|y\|=\sqrt{J(\lambda, t_0x)}$
and hence
$$
\psi(\lambda, t_0x)=\sqrt{J(\lambda,
t_0x)}\frac{t_0x}{\|t_0x\|}=\|y\|\frac{y}{\|y\|}=y.
$$
This shows that $B_H\bigl(\theta,
\sqrt{p(\delta/2)/2}\bigr)\subset\psi\bigl(\{\lambda\}\times
B_H(\theta, \delta)\bigr)$. Let
$$
U=\left\{(\lambda, z)\in\Lambda\times
B_H(\theta,\delta)\,\Bigm|\,\psi(\lambda,z)\in B_H\bigl(\theta,
\sqrt{p(\delta/2)/2}\bigr)\right\}.
$$
It is an open neighborhood of $\Lambda\times\{\theta\}$ in
$\Lambda\times H$. Define
$$
\phi:\Lambda\times B_H(\theta, \sqrt{p(\delta/2)/2})\to
U,\;(\lambda, x)\mapsto (\lambda, y),
$$
where $y\in B_H(\theta,\delta)$ is the unique point such that
$\psi(\lambda, y)=x$. As in the proof of Lemma 2.7 of \cite{DHK} it is
easily showed that $\phi$ is continuous and satisfies
$$
J(\phi(\lambda,x))=\|x\|^2\quad\forall (\lambda,
x)\in\Lambda\times B_H(\theta, \sqrt{p(\delta/2)/2}).
$$

\underline{Next we assume  $H^+=\{\theta\}$}. Then the conditions (iii) and (iv)
are empty, and the condition (ii) becomes:
$[D_2J(\lambda,  y_2)-D_2J(\lambda, y_1)](y_2-y_1)<0$ for any $\lambda\in\Lambda$, $y_1, y_2\in\bar
B_{H^-}(\theta,\delta)=\bar
B_{H}(\theta,\delta)$ and $y_1\ne y_2$.
This implies that $0=J(\lambda, \theta)>J(\lambda, y)\;\forall
y\in B_{H}(\theta,\delta)\setminus\{\theta\}$
for each $\lambda\in\Lambda$.
 Define
$$
\psi(\lambda, x)=\left\{\begin{array}{ll}
 \frac{\sqrt{-J(\lambda, x)}}{\|x\|}x &\;\hbox{if}\;x\in \bar B_H(\theta, \delta)\setminus\{\theta\},\\
 \theta&\;\hbox{if}\;x=\theta.
 \end{array}\right.
$$
Clearly, it is continuous and $J(\lambda, x)=-\|\psi(\lambda,x)\|^2$. It
easily follows from the condition (ii) that for each
$\lambda\in\Lambda$ the map $\psi(\lambda,\cdot)$ is one-to-one on
$\bar B_H(\theta, \delta)$. Moreover,  for any fixed $x\in\partial
B_H(\theta,\delta)$ and $t\in (0, 1]$ we have
\begin{eqnarray*}
\frac{d}{dt}J(\lambda, tx)&=&-D_2J(\lambda,tx)(x)\\
&=&\frac{1}{t}\bigl[D_2J(\lambda, tx)(tx)-D_2J(\lambda,\theta)(\theta)\bigr)<0
\end{eqnarray*}
by the condition (ii). Hence $J(\lambda, x)=\min\{J(\lambda,tx)\,|\, 0\le t\le 1\}\;\forall
x\in\partial B_H(\theta,\delta)$. Since $\bar B_H(\theta,\delta)$ is compact we have $\rho_\lambda>0$
such that
$$
-\rho^2_\lambda=\min\{J(\lambda, x)\,|\, x\in \bar B_H(\theta,\delta)\}=
\min\{J(\lambda, x)\,|\, x\in \partial B_H(\theta,\delta)\}.
$$
It follows that $B_H\bigl(\theta,
\rho_\lambda\bigr)\subset\psi\bigl(\{\lambda\}\times
B_H(\theta, \delta)\bigr)$. Since $W=\{(\lambda,x)\,|\, x\in B_H\bigl(\theta,
\rho_\lambda\bigr)\}$ is an open neighborhood of $\Lambda\times\{\theta\}$ in
$\Lambda\times H$, so is
$$
V=\bigl\{(\lambda, z)\in\Lambda\times
B_H(\theta,\delta)\,\bigm|\,\psi(\lambda,z)\in W \bigr\}.
$$
 Define $\phi:W\to V,\;(\lambda, x)\mapsto (\lambda, y)$,
where $y\in B_H(\theta,\delta)$ is the unique point such that
$\psi(\lambda, y)=x$. Then
$J(\lambda, \phi(\lambda,x))=-\|x\|^2\;\forall (\lambda,
x)\in W$. We claim that $\phi$ is continuous. In fact, suppose that
$\{(\lambda_n, x_n)\}\subset W$ converges to $(\lambda_0,x_0)\in W$.
Let $\phi(\lambda_0, x_0)=(\lambda_0, y_0)$ and
$\phi(\lambda_n, x_n)=(\lambda_n, y_n)\;\forall n\in\N$. Then
$\psi(\lambda_0, y_0)=x_0$ and
$\psi(\lambda_n, y_n)=x_n\;\forall n\in\N$.
We can always assume $x_n\ne\theta\;\forall n$.
Then $y_n\ne\theta\;\forall n$. It follows that $J(\lambda_n, y_n)=-\|x_n\|^2\to -\|x_0\|^2=J(\lambda_0, y_0)$.
Since $\{y_n\}\subset B_H(\theta,\delta)$, we may assume $y_n\to y^\ast\in\bar{B}_H(\theta,\delta)$ (by passing a subsequence if necessary) because of the compactness of $\bar{B}_H(\theta,\delta)$.
We want to prove $y^\ast=y_0$. Since $\psi$ is continuous we get
$\psi(\lambda_0, y^\ast)=x_0=\psi(\lambda_0,y_0)$ and thus
$y^\ast=y_0$ by the fact that
the map $\psi(\lambda_0,\cdot)$ is one-to-one on
$\bar B_H(\theta, \delta)$.

If the condition (ii') is replaced by  (iv'), the arguments are obvious.

 \noindent{$2^\circ)$} {\bf Case} $H^\ast\ne\{\theta\}$ ($\ast=+,-$) and $\Lambda$ is compact.
 Since the parameter $\lambda$ appears many notations in \cite{DHK}
have corresponding changes.

\noindent{\bf Step 1}(\cite[Lemma 2.1]{DHK}). There exists a
positive real number $\epsilon_1<\delta$ having the following
property: For each $(\lambda, x)\in\Lambda\times B_{H^+}(\theta,
\epsilon_1)$ there exists a unique $\varphi_\lambda(x)\in
B_{H^-}(\theta,\delta)$ such that
$$
J(\lambda, x+\varphi_\lambda(x))=\max\{J(\lambda, x+y)\,|\, y\in
B_{H^-}(\theta,\delta)\}.
$$
See the proof of Claim~\ref{cl:A.3} in the proof of Theorem~\ref{th:A.2}. ({\it
Note}: The compactness of $\Lambda$ is necessary in proving this
claim.)

Remarks that  $J(\lambda, x+ \varphi_\lambda(x))>0$ for any $x\in
B_{H^+}(\theta,\delta)\setminus\{\theta\}$ by
Theorem~\ref{th:A.1}(iv) and the mean value theorem. Moreover, the
uniqueness of $\varphi_\lambda(x)$ implies
$$
J(\lambda, x+ \varphi_\lambda(x))> J(\lambda, x+ y)
$$
for all $x\in B_{H^+}(\theta,\epsilon_1)$ and $y\in
B_{H^-}(\theta, \delta)\setminus\{\varphi_\lambda(x)\}$.

By replacing $\delta$ by $\delta/2$ in the arguments above we can
assume $\varphi_\lambda(x)\in B_{H^-}(\theta, \delta/2)$ for any
$x\in B_{H^+}(\theta,\epsilon_1)$ below.

\noindent{\bf Step 2}(\cite[Lemma 2.2]{DHK}).  The map
$\Lambda\times B_{H^+}(\theta, \epsilon_1):(\lambda,
x)\mapsto\varphi_\lambda(x)$ is continuous.

In fact, suppose that the sequence $\{(\lambda_n,
x_n)\}\subset\Lambda\times B_{H^+}(0,\epsilon_1)$ converges to
$(\lambda_0, x_0)\in\Lambda\times B_{H^+}(0,\epsilon_1)$. Since
$\bar B_{H^-}(0,\delta/2)$ is compact, we can assume that
$\{\varphi_{\lambda_n}(x_n)\}$ converges to $y_0\in\bar
B_{H^-}(0,\delta/2)$. Then
$$
J(\lambda_n, x_n+\varphi_{\lambda_n}(x_n))\ge J(\lambda_n,
x_n+y)\quad\forall y\in B_{H^-}(0,\delta)\;\hbox{and}\;n\in\N.
$$
This implies that $J(\lambda_0, x_0+ y_0)\ge J(\lambda_0, x_0+ y)$
for any $y\in B_{H^-}(0,\delta)$. By the uniqueness of
$\varphi_{\lambda_0}(x_0)$ we get $y_0=\varphi_{\lambda_0}(x_0)$.

\noindent{\bf Step 3}(\cite[Lemma 2.3]{DHK}). Put $j(\lambda,
x)=J(\lambda, x+ \varphi_\lambda(x))$ for any $(\lambda,
x)\in\Lambda\times B_{H^+}(\theta, \epsilon_1)$. Then $j$ is
continuous and for each  $\lambda\in\Lambda$ the map $x\mapsto
j(\lambda, x)$ is continuously directional differentiable.

\noindent{\bf Step 4}(\cite[Lemma 2.4]{DHK}). Define
\begin{eqnarray*}
&&\psi_1(\lambda, x+y)=\left\{\begin{array}{ll}
 \frac{\sqrt{J(\lambda, x+ \varphi_\lambda(x))}}{\|x\|}x &\;\hbox{if}\;x\ne \theta,\\
 \theta&\;\hbox{if}\;x=\theta,
 \end{array}\right.\\
&&\psi_2(\lambda, x+y)=\left\{\begin{array}{ll}
 \frac{\sqrt{J(\lambda, x+ \varphi_\lambda(x))-J(\lambda, x+y)}}{\|y-\varphi_\lambda(x)\|}(y-\varphi_\lambda(x))
  &\;\hbox{if}\;y\ne\varphi_\lambda(x),\\
 \theta&\;\hbox{if}\;y=\varphi_\lambda(x),
 \end{array}\right.\\
&&\psi(\lambda, x+y)=\psi_1(\lambda, x+y)+\psi_2(\lambda,
x+y)\\
&&\hspace{20mm}\forall (x,y)\in B_{H^+}(\theta, \epsilon_1)\times
B_{H^-}(\theta,\delta).
\end{eqnarray*}
Then $\psi_1$, $\psi_2$ and $\psi$ are continuous on
 $\Lambda\times (B_{H^+}(\theta, \epsilon_1)+
B_{H^-}(\theta,\delta))$ and
\begin{equation}\label{e:A.1}
J(\lambda, x+y)=\|\psi_1(\lambda, x+y)\|^2-\|\psi_2(\lambda,
x+y)\|^2
\end{equation}
for any $(\lambda, x,y)\in \Lambda\times B_{H^+}(\theta,
\epsilon_1)\times B_{H^-}(\theta,\delta)$. Moreover,
$\psi(\lambda, x+y)\in {\rm Im}(\psi)\cap H^-$ if and only if
$x=\theta$.

\noindent{\bf Step 5}(\cite[Lemma 2.5]{DHK}). For each
$\lambda\in\Lambda$ the map
$$
\psi(\lambda,\cdot): B_{H^+}(\theta,\epsilon_1)+
B_{H^-}(\theta,\delta)\to H^\pm
$$
is injective.

\noindent{\bf Step 6}(\cite[Lemma 2.6]{DHK}). There is a positive
real number $\epsilon<\epsilon_1$ such that
$$
B_{H^+}\bigl(\theta, \sqrt{p(\epsilon)/2}\bigr)+
B_{H^-}\bigl(\theta,
\sqrt{p(\epsilon)/2}\bigr)\subset\psi\bigl(\lambda,
B_{H^+}(\theta, 2\epsilon)+ B_{H^-}(\theta,\delta)\bigr)
$$
for any $\lambda\in\Lambda$.

We here give a detailed proof of it because the compactness of
$\Lambda$ is very key in the following proof. They are helpful for
understanding the proof of the noncompact case in Section 4 of
\cite{Lu2}.

 For each $(\lambda, y)\in\Lambda\times \bar
B_{H^-}(0, \delta)$ with $y\ne 0$, the mean value theorem yields
$\bar t\in (0,1)$ such that
$$
J(\lambda, y)=J(\lambda, y)-J(\lambda, 0)=D_2J(\lambda, \bar t\cdot
y)y=\frac{-1}{\bar t}D_2J(\lambda, \bar t\cdot y)(-\bar t\cdot y)<0
$$
because of the condition (iii) in Theorem~\ref{th:A.1}. So the
compactness of $\Lambda\times\partial B_{H^-}(0,\delta)$ implies
that there exists a positive real number $C$ such that
\begin{equation}\label{e:A.2}
J(\lambda, y)<-C\quad\forall (\lambda, y)\in\Lambda\times\partial
B_{H^-}(0,\delta).
\end{equation}
We shall prove that there exists a positive real number
$\epsilon<\epsilon_1/4$ such that
\begin{equation}\label{e:A.3}
J(\lambda, x+ y)\le 0\quad\forall (\lambda, x, y)\in\Lambda\times
\bar B_{H^+}(0, 2\epsilon)\times\partial B_{H^-}(0,\delta).
\end{equation}
Assume by contradiction that there exists a sequence
$$
\{(\lambda_n, x_n, y_n)\}\subset\Lambda\times \bar B_{H^+}(0,
\epsilon_1)\times\partial B_{H^-}(0,\delta)
$$
such that $(\lambda_n, x_n, y_n)\to (\lambda_0, \theta,  y_0)\in\Lambda\times
\bar B_{H^+}(0, \epsilon_1)\times\partial B_{H^-}(0,\delta)$ and
$J(\lambda_n, x_n+ y_n)\ge 0\;\forall n$. Then the continuity of $J$
implies $J(\lambda_0, y_0)\ge 0$. This contradicts to (\ref{e:A.2}).
Hence (\ref{e:A.3}) holds.

Since $\varphi_\lambda(0)=0\;\forall\lambda\in\Lambda$, by Step 2 we
may shrink $\epsilon$ in (\ref{e:A.3}) such that
\begin{equation}\label{e:A.4}
\varphi_\lambda(\bar B_{H^+}(0, 2\epsilon))\subset B_{H^-}(0,
\delta/2)\quad\forall\lambda\in\Lambda.
\end{equation}
Fixing $(\lambda, x)\in\Lambda\times\bar B_{H^+}(0,
2\epsilon)\setminus\{0\}$ we can use the mean value theorem and the
condition (iv) in Theorem~\ref{th:A.1} to get $s_x\in (1/2, 1)$ such
that
\begin{eqnarray}\label{e:A.5}
J(\lambda, x+ \varphi_\lambda(x))&\ge& J(\lambda, x)>J(\lambda,
x)-J(\lambda, x/2)\nonumber\\
&=&D_2J(\lambda, s_xx)(x/2)\nonumber\\
&=&\frac{1}{2s_x}D_2J(\lambda,
s_xx)(s_xx)\nonumber\\
&>&\frac{1}{2}p(\|s_xx\|)\ge\frac{1}{2}p(\|x/2\|)
\end{eqnarray}
This and (\ref{e:A.3}) imply that for any $(\lambda, x, y)\in\Lambda\times\partial B_{H^+}(0,
2\epsilon)\times\partial B_{H^-}(0, \delta)$,
\begin{eqnarray}\label{e:A.6}
J(\lambda, x+\varphi_\lambda(x))-J(\lambda, x+y)&\ge &J(\lambda,
x+\varphi_\lambda(x))\nonumber\\
&>&\frac{1}{2}p(\|x/2\|)=\frac{p(\epsilon)}{2}.
\end{eqnarray}

Now for $x\in \partial B_{H^+}(0, 2\epsilon)$ and $0\le
t\le\sqrt{p(\epsilon)/2}$, by (\ref{e:A.5}) we have
$$
\sqrt{J(\lambda, x+ \varphi_\lambda(x))}>\sqrt{p(\epsilon)/2}\ge
t\ge 0.
$$
 Since the map $[0, 1]\to\R, s\mapsto J(\lambda, sx+
\varphi_\lambda(sx))$, is continuous we may obtain a $\bar s\in [0, 1)$ such that
$\sqrt{J(\lambda, \bar sx+ \varphi_\lambda(\bar sx))}=t$. Clearly,
$\bar s>0$ if and only $t>0$. If $t>0$, by the definition of
$\psi_1$ we get
$$
\psi_1(\lambda, \bar sx+ y)=\frac{t}{\|x\|}x=\frac{t}{\|\bar
sx\|}\bar sx\quad\forall y\in B_{H^-}(0,\delta).
$$
When $t=0$, $\psi_1(\lambda, 0)=0$. So for any $x\in\partial
B_{H^+}(0,2\epsilon)$ we have always
$$
\left\{\frac{t}{\|x\|}x\,\bigm|\, 0\le
t\le\sqrt{p(\epsilon)/2}\right\}\subset\psi_1\bigl(\lambda,
B_{H^+}(0, 2\epsilon)\bigr),
$$
 that is,
\begin{equation}\label{e:A.7}
\bar B_{H^+}(0,\sqrt{p(\epsilon)/2})\subset \psi_1\bigl(\lambda,
B_{H^+}(0, 2\epsilon)\bigr)\quad\forall\lambda\in\Lambda.
\end{equation}

For a given $(x^\ast,  y^\ast)\in \bar
B_{H^+}(0,\sqrt{p(\epsilon)/2})\times\bar
B_{H^-}(0,\sqrt{p(\epsilon)/2})$, we may assume $x^\ast\ne\theta$
and $y^\ast\ne\theta$, by (\ref{e:A.7}) we have $x_\lambda\in
B_{H^+}(0, 2\epsilon)\setminus\{\theta\}$ such that
\begin{equation}\label{e:A.8}
\psi_1(\lambda, x_\lambda+ y)=x^\ast\quad\forall y\in
B_{H^-}(\theta, \delta).
\end{equation}

Let us write $y^\ast=\bar t z/\|z\|$, where $z\in\partial B_{H^-}(0,
\delta/2)$ and $0< \bar t\le\sqrt{p(\epsilon)/2}$.
 Since
$\varphi_\lambda(x_\lambda)\in B_{H^-}(0, \delta/2)$ by
(\ref{e:A.4}),  and $\varphi_\lambda(x_\lambda)\ne\theta$, we
have always a real number $k$ with $|k|>1$ such that
$$
y:=kz+ \varphi_\lambda(x_\lambda)\in \partial B_{H^-}(0,\delta)
$$
 (because $|k\cdot
z|=|y-\varphi_\lambda(x)|\ge |y|-|\varphi_\lambda(x)|>\delta/2$). By
(\ref{e:A.6}) the continuous  map
$$
[0, 1]\mapsto\R, \;s\mapsto J(\lambda,
x_\lambda+\varphi_\lambda(x_\lambda)) -J(\lambda, x+
(1-s)\varphi_\lambda(x_\lambda)+ sy)
$$
takes a value $J(\lambda, x_\lambda+\varphi_\lambda(x_\lambda))
-J(\lambda, y)>p(\epsilon)/2$ at $s=1$, and zero at $s=0$. So we
have $\hat s\in (0, 1)$ such that
$$
\sqrt{J(\lambda, x_\lambda+\varphi_\lambda(x_\lambda)) -J(\lambda,
x+ (1-\hat s)\varphi_\lambda(x_\lambda)+ \hat sy)}=\bar t.
$$
Set
\begin{eqnarray*}
y_\lambda:=(1-\hat s)\varphi_\lambda(x_\lambda)+ \hat sy&=&(1-\hat
s)\varphi_\lambda(x_\lambda)+ \hat sk\cdot z+ \hat
s\varphi_\lambda(x_\lambda)\\
&=&\varphi_\lambda(x_\lambda)+ \hat sk\cdot z.
\end{eqnarray*}
 Then
$$
\|y_\lambda\|=\|(1-\hat s)\varphi_\lambda(x_\lambda)+ \hat sy\|\le
(1-\hat s)\|\varphi_\lambda(x_\lambda)\|+ \hat s\delta<(1-\hat
s)\delta/2 + \hat s\delta<\delta,
$$
and the definition of $\psi_2$ shows that
$$
\psi_2(\lambda, x_\lambda + y_\lambda)=\frac{\bar
t}{\|y_\lambda-\varphi_\lambda(x_\lambda)\|}(y_\lambda-\varphi_\lambda(x_\lambda))=\frac{\bar
t}{\|z\|}z=y^\ast.
$$
 This and (\ref{e:A.8}) show that $\psi(\lambda, x_\lambda+
y_\lambda)=(x^\ast, y^\ast)$. The desired result is proved. $\Box$

\noindent{\bf Step 7}(\cite[Lemma 2.7]{DHK}).  Put
$$
U=[\Lambda\times (B_{H^+}(\theta, 2\epsilon)+
B_{H^-}(\theta,\delta))]\cap \psi^{-1}\left(B_{H^+}(\theta,
\sqrt{p(\epsilon)/2})+ B_{H^-}(\theta,
\sqrt{p(\epsilon)/2})\right)
$$
and
\begin{eqnarray}\label{e:A.9}
&&\phi:\Lambda\times\left( B_{H^+}(\theta, \sqrt{p(\epsilon)/2})+
B_{H^-}(\theta, \sqrt{p(\epsilon)/2})\right)\to U,\\
&&\hspace{20mm} (\lambda,x+y)\mapsto (\lambda,
\phi_\lambda(x+y)):=(\lambda, x'+ y'),\nonumber
\end{eqnarray}
where $(x', y')\in B_{H^+}(\theta, 2\epsilon)\times
B_{H^-}(\theta,\delta)$ is a unique point satisfying
$x+y=\psi(\lambda, x'+y')$. Then $\phi$ is continuous and
$$
J(\phi(\lambda, x+ y))=\|x\|^2-\|y\|^2
$$
for any $(\lambda, x, y)\in\Lambda\times B_{H^+}(\theta,
\sqrt{p(\epsilon)/2})\times B_{H^-}(\theta,
\sqrt{p(\epsilon)/2})$. Moreover, $\phi(\lambda, x+ y)\in{\rm
Im}(\psi)\cap(\Lambda\times H^-)$ if and only if $x=\theta$.

\noindent{\bf Step 8.} We shall prove the claims in ``Moreover'' part
of Theorem~\ref{th:A.1}.
 It suffices to check Steps 4, 7.
By Step 1,  for each $(\lambda, x)\in \Lambda\times
B_{H^+}(\theta, \epsilon_1)$, $\varphi_\lambda(x)\in
B_{H^-}(\theta,\delta)$ is a unique maximum point of the function
$B_{H^-}(\theta,\delta)\to \R,\;y\mapsto J(\lambda, x+ y)$. For
any $y\in B_{H^-}(\theta,\delta)$ with $y\ne \theta$, it follows
from the condition (ii) and the mean value theorem that
$$
J(\lambda, y)=J(\lambda, y)-J(\lambda, \theta)=D_2J(\lambda,
ty)(y)=\frac{1}{t}D_2J(\lambda, ty)(ty)<0
$$
for some $t\in (0, 1)$. Hence $\varphi_\lambda(\theta)=\theta$.
For any $x\in B_{H^+}(\theta, \epsilon_1)$ with $x\ne \theta$,
by the condition (iv) and the similar reason we get a $t\in (0, 1)$
such that
$$
J(\lambda, x+\varphi(x))\ge J(\lambda, x)-J(\lambda,
\theta)=D_2J(\lambda, tx)(x)>p(\|tx\|)/t>0.
$$
This implies that $\psi_1(\lambda, x+y)\ne \theta$ if $x\ne
\theta$. When $\psi(\lambda, x+y)\in H^-$, $\psi_1(\lambda,
x+y)=\theta$ and thus $x=\theta$. Conversely, if $x=\theta$ then
$\psi_1(y)=\theta$ and
$$
\psi(\lambda, y)=\theta+ \psi_2(\lambda,
y)=\left\{\begin{array}{ll}
 \frac{\sqrt{-J(\lambda, y)}}{\|y\|}y
  &\;\hbox{if}\;y\ne \theta,\\
 \theta&\;\hbox{if}\;y=\theta.
 \end{array}\right.
 $$
Hence we get that $\psi(\lambda, x+y)\in H^-$ if and only if
$x=\theta$. By the definition of $\phi$ in (\ref{e:A.9}), it is
easy to see that
 $\phi(\lambda, x+y)$  sits in $U\cap (\Lambda\times H^-)$ if and only if $x=\theta$.

 As
to the final claim, since $\dim H^-<\infty$ implies that any norm
$\|\cdot\|^\ast$ on $H^-$ is equivalent to the original $\|\cdot\|$,
$\Lambda\times (H^-, \|\cdot\|^\ast)$ and $\Lambda\times (H^-,
\|\cdot\|)$ induce equivalent topologies on each of the sets
$\Lambda\times B_{H^-}(\theta, \sqrt{p(\epsilon)/2})$ and
$U\cap(\Lambda\times H^-)$. The claim follows.
\end{proof}

In order to give the corresponding version at critical submanifolds
we need  a more general result than Theorem~\ref{th:A.1}. For future conveniences
we here present it because many arguments and notations can be saved.
 Let
$\Lambda$ and $\mathcal{ E}$ be two topological spaces. Imitating
\cite[\S1 of Chap.III]{La} one can naturally define a  {\it
topological normed vector bundle} over $\Lambda$  to be a triple
$(\mathcal{ E}, \Lambda, p)$, where $p:\mathcal{ E}\to \Lambda$ is a
continuous surjection (projection).  In particular we have the
notions of a {\it topological Banach} (resp. {\it Hilbert}) {\it
vector bundle}. Corresponding to Definition 3.1 in Chapter 2 of
\cite{Hu}, a {\it bundle morphism} from the normed vector bundles
$p_1:\mathcal{ E}^{(1)}\to\Lambda_1$ to $p_2:\mathcal{
E}^{(2)}\to\Lambda_2$ is a pair of continuous maps $(\tilde f, f)$,
where $\tilde f:\mathcal{ E}^{(1)}\to \mathcal{ E}^{(2)}$ and
$f:\Lambda_1\to\Lambda_2$ such that $p_2\circ\tilde f=f\circ p_1$.
As on the pages 43-44 of \cite{La} we may define the notion of  a
{\it normed vector bundle morphism}. If
$\Lambda_1=\Lambda_2=\Lambda$ and $f=id_{\Lambda}$ we get the
notions of a $\Lambda$-{\it bundle morphism} and a $\Lambda$-{\it
normed vector bundle morphism}. When $f$ and $\tilde f$ are
homeomorphisms onto $\Lambda_2$ and $\mathcal{ E}^{(2)}$ the
corresponding bundle morphism and normed vector bundle morphism
$(\tilde f, f)$ are called {\it bundle isomorphism} and {\it normed
vector bundle isomorphism} from $\mathcal{ E}^{(1)}$ onto $\mathcal{
E}^{(2)}$. See \cite{La} for more notions such as  subbundles and so
on. As in \cite[Def.2.2, page 15]{Ch} we can define a {\it Finsler
structure} on the bundle $p:\mathcal{ E}\to \Lambda$, and show the
existence of such a structure on the vector bundle if $\Lambda$ is
paracompact.

Let $G$ be a topological group.  For a normed vector bundle
$p:\mathcal{ E}\to\Lambda$, let both $\mathcal{ E}$ and $\Lambda$ be
also $G$-spaces and let $p$ be a $G$-map (or $G$-equivariant map),
we call it a $G$-{\it normed vector bundle} if for all $g\in G$ the
action of $g: \mathcal{ E}_\lambda\to \mathcal{ E}_{g\lambda}$ is a
vector space isomorphism.

\begin{theorem}\label{th:A.2}
Let $\Lambda$ be a  topological space, and let $p:\mathcal{
E}\to\Lambda$ be a topological normed vector bundle with a Finsler
structure $\|\cdot\|:\mathcal{ E}\to [0, \infty)$. Suppose that
$\mathcal{ E}$ can be split into a direct sum of two topological
normed vector subbundles, $\mathcal{ E}=\mathcal{
E}^+\oplus\mathcal{ E}^-$, where $p_-:\mathcal{ E}^-\to\Lambda$ has
finite rank. For $\delta>0$ let $B_{\delta}(\mathcal{
E})=\{(\lambda, v)\in\mathcal{ E}_\lambda\,|\,\|v\|_{\lambda}:=
\|(\lambda, v)\|<\delta\}$. Assume that $J: B_{2\delta}(\mathcal{
E})\to\R$ is continuous and that the restriction of it to each fiber
$$
J_\lambda: B_{2\delta}(\mathcal{ E})_\lambda=\{v\in\mathcal{
E}_\lambda\,|\,\|v\|_{\lambda}<2\delta\},\;v\mapsto J(\lambda, v)
$$
is continuously directional differentiable. Furthermore assume:
\begin{enumerate}
\item[\bf (i)] $J_\lambda(\theta_\lambda)=0$ and $DJ_\lambda(\theta_\lambda)=0$,
\item[\bf (ii)] $[DJ_\lambda(x+ y_2)-DJ_\lambda(x+ y_1)](y_2-y_1)>0$ for any $(\lambda, x)\in\bar
B_{\delta}(\mathcal{ E}^+)$, $(\lambda,y_1), (\lambda,y_2)\in\bar B_{\delta}(\mathcal{
E}^-)$ and $y_1\ne y_2$,

\item[\bf (iii)] $DJ_\lambda(x+y)(x-y)>0$ for any $(\lambda, x)\in\bar B_{\delta}(\mathcal{ E}^+)$
and $(\lambda,y)\in\bar B_{\delta}(\mathcal{ E}^+)$ with $x+y\ne
\theta_\lambda$,

\item[\bf (iv)] $DJ_\lambda(x)x>p(\|x\|_\lambda)$ for any $(\lambda, x)\in\bar
B_{\delta}(\mathcal{ E}^+)$ with $x\ne\theta_\lambda$, where $p:(0,
\delta]\to (0, \infty)$ is a non-decreasing function independent of $\lambda\in\Lambda$.
\end{enumerate}
Then we have:\\
$1^\circ.$ If ${\rm rank}{\mathcal E}^-=0$ {\rm (}so
  the condition (ii) is empty and (iv) implies (iii) {\rm )} then there exist
  an open neighborhood $U$ of the zero section $0_\mathcal{ E}$ of $\mathcal{ E}$  and a
preserving-fiber homeomorphism
$\phi: B_{\sqrt{p(\epsilon)/2})}(\mathcal{ E})\to U$
such that for all $(\lambda, x)\in  B_{\sqrt{p(\epsilon)/2})}(\mathcal{E})$,
$$
J(\phi(\lambda, x))=\|x\|^2_{\lambda}\quad\hbox{and}\quad
\phi(\lambda, x)=(\lambda, \phi_\lambda(x))\in\mathcal{E}.
$$
If ${\rm rank}{\mathcal E}^+=0$ {\rm (}so the conditions (iii) and (iv)
are empty, and (ii) becomes:\\
{\bf (ii')} $[DJ_\lambda(y_2)-DJ_\lambda(y_1)](y_2-y_1)>0$ for any  $(\lambda,y_1), (\lambda,y_2)\in\bar B_{\delta}(\mathcal{
E}^-)$ and $y_1\ne y_2${\rm )},
then there exist open neighborhoods of the zero section $0_\mathcal{ E}\subset\mathcal{ E}$, $W$ and $V$ with $V\subset B_\delta({\mathcal E})$, and  a preserving-fiber homeomorphism
 $\phi:W\to V$ such that
 $$J(\phi(\lambda,x))=-\|x\|^2\;\forall (\lambda,
x)\in W,
$$
 moreover $W$ can be taken as $B_{\sqrt{p(\epsilon)/2})}(\mathcal{ E})$
 provided that  {\bf (ii')} is replaced by\\
{\bf (iv')} $DJ_\lambda(x)x<-p(\|x\|_\lambda)$ for any $(\lambda, x)\in\bar
B_{\delta}(\mathcal{ E}^+)$ with $x\ne\theta_\lambda$, where $p:(0,
\delta]\to (0, \infty)$ is a non-decreasing function independent of $\lambda\in\Lambda$.

\noindent{$2^\circ.$} If $\Lambda$ is compact, and ${\rm rank}{\mathcal E}^+>0$ and ${\rm rank}{\mathcal E}^->0$, then there exist a positive $\epsilon\in\R$, an open neighborhood
$U$ of the zero section $0_\mathcal{ E}$ of $\mathcal{ E}$  and a
preserving-fiber homeomorphism
$$
\phi: B_{\sqrt{p(\epsilon)/2})}(\mathcal{ E}^+) \oplus
B_{\sqrt{p(\epsilon)/2})}(\mathcal{ E}^-)\to U
$$
such that
$$
J(\phi(\lambda, x+
y))=\|x\|^2_{\lambda}-\|y\|^2_{\lambda}\quad\hbox{and}\quad
\phi(\lambda, x+ y)=(\lambda, \phi_\lambda(x+y))\in\mathcal{ E}
$$
for all $(\lambda, x+ y)\in  B_{\sqrt{p(\epsilon)/2})}(\mathcal{
E}^+) \oplus B_{\sqrt{p(\epsilon)/2})}(\mathcal{ E}^-)$. Moreover,
for each $\lambda\in\Lambda$,
$\phi_\lambda(\theta_\lambda)=\theta_\lambda$, $\phi_\lambda(x+y)\in
\mathcal{ E}^-_\lambda$ if and only if $x=\theta_\lambda$, and
$\phi$ is a preserving-fiber homoeomorphism from $B_{\sqrt{p(\epsilon)/2})}(\mathcal{
E}^-)$ onto $U\cap \mathcal{ E}^-$ according to any topology on both
induced by any Finsler structure on $\mathcal{ E}^-$.

\noindent{$3^\circ.$} In the above two cases, if
$G$ is a topological group and $p:\mathcal{ E}\to\Lambda$ is a
$G$-normed vector bundle such that the splitting $\mathcal{
E}=\mathcal{ E}^+\oplus\mathcal{ E}^-$, the functional $J$ and the
Finsler structure $\|\cdot\|$ are preserved, i.e.
\begin{equation}\label{e:A.10}
\left.\begin{array}{ll}
 J(g(\lambda,x))=J(\lambda, x),\quad
 \|gx\|_{g\lambda}=\|x\|_\lambda\\
\hbox{and}\quad gx\in\mathcal{ E}^+\;(\hbox{resp.}\; gx\in \mathcal{
E}^-)
\end{array}\right\}
\end{equation}
 for any $g\in G$
and $(\lambda, x)\in\mathcal{ E}^+$ (resp. $\mathcal{ E}^-$), then
the above homoeomorphism $\phi$ is $G$-equivariant, i.e.
$$
\phi(g(\lambda, x+y))=(g\lambda, \phi_{g\lambda}(gx+ gy))=(g\lambda,
g\phi_{\lambda}(x+ y))=g\phi(\lambda, x+y)
$$
for any $g\in G$ and $(\lambda, x+y)\in\mathcal{ E}^+\oplus\mathcal{
E}^-$.
\end{theorem}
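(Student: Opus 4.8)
The plan is to run the proof of Theorem~\ref{th:A.1} essentially verbatim, with the trivial bundle $\Lambda\times H$ replaced by $\mathcal{E}$, the subbundles $\Lambda\times H^\pm$ replaced by $\mathcal{E}^\pm$, the balls $B_H(\theta,\rho)$ and $B_{H^\pm}(\theta,\rho)$ replaced by the ball bundles $B_\rho(\mathcal{E})$ and $B_\rho(\mathcal{E}^\pm)$, and the fixed norm $\|\cdot\|$ replaced by the Finsler structure $v\mapsto\|v\|_\lambda$. The point is that every object produced in that proof is built one fibre at a time from $J$ and from the Finsler structure: the maximizer $\varphi_\lambda(x)\in B_\delta(\mathcal{E}^-)_\lambda$ of $y\mapsto J_\lambda(x+y)$ (Step~1), the functions $\psi_1,\psi_2,\psi$ given by the same normalization formulas now using $\|\cdot\|_\lambda$ in the fibre over $\lambda$ (Step~4), and the preserving-fibre map $\phi(\lambda,x+y)=(\lambda,\phi_\lambda(x+y))$ obtained by inverting $\psi(\lambda,\cdot)$ (Steps~5--7). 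Consequently all the fibrewise algebraic facts --- injectivity of $\psi(\lambda,\cdot)$, the identity $J_\lambda=\|\psi_1\|_\lambda^2-\|\psi_2\|_\lambda^2$, the characterisation ``$\phi_\lambda(x+y)\in\mathcal{E}^-_\lambda$ iff $x=\theta_\lambda$'', and the quadratic normal form $J(\phi(\lambda,x+y))=\|x\|_\lambda^2-\|y\|_\lambda^2$ --- carry over with no change.

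What must be rechecked is continuity across fibres, and this is where local triviality of $\mathcal{E}$ (hence of $\mathcal{E}^\pm$) together with the joint continuity of $(\lambda,v)\mapsto\|v\|_\lambda$ on the total space is used. Concretely, to see that $\varphi$ is continuous at a point $\lambda_0$ (the analogue of Step~2 of Theorem~\ref{th:A.1}), or that $\phi^{-1}$ is continuous, one takes $(\lambda_n,x_n)\to(\lambda_0,x_0)$ and must extract from $\{\varphi_{\lambda_n}(x_n)\}$, which a priori lies in varying fibres, a convergent subsequence. Pick a trivializing neighbourhood $U\ni\lambda_0$ over which $\mathcal{E}^-|_U\cong U\times H^-$ with $\dim H^-=\operatorname{rank}\mathcal{E}^-<\infty$; joint continuity of the Finsler structure shows that the norms $\|\cdot\|_\mu$, $\mu\in U$, are locally uniformly equivalent to the fixed trivialization norm on the finite-dimensional $H^-$, so $\{\varphi_{\lambda_n}(x_n)\}$ reads as a bounded sequence in $H^-$, has a convergent subsequence, and continuity of $J$ together with uniqueness of the maximizer identifies the limit with $\varphi_{\lambda_0}(x_0)$. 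The same localization handles the continuity of $\psi_1,\psi_2,\psi$ and of $\phi$, and the ``moreover'' clause on the topology of $\mathcal{E}^-$ (any two Finsler structures on a finite-rank bundle induce, fibrewise and hence locally, equivalent topologies, exactly as in Step~8 of Theorem~\ref{th:A.1}).

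For part $1^\circ$ no compactness of $\Lambda$ is needed: when $\operatorname{rank}\mathcal{E}^-=0$ everything is the one-variable construction of Theorem~\ref{th:A.1}, $1^\circ$, performed fibrewise; when $\operatorname{rank}\mathcal{E}^+=0$ the closed balls $\bar B_\delta(\mathcal{E}^-)_\lambda$ are compact (finite-dimensional fibre), which is all that is invoked there. In part $2^\circ$ the hypothesis that $\Lambda$ is compact enters at exactly one place, the analogue of Step~6, where uniform constants $C$ and $\epsilon$ are extracted; here $\Lambda$ compact and $\operatorname{rank}\mathcal{E}^-<\infty$ make the sphere bundle $S_\delta(\mathcal{E}^-)=\{(\lambda,v)\in\mathcal{E}^-:\|v\|_\lambda=\delta\}$ compact (it is closed in the total space and, over a finite trivializing cover, fibrewise bounded with respect to the trivialization norms by joint continuity of $\|\cdot\|$), so the arguments producing $C$ with $J_\lambda(y)<-C$ on $S_\delta(\mathcal{E}^-)$ and then $\epsilon$ in the estimates (\ref{e:A.3})--(\ref{e:A.6}) go through unchanged. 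Finally, part $3^\circ$ is automatic: each $g\in G$ acts fibrewise by a linear isomorphism $\mathcal{E}_\lambda\to\mathcal{E}_{g\lambda}$ preserving the splitting, $J$ and the Finsler structure, so uniqueness of the maximizer gives $\varphi_{g\lambda}(gx)=g\varphi_\lambda(x)$, the explicit normalization formulas give $\psi(g(\lambda,x+y))=g\,\psi(\lambda,x+y)$, and hence $\phi$, being the fibrewise inverse of a restriction of $\psi$, satisfies $\phi(g(\lambda,x+y))=g\,\phi(\lambda,x+y)$.

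I expect the main obstacle to be not a new idea but precisely the bookkeeping of the second and third paragraphs: making the sequential/compactness continuity arguments of Theorem~\ref{th:A.1} \emph{local in the base}, so that finite-dimensionality of $\mathcal{E}^-$ and joint continuity of the Finsler norm can be applied inside a single trivializing chart, and checking that the single global use of compactness of $\Lambda$ (the uniform constants in Step~6) survives when balls and spheres are replaced by the corresponding bundles.
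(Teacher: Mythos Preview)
Your proposal is correct and matches the paper's approach: reproduce the proof of Theorem~\ref{th:A.1} fibrewise, using finite rank of $\mathcal{E}^-$ and local triviality for the sequential compactness arguments, and derive $G$-equivariance from uniqueness of the maximizer $\varphi_\lambda(x)$.

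One correction: you say that in part $2^\circ$ the compactness of $\Lambda$ enters at exactly one place, the analogue of Step~6. In fact it already enters at the analogue of Step~1 (the paper's Claim~\ref{cl:A.3}). The existence of a \emph{single} $\epsilon_1>0$, independent of $\lambda$, such that the maximizer of $y\mapsto J_\lambda(x+y)$ over $\bar B_\delta(\mathcal{E}^-)_\lambda$ lies in the open ball for every $(\lambda,x)\in B_{\epsilon_1}(\mathcal{E}^+)$, is proved by contradiction via a sequence $(\lambda_n,x_n,y_n)$ with $\|x_n\|_{\lambda_n}\to 0$ and $y_n\in\partial B_\delta(\mathcal{E}^-)_{\lambda_n}$; extracting a convergent subsequence from $\{\lambda_n\}$ requires compactness of $\Lambda$, and then compactness of $\bar B_\delta(\mathcal{E}^-)$ (which in turn uses both compactness of $\Lambda$ and finite rank) handles $\{y_n\}$. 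The paper flags this explicitly. Your local-trivialization argument handles Step~2 (continuity of $\varphi$), but the uniformity in Step~1 is a separate use of compactness that you should not omit.
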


\begin{proof}
 The case $1^\circ$ may be proved as in the proof of Theorem~\ref{th:A.1}.
 For the case $2^\circ$ the key  is the first two steps
corresponding with
 the proof of Theorem~\ref{th:A.1}. We can
slightly modify  the proof of \cite[Lemma 2.1]{DHK} to prove:
\begin{claim}\label{cl:A.3}
There exists a positive real number $\epsilon_1<\delta$ having the
following property: For each $(\lambda, x)\in
B_{\epsilon_1}({\mathcal{E}}^+)$ there exists a unique
$\varphi_\lambda(x)\in B_{\delta}({\mathcal{E}}^-)_\lambda$ such
that
\begin{equation}\label{e:A.11}
J(\lambda, x+\varphi_\lambda(x))=\max\{J(\lambda, x+y)\,|\, y\in
B_{\delta}(\mathcal{E}^-)_\lambda\}.
\end{equation}
\end{claim}

 In fact, the existence of $\epsilon_1$ can be obtained as follows.
Since $\bar B_{\delta}(\mathcal{E}^-)$ is compact, suppose by
contradiction that there exists a sequence $\{(\lambda_n, x_n)\}$ in
$B_{\delta}(\mathcal{E}^+)$ such that $(\lambda_n, x_n)\to
(\lambda_0, \theta_{\lambda_0})$ and a sequence
$\{y_n\}\subset\partial B_{\delta}(\mathcal{E}^-)_{\lambda_n}$ such
that
$$
J(\lambda_n, x_n+ y_n)> J(\lambda_n, x_n+ y)\quad\forall y\in
B_{\delta}(\mathcal{E}^-)_{\lambda_n},\;n=1,2,\cdots.
$$
We may assume $y_n\to y_0\in\partial B_{\delta}({\mathcal{
E}}^-)_{\lambda_0}$. Then
$$
\lim_{n\to\infty}J(\lambda_n, x_n+ y_n)=J(\lambda_0,
y_0)\quad\hbox{and}\quad \lim_{n\to\infty}J(\lambda_n,
x_n)=J(\lambda_0, \theta_{\lambda_0}).
$$
Hence $J(\lambda_0, y_0)\ge J(\lambda_0, \theta_{\lambda_0})$.
Moreover, by the mean value theorem and  Theorem~\ref{th:A.2}(iii)
there exists a $t\in (0, 1)$ such that
$$
J(\lambda_0, y_0)-J(\lambda_0, \theta_{\lambda_0})=DJ_{\lambda_0}(
t\cdot y_0)(y_0)=-\frac{1}{t}DJ_{\lambda_0}(t\cdot y_0)(-t\cdot
y_0)<0.
$$
This leads to a contradiction.

The uniqueness of $\varphi_\lambda(x)$ can also be proved by
contradiction.

Next, as in Step 2 of the proof of Theorem~\ref{th:A.1} above we can show that the map
$$
B_{\epsilon_1}(\mathcal{E}^+)\to
B_{\epsilon_1}(\mathcal{E}^-),\;(\lambda, x)\mapsto (\lambda,
\varphi_\lambda(x))
$$
is continuous. As in Step 4 above, for $(\lambda, x+y)\in
B_{\epsilon_1}(\mathcal{E}^+)\oplus B_{\delta}(\mathcal{E}^-)$ we
define
\begin{eqnarray*}
&&\psi_1(\lambda, x+y)=\left\{\begin{array}{ll}
 \frac{\sqrt{J(\lambda, x+ \varphi_\lambda(x))}}{\|x\|_\lambda}x &\;\hbox{if}\;x\ne \theta_\lambda,\\
 \theta_\lambda&\;\hbox{if}\;x=\theta_\lambda,
 \end{array}\right.\nonumber\\
&&\psi_2(\lambda, x+y)=\left\{\begin{array}{ll}
 \frac{\sqrt{J(\lambda, x+ \varphi_\lambda(x))-J(\lambda, x+y)}}{\|y-\varphi_\lambda(x)\|_\lambda}(y-\varphi_\lambda(x))
  &\;\hbox{if}\;y\ne\varphi_\lambda(x),\\
 \theta_\lambda&\;\hbox{if}\;y=\varphi_\lambda(x),
 \end{array}\right.
\end{eqnarray*}
and
\begin{eqnarray}\label{e:A.12}
\psi(\lambda, x+y)=\psi_1(\lambda, x+y)+\psi_2(\lambda, x+y).
\end{eqnarray}
They are continuous and $\psi(\lambda,
\theta_\lambda)=\theta_\lambda$. Let $\tilde\psi(\lambda,
x+y)=(\lambda, \psi(\lambda, x+y))$. As in Step 6 above  there is a positive real
number $\epsilon<\epsilon_1$ such that
$$
B_{\sqrt{p(\epsilon)/2})}(\mathcal{E}^+) \oplus
B_{\sqrt{p(\epsilon)/2})}(\mathcal{E}^-)\subset\tilde\psi\bigl(
B_{2\epsilon}(\mathcal{E}^+)\oplus B_{\delta}(\mathcal{E}^-)\bigr).
$$
Set
$$
U=\bigl( B_{2\epsilon}(\mathcal{E}^+)\oplus B_{\delta}({\mathcal{
E}}^-)\bigr)\cap
\tilde\psi^{-1}\left(B_{\sqrt{p(\epsilon)/2})}({\mathcal{E}}^+)
\oplus B_{\sqrt{p(\epsilon)/2})}(\mathcal{E}^-)\right)
$$
and
\begin{eqnarray}\label{e:A.13}
&&\phi:B_{\sqrt{p(\epsilon)/2})}(\mathcal{E}^+)
\oplus B_{\sqrt{p(\epsilon)/2})}(\mathcal{E}^-) \to U,\\
&&\hspace{10mm} (\lambda,x+y)\mapsto (\lambda,
\phi_\lambda(x+y)):=(\lambda, x'+ y'),\nonumber
\end{eqnarray}
where $(x', y')\in B_{2\epsilon}(\mathcal{E}^+)_\lambda\oplus
B_{\delta}({\mathcal{E}}^-)_\lambda$ is a unique point satisfying
$x+y=\psi(\lambda, x'+y')$. Except the final claim we leave the
remainder arguments to the reader.

As to the conclusion in $3^\circ$,   since $\|gx\|_{g\lambda}=\|x\|_\lambda$
for any $g\in G$ and $(\lambda,x)\in\mathcal{E}$, for any
$\varepsilon>0$ the sets $B_{\varepsilon}(\mathcal{E})$,
$B_{\varepsilon}(\mathcal{E}^+)$ and
$B_{\varepsilon}(\mathcal{E}^-)$ are $G$-invariant. For any $g\in G$
and $(\lambda, x)\in B_{\epsilon_1}(\mathcal{E}^+)$, by
Claim~\ref{cl:A.3} there exists a unique $\varphi_{g\lambda}(gx)\in
B_{\delta}(\mathcal{E}^-)_{g\lambda}$ such that
\begin{equation}\label{e:A.14}
J(g\lambda, gx+\varphi_{g\lambda}(gx))=\max\{J(g\lambda, gx+y)\,|\,
y\in B_{\delta}(\mathcal{E}^-)_{g\lambda}\}.
\end{equation}
Note that $g:B_{\delta}(\mathcal{E}^-)_{\lambda}\to
B_{\delta}({\mathcal{E}}^-)_{g\lambda},\;x\mapsto gx$ is a
homeomorphism. We conclude
\begin{eqnarray*}
\max\{J(g\lambda, gx+y)\,|\, y\in B_{\delta}({\mathcal{
E}}^-)_{g\lambda}\}&=&\max\{J(g\lambda, gx+ gy)\,|\, y\in
B_{\delta}(\mathcal{E}^-)_{\lambda}\}\\
&=&\max\{J(\lambda, x+ y)\,|\, y\in B_{\delta}({\mathcal{
E}}^-)_{\lambda}\}\\
&=&J(\lambda, x+\varphi_\lambda(x))\\
&=&J(g\lambda, gx+ g\varphi_\lambda(x)),
\end{eqnarray*}
where the third equality comes from (\ref{e:A.11}).  Since
$g\varphi_\lambda(x)\in B_{\delta}(\mathcal{E}^-)_{g\lambda}$ it
follows from this, (\ref{e:A.14}) and Claim~\ref{cl:A.3} that
$$
\varphi_{g\lambda}(gx)= g\varphi_\lambda(x)\quad\forall g\in
G\;\hbox{and}\;(\lambda,x)\in B_{\epsilon_1}(\mathcal{E}^+).
$$
Then the desired conclusion follows from this and
(\ref{e:A.12})-(\ref{e:A.13}).
\end{proof}


\section{ Several results on functional analysis}\label{app:B}\setcounter{equation}{0}

Perhaps the results in this appendix can be founded in some
references. For the readers's convenience we shall give proofs of
them. Let $E_1$ and $E_2$ be two real normed linear spaces and let
$T$ be a map from an open subset $U$ of $E_1$ to $E_2$.  For a positive integer $n$ we call $T$ {\it finite
$n$-continuous} at $x\in U$ if for any $h_1,\cdots, h_n\in E_1$ the
map
$$
\R^n\supseteq B^n(0, \epsilon)\ni t=(t_1,\cdots,t_n)\mapsto T(x+
t_1h_1+\cdots+ t_nh_n)
$$
is continuous at the origin $0\in\R^n$.

\begin{proposition}\label{prop:B.1}
\begin{enumerate}
\item[\bf (i)] If for any $u\in E_1$ the map $x\mapsto DT(x, u)$ is finite
$2$-continuous at $x_0\in U$ then $u\mapsto DT(x_0, u)$ is additive.

\item[\bf (ii)] If $T$ is continuously directional differentiable on
$U$ then it is strictly $H$-differentiable at every $x\in U$, and
restricts to a $C^1$-map on any finitely dimensional subspace. (So
the continuously directional differentiability is a notion between
the strict $H$-differentiability and $C^1$.)

\item[\bf (iii)]  If $T:U\to E_2$ is $G$-differentiable near $x_0\in U$
and also strictly $G$--differentiable at $x_0$, then $T'$ is
strongly continuous at $x_0$, i.e. for any $v\in E_1$ it holds that
$\|T'(x)v-T'(x_0)v\|\to 0$ as $\|x-x_0\|\to 0$. In particular, if
$E_2=\R$ this means that $T'$ is continuous  with respect to the
weak* topology on $E^\ast_1$.
\end{enumerate}
\end{proposition}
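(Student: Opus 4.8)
The plan is to prove the three parts of Proposition~\ref{prop:B.1} essentially in the order listed, since (ii) leans on (i) and (iii) is independent. For part (i), I would fix $u, v \in E_1$ and study the function $g(s,t) = T(x_0 + su + tv)$ on a small ball $B^2(0,\epsilon) \subset \R^2$. By hypothesis the map $x \mapsto DT(x,u)$ is finite $2$-continuous at $x_0$; the idea is to compute, for fixed small $t$, the $s$-derivative of $T(x_0 + su + tv)$ at $s=0$, which equals $DT(x_0 + tv, u)$, and then compare with the value obtained by first moving in direction $u+v$. The standard trick here is to introduce the auxiliary one-variable function $\phi(r) = T(x_0 + r(u+v)) - T(x_0 + ru) - T(x_0 + rv)$ and show, using the mean value theorem in integrated form (or \cite[Prop.3.3.3]{Schi}) together with the finite $2$-continuity of $x\mapsto DT(x,u)$ and of $x\mapsto DT(x,v)$, that $\phi(r) = o(r)$; since $\phi$ is also positively homogeneous-like after the appropriate rescaling, this forces $DT(x_0, u+v) = DT(x_0,u) + DT(x_0,v)$. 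The continuity hypothesis is exactly what lets the difference quotients of the three terms be controlled uniformly as $r\to 0$.

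\textbf{Part (ii).} Assuming $T$ is continuously directional differentiable on $U$, i.e. $(x,u)\mapsto DT(x,u)$ is jointly continuous, I would first note that by (i) each $u\mapsto DT(x,u)$ is additive; positive homogeneity is immediate from the definition of the directional derivative, and continuity of $u \mapsto DT(x,u)$ (a consequence of joint continuity) upgrades additivity plus $\Q$-homogeneity to $\R$-linearity, so $DT(x,\cdot) =: T'(x) \in L(E_1,E_2)$. Then I would invoke \cite[Prop.2.2.1]{Cl} or \cite[Prop.3.2.4(iii)]{Schi} as cited in the paper: strict $H$-differentiability at $x$ is equivalent to local Lipschitz continuity near $x$ together with strict $G$-differentiability at $x$. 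The strict $G$-differentiability follows from the joint continuity of $DT$ via the mean value theorem: $\|T(x+tv)-T(x)-T'(x_0)v\| = \|t\| \cdot \sup_{0<s<1}\|DT(x+stv, v) - DT(x_0,v)\|/\ldots$ — more precisely, write $T(x+tv)-T(x) = \int_0^1 DT(x+stv, tv)\,ds$ and estimate the integrand against $DT(x_0,tv)=tT'(x_0)v$ using joint continuity at $(x_0,v)$; local Lipschitz continuity near $x_0$ follows similarly since $DT$ is bounded on a compact neighborhood. Restricting to a finite-dimensional subspace $F$, the map $DT$ restricted to $(U\cap F)\times F$ is continuous, which is precisely the classical criterion for $T|_{U\cap F}$ to be $C^1$; this gives the last assertion. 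The main obstacle in (ii) is bookkeeping the passage from directional differentiability to genuine Fréchet-type estimates, but all the needed equivalences are already quoted in the paper, so it is mostly assembling them.

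\textbf{Part (iii).} Here $T$ is $G$-differentiable near $x_0$ and strictly $G$-differentiable at $x_0$. Fix $v \in E_1$. The strict $G$-differentiability gives, by definition, that $\|T(x+tv) - T(x) - tT'(x_0)v\| = o(|t|)$ as $x\to x_0$ and $t\to 0$. On the other hand, for $x$ near $x_0$, $G$-differentiability at $x$ gives $T(x+tv)-T(x) = tT'(x)v + o(|t|)$ as $t\to 0$ (with the $o$ depending on $x$). The plan is to combine these: divide by $t$ and take $t\to 0$ to obtain $\|T'(x)v - T'(x_0)v\| \le \limsup_{t\to 0} \|T(x+tv)-T(x)-tT'(x_0)v\|/|t|$, and then observe that the strict condition makes the right-hand side tend to $0$ as $x\to x_0$. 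One must be slightly careful about the order of limits — the cleanest route is: given $\varepsilon>0$, strict $G$-differentiability furnishes $\delta>0$ so that $\|T(x+tv)-T(x)-tT'(x_0)v\|\le \varepsilon|t|$ whenever $\|x-x_0\|<\delta$ and $|t|<\delta$; then for each such fixed $x$, divide by $|t|$ and let $t\to 0$ using $G$-differentiability at $x$ to get $\|T'(x)v-T'(x_0)v\|\le\varepsilon$. This proves strong (pointwise) continuity of $T'$ at $x_0$. When $E_2 = \R$, $T'(x)\in E_1^\ast$ and the statement $\langle T'(x),v\rangle \to \langle T'(x_0),v\rangle$ for every $v$ is by definition weak\textsuperscript{*} convergence $T'(x)\to T'(x_0)$, completing the proof. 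I expect part (i) to be the main obstacle, since it is the only genuinely non-formal step — one has to squeeze additivity of $DT(x_0,\cdot)$ out of a two-variable continuity hypothesis via a careful mean value / difference-quotient argument, whereas (ii) and (iii) are largely reorganizations of cited equivalences and straightforward $\varepsilon$--$\delta$ manipulations.
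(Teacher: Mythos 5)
Your plan for (i) and (iii) follows essentially the same route as the paper.  In (i) the key is to form the second difference
\[
\triangle^2_{tu,tv}T(x_0)=T(x_0+tu+tv)-T(x_0+tu)-T(x_0+tv)+T(x_0),
\]
show that $\tfrac1t\triangle^2_{tu,tv}T(x_0)\to DT(x_0,u+v)-DT(x_0,u)-DT(x_0,v)$, and then use Hahn--Banach together with two applications of the mean value theorem to show that this limit is $0$ from finite $2$-continuity of $x\mapsto DT(x,u)$; that is exactly the paper's argument.  Note, however, that your auxiliary function $\phi(r)=T(x_0+r(u+v))-T(x_0+ru)-T(x_0+rv)$ is missing the compensating term $+T(x_0)$, so as written $\phi(r)=o(r)$ is false; what you actually want is $\phi(r)-\phi(0)=o(r)$, and the digression about $\phi$ being ``positively homogeneous-like'' is not the right mechanism.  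For (iii) your $\varepsilon$--$\delta$ argument is verbatim the paper's.

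For (ii) you take a genuinely different route.  The paper first proves strict $G$-differentiability by a contradiction argument (Hahn--Banach, mean value theorem, continuous directional differentiability) and then proves strict $H$-differentiability \emph{directly} by a second contradiction argument that exploits compactness of the test set $K$ to extract $v_n\to v_0$.  You instead prove strict $G$-differentiability (by the cleaner integral-form mean value theorem) plus local Lipschitz continuity, and then invoke the cited equivalence ``strict $H$ $\iff$ strict $G$ $+$ locally Lipschitz''.  This is a legitimate and arguably tidier route, but your justification of the Lipschitz step is wrong as stated: you claim ``$DT$ is bounded on a compact neighborhood'', and in an infinite-dimensional normed space there are no compact neighborhoods.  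The correct argument is:  for each $x\in U$, $DT(x,\cdot)$ is additive by part (i), positively homogeneous by definition, and continuous, hence linear; then joint continuity of $DT$ at $(x_0,\theta)$ (where $DT(x_0,\theta)=\theta$) furnishes $\delta>0$ and, by scaling, a uniform bound $\|DT(x,\cdot)\|_{L(E_1,E_2)}\le L$ for all $x$ with $\|x-x_0\|<\delta$; feeding this into $T(x)-T(y)=\int_0^1 DT(y+s(x-y),x-y)\,ds$ gives the Lipschitz estimate.  With that fix your approach goes through and is in fact a reasonable alternative to the paper's double-contradiction argument.  Your observation about the finite-dimensional restriction (continuity of $DT$ on $(U\cap F)\times F$ is exactly the classical $C^1$ criterion) is also cleaner than the paper's somewhat cryptic appeal to equivalence of strong and weak convergence in finite dimensions.
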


\begin{proof}
{\bf (i)} This directly follows from the
mean value theorem. In fact, for $u,v\in E_1$ and a small $t\ne 0$
let $ \triangle^2_{tu,tv}T(x_0)=T(x_0+ tu+ tv)-T(x_0+ tu)-T(x_0+
tv)+ T(x_0)$. Then
$$
\lim_{t\to 0}\frac{1}{t}\triangle^2_{tu,tv}T(x_0)=DT(x_0, u+v)-
DT(x_0, u)- DT(x_0, v).
$$
By the Hahn-Banach theorem there exists a functional $y^\ast\in
E_2^\ast$ such that $\|y^\ast\|=1$ and
$y^\ast(\triangle^2_{tu,tv}T(x_0))=\|\triangle^2_{tu,tv}T(x_0)\|$.
Applying twice the mean value theorem yields $\tau_1, \tau_2\in [0,
t]$ such that
\begin{eqnarray*}
&&\!\!\!y^\ast(T(x_0+ t u+ tv)-T(x_0+ t u)-T(x_0+ tv)+ T(x_0))\\
&=&\!\!\!\! y^\ast(DT(x_0+ tv+ \tau_1u, u))t-y^\ast(DT(x_0+ \tau_2u, u))t\\
 &\le&\!\!\!\! \|DT(x_0+ tv+ \tau_1u, u)-DT(x_0, u)\|\cdot |t|\\
  &+&\|DT(x_0+ \tau_2u, u)-DT(x_0,
 u)\|\cdot |t|.
\end{eqnarray*}
Since the map $x\mapsto DT(x, u)$ is finite $2$-continuous at
$x_0\in U$ it follows that
$$
\lim_{t\to
0}y^\ast\Bigl(\frac{1}{t}\triangle^2_{tu,tv}T(x_0)\Bigr)=0.
$$
Hence $DT(x_0, u+v)= DT(x_0, u)+ DT(x_0, v)$.

{\bf (ii)} Firstly, it follows from (i) that  $T$ is G\^{a}teaux
differentiable at every $x\in U$ if  $T$ is continuously directional
differentiable on $U$.

Next we prove that $T$ is strictly $G$-differentiable at every $x\in
U$. Otherwise, there exist $x_0\in U$, $v\in E_1$, $\varepsilon_0>0$
and sequences $\{x_n\}\subset U$ with $x_n\to x_0$,
$\{t_n\}\subset\R\setminus\{0\}$ with $t_n\to 0$, such that
$$
\left\|\frac{T(x_n+ t_nv)-T(x_n)}{t_n}-T'(x_0)v\right\|\ge
\varepsilon_0\quad\forall n=1,2,\cdots.
$$
As above we may use the Hahn-Banach theorem to get a sequence of
functionals $y^\ast_n\in E_2^\ast$ such that $\|y^\ast_n\|=1$ and
$$
y^\ast_n\left(\frac{T(x_n+ t_nv)-T(x_n)}{t_n}-T'(x_0)v
\right)=\left\|\frac{T(x_n+ t_nv)-T(x_n)}{t_n}-T'(x_0)v\right\|
$$
for any $n\in\N$. Then the mean value theorem yields a
sequence $\{\tau_n\}\subset (0, 1)$  such that
$$
y^\ast_n\left(\frac{T(x_n+ t_nv)-T(x_n)}{t_n}-T'(x_0)v
\right)=y^\ast_n\left(T'(x_n+ \tau_nt_nv)v-T'(x_0)v \right)
$$
$\forall n\in\N$. It follows that
$$
\|T'(x_n+ \tau_nt_nv)v-T'(x_0)v\|\ge\varepsilon_0\;\forall
n=1,2,\cdots.
$$
This contradicts to the continuously directional differentiability
of $T$.

Finally, suppose that $T$ is not strictly $H$-differentiable at some
$x_0\in U$. Then there exist a compact subset $K\subset E_1$,
$\varepsilon_0>0$, and and sequences $\{x_n\}\subset U$ with $x_n\to
x_0$, $\{t_n\}\subset\R\setminus\{0\}$ with $t_n\to 0$, such that for
some sequence $\{v_n\}\subset K$,
$$
\left\|\frac{T(x_n+ t_nv_n)-T(x_n)}{t_n}-T'(x_0)v_n\right\|\ge
\varepsilon_0\quad\forall n=1,2,\cdots.
$$
Since $K$ is compact we may assume $v_n\to v_0\in K$. As just we
have a sequence $\{s_n\}\subset (0, 1)$  such that $\|T'(x_n+
s_nt_nv_n)v-T'(x_0)v_n\|\ge\varepsilon_0$ for all $n\in\N$,
which leads to a contradiction.

The second claim can be derived from the fact that the strong
convergence and weak one are equivalent on finitely dimensional
spaces.

{\bf (iii)} Since $T$ is strictly $G$--differentiable at $x_0$, for
any $v\in E_1$ and $\varepsilon>0$ there exists a $\delta>0$ such
that
$$
\left\|\frac{T(x+ tv)-T(x)}{t}-T'(x_0)v\right\|< \varepsilon
$$
for any $t\in (-\delta,\delta)\setminus\{0\}$ and $x\in
B_X(x_0,\delta)$. Setting $t\to 0$ we get
$\|T'(x)v-T'(x_0)v\|\le\varepsilon\;\forall x\in B_X(x_0,\delta)$.
 \end{proof}

\begin{proposition}\label{prop:B.2}
Suppose that a bounded linear self-adjoint operator $B$ on a Hilbert
space $H$ has a decomposition $B=P+ Q$, where $Q\in L_s(H)$ is
compact and $P\in L_s(H)$ is positive, i.e., $\exists\; C_0>0$ such
that $(Pu, u)_H\ge C_0\|u\|^2\;\forall u\in H$. Then
 every $\lambda\in (-\infty, C_0)$ is either a regular value of $B$ or an
isolated point of $\sigma(B)$, which is also an eigenvalue of finite
multiplicity.
\end{proposition}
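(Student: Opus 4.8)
The plan is to reduce the statement to a standard spectral-theoretic fact about compact perturbations of a positive operator, by translating the claim about $B$ into one about the operator $B_\lambda := B - \lambda I$ for a fixed $\lambda < C_0$. First I would write $B_\lambda = (P - \lambda I) + Q$ and observe that $P - \lambda I$ is still positive definite: indeed $((P-\lambda I)u,u)_H = (Pu,u)_H - \lambda\|u\|^2 \ge (C_0 - \lambda)\|u\|^2$ with $C_0 - \lambda > 0$. Hence $P - \lambda I$ is a bounded, self-adjoint, invertible operator with bounded inverse (Lax–Milgram, or simply the Neumann-series/coercivity argument), so $B_\lambda$ is a compact perturbation of an invertible self-adjoint operator. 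This is precisely the setting in which $B_\lambda$ is Fredholm of index zero.

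\medskip

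The key steps, in order, are: (1) fix $\lambda < C_0$ and form $B_\lambda = B - \lambda I$, splitting it as above into an invertible positive part $P - \lambda I$ plus the compact part $Q$; (2) conclude $B_\lambda$ is self-adjoint and Fredholm of index zero, so its range is closed and $\dim\ker B_\lambda = \operatorname{codim}\operatorname{ran} B_\lambda < \infty$; (3) distinguish two cases. If $\ker B_\lambda = \{0\}$, then by the closed range theorem and self-adjointness $\operatorname{ran} B_\lambda = (\ker B_\lambda)^\perp = H$, so $B_\lambda$ is bijective with bounded inverse, i.e. $\lambda \notin \sigma(B)$ is a regular value. If $\ker B_\lambda \ne \{0\}$, then $\lambda$ is an eigenvalue of finite multiplicity $\dim\ker B_\lambda < \infty$; (4) in this second case show $\lambda$ is an isolated point of $\sigma(B)$. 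For this I would use that near $\lambda$ the spectrum of $B$ in $(-\infty, C_0)$ behaves like that of a compact operator: restrict attention to a small interval $[\lambda - \epsilon, \lambda + \epsilon] \subset (-\infty, C_0)$, and argue via the spectral projection $E_{[\lambda-\epsilon,\lambda+\epsilon]}$ of $B$. On the invariant subspace $E_{[\lambda-\epsilon,\lambda+\epsilon]}H$ the operator $P$ is still bounded below by $C_0$ while $B$ is bounded above by $\lambda + \epsilon < C_0$, which forces $Q = B - P$ to be bounded below by $C_0 - \lambda - \epsilon > 0$ there; a positive-definite operator that is also compact has finite-dimensional range, so $E_{[\lambda-\epsilon,\lambda+\epsilon]}H$ is finite-dimensional, whence $\sigma(B)\cap[\lambda-\epsilon,\lambda+\epsilon]$ is a finite set of eigenvalues of finite multiplicity, and $\lambda$ is isolated.

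\medskip

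The main obstacle I anticipate is step (4): making rigorous the passage from ``$B$ has small spectrum near $\lambda$'' to ``that spectral subspace is finite-dimensional.'' The clean route is the spectral-projection argument just sketched — one needs the elementary but slightly delicate observation that if $T \in L_s(H)$ is both compact and satisfies $(Tu,u)_H \ge c\|u\|^2$ for some $c > 0$, then $\dim H < \infty$ (otherwise an orthonormal sequence $e_n \rightharpoonup 0$ gives $\|Te_n\| \to 0$ while $(Te_n,e_n)_H \ge c$, a contradiction). Applied to $T = Q|_{E_{[\lambda-\epsilon,\lambda+\epsilon]}H}$ this yields finite dimensionality of the spectral subspace; alternatively one could invoke the classical result (e.g. via the min–max principle, or Riesz–Schauder theory for $Q$) that $\sigma(B) \cap (-\infty, C_0)$ consists only of isolated eigenvalues of finite multiplicity accumulating at most at $C_0$. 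Either way, once this finiteness is in hand, the conclusion that every $\lambda < C_0$ is either a regular value or an isolated eigenvalue of finite multiplicity is immediate.
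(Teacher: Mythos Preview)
Your argument is essentially the paper's: both of you write $B-\lambda I=(P-\lambda I)+Q$ with $P-\lambda I$ coercive and hence invertible, conclude that $B-\lambda I$ is Fredholm of index zero, and derive the dichotomy from there. The one structural difference is in step~(4): the paper simply cites an external result (Theorem~4.5, p.~150 of \cite{Sch}, or equivalently Proposition~\ref{prop:B.3}) to the effect that for a normal operator a closed range is equivalent to $0$ being at most an isolated spectral point, whereas you supply a self-contained spectral-projection argument. Your argument contains a harmless sign slip: on $V=E_{[\lambda-\epsilon,\lambda+\epsilon]}H$ one has $(Qu,u)_H=(Bu,u)_H-(Pu,u)_H\le(\lambda+\epsilon-C_0)\|u\|^2<0$, so it is $-Q$ (not $Q$) that is bounded below by $C_0-\lambda-\epsilon>0$ there; the orthonormal-sequence contradiction applies to $-Q$ equally well, so the conclusion stands.
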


\begin{proof}
 Since  $(Pu-\lambda u, u)_H=(Pu,
u)_H-\lambda\|u\|^2\ge (C_0-\lambda)\|u\|^2$ for any  $\lambda\in
(-\infty, C_0)$ and $u\in H$, it follows from Theorem~9.1-2 in
\cite{Kr} that every $\lambda\in (-\infty, C_0)$ belongs to
$\rho(P)$. For such a $\lambda\in (-\infty, C_0)$,  observe that
$$
\lambda I_H- B=(\lambda I_H-P)[I_H- (\lambda I_H-P)^{-1}Q].
$$
So $\lambda I_H- B$ is Fredholm, and hence $\dim{\rm Ker}(\lambda
I_H- B)<\infty$, ${\rm codim}{\rm Ker}(\lambda I_H- B)<\infty$,
 and $R(\lambda I_H-B)\subset H$ is closed. By Theorem~4.5 on the
 page 150 of \cite{Sch}, either  $\lambda\notin
\sigma(B)$ or $\lambda$ is  an isolated point of $\sigma(B)$.
Clearly, in the latter case $\lambda$ is also an eigenvalue of $B$
with finite multiplicity.
\end{proof}

Actually, this result may also follow from
Proposition~\ref{prop:B.3} below.

 By Proposition~4.5 of
\cite{Con}, if $A$ is a continuous linear normal operator (i.e.
$A^\ast A=AA^\ast$) on a Hilbert space $H$, then for
$\lambda\in\sigma(A)$ the range
 $R(A-\lambda I)$ is closed if and only if $\lambda$ is not a limit
 point of $\sigma(A)$. As a consequence we deduce that  (i) and (ii) of the following
 proposition are equivalent.

\begin{proposition}\label{prop:B.3}
Let $H$ be a Hilbert space and let $A\in L(H)$ be a normal operator
(i.e. $A^\ast A=AA^\ast$). Then the following three claims are
equivalent.
\begin{enumerate}
\item[\bf (i)] $0$ is at most an isolated point of $\sigma(A)$;
\item[\bf (ii)] The range $R(A)$ is closed in $H$;
\item[\bf (iii)] The operator $A|_W: W\to W$ is invertible and
its inverse operator $(A|_W)^{-1}:W\to W$ is bounded, where $W=({\rm
Ker}(A))^\bot$.
 \end{enumerate}
\end{proposition}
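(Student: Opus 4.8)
\textbf{Proof plan for Proposition~\ref{prop:B.3}.}
The plan is to establish the cycle of implications (i)$\Rightarrow$(ii)$\Rightarrow$(iii)$\Rightarrow$(i), using throughout the fact that a normal operator $A$ satisfies $\|Au\|=\|A^\ast u\|$ for all $u\in H$, so that $\mathrm{Ker}(A)=\mathrm{Ker}(A^\ast)$. First I would record the standard consequence that $H=\mathrm{Ker}(A)\oplus\overline{R(A)}$; indeed $\overline{R(A)}=(\mathrm{Ker}(A^\ast))^\bot=(\mathrm{Ker}(A))^\bot=W$, so in all three statements the relevant complement $W$ is exactly the closure of the range. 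This reduces the whole proposition to controlling the behavior of $A$ restricted to $W$, where $A|_W$ is injective with dense range.

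For (i)$\Rightarrow$(ii): if $0$ is an isolated point of $\sigma(A)$ or $0\notin\sigma(A)$, then using the (Riesz) spectral projection $E$ associated to the part $\{0\}$ of the spectrum, one decomposes $H=E(H)\oplus (I-E)(H)$ into $A$-invariant closed subspaces; on $E(H)$ the operator $A$ is quasi-nilpotent and normal, hence zero, so $E(H)=\mathrm{Ker}(A)$, while on $(I-E)(H)$ the spectrum of $A$ is bounded away from $0$, so $A$ is invertible there and $R(A)=(I-E)(H)$ is closed. (When $0\notin\sigma(A)$ the argument is trivial: $R(A)=H$.) For (ii)$\Rightarrow$(iii): assuming $R(A)$ closed, we have $R(A)=\overline{R(A)}=W$, so $A|_W:W\to W$ is a bijection (injective since $\mathrm{Ker}(A)\cap W=\{\theta\}$, surjective since $A(W)=A(H)=W$ using $A(\mathrm{Ker}A)=\{\theta\}$); boundedness of $(A|_W)^{-1}$ then follows from the open mapping theorem applied to the bounded bijection $A|_W$ between the Banach spaces $(W,\|\cdot\|)$. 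For (iii)$\Rightarrow$(i): if $(A|_W)^{-1}$ is bounded with norm $M$, then $\|Aw\|\ge M^{-1}\|w\|$ for all $w\in W$, which gives a spectral gap: for $|\lambda|<M^{-1}$ one checks that $\lambda I - A$ is invertible on $W$ (by a Neumann-series perturbation of $A|_W$) and on $\mathrm{Ker}(A)$ the operator $\lambda I - A=\lambda I$ is invertible whenever $\lambda\ne 0$; combining these, $\lambda\in\rho(A)$ for all $\lambda$ with $0<|\lambda|<M^{-1}$, so $0$ is at most an isolated point of $\sigma(A)$.

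Alternatively, one may shortcut (i)$\Leftrightarrow$(ii) by directly invoking Proposition~4.5 of \cite{Con} as the excerpt already notes, so that the only real work is the equivalence of these with (iii); I would nonetheless present the self-contained spectral-projection argument for completeness. The main obstacle I anticipate is the careful handling of the decomposition $H=\mathrm{Ker}(A)\oplus W$ in the step (i)$\Rightarrow$(ii): one must verify that the Riesz projection $E$ onto the spectral part at $0$ genuinely has range equal to $\mathrm{Ker}(A)$, which uses both normality (to conclude a quasi-nilpotent normal operator vanishes, via $\|A^n\|^{1/n}=r(A)$ for normal $A$ and $r(A|_{E(H)})=0$) and the invariance of $W$ under $A$. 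Once that structural decomposition is in place, the remaining implications are routine applications of the open mapping theorem and Neumann series.
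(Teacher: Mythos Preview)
Your proposal is correct and in fact more detailed and self-contained than the paper's own treatment. The paper dispatches (i)$\Leftrightarrow$(ii) entirely by citing Proposition~4.5 of \cite{Con} (the text just before the proposition states this), and then handles (ii)$\Leftrightarrow$(iii) in two sentences: (ii)$\Rightarrow$(iii) by the Banach inverse operator theorem, and (iii)$\Rightarrow$(ii) by noting $R(A)=A(W)=W$ is closed. Your (ii)$\Leftrightarrow$(iii) is exactly this argument; your (i)$\Rightarrow$(ii) via the Riesz projection and (iii)$\Rightarrow$(i) via a Neumann-series spectral gap replace the Conway citation with a direct proof, and you explicitly flag the Conway shortcut as an alternative. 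So there is no discrepancy in strategy---only in how much is proved versus quoted---and your self-contained route has the advantage that the reader need not chase an external reference for the spectral characterization of closed range.
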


 By the Banach inverse operator
theorem we arrive at (ii)$\Rightarrow$ (iii). Conversely,
$R(A)=A(W)=W$ is closed.

\section*{Acknowledgments}
 The author is deeply grateful to
 the anonymous referee for some interesting questions,  numerous comments
 and improved suggestions.

\end{document}